\numberwithin{equation}{section}
\numberwithin{figure}{section}
\renewcommand{\subsection}[1]{\hspace{-\parindent}\refstepcounter{subsection}{\bf (\arabic{section}\alph{subsection}) #1.}\addcontentsline{toc}{subsection}{\bf #1.}}
\newenvironment{nouppercase}{%
  \renewcommand{\uppercasenonmath}[1]{}}{}
\newcommand{\mybox}[1]{\parbox[t]{37.5em}{#1}}
\theoremstyle{plain}
\newtheorem{thm}{Theorem}[section]
\newtheorem{theorem}[thm]{Theorem}
\newtheorem{cor}[thm]{Corollary}
\newtheorem{corollary}[thm]{Corollary}
\newtheorem{prop}[thm]{Proposition}
\newtheorem{remark}[thm]{Remark}
\newtheorem{proposition}[thm]{Proposition}
\newtheorem{lemma}[thm]{Lemma}
\newtheorem{setup}[thm]{Setup}
\newtheorem*{claim*}{Claim} 
\newtheorem*{lemma*}{Lemma}
\newtheorem*{theorem*}{Theorem}
\newtheorem*{conjecture*}{Conjecture}
\newcommand{\bC}{{\mathbb C}}
\newcommand{\bR}{{\mathbb R}}
\newcommand{\bZ}{{\mathbb Z}}
\newcommand{\scrA}{\EuScript A}
\newcommand{\scrB}{\EuScript B}
\newcommand{\scrD}{\EuScript D}
\newcommand{\scrG}{\EuScript G}
\newcommand{\scrP}{\EuScript P}
\newcommand{\scrQ}{\EuScript Q}
\newcommand{\scrR}{\EuScript R}
\newcommand{\frakg}{\mathfrak{g}}
\newcommand{\half}{{\textstyle\frac{1}{2}}}
\newcommand{\iso}{\cong}
\newcommand{\htp}{\simeq}
\newcommand{\smooth}{C^\infty}
\renewcommand{\ge}{{\scriptscriptstyle\geq 0}}
\newcommand{\gr}{{\scriptscriptstyle >0}}
\renewcommand{\hom}{\mathit{hom}}
\newcommand{\re}{\mathrm{re}}
\newcommand{\im}{\mathrm{im}}
\title[LEFSCHETZ FIBRATIONS]{\Large\larger\rm Fukaya $A_\infty$-structures associated to\\ Lefschetz fibrations. V}
\author{Paul Seidel}
\begin{document}
\begin{nouppercase}
\maketitle
\end{nouppercase}

\begin{abstract}
We (re)consider how the Fukaya category of a Lefschetz fibration is related to that of the fibre. The distinguishing feature of the approach here is a more direct identification of the bimodule homomorphism involved.
\end{abstract}

\section{Introduction}
The subject of this paper is an idea originally proposed in \cite[Conjecture 4]{seidel06}, concerning the relation between the Fukaya $A_\infty$-category of a Lefschetz fibration and that of its fibre. That idea is central to the series of articles \cite{seidel12b, seidel14b, seidel15, seidel16, seidel17, seidel18, seidel18b} to which this one belongs. Indeed, a first answer was given in \cite{seidel12b}, and a much more comprehensive one in \cite{seidel18b} (the author apologizes for the anachronistic numbering; \cite{seidel18b} was completed before the present paper). However, in those constructions one salient feature of the situation, namely the special role of the dual diagonal $A_\infty$-bimodule, appears in a post hoc way (one first introduces a geometrically defined bimodule, and then later identifies it as quasi-isomorphic to the dual diagonal). Here, we take a modified route which builds in the dual diagonal bimodule from the outset. Independently of that, we also take advantage of the simpler definition of Fukaya categories of Lefschetz fibrations from \cite{seidel18}. Finally, our construction has larger scope than the previous ones, since we'll allow (a particular class of) Fukaya categories with curvature terms.

\subsection{Poincar{\'e} duality\label{subsec:poincare}}
Our thinking is informed by classical Poincar{\'e} duality for manifolds with boundary (the idea of using this as a model already occurs in \cite{seidel06, kontsevich-vlassopoulos13}). Namely, let $E^n$ be a compact oriented manifold with boundary $M = \partial E$. Let's consider the associated cochain complexes (leaving the precise meaning open; secretly, we are thinking of Morse theory). We have a short exact sequence
\begin{equation} \label{eq:classical-les}
0 \rightarrow C^*(E,M) \xrightarrow{\text{inclusion}} C^*(E) \xrightarrow{\text{restriction}} C^*(M) \rightarrow 0,
\end{equation}
together with a (Poincar{\'e} duality) homotopy equivalence
\begin{equation} \label{eq:poincare}
C^*(E,M) \stackrel{\htp}{\longrightarrow} C^*(E)^\vee[-n].
\end{equation}
Let's write $\scrA = C^*(E)$, $\scrB = C^*(M)$ for the complexes, and $\rho: \scrA \rightarrow \scrB$ for the restriction map. Inverting \eqref{eq:poincare} and composing with inclusion yields a chain map $\delta: \scrA^\vee[-n] \rightarrow \scrA$. Altogether, the outcome is that \eqref{eq:classical-les} now appears as an acyclic complex of the form
\begin{equation} \label{eq:total-chains}
\big\{ \scrA^\vee[-n] \stackrel{\delta}{\longrightarrow} \scrA \stackrel{\rho}{\longrightarrow} \scrB \big\}.
\end{equation}
The drawback of this approach is the indirect construction of $\delta$. Instead, it is preferable to obtain it directly from a geometrically defined diagonal cocycle
\begin{equation} \label{eq:chain-diagonal}
\delta \in (\scrA \otimes \scrA)^n.
\end{equation}
However, with such a modified definition, the two maps in \eqref{eq:total-chains} may no longer strictly compose to zero, thus requiring the introduction of an auxiliary nullhomotopy to build our acyclic complex.

\subsection{The result}
We now switch to the actual situation under consideration (in particular, the meaning of several notations will change at this point). Take a symplectic manifold $E^{2n}$ with a proper map
\begin{equation} \label{eq:pi}
p: E \longrightarrow \bR^2.
\end{equation}
Outside a compact subset, $p$ should be a locally trivial symplectic fibration. We require that $c_1(E) = 0$. We also require that the Poincar{\'e} dual of the symplectic form is represented by a symplectic submanifold $\Omega_E \subset E$, such that $p|\Omega_E$ has no critical points, and $\Omega_E$ is preserved by symplectic parallel transport for $p$. Let $M$ be the fibre of $p$ at a point close to infinity on $\bR^2$, with its symplectic submanifold $\Omega_M = \Omega_E \cap M$. Write $\scrB_q$ for the relative Fukaya category of $(M,\Omega_M)$, defined as in \cite{sheridan11b, perutz-sheridan20}. The objects of that category are certain closed Lagrangian submanifolds in $M \setminus \Omega_M$, but its structure maps involve discs that intersect $\Omega_M$. The resulting $\scrB_q$ is a curved $A_\infty$-category over $\bZ[[q]]$. For the Fukaya category of $E$, one wants to consider noncompact Lagrangian submanifolds which go to infinity in an appropriately constrained way, modelled on the classical theory of thimbles for Lefschetz fibrations. There is again a relative version, using Lagrangians in $E \setminus \Omega_E$, which we denote by $\scrA_q$. By a version of the construction from \cite[Section 6]{seidel18b} (itself borrowed from unpublished work of Abouzaid and the present author), those categories come with a restriction functor
\begin{equation}
\scrQ_q: \scrA_q \longrightarrow \scrB_q.
\end{equation}
In particular, pullback yields an $A_\infty$-bimodule $\scrQ_q^*\scrB_q$ over $\scrA_q$, which comes with a tautological map from the diagonal bimodule,
\begin{equation} \label{eq:restriction-trans}
\rho_q: \scrA_q \longrightarrow \scrQ_q^*\scrB_q.
\end{equation}
Fukaya categories, and in particular $\scrA_q$, come with additional structure, which we call the diagonal class. One can think of this as a natural tranformation from the Serre functor to the identity, or as a bimodule map
\begin{equation} \label{eq:serre-trans}
\delta_q: \scrA_q^\vee[-n] \longrightarrow \scrA_q,
\end{equation}
where $\scrA_q^\vee[-n]$ is the dual diagonal bimodule, with the grading shifted up by $n$. Such bimodule maps are the cocycles in a complex $\mathit{CC}^*(\scrA_q,2)$, whose cohomology is denoted by $\mathit{HH}^*(\scrA_q,2)$. (The notation expresses the fact that this is part of a sequence of groups which generalize classical Hochschild cohomology.) In this terminology, \eqref{eq:serre-trans} describes a class
\begin{equation}
[\delta_q] \in \mathit{HH}^n(\scrA_q,2),
\end{equation}
in which form it is independent of all choices. Our aim is to combine \eqref{eq:restriction-trans} and \eqref{eq:serre-trans}:

\begin{theorem} \label{th:main}
The composition of \eqref{eq:serre-trans} and \eqref{eq:restriction-trans} is nullhomotopic. Moreover, for a suitable choice of nullhomotopy, putting all parts together yields a filtered acyclic bimodule
\begin{equation} \label{eq:contractible}
\big\{\!
\xymatrix{ 
\ar@/_1pc/[rrrr]_-{\text{nullhomotopy}}
\scrA_q^\vee[-n] \ar[rr]^-{\delta_q} && \scrA_q \ar[rr]^-{\rho_q} && \scrQ_q^*\scrB_q
}\!\big\}.
\end{equation}
\end{theorem}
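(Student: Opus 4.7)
My proof plan closely follows the model \eqref{eq:classical-les}--\eqref{eq:total-chains}: the three entries of \eqref{eq:contractible} and the nullhomotopy between them should be manufactured as counts of pseudo-holomorphic discs within a single master moduli problem on $E$. The basic setup is discs with boundary on a cyclic sequence of objects of $\scrA_q$, equipped with two distinguished interior marked points: a \emph{diagonal marked point}, which encodes the $\delta_q$ operation (its presence accounts for the quadratic ``2'' structure in $\mathit{HH}^*(\scrA_q,2)$), and an \emph{$\Omega_E$-marked point} whose intersections with $\Omega_E$ record powers of $q$. By varying which Lagrangians appear on the boundary, and by allowing some of the boundary strip-like ends to escape to infinity in $\bR^2$, where their labels change from objects of $\scrA_q$ to their restrictions in $\scrB_q$, all three operations $\delta_q$, $\rho_q$, and the composition $\rho_q\circ\delta_q$ arise from the same framework. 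This is the ``direct identification of the bimodule homomorphism'' promised in the abstract: the dual diagonal bimodule appears from the start as a formal consequence of placing the diagonal marked point in the interior.

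The nullhomotopy $h_q$ is then produced by introducing an auxiliary one-parameter family in this moduli space, interpolating between two configurations of the diagonal marked point: on one end, a degeneration producing two discs glued along an output strip, which gives $\rho_q\circ \delta_q$ after applying restriction to the outgoing strand; on the other, a configuration in which either a forced bubbling at infinity or a symmetry of the fibration structure makes the signed count trivial. Isolated solutions in the interior of the family yield a bimodule pre-homomorphism $h_q: \scrA_q^\vee[-n]\to \scrQ_q^*\scrB_q$, and the standard analysis of the codimension-one boundary of the parametrized moduli space produces the bimodule-homomorphism identity $dh_q = \rho_q\circ \delta_q$. Filteredness of \eqref{eq:contractible} over $\bZ[[q]]$ is automatic from the $\Omega_E$-intersection count.

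For acyclicity, I would reduce modulo $q$: the associated graded of \eqref{eq:contractible} is a complex of bimodules over the exact (uncurved) Fukaya categories of $E\setminus\Omega_E$ and $M\setminus\Omega_M$, where one can identify it as a categorical avatar of the classical Poincar{\'e}--Lefschetz long exact sequence and borrow the acyclicity already established in \cite{seidel18b}. $q$-adic completeness then upgrades acyclicity at $q=0$ to filtered acyclicity of the full complex via the usual Mittag-Leffler/Nakayama-type argument. The principal obstacle is of a geometric-analytic nature: one must set up perturbation data and almost complex structures that are simultaneously adapted to three different asymptotic regimes, namely the infinity of $\bR^2$ (where the restriction to $\scrB_q$ must take effect compatibly with the definition of $\scrQ_q$), the divisor $\Omega_E$ (where positivity of intersection is required for the $q$-adic filtration), and the codimension-one boundary of the nullhomotopy parameter. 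Proving Gromov compactness for this enlarged moduli problem, ruling out disc or sphere bubbling in codimension $\leq 1$ through $\Omega_E$ or at the boundary of $\bR^2$, and checking the signs that identify the various boundary strata with $\rho_q\circ\delta_q$ and with the $A_\infty$-differential of $h_q$, is where I expect the majority of the technical work to lie.
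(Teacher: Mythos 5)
Your high-level skeleton is reasonable---construct the nullhomotopy from a parametrized moduli space, and reduce filtered acyclicity to the $q=0$ statement---but several of the specific mechanisms you propose are not the ones that make the argument work, and in places they would not work.

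First, the model you describe for $\delta_q$ is incorrect. There is no ``diagonal marked point'' in the interior of a disc. The diagonal class is built from the surfaces $\scrR^{k+1,l+1;m}$ of \eqref{eq:z-surface}: an infinite strip $\bR\times[0,1]$ with two \emph{negative strip-like ends} at $s=\pm\infty$, plus extra boundary punctures and (separately) the $\Omega$-constrained interior marked points. The fact that there are two negative ends is exactly what produces the $\scrA_q^\vee$ input in the bimodule map $\scrA_q^\vee[-n]\to\scrA_q$; this is the ``two-pointed closed-open'' type construction of \cite[Section 5.6]{ganatra12}, not an interior insertion. Also, your ``$\Omega_E$-marked point'' is a single marked point, whereas one actually needs $m\geq 0$ unordered marked points (mapping to $\Omega_E$) contributing $q^m$, with a $\mathit{Sym}_m$-quotient; one marked point only records $q^1$.

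Second, and this is the core gap, your nullhomotopy mechanism is too vague to be checked and does not capture the actual geometric content. The paper's parameter $r\in[0,1]$ does \emph{not} move a marked point; it interpolates between generic data at $r=1$ (defining $\delta_q$) and data ``suitable for restriction'' at $r=0$, under which the projection $p$ is $J$-holomorphic near $M=E_i$ and the Hamiltonian splits, forcing any $r=0$ solution with a chord limit in $M$ to lie entirely inside $M$ (Lemma \ref{th:approach}). Counting only solutions with $r>0$ then defines $h_q$, and the whole difficulty is to show that the one-dimensional parametrized moduli space has no extra ends at $r=0$. The regular-at-$r=0$-inside-$M$ case is handled by a dimension count using the block-diagonal splitting of the linearized operator (Lemma \ref{th:double-tranverslity-2}); the genuinely hard case is a broken $r=0$ solution with one piece $u^2$ in $M$ and one piece $u^1$ escaping $M$. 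That case needs the parametrized gluing analysis of Sections \ref{sec:gluing} and \ref{sec:plane}: the invariant $\Gamma(S,\beta,\gamma)$ of \eqref{eq:lim-gamma} can be arranged to have a fixed sign (Proposition \ref{th:sign-of-gamma}), and then Corollary \ref{th:r-moves} / Corollary \ref{th:r-moves-2} forces the glued family to have $r_g\leq 0$, so it never enters the $r>0$ region being counted. Nothing in your sketch---neither ``forced bubbling at infinity'' nor ``a symmetry of the fibration structure''---supplies this sign-control, and without it the relation $dh_q = \rho_q\circ\delta_q$ is not established.

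Third, for acyclicity you propose to borrow the statement from \cite{seidel18b}. That would require identifying the present $\delta_q$ (and the filtered quasi-isomorphism it induces) with the bimodule map constructed in \cite{seidel18b}, which is itself a nontrivial comparison and is precisely what this paper is trying to avoid---the point of the paper is a self-contained identification. The paper instead proves acyclicity directly (Section \ref{sec:acyclic}): it splits each Floer complex into a subcomplex of chords near infinity and a quotient of chords in $M$, shows via the winding-number argument of Lemma \ref{th:box-lemma} that all the maps respect this splitting, and deduces acyclicity of \eqref{eq:contractible-2} from the explicit diagram \eqref{eq:all-complex}. Finally, the ``Mittag-Leffler/Nakayama'' step is a misreading of the terminology: filtered acyclicity \emph{means} acyclicity of the $q=0$ reduction (as stated after Theorem \ref{th:main}); there is nothing to upgrade.
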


The most difficult part of proving the theorem is to construct the required nullhomotopy. Filtered acyclicity, on the other hand, is a statement about the Floer cohomology groups obtained by setting $q = 0$, which means working on the exact symplectic manifold $E \setminus \Omega_E$, and is correspondingly more elementary.
%

\begin{remark} \label{th:pre-cy}
Suppose for simplicity that we work with rational coefficients instead of integer ones. Dualization equips $\mathit{CC}^*(\scrA_q,2)$ with a $\bZ/2$-action. The $(-1)^n$-eigenspace is the ``higher Hochschild cochain group'' $C^{(2,n)}(\scrA_q)$ in the terminology of \cite[Section 3.2]{kontsevich-vlassopoulos13}. The cocycle \eqref{eq:serre-trans} lies in that eigenspace at least up to homotopy. Therefore, its mapping cone $\{\scrA_q^\vee[-n] \rightarrow \scrA_q\}$ is a bimodule which comes with a filtered quasi-isomorphism to its own (shifted) dual. The same is true for $\scrQ_q^*\scrB_q$, as a consequence of the weak Calabi-Yau structure (in the sense of \cite{tradler01}) of $\scrB_q$. We will not attempt to prove that, but it seems natural to suppose that the two structures correspond to each other under the homotopy equivalence $\{\scrA_q^\vee[-n] \rightarrow \scrA_q\} \htp \scrB_q$ induced by \eqref{eq:contractible}.

Going further, one conjectures that $\scrA_q$ has a geometrically defined ``pre-Calabi-Yau structure'' \cite{kontsevich-vlassopoulos13} (also called ``boundary $A_\infty$-algebra'' in \cite{seidel06}, or ``$V_\infty$ algebra'' in \cite{tradler-zeinalian07}; in spite of the proliferation of existing names, I would propose ``log Calabi-Yau'' as a better alternative). Such a structure induces a cyclic $A_\infty$-structure on $\scrA_q^\vee[1-n] \oplus \scrA_q$. The expectation would then be that this is equivalent to $\scrB_q$ as a cyclic $A_\infty$-category. However, as far as this author knows, the geometric moduli spaces underlying the construction of pre-Calabi-Yau structures are not well understood (the ``popsicle spaces'' from \cite{abouzaid-seidel07}, used in \cite{seidel18b}, are unsuitable here because they lack the requisite cyclic symmetry).
\end{remark}

\subsection{Structure of the paper}
Section \ref{sec:gluing} introduces, in its simplest form, the Floer-theoretic trick which underlies our main construction. Section \ref{sec:algebra} reviews the algebraic terminology and sign conventions. Section \ref{sec:fukaya} recalls the definition of the relative Fukaya category, including the diagonal class, for closed Calabi-Yau manifolds. Section \ref{sec:plane} contains some simple considerations about $\bar\partial$-operators, which are needed for the extension of our basic trick to more general Riemann surfaces. Sections \ref{sec:connections} and \ref{sec:setup} set up the geometry and Floer theory of the spaces \eqref{eq:pi}.

Up to that point, everything is essentially getting the preliminaries together. Section \ref{sec:acyclic} carries out a simpler version of the main argument, where the parameter $q$ is set to zero, and we consider Floer cochain complexes without any of their additional $A_\infty$-structures. Nevertheless, this already contains all the main geometric ideas. After that, Section \ref{sec:final} develops the argument in full.

{\em Acknowledgments.} I would like to thank: Tim Large, for a useful discussion concerning coupled Floer equations; and Nick Sheridan, for sharing the unpublished manuscript \cite{perutz-sheridan20}. This work was supported by: the Simons Foundation, through a Simons Investigator award and the Simons Collaboration for Homological Mirror Symmetry; and by the NSF, through award DMS-1904997.

\section{Parametrized continuation maps\label{sec:gluing}}
This section addresses a basic question in the underpinnings of Floer theory. Briefly, the question has to do with the behaviour of the parameter values under gluing in parametrized moduli spaces. We will consider parametrized continuation maps, since this is the most basic version.

\subsection{Background}
Suppose that we have a symplectic manifold $M$, and two Lagrangian submanifolds $L_0,L_1$. Take a time-dependent Hamiltonian $(H_t)$, $t \in [0,1]$, with vector field $(X_t)$; as well as a time-dependent compatible almost complex structure $(J_t)$. A chord between $L_0$ and $L_1$ is an
\begin{equation} \label{eq:chord}
\left\{
\begin{aligned}
& x: [0,1] \longrightarrow M, \\
& x(0) \in L_0, \, x(1) \in L_1, \\
& dx/dt = X_t.
\end{aligned}
\right.
\end{equation}
The linearization of \eqref{eq:chord}, or rather of its equivalent form $J_t(dx/dt - X_t) = 0$, along a chord is a formally selfadjoint operator
\begin{equation}
\begin{aligned}
& Q_x: \smooth([0,1], x^*TM, x^*TL_0, x^*TL_1) 
\stackrel{\mathrm{def}}{=} \\ & \qquad \qquad
\{ \Xi(t)\in TM_{x(t)}\,:\, \Xi(0) \in TL_0, \;\; \Xi(1) \in TL_1 \}
\longrightarrow \smooth([0,1], x^*TM).
\end{aligned}
\end{equation}
One says that $x$ is nondegenerate if $Q_x$ is injective, and hence an isomorphism in suitable Sobolev completions (say from $W^{1,2}$ to $L^2$). Throughout the following discussion, we assume that this holds for all chords. 

The Floer trajectories are
\begin{equation} \label{eq:floer}
\left\{
\begin{aligned}
& u: \bR \times [0,1] \longrightarrow M, \\
& u(s,0) \in L_0, \, u(s,1) \in L_1, \\
& \textstyle \lim_{s \rightarrow \pm\infty} u(s,t) = x_\pm(t), \\
& \partial_s u + J_t(\partial_t u - X_t) = 0,
\end{aligned}
\right.
\end{equation}
where $x_{\pm}$ are chords \eqref{eq:chord}. One usually excludes solutions $u(s,t) = x(t)$ from consideration. The linearization of \eqref{eq:floer} is an operator
\begin{equation} \label{eq:linearized-op}
\begin{aligned}
& D_u: \smooth(\bR \times [0,1], u^*TM, u^*TL_0,u^*TL_1) 
\stackrel{\mathrm{def}}{=} \\ & \qquad
\{ \Upsilon(s,t) \in TM_{u(s,t)}\,:\, \Upsilon(s,0) \in TL_0, \;\; \Upsilon(s,1) \in TL_1 \}
\longrightarrow \smooth(\bR \times [0,1], u^*TM),
\end{aligned}
\end{equation}
whose asymptotic behaviour over the ends of the strip is that
 \begin{equation} \label{eq:asymptotic-d}
D_u \longrightarrow \partial_s + Q_{x_\pm}.
\end{equation}
In suitable Sobolev spaces (one can choose $W^{1,2}$ and $L^2$ as before), $D_u$  becomes Fredholm. Regularity of a solution $u$ of \eqref{eq:floer} means surjectivity of the associated Fredholm operator.
 
Let's suppose that we have two choices $(H_{\pm,t}, J_{\pm,t})$. Take $(H_{s,t},J_{s,t})$, $(s,t) \in \bR \times [0,1]$, which interpolates between them, in the sense that
\begin{equation} \label{eq:cont-family}
(H_{s,t},J_{s,t}) \longrightarrow (H_{\pm,t},J_{\pm,t}) \;\; \text{ as } s \rightarrow \pm\infty.
\end{equation}
Here, convergence is understood to be exponentially fast in any $C^r$ topology. The continuation map equation is a version of \eqref{eq:floer} which breaks the $s$-translation invariance, meaning
\begin{equation} \label{eq:cont-map-equation}
\partial_s u + J_{s,t}(\partial_t u - X_{s,t}) = 0,
\end{equation}
with limits $x_\pm$ which are $H_\pm$-chords. The linearization of this equation has the same form as in \eqref{eq:linearized-op}. Finally, suppose that we have a family $(H_{r,s,t},J_{r,s,t})$ depending on an additional parameter $r$, and which satisfies \eqref{eq:cont-family} for all values of $r$. One can then look at the parametrized version of the continuation map equation, meaning pairs $(r,u)$ consisting of a parameter value and a solution of \eqref{eq:cont-map-equation} with $J$ and $X$ for that value of $r$. Linearizing that gives rise to an extended version of $D_u$, denoted by
\begin{equation}
D_{(r,u)}^{\mathit{para}}: \bR \oplus \smooth(\bR \times [0,1], u^*TM, u^*TL_0,u^*TL_1) 
\longrightarrow \smooth(\bR \times [0,1], u^*TM),
\end{equation}
where the $\bR$ component takes into account infinitesimal changes of the parameter $r$. In this context, one has to distinguish between regularity of $u$ in the ordinary sense (surjectivity of $D_u$, in the usual Sobolev spaces) and surjectivity of $(r,u)$ in the parametrized sense (surjectivity of $D_{(r,u)}^{\mathit{para}}$); only the latter, weaker, property is generically satisfied over a whole parametrized moduli space.

\subsection{Local linearity}
Let's briefly review the effect of time-dependent coordinate changes on our equations. Namely, suppose that alongside our manifold $M$ we have another one $\tilde{M}$ of the same dimension, and symplectic embeddings $f_t: \tilde{M} \rightarrow M$ ($t \in [0,1]$), such that $f_t^*(\partial_t f_t)$ is the Hamiltonian vector field of some function $\tilde{F}_t$. If we write a chord or a Floer trajectory on $M$ as
\begin{equation} \label{eq:transform}
x(t) = f_t(\tilde{x}(t)) \;\; \text{ or } \;\; u(s,t) = f_t(\tilde{u}(s,t)),
\end{equation}
then $\tilde{x}$ and $\tilde{u}$ satisfy the analogues of \eqref{eq:chord} and \eqref{eq:floer} for
\begin{equation} \label{eq:transformed-data}
\left\{
\begin{aligned}
& \tilde{L}_0 = f_0^{-1}(L_0), \; \tilde{L}_1 = f_1^{-1}(L_1), \\
& \tilde{H}_t = f_t^*H_t - \tilde{F}_t, \\
& \tilde{J}_t = f_t^*J_t.
\end{aligned}
\right.
\end{equation}
On the level of linearized operators, there is an induced isomorphism between $Q_x$ (or $D_u$) and $Q_{\tilde{x}}$ (respectively $D_{\tilde{u}}$). 

We find it convenient to introduce a simplifying technical assumption (which could presumably be dropped, but at the cost of requiring slightly beefier analytic methods; see e.g.\ \cite{robbin-salamon02}). For Floer's equation, this looks as follows. Let $x$ be a chord.
\begin{equation} \label{eq:locally-linear}
\mybox{
We say that $(H_t)$ and $(J_t)$ are locally linear near $x$ if the following holds. There is a time-dependent local Darboux chart,
\[
f_t: (\bR^{2n},0) \dashrightarrow (M,x(t))
\]
 (the dashed arrow indicates that $f_t(x)$ is defined only for $(t,x)$ in a neighbourhood of $[0,1] \times \{0\} \subset [0,1] \times \bR^{2n}$), such that for the transformed data \eqref{eq:transformed-data}, $(\tilde{L}_0,\tilde{L}_1)$ are linear Lagrangian subspaces, $\tilde{H}_t$ are quadratic forms plus constants, and $\tilde{J}_t$ are constant (over $\bR^{2n}$, while still potentially $t$-dependent).
}
\end{equation}
We will assume from now on that all chords have this property. When setting up continuation maps, the corresponding condition is:
\begin{equation} \label{eq:added-condition}
\mybox{
Take $(H_{s,t},J_{s,t})$ which define a continuation map equation, with limits $(H_{\pm,t},J_{\pm,t})$. Let $x_\pm$ be one of the chords that can appear in the limit (and which satisfy local linearity). We say that the continuation map equation is locally $s$-independent near that chord if for $\pm s \gg 0$, we have $(H_{s,t},J_{s,t}) = (H_{\pm,t}, J_{\pm, t})$ in a neighbourhood of $\{ (t,x_\pm(t)) \} \subset [0,1] \times M$.
}
\end{equation}

Suppose that we have a Floer trajectory or continuation map solution $u$. Local linearity, together with \eqref{eq:added-condition} for continuation maps, implies that on each of the regions $\pm s \gg 0$, we can apply a coordinate change \eqref{eq:transform} so that the transformed maps $\tilde{u}_{\pm}$ satisfy a linear (and $s$-independent) Cauchy-Riemann equation. As a consequence, we have an equality
\begin{equation} \label{eq:large-s}
D_{\tilde{u}_\pm} = \partial_s + Q_{\tilde{x}_\pm} \;\; \text{on regions $\pm s \gg 0$,}
\end{equation}
rather than the asymptotic statement \eqref{eq:asymptotic-d}. Suppose that we have a solution $\tilde{\Upsilon}$ of $D_{\tilde{u}_+} \tilde{\Upsilon} = 0$, defined on $[s_+,\infty) \times [0,1]$ for some $s_+ \gg 0$. One can write it as
\begin{equation} \label{eq:upsilon-plus}
\tilde{\Upsilon}(s,t) = \textstyle
\sum_{\lambda} \exp(-\lambda s) \Xi_{\lambda}(t),
\end{equation}
where each $\Xi_{\lambda}$ is a $\lambda$-eigenvector for $Q_{\tilde{x}_+}$ (which includes the possibility of its being zero). The solution decays as $s \rightarrow +\infty$ iff the only nonzero terms come from positive eigenvalues. Here's a simple application:

\begin{lemma}
Take a nonzero solution of $D_u \Upsilon = 0$, defined on $[s_+,\infty) \times [0,1]$, and which decays as $s \rightarrow +\infty$. Then there is an eigenvalue $\lambda > 0$ of $Q_{x_+}$, such that
\begin{equation} \label{eq:exp-decay}
\textstyle \lim_{s \rightarrow \pm\infty} \exp(\lambda s) \Upsilon\;\; \text{ is a nonzero $\lambda$-eigenvector.}
\end{equation}
\end{lemma}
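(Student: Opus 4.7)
The plan is to reduce the statement to a linear problem via the local linearity assumption, and then read off the asymptotic behaviour from the eigenfunction expansion already set up in \eqref{eq:upsilon-plus}.

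First I would apply the coordinate change $f_t$ from \eqref{eq:locally-linear} along the chord $x_+$. Since local linearity is assumed, after possibly enlarging $s_+$ we may assume that $\tilde u(s,\cdot)$ takes values in the Darboux chart for all $s \geq s_+$. Writing $\Upsilon(s,t) = df_t(\tilde{x}(t))\,\tilde\Upsilon(s,t)$, the equation $D_u\Upsilon = 0$ transforms to $D_{\tilde u}\tilde\Upsilon = 0$, and by \eqref{eq:large-s} this is literally the linear equation
\begin{equation*}
\partial_s \tilde\Upsilon + Q_{\tilde{x}_+}\tilde\Upsilon = 0 \qquad \text{on } [s_+,\infty)\times[0,1],
\end{equation*}
with boundary conditions in the linear Lagrangians $\tilde L_0,\tilde L_1$. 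Since the coordinate change is an isomorphism at the chord (and intertwines $Q_x$ with $Q_{\tilde x}$ under the obvious identifications), it suffices to prove the statement for $\tilde\Upsilon$ and $Q_{\tilde{x}_+}$; eigenvectors and eigenvalues then transport back to the original setting.

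Next I would use that $Q_{\tilde{x}_+}$ is formally selfadjoint on $L^2([0,1], \tilde x_+^*T\tilde M)$ with the given Lagrangian boundary conditions, and (by the nondegeneracy assumption running throughout the section) is invertible. Its spectrum is therefore a discrete set of real nonzero eigenvalues with finite-dimensional eigenspaces, and the corresponding eigenvectors $\{\Xi_\lambda\}$ form a Hilbert basis. Expanding $\tilde\Upsilon(s_+,\cdot)$ in this basis and using uniqueness of solutions to the linear evolution equation gives the expansion \eqref{eq:upsilon-plus} (convergent in $L^2$ for each fixed $s$, and then smoothly by parabolic regularity for $s > s_+$). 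Decay as $s\to+\infty$ then forces the coefficient $\Xi_\lambda$ to vanish for every $\lambda \leq 0$; since $\tilde\Upsilon$ is nonzero, there exists a smallest $\lambda > 0$ with $\Xi_\lambda \neq 0$.

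Finally, multiplying the expansion by $\exp(\lambda s)$ yields
\begin{equation*}
\exp(\lambda s)\,\tilde\Upsilon(s,t) \;=\; \Xi_\lambda(t) \;+\; \sum_{\mu > \lambda} \exp\bigl((\lambda-\mu)s\bigr)\,\Xi_\mu(t),
\end{equation*}
and the remainder sum tends to zero as $s \to +\infty$ (each individual term does, and the series is dominated by $\exp((\lambda-\mu_1)s)$ times the $L^2$-norm of the tail, where $\mu_1 > \lambda$ is the next eigenvalue in the support, with uniform control coming from the spectral gap). Thus $\exp(\lambda s)\,\tilde\Upsilon \to \Xi_\lambda$, a nonzero $\lambda$-eigenvector, which transports back to the stated conclusion.

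The main point to be careful about is not the derivation of the expansion itself, which is standard once selfadjointness and nondegeneracy are in hand, but the justification of termwise convergence in a function-space topology strong enough for the limit statement \eqref{eq:exp-decay} to make pointwise sense; this is handled either by passing to a $C^0$-norm via the spectral gap and Sobolev embedding, or by invoking smoothing for the evolution operator $\exp(-sQ_{\tilde x_+})$ restricted to the positive spectral subspace.
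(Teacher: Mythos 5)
Your argument follows the same route as the paper's sketch: pass to the Darboux chart via local linearity, expand the linearized solution in the eigenbasis of $Q_{\tilde x_+}$, identify the smallest positive eigenvalue in the support, and transport back. The one missing ingredient is the justification that $\tilde\Upsilon$ is still nonzero after you have enlarged $s_+$ to enter the chart. The hypothesis only guarantees that $\Upsilon$ is nonzero on the original domain $[s_+,\infty)\times[0,1]$; without further argument, $\Upsilon$ could a priori vanish identically for $s$ large enough, in which case the expansion would be trivially zero and there would be no $\lambda$ to exhibit. The paper opens the proof with exactly this point, invoking unique continuation for the Cauchy--Riemann-type operator $D_u$: a nonzero solution cannot vanish on an open subset of the strip, so there are $(s,t)$ with arbitrarily large $s$ where $\Upsilon(s,t)\neq 0$, and one may therefore take $s_+$ as large as needed while keeping the restricted solution nonzero. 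Once you add that observation, the rest of your argument (the spectral expansion, the isolation of the leading eigenvalue, and the exponential estimate on the tail, upgraded from $L^2$ to a pointwise statement by elliptic regularity) fills in exactly the details the paper declares ``obvious.''
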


\begin{proof}
By unique continuation, there are $(s,t)$ with arbitrarily large $s$ on which $\Upsilon(s,t) \neq 0$. Hence, we can make $s_+$ larger if necessary, so that local linearity applies, and obtain a nonzero \eqref{eq:upsilon-plus}.
For that, the corresponding property is obvious, and one then transforms back to $\Upsilon$.
\end{proof}

Take some \eqref{eq:upsilon-plus} which decays as $s \rightarrow +\infty$. The $L^2$-norm on a region $[s,\infty) \times [0,1]$ is (omitting a factor of $1/2$, which is irrelevant for future considerations)
\begin{equation} \label{eq:l2} \textstyle
\big\|\tilde\Upsilon \big\|^2_{L^2([s,\infty) \times [0,1])} = \sum_{\lambda>0} \lambda^{-1}\exp(-2\lambda s) \|\Xi_{\lambda}\|^2,
\end{equation}
where $\|\cdot\|$ is the $L^2$-norm on $[0,1]$. One can compare that with
\begin{equation} \label{eq:norm} \textstyle
\big\|\tilde\Upsilon|(\{s\} \times [0,1])\big\|^2 =
\sum_{\lambda > 0} \exp(-2\lambda s) \|\Xi_{\lambda}\|^2.
\end{equation}
Finally, the analogue of \eqref{eq:l2} for the $W^{1,2}$-norm (or rather, a norm equivalent to it) is
\begin{equation} \label{eq:w12} \textstyle
\big\|\tilde\Upsilon \big\|^2_{W^{1,2}([s,\infty) \times [0,1])} = \sum_{\lambda>0} \lambda \exp(-2\lambda s) \|\Xi_{\lambda}\|^2.
\end{equation}
Since the $\lambda$ are bounded below by a positive constant, these three norms are increasingly strict (each dominating the previous one). On the other hand, \eqref{eq:l2} dominates the $W^{1,2}$-norm on any $[s',\infty) \times [0,1]$ for $s' > s$; this is an elementary instance of ellipticity, and has analogues for higher Sobolev spaces $W^{l,2}$. An easy consequence of these observations is:

\begin{lemma}
Take a sequence $\Upsilon_k$ of solutions of $D_u \Upsilon_k = 0$, defined on some region $(s_+,\infty) \times [0,1]$, and each of which decays as $s \rightarrow +\infty$. Suppose that the $\Upsilon_k$ converge on compact subsets to some $\Upsilon$ (because of ellipticity, it does not matter which topology one chooses for convergence). Then $\Upsilon$ again decays as $s \rightarrow +\infty$; and moreover, the convergence holds in $W^{l,2}$-sense on $[s,\infty) \times [0,1]$, for $s>s_+$ and any $l$.
\end{lemma}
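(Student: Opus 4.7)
The strategy is to reduce matters to the linear asymptotic regime governed by \eqref{eq:upsilon-plus}, extract a uniform exponential decay for the whole sequence $\Upsilon_k$ from the hypothesis of convergence on compact subsets, and combine it with elliptic regularity. First, I would fix $s_0 > s_+$ large enough that the local linearity assumptions \eqref{eq:locally-linear} and \eqref{eq:added-condition} apply on $[s_0, \infty) \times [0,1]$, and pass to the associated Darboux coordinates so that $D_{\tilde u}$ coincides with the constant-coefficient operator $\partial_s + Q_{\tilde x_+}$ there. Each $\tilde\Upsilon_k$, being decaying, then admits an eigenfunction expansion $\tilde\Upsilon_k(s,t) = \sum_{\lambda > 0} e^{-\lambda s} \Xi_{k,\lambda}(t)$ over positive eigenvalues of $Q_{\tilde x_+}$.

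The crucial step is to translate compact convergence on the single slice $\{s_0\} \times [0,1]$ into a uniform bound on the coefficients $\Xi_{k,\lambda}$. Since $\|\tilde\Upsilon_k(s_0,\cdot)\|_{L^2[0,1]}$ is uniformly bounded by hypothesis, formula \eqref{eq:norm} gives $\sum_{\lambda > 0} e^{-2\lambda s_0}\|\Xi_{k,\lambda}\|^2 \leq C$. Substituting into \eqref{eq:l2} and extracting the factor $e^{-2\lambda_0(s - s_0)}$, where $\lambda_0 > 0$ denotes the smallest positive eigenvalue of $Q_{\tilde x_+}$, produces the uniform exponential bound
\[
\|\tilde\Upsilon_k\|_{L^2([s,\infty) \times [0,1])}^2 \leq \lambda_0^{-1} C e^{-2\lambda_0(s - s_0)}, \qquad s \geq s_0,
\]
which via the elementary ellipticity observed after \eqref{eq:w12} upgrades to an analogous uniform $W^{l,2}$ estimate on $[s,\infty) \times [0,1]$ for every $l$ (at the cost of shifting $s$ slightly). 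By lower semicontinuity under pointwise convergence, $\tilde\Upsilon$ inherits the same bounds; in particular $\tilde\Upsilon \in L^2$ on every such half-strip, which via its own eigenfunction expansion forces all nonpositive-$\lambda$ components of $\tilde\Upsilon$ to vanish, so $\tilde\Upsilon$ decays as $s \to +\infty$. Transforming back yields the decay of $\Upsilon$.

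For the convergence statement, I would split $\|\tilde\Upsilon_k - \tilde\Upsilon\|_{W^{l,2}([s,\infty)\times[0,1])}$ at a large threshold $S \geq s_0$. The tail contribution from $[S,\infty) \times [0,1]$ is controlled by a multiple of $e^{-\lambda_0(S - s_0)}$ uniformly in $k$, via the exponential estimate above applied to $\tilde\Upsilon_k$ and, by the Fatou argument of the previous step, to $\tilde\Upsilon$ as well, so it can be made arbitrarily small by taking $S$ large. On the compact region $[s,S] \times [0,1]$, compact convergence combined with interior elliptic regularity for the linear operator promotes the convergence to $W^{l,2}$; in the range $s_+ < s < s_0$ the additional compact piece $[s, s_0] \times [0,1]$ is handled by the same interior elliptic argument applied to the original (not-yet-linearized) equation. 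The one delicate point, and the step I expect to carry the argument, is the uniform-in-$k$ exponential decay: the eigenfunction decomposition makes this nearly automatic once one has control at one slice, which is precisely the payoff of the simplifying local linearity hypothesis.
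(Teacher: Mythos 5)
Your proof is correct and is essentially the argument the paper has in mind: the lemma is stated without a written proof, as an ``easy consequence'' of the three norm formulas \eqref{eq:l2}--\eqref{eq:w12} together with the lower bound $\lambda\geq\lambda_0>0$ on positive eigenvalues, and your key step --- reading off a uniform coefficient bound from convergence at the single slice $\{s_0\}\times[0,1]$ via \eqref{eq:norm}, propagating it to a uniform exponential tail via \eqref{eq:l2}, and then upgrading to $W^{l,2}$ via the ellipticity remark after \eqref{eq:w12} --- is exactly the intended use of those observations.
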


Of course, all of these remarks have counterparts for regions with $s \ll 0$, where the negative half of the spectrum of $Q_{x_-}$ is relevant for decaying solutions.

\subsection{Gluing}
We need to say a few words about gluing of solutions. Let's suppose that $u^2$ is a solution of a continuation map equation, and $u^1$ a Floer trajectory for $(H_+,J_+)$, with matching limits
\begin{equation} \label{eq:matching-limits}
\textstyle \lim_{s \rightarrow +\infty} u^2(s,t) = x(t) = \lim_{s \rightarrow -\infty} u^1(s,t).
\end{equation}
Both should be regular isolated points of the respective moduli spaces. In the case of $u^1$, this of course means Floer trajectories up to translation, so that $\mathit{ker}(D_{u^1}) = \bR \cdot \partial_s u^1$. Gluing then produces a family $(u_g)$ of continuation map solutions, parametrized by a gluing length $g \gg 0$. In the limit $g \rightarrow \infty$, we get back the original maps, thought of as forming a single broken solution. A weak form of that convergence statement is that $u_g \rightarrow u^2$ on compact subsets, and that $u_g(s+g,t) \rightarrow u^1$ in the same sense.

\begin{lemma} \label{th:convergence-to-derivative}
As $g \rightarrow \infty$, $\partial_g u_g$ converges to zero on compact subsets. In the same sense, $(\partial_g u_g)(s+g,t)$ converges to $-\partial_s u^1$.
\end{lemma}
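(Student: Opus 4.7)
Since the continuation map equation \eqref{eq:cont-map-equation} carries no $g$-dependence, differentiating in $g$ immediately yields $D_{u_g}(\partial_g u_g) = 0$. My plan is to decompose $u_g = \hat u_g + \eta_g$, where $\hat u_g$ is the standard pre-glued approximate solution and $\eta_g$ is the small correction supplied by the implicit function theorem, and then to compute $\partial_g \hat u_g$ explicitly while showing $\partial_g \eta_g \to 0$ on compact subsets.

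Using the local Darboux coordinates near $x$ provided by \eqref{eq:locally-linear}--\eqref{eq:added-condition}, I would fix a neck half-width $R > 0$ and set $\hat u_g = u^2$ for $s \leq g/2 - R$, $\hat u_g(s,t) = u^1(s-g,t)$ for $s \geq g/2 + R$, interpolating between these in the neck $|s-g/2| \leq R$ via cutoff functions in the Darboux chart (where both pieces sit close to $0$). Then
\begin{equation*}
\partial_g \hat u_g(s,t) = \begin{cases} 0, & s \leq g/2 - R,\\ -\partial_s u^1(s-g,t), & s \geq g/2 + R, \end{cases}
\end{equation*}
and on the neck $\partial_g \hat u_g$ comes from the shift of $u^1$ plus cutoff contributions involving the difference between $u^2$ and $u^1(\cdot-g,\cdot)$; both terms are of size $e^{-\lambda g/2}$ by the exponential decay of $u^2$ and $u^1$ toward $x$ that underlies \eqref{eq:l2}--\eqref{eq:w12}.

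The correction $\eta_g$ is produced by a fixed-point argument using a right inverse $G_g$ of $D_{\hat u_g}$ whose operator norm is uniformly bounded in $g$; supplying that bound is the standard analytic heart of gluing. Granted this, differentiating the fixed-point equation in $g$ expresses $\partial_g \eta_g$ as $G_g$ applied to $\partial_g F(\hat u_g)$ plus quadratically small commutator terms, where $F$ denotes the $\bar\partial$-operator of \eqref{eq:cont-map-equation}. Since $u^1$ exactly solves Floer's equation for $(H_+,J_+)$, the error $\partial_g F(\hat u_g)$ is supported in the neck (through cutoff derivatives) and in the region $s \geq g/2 + R$ (through the difference between the continuation data and its limit $(H_{+,t}, J_{+,t})$, which is exponentially small in $s$), with total $L^2$-size $O(e^{-\lambda g/2})$. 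Hence $\|\partial_g \eta_g\|_{W^{1,2}} \to 0$, and elliptic bootstrapping upgrades this to uniform $C^k$ convergence on compact subsets, including subsets shifted by $g$, since the coefficients of $D_{u_g}(\cdot + g, \cdot)$ enjoy uniform $C^k$ bounds governed by $u^1$.

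Both claims now follow. Given a compact $K \subset \bR \times [0,1]$, for $g$ large $K$ lies in $\{s \leq g/2 - R\}$, so $\partial_g u_g|_K = \partial_g \eta_g|_K \to 0$. Likewise the $g$-translate of $K$ eventually lies in $\{s \geq g/2 + R\}$, on which $(\partial_g \hat u_g)(s+g,t) = -\partial_s u^1(s,t)$, while $(\partial_g \eta_g)(s+g,t)|_K \to 0$ by the same uniform bound. The hard part is the uniformly bounded right inverse $G_g$, a standard but nontrivial input of gluing theory; granted that, the exponential decay machinery of the previous subsection disposes of every remaining error term.
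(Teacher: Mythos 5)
Your approach is correct in outline but follows a genuinely different route from the paper's. The paper breaks the reparametrization ambiguity in the glued family by a pointwise stabilizing-hypersurface constraint $u_g(s_*+g,t_*)\in H$; this characterizes $\partial_g u_g$ as the unique element of the one-dimensional $\mathit{ker}(D_{u_g})$ satisfying the linearized incidence condition \eqref{eq:characterize-dg}, after which \emph{linear} gluing alone shows it lies close to $0\#_g(-\partial_s u^1)$. You instead differentiate the nonlinear gluing decomposition $u_g=\hat u_g+\eta_g$ directly in $g$. Both ultimately rest on the same linear-gluing input, and both fix a particular (generally different) parametrization of the glued family, which is fine since, as the paper explicitly remarks, this is a choice and the downstream uses are insensitive to it. What the hypersurface formulation buys is that one never needs to differentiate the right inverse: identifying a kernel element by a pointwise constraint sidesteps that entirely. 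By contrast, differentiating your fixed-point relation $\eta_g=-G_gF(\hat u_g)-G_gN_g(\eta_g)$ produces terms involving $\partial_g G_g$ as well as $\partial_g N_g$, which you absorb under the label ``quadratically small commutator terms''. Those remainders are in fact small only because they multiply the exponentially small quantities $F(\hat u_g)$ and $\eta_g$ (not because of any quadratic structure), and for this you need $\partial_g G_g$ to be uniformly bounded. That is a true extra estimate, not subsumed by the statement that $G_g$ itself has a uniform bound; it is standard, but you should state it rather than fold it into ``the standard analytic heart of gluing''. With that one supplement your argument closes, and it would give a perfectly acceptable, if analytically heavier, alternative to the paper's.
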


\begin{proof}[Sketch of proof]
The statement is slightly imprecise, since the parametrization of the glued family by $g$ is not canonical, but rather depends on the setup used for gluing. We will explain this for one particular choice, which involves a stabilizing hypersurface (a geometrically straightforward way of breaking translation invariance, even if not the simplest analytically).
That is enough for our applications. 

Namely, pick some point $(s_*,t_*) \in \bR \times (0,1)$ where $\partial_s u^1$ is nonzero (this is always possible, as the subset of points where $\partial_s u^1 = 0$ is discrete). Then, find a local hypersurface $H \subset M$ through $p = u^1(s_*,t_*)$, which is transverse to the path $s \mapsto u^1(s,t_*)$ at $s = s_*$. We fix the choice of glued solution so that $u_g(s_*+g,t_*) \in H$. Because $u_g(\cdot+g,\cdot) \rightarrow u^1$, we have in particular
\begin{equation} \label{eq:d-convergence}
(\partial_s u_g)(s_*+g,t_*) \longrightarrow (\partial_s u^1)(s_*,t_*),
\end{equation}
which means that $(\partial_s u_g)(s_*+g,t_*) \neq 0$ for $g \gg 0$. Correspondingly, $\partial_g u_g$ can be characterized as the unique element of $\mathit{ker}(D_{u_g}) \iso \bR$ which satisfies
\begin{equation} \label{eq:characterize-dg}
(\partial_g u_g)(s_*+g,t_*) 
\in \mathit{TH}_p - (\partial_s u_g)(s_*+g,t_*) \subset \mathit{TM}_p.
\end{equation}

Let's quickly think through the gluing process. One starts with a preglued solution $u^2 \#_g u^1$, which satisfies
\begin{equation} \label{eq:preglue}
(u^2 \#_g \, u^1)(s,t) = \begin{cases} u^2(s,t) & s \leq g/2-1/2, \\
u^1(s-g,t) & s \geq g/2+1/2, \end{cases}
\end{equation}
and which on the interval not described in \eqref{eq:preglue} uses cutoff functions, in a suitable tubular neighbourhood of the chord $x$ from \eqref{eq:matching-limits}, to interpolate between the relevant values. One obtains $u_g$ from this by an inverse function theorem argument in the space of maps satisfying $u(s_*+g,t_*) \in H$. The preglued solution comes with a section of $(u^2 \#_g u^1)^*TM$, which we can analogously write as
\begin{equation} \label{eq:linear-preglue}
(0 \#_g\, (-\partial_s u^1))(s,t) = \begin{cases} 0 & s \leq g/2-1/2, \\
-(\partial_s u^1)(s-g,t) & s \geq g/2+1/2, \end{cases}
\end{equation}
again using a cutoff on the missing interval. The failure of \eqref{eq:linear-preglue} to lie in the kernel of an appropriate operator $D_{u^2 \#_g u^1}$ goes to zero as $g \rightarrow \infty$. Moreover, while $(0 \#_g (-\partial_s u^1))(s_*+g,t_*) = -\partial_s u^1(s_*,t_*)$ does not lie in the subspace from \eqref{eq:characterize-dg}, it lies in the parallel translate $\mathit{TH}_p - (\partial_s u^1)(s_*,t_*)$, which is close by \eqref{eq:d-convergence}. One can therefore apply a linearized version of standard inverse function theorem argument to \eqref{eq:linear-preglue}, in order to obtain an $\Upsilon_g \in \mathit{ker}(D_{u_g})$ such that $\Upsilon_g(s_*+g,t_*) \in \mathit{TH}_p - (\partial_s u_g)(s_*+g,t_*)$, and whose distance to \eqref{eq:linear-preglue} goes to zero as $g \rightarrow \infty$. By our previous characterization \eqref{eq:characterize-dg}, $\Upsilon_g = \partial_g u_g$. On any compact subset, \eqref{eq:linear-preglue} ultimately becomes zero, and therefore $\Upsilon_g$ converges to zero as $g \rightarrow \infty$, which proves our first claim. Similarly, the translated versions $(0 \#_g (-\partial_s u^1))(s+g,t)$ converge to $-\partial_s u^1$ on compact subsets, and therefore, the same is true for $\Upsilon_g$, which was our second claim.
\end{proof}

\subsection{A parametrized gluing situation}
Our main argument takes place in the following setup. We have a parametrized continuation map equation, given by $(H_{r,s,t},J_{r,s,t})$. 
\begin{align} \label{eq:matching-us}
&
\mybox{
Let $(r^2,u^2)$ be a solution of the parametrized equation. We require that this is an isolated regular point of the parametrized moduli space, meaning that $D_{(r^2,u^2)}^{\mathit{para}}$ is invertible.
}
\\ \label{eq:matching-us-2}
&
\mybox{
Let $u^1$ be a Floer trajectory for $(H_+,J_+)$, which is also an isolated regular point of its moduli space. Moreover, the two pieces should have matching limits \eqref{eq:matching-limits}.
}
\end{align}
Parametrized gluing theory says that there is a family $(r_g, u_g)$ of solutions to the parametrized continuation map equation, depending on a large gluing length $g \gg 0$, which in the limit $g \rightarrow \infty$ converges to the two given pieces. This family comes with tangent vectors
\begin{equation} \label{eq:tangent-vector}
(R_g, \Upsilon_g) = \partial_g (r_g, u_g) \in \mathit{ker}(D_{(r_g,u_g)}^{\mathit{para}}).
\end{equation}
The parametrized analogue of Lemma \ref{th:convergence-to-derivative}, proved in the same way, tells us that:
\begin{equation} \label{eq:plus-derivative}
\mybox{
as $g \rightarrow \infty$, $(\partial_g u_g)(s+g,t) \longrightarrow -\partial_s u^1$ on compact subsets.
}
\end{equation}

Choose $s^1$, $s^2$ so that, for large $g$, the restriction of $u_g$ to $[s^2,s^1 + g] \times [0,1]$ remains close to $x$; the same should apply to $u^1$ on $(-\infty,s^1] \times [0,1]$. We apply a change of variables as in \eqref{eq:upsilon-plus} to \eqref{eq:tangent-vector}, and write
\begin{equation} \label{eq:transformed-ul}
\tilde\Upsilon_g = \widetilde{\partial_g u_g} = \textstyle \sum_{\lambda} \exp(-\lambda s) \Xi_{g,\lambda}, 
\end{equation}
where the $\lambda$ are eigenvalues of $Q_x$ (of either sign), and the expression is valid on $[s^2,s^1 + g] \times [0,1]$. In parallel, we get
\begin{equation} \label{eq:transformed-uplus}
\tilde\Upsilon^1 = \widetilde{-\partial_s u^1} = \textstyle \sum_{\lambda} \exp(-\lambda s)
\Xi_{\lambda}^1
\end{equation}
on $(-\infty,s^1] \times [0,1]$, and where this time, the positive $\lambda$ must have vanishing coefficients $\Xi_{\lambda}^1$. As a consequence of \eqref{eq:plus-derivative} for $s = s^1$, we have that as $g \rightarrow \infty$,
\begin{equation} \label{eq:convergence-2}
\textstyle \sum_{\lambda} \exp(-2\lambda s^1) \|\exp(-\lambda g)\Xi_{g,\lambda} - \Xi_{\lambda}^1 \|^2 \rightarrow 0.
\end{equation}
In particular,
\begin{equation} \label{eq:convergence-implies-bound}
\textstyle \sum_{\lambda} \exp(2 \lambda (-s^1 - g))\, \|\Xi_{g,\lambda}\|^2 \; \text{ is bounded as $g \rightarrow \infty$.}
\end{equation}
Let $\lambda^1 < 0$ be the eigenvalue which governs the $s \rightarrow -\infty$ behaviour of $\partial_s u^1$, so 
\begin{equation} \label{eq:lambda-plus} \textstyle
\lim_{s \rightarrow -\infty} \exp(\lambda^1 s) \partial_s u^1\;\; \text{is a nonzero $\lambda^1$-eigenvector.}
\end{equation} 
Equivalently, this is the highest eigenvalue which has a nonzero coefficient in \eqref{eq:transformed-uplus}. 

\begin{prop} \label{th:rescaled-limit}
Take \eqref{eq:tangent-vector} for some sequence $g_k$ going to $\infty$, and assume that $R_{g_k} = (dr_g/dg)_{g_k} \neq 0$ for all $k$. Then, after rescaling by suitable positive constants $c_k$, a subsequence of the rescaled versions $(c_k R_{g_k}, c_k \Upsilon_{g_k})$ will converge on compact subsets to a nonzero solution $(R,\Upsilon)$ of
\begin{equation}
D_{(r^2,u^2)}^{\mathit{para}}(R,\Upsilon) = 0,
\end{equation}
with the following properties:
\begin{align}
& \label{eq:left-decay}
\mybox{$\Upsilon$ decays as $s \rightarrow -\infty$.}
\\
& \label{eq:matching}
\mybox{$\lim_{s \rightarrow +\infty} \exp(\lambda^1 s) \Upsilon$ is a nonpositive multiple of the eigenvector from \eqref{eq:lambda-plus}.}
\end{align}
\end{prop}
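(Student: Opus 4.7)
The plan is to combine elliptic compactness for the rescaled sequence with the exponential-mode bookkeeping of \eqref{eq:upsilon-plus}--\eqref{eq:convergence-implies-bound}. First, I would fix the rescaling by requiring unit size on a strip well inside the $u^2$ region: set $c_k = 1/N_k$ where $N_k = |R_{g_k}| + \|\Upsilon_{g_k}\|_{W^{1,2}([s^2, s^1] \times [0,1])}$. The quantity $N_k$ is positive (since $R_{g_k} \neq 0$ by hypothesis), and the estimates \eqref{eq:l2}--\eqref{eq:w12} together with \eqref{eq:convergence-implies-bound} show that $N_k \to 0$ at a rate controlled by $\exp(\lambda^1 g_k)\|\Xi_{\lambda^1}^1\|$, so the positive constants $c_k$ are well-defined and diverge at a controlled rate.

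Next, I would extract the subsequential limit. Each $(c_k R_{g_k}, c_k \Upsilon_{g_k})$ lies in $\mathit{ker}\, D^{\mathit{para}}_{(r_{g_k}, u_{g_k})}$, while $(r_{g_k}, u_{g_k}) \to (r^2, u^2)$ on compact subsets of $\bR \times [0,1]$ (the bottom half of the parametrized gluing convergence, in the spirit of Lemma~\ref{th:convergence-to-derivative}). Since the linearized operators depend continuously on the underlying maps, the standard elliptic bootstrap---uniform interior estimates together with the compact embedding $W^{l,2} \hookrightarrow W^{l-1,2}$ on bounded regions---produces a diagonal subsequence converging in $W^{l,2}_{\mathrm{loc}}$ for every $l$ to a limit $(R, \Upsilon)$ satisfying $D^{\mathit{para}}_{(r^2, u^2)}(R, \Upsilon) = 0$. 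The unit-norm normalization survives on the fixed strip, so $(R, \Upsilon) \neq 0$.

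For the decay \eqref{eq:left-decay}, each $\Upsilon_g = \partial_g u_g$ decays as $s \to -\infty$, because the left asymptote $x_-$ of $u_g$ is independent of the gluing length; this decay passes to $\Upsilon$ by the convergence statement for decaying solutions established earlier in this section, applied to the rescaled sequence in local-linearity coordinates near $x_-$. For the matching \eqref{eq:matching}, I would expand the rescaled sequence in the exponential basis of \eqref{eq:transformed-ul} on the strip near the intermediate chord $x$. The normalization directly bounds each mode coefficient $c_k \Xi_{g_k, \lambda}$ in $L^2([0,1])$, so after passing to a further subsequence each mode has a limit. By \eqref{eq:convergence-2}, the leading asymptotic behavior is $c_k \Xi_{g_k, \lambda} \sim c_k \exp(\lambda g_k)\Xi_\lambda^1$: for $\lambda < \lambda^1$ this tends to zero via the factor $\exp((\lambda - \lambda^1) g_k) \to 0$, while for $\lambda > \lambda^1$ we already have $\Xi_\lambda^1 = 0$, so only the $\lambda = \lambda^1$ mode contributes to $\lim_{s \to +\infty} \exp(\lambda^1 s)\Upsilon$. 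With $c_k > 0$, that limit is a nonnegative multiple of $\Xi_{\lambda^1}^1$, and since \eqref{eq:lambda-plus} identifies the eigenvector named there as $-\Xi_{\lambda^1}^1$, the result is the required nonpositive multiple.

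The main obstacle is calibrating the rescaling so that the $\lambda^1$-mode survives without being swamped: a too-aggressive choice would force $(R, \Upsilon)$ to capture only sub-dominant modes at $+\infty$ and satisfy the matching trivially, while a too-weak choice would collapse the limit on the left. The two-sided exponential bookkeeping in \eqref{eq:l2}--\eqref{eq:w12}, combined with the quantitative asymptotics \eqref{eq:convergence-2}, is what makes this calibration possible, ensuring that the compact-region norm used to define $c_k$ is genuinely comparable to the global $\lambda^1$-mode coefficient and thus pins down the asymptotic behavior at both ends simultaneously.
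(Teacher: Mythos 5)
Your proposal follows the same overall strategy as the paper's proof (rescale, extract a limit by elliptic compactness, and do mode-by-mode exponential bookkeeping at the intermediate chord $x$), and the sign determination for \eqref{eq:matching} is correctly identified: $c_k>0$ and $\Xi^1_{\lambda^1} = -\bigl(\text{the eigenvector of \eqref{eq:lambda-plus}}\bigr)$ give a nonpositive multiple. However, the choice of normalization region is a real deviation and leaves a gap. You normalize on the finite strip $[s^2,s^1]\times[0,1]$, whereas the paper's \eqref{eq:c-bound} normalizes on $(-\infty,s^2]\times[0,1]$. The paper's choice directly bounds the rescaled sequence on the entire left half, which is precisely what is needed (i) to extract a limit on compact subsets including those with $s\ll 0$, and (ii) to conclude the decay \eqref{eq:left-decay}. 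Your finite-strip normalization controls only $[s^2,s^1]$, and the appeal to ``standard elliptic bootstrap'' does not fill the gap: interior elliptic estimates propagate bounds inward, not outward, and so give nothing on $s<s^2$ or on $[s^2,s]$ for $s\gg s^1$. To control $[s^2,s]$ for arbitrary fixed $s$, the paper runs the three-way eigenvalue split \eqref{eq:ana}--\eqref{eq:ana-3}; your sketch says the normalization ``directly bounds each mode coefficient,'' but that gives a $\lambda$-dependent bound on each mode separately, which does not imply a uniform bound on the full sum --- that is exactly the nontrivial content of \eqref{eq:ana}--\eqref{eq:ana-3}.

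A second, smaller, issue: the claim that $N_k\to 0$ at a rate controlled by $\exp(\lambda^1 g_k)\|\Xi^1_{\lambda^1}\|$ is not established by the cited estimates \eqref{eq:l2}--\eqref{eq:w12} and \eqref{eq:convergence-implies-bound} (these control only the negative modes, and say nothing about $R_k$), and is in any case not what is used. What is actually used is the one-sided bound $c_k \lesssim \exp(\lambda^1(s^2-g_k))$ from \eqref{eq:scaling-bound}, which does follow from your normalization together with \eqref{eq:convergence-2} (the $\lambda^1$-mode alone forces $N_k \gtrsim \exp(\lambda^1 g_k)$); it would be cleaner to argue this directly and drop the claim that $N_k\to 0$. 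With the normalization moved to $(-\infty,s^2]$ and the bookkeeping on $[s^2,s]$ spelled out as in \eqref{eq:ana}--\eqref{eq:ana-3}, the rest of your argument --- vanishing of the $\lambda<\lambda^1$ modes via the factor $\exp((\lambda-\lambda^1)g_k)\to 0$, and the sign of the surviving $\lambda^1$-mode --- matches the paper's.
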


\begin{proof}
We change notation from now on, using the subscript $k$ to refer to the data associated to $g = g_k$. Suppose that the rescaling factors $c_k$ are chosen so that
\begin{equation} \label{eq:c-bound}
c_k^2\big(R_k^2 + \|\Upsilon_k\|^2_{W^{1,2}((-\infty,s^2] \times [0,1])} \big) \;\text{ is bounded as $k \rightarrow \infty$.}
\end{equation}
This implies that
\begin{equation} \label{eq:c-bound-2}
\textstyle c_k^2 \sum_{\lambda} \exp(-2\lambda s^2) \|\Xi_{k,\lambda}\|^2 \; \text{ is bounded as $k \rightarrow \infty$.} 
\end{equation}
By \eqref{eq:convergence-2} we have $\exp(-\lambda^1 g_k)\Xi_{k,\lambda^1} \rightarrow \Xi_{\lambda^1}^1 \neq 0$. Combining this with \eqref{eq:c-bound-2} yields that
\begin{equation} \label{eq:scaling-bound}
c_k \lesssim  \exp(\lambda^1 (s^2 - g_k)) \sim \exp(\lambda^1(-s^1 - g_k))
\end{equation}
where $\lesssim$ means less than a constant ($k$-independent) multiple of the other side, and similarly for $\sim$. Fix some $s \geq s^2$. As long as $k$ is large, we have 
\begin{equation} \textstyle
s^1 + g_k \geq \frac{s}{1 - \lambda^1/\lambda} 
\; \; \Leftrightarrow \;\; (\lambda^1 - \lambda) (s^1 + g_k) \geq -\lambda s,
\;\; \text{ for all eigenvalues $\lambda < \lambda^1$.}
\end{equation}
Therefore,
\begin{equation} \label{eq:ana}
\begin{aligned}
& c_k^2 \textstyle \sum_{\lambda < \lambda^1} \exp(-2\lambda s)\, \|\Xi_{k,\lambda}\|^2
\\ & \qquad
\lesssim 
\textstyle \sum_{\lambda < \lambda^1} \exp(2\lambda^1 (-s^1 - g_k) - 2\lambda s)\, \|\Xi_{k,\lambda}\|^2
\\ & \qquad
\leq \textstyle \sum_{\lambda < \lambda^1} \exp(2 \lambda  (-s^1 - g_k) + 2(\lambda^1 - \lambda) (-s^1 - g_k) - 2\lambda s)\,
\|\Xi_{k,\lambda}\|^2
\\ 
 & \qquad
\leq \textstyle \sum_{\lambda < \lambda^1} \exp(2\lambda (-s^1 - g_k)) \,\|\Xi_{k,\lambda}\|^2.
\end{aligned}
\end{equation}
The last line is bounded because of \eqref{eq:convergence-implies-bound}. For positive eigenvalues we have the elementary inequality
\begin{equation} \label{eq:ana-2}
\textstyle
c_k^2 \sum_{\lambda > 0} \exp(-2\lambda s) \|\Xi_{k,\lambda}\|^2
\leq c_k^2 \sum_{\lambda > 0} \exp(-2\lambda s^2) \|\Xi_{k,\lambda}\|^2.
\end{equation}
The remaining eigenvalues $\lambda \in [\lambda^1,0)$ are finite in number, and so ($s$ being fixed)
\begin{equation} \label{eq:ana-3}
\textstyle
c_k^2 \sum_{\lambda^1 \leq \lambda < 0}
\exp(-2\lambda s) \|\Xi_{k,\lambda}\|^2 \lesssim c_k^2 
\sum_{\lambda^1 \leq \lambda < 0}
\exp(-2\lambda s^2) \|\Xi_{k,\lambda}\|^2.
\end{equation}
The right hand sides of \eqref{eq:ana-2}, \eqref{eq:ana-3} are bounded by \eqref{eq:c-bound-2}. By combining that with \eqref{eq:ana}--\eqref{eq:ana-3}, one sees that
\begin{equation} \textstyle
c_k^2 \sum_{\lambda} \exp(-2\lambda s)\|\Xi_{k,\lambda}\|^2 \;\; \text{is bounded as $k \rightarrow \infty$.}
\end{equation}
The bound depends on $s \geq s^2$, but inspection of the argument shows that it can be made uniform on any bounded interval. This, together with the original assumption \eqref{eq:c-bound}, shows that the rescaled solutions $c_k\Upsilon_k$ are bounded in $L^2((-\infty,s] \times [0,1]$) for every $s$ (by a bound that depends on $s$, but is independent of $k$). By ellipticity, one gets the same kind of bound in any $W^{l,2}$-norm. As a consequence, after passing to a subsequence, we can achieve that the rescaled solutions converge on any subset of the form $(-\infty,s] \times [0,1]$, with the limit being some $(R,\Upsilon)$ defined on all of $\bR \times [0,1]$. Let's choose the scaling factors so that the norm in \eqref{eq:c-bound} is bounded below by a positive number (for instance, one could take it equal to $1$; here, we are using the fact that $R_k \neq 0$). Then, the limit of our subsequence is necessarily nonzero.

Because of the convergence, $\Upsilon$ satisfies \eqref{eq:left-decay}. For $\lambda < \lambda^1$ we have, by \eqref{eq:scaling-bound} and \eqref{eq:convergence-implies-bound},
\begin{equation}
\begin{aligned}
& \|c_k \Xi_{k,\lambda}\| \lesssim \exp(-\lambda^1 g_k) \|\Xi_{k,\lambda}\| 
= \exp((\lambda - \lambda^1) g_k) \| \exp(-\lambda g_k) \Xi_{k,\lambda} \| \\
& \qquad \lesssim \exp((\lambda - \lambda^1) g_k) \longrightarrow 0\;\; \text{ as $k \rightarrow \infty$,}
\end{aligned}
\end{equation}
which means that the $s \geq s^2$ part of $\Upsilon$ must have zero coefficients for those eigenvalues, establishing the growth rate bound from \eqref{eq:matching}. For $\lambda = \lambda^1$, using \eqref{eq:convergence-2} instead, one gets
\begin{equation}
\| c_k \Xi_{k,\lambda^1} - c_k \exp(\lambda^1 g_k) \Xi^1_{\lambda^1} \| \lesssim \|\exp(-\lambda^1 g_k) \Xi_{k,\lambda^1} - \Xi^1_{\lambda^1}\| \longrightarrow 0.
\end{equation}
This leaves two possibilities: either $c_k \exp(\lambda^1 g)$ goes to zero, in which case $\lim_{s \rightarrow +\infty} \exp(\lambda^1 s) \Upsilon$ vanishes; or it converges to a positive number, in which case $\lim_{s \rightarrow +\infty} \exp(\lambda^1 s) \Upsilon$ is a positive multiple of $\lim_{s \rightarrow -\infty} \exp(\lambda^1 s) (-\partial_s u^1)$. This proves the rest of \eqref{eq:matching}.
\end{proof}

\begin{corollary} \label{th:rescaled-limit-2}
Assume that:
\begin{equation} \label{eq:lowest-eigenvalue}
\mybox{
In \eqref{eq:lambda-plus}, $\lambda^1$ is the highest (closest to zero) negative eigenvalue of $Q_x$.
}
\end{equation}
Then, the multiple in \eqref{eq:matching} is necessarily nonzero (hence negative).
\end{corollary}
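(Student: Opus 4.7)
The plan is to argue by contradiction: assume that the multiple in \eqref{eq:matching} vanishes, and show that this forces $(R,\Upsilon) = 0$, contradicting the nonvanishing of the rescaled limit established at the end of the proof of Proposition \ref{th:rescaled-limit}.

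First, I would unpack what the assumption means in terms of the eigenvalue expansion. By local linearity near the right-hand limit $x$ of $u^2$, the section $\Upsilon$ admits an expansion
\[
\Upsilon(s,t) = \textstyle \sum_{\lambda} \exp(-\lambda s)\, \Xi_{\lambda}(t)
\]
on a half-strip $[s_+,\infty) \times [0,1]$, with $\lambda$ running over the spectrum of $Q_x$. For $\exp(\lambda^1 s)\Upsilon$ to have a finite limit as $s \to +\infty$, all coefficients with $\lambda < \lambda^1$ must vanish, and the value of the limit is then $\Xi_{\lambda^1}$. Assuming the multiple in \eqref{eq:matching} is zero is exactly the statement that $\Xi_{\lambda^1} = 0$, so only coefficients with $\lambda > \lambda^1$ survive.

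Now I invoke \eqref{eq:lowest-eigenvalue}: any eigenvalue strictly greater than $\lambda^1$ is nonnegative, and $0$ is ruled out by nondegeneracy of $x$. Hence the surviving eigenvalues are strictly positive, and by the discussion following \eqref{eq:upsilon-plus}, $\Upsilon$ decays exponentially as $s \to +\infty$. Together with \eqref{eq:left-decay}, this places $\Upsilon$ in the $W^{1,2}$ Sobolev completion on all of $\bR \times [0,1]$ used to set up the parametrized Fredholm operator $D^{\mathit{para}}_{(r^2,u^2)}$. Therefore $(R,\Upsilon) \in \mathit{ker}(D^{\mathit{para}}_{(r^2,u^2)})$, and hypothesis \eqref{eq:matching-us}, which asserts invertibility of this operator, forces $(R,\Upsilon) = 0$. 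This contradicts the final line of the proof of Proposition \ref{th:rescaled-limit}, so the multiple must be nonzero; being nonpositive, it is negative.

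No real obstacle is expected here. The whole argument is a short linear-algebraic consequence of the eigenvalue expansion machinery already set up in the text, with \eqref{eq:lowest-eigenvalue} doing the essential work by ensuring that the surviving part of the $+\infty$ expansion consists only of positive-eigenvalue terms, so that two-sided decay at $\pm\infty$ converts the parametrized regularity assumption into the required rigidity.
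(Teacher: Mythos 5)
Your proof is correct and is essentially the paper's own argument, which observes in one line that vanishing of the multiple would make $(R,\Upsilon)$ a nonzero element of the $W^{1,2}$ kernel of $D^{\mathit{para}}_{(r^2,u^2)}$, contradicting \eqref{eq:matching-us}. You have usefully spelled out the eigenvalue-expansion step the paper leaves implicit, namely why vanishing of the $\lambda^1$-coefficient together with \eqref{eq:lowest-eigenvalue} forces two-sided exponential decay and hence membership in the standard Sobolev space.
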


\begin{proof}
If the multiple were zero, $(R,\Upsilon)$ would be a nonzero element in the kernel of $D^{\mathit{para}}_{(r,u)}$ which decays as $s \rightarrow \pm\infty$. But the assumption that $(r,u)$ is an isolated regular point of the parametrized moduli space forbids the existence of such elements.
\end{proof}

We now turn to the question which is actually of interest to us: in the glued family $(r_g,u_g)$, how do the parameter values $r_g$ behave? Are they bigger or smaller than $r^2$? One can't expect an answer in complete generality, but we can see the following:

\begin{corollary} \label{th:r-moves}
Suppose that \eqref{eq:lowest-eigenvalue}  holds. Take the unique solution of $D_{(r,u)}^{\mathit{para}}(R, \Upsilon) = 0$ which decays as $s \rightarrow -\infty$, and such that 
\begin{equation} \textstyle \label{eq:matching-condition}
\lim_{s \rightarrow +\infty} \exp(\lambda^1 s) \Upsilon = -\lim_{s \rightarrow -\infty}\, \exp(\lambda^1 s) \partial_s u^1. 
\end{equation}
Assume that $R \neq 0$. Then, for large $g$ we have
\begin{equation} \label{eq:r-moves-how}
\begin{cases} r_g \leq r^2 &  \text{if $R > 0$,} \\
r_g \geq r^2 & \text{if $R < 0$.}
\end{cases}
\end{equation}
\end{corollary}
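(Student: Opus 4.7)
The plan is to deduce the sign of $R_g = dr_g/dg$ for large $g$ from the canonical solution $(R,\Upsilon)$, and then convert that into a statement about $r_g - r^2$ using that $r_g \to r^2$ as $g \to \infty$.

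First, I would observe that the canonical $(R,\Upsilon)$ described in the statement is uniquely determined. Indeed, since $(r^2,u^2)$ is an isolated regular point of the parametrized moduli space, $D^{\mathit{para}}_{(r^2,u^2)}$ is an isomorphism on the natural $W^{1,2}$/$L^2$ setting (that is, with $\Upsilon$ decaying on both ends). Relaxing the decay condition at $+\infty$ to allow the $\lambda^1$-eigenspace produces a one-dimensional enlargement, and the ``leading coefficient'' map to that eigenspace is an isomorphism. Thus any solution $(R', \Upsilon')$ that decays at $-\infty$ and whose leading coefficient at $+\infty$ is a \emph{negative} multiple of $\lim_{s\to-\infty}\exp(\lambda^1 s)\partial_s u^1$ is automatically a \emph{positive} scalar multiple of $(R,\Upsilon)$; in particular the sign of the $R$-component agrees with the sign of $R$ itself.

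Next I would apply Proposition \ref{th:rescaled-limit} and Corollary \ref{th:rescaled-limit-2}. Suppose for contradiction that $R > 0$ but there is a sequence $g_k \to \infty$ with $R_{g_k} < 0$. By Proposition \ref{th:rescaled-limit} applied along this sequence, after rescaling by positive $c_k$ and passing to a subsequence, $(c_k R_{g_k}, c_k\Upsilon_{g_k})$ converges to a nonzero solution $(R',\Upsilon')$ of the linearized equation satisfying \eqref{eq:left-decay}, and by Corollary \ref{th:rescaled-limit-2} (which uses hypothesis \eqref{eq:lowest-eigenvalue}) the multiple in \eqref{eq:matching} is strictly negative. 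By the uniqueness observation above, $(R',\Upsilon')$ is a positive multiple of $(R,\Upsilon)$, so $R' > 0$. But $c_k > 0$ and $R_{g_k} < 0$ force $R' \le 0$, a contradiction. Hence $R_g \ge 0$ for all sufficiently large $g$.

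Finally, since the glued family $(r_g, u_g)$ depends smoothly on $g$, this monotonicity statement says that $g \mapsto r_g$ is non-decreasing for large $g$. Combined with the convergence $r_g \to r^2$ as $g \to \infty$ (which follows from gluing, the limit configuration having parameter value $r^2$ at its $(r^2,u^2)$-component), we conclude $r_g \le r^2$ for large $g$, as claimed. The symmetric argument treats $R < 0$, yielding $r_g \ge r^2$. The main technical obstacle I anticipate is the identification step in the first paragraph, namely verifying that the limiting solution produced by Proposition \ref{th:rescaled-limit} lies in a one-dimensional space forced by the regularity of $(r^2,u^2)$; once this is in hand, the sign-chasing and contradiction argument is straightforward.
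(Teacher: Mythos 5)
Your proof is correct and follows the same approach as the paper: establish uniqueness of $(R,\Upsilon)$ via the weighted-Sobolev index argument and the injectivity of the leading-coefficient map forced by the regularity of $(r^2,u^2)$, then feed Proposition \ref{th:rescaled-limit} and Corollary \ref{th:rescaled-limit-2} into a contradiction with the sign of $R$. The only minor difference is organizational (you deduce $R_g \geq 0$ for large $g$ and then integrate, whereas the paper deduces the existence of $g$ with $\partial_g r_g < 0$ directly from $r_g > r^2$), and one small imprecision: you describe the weight-relaxed kernel as a ``one-dimensional enlargement,'' which presumes $\lambda^1$ is simple; the paper's argument works for $E_{\lambda^1}$ of any dimension, since injectivity of the leading-coefficient map plus the index count suffices to pin down $(R,\Upsilon)$ uniquely given the prescribed limit \eqref{eq:matching-condition}.
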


\begin{proof}
We need a preliminary index theory consideration (such arguments will figure in greater generality in Section \ref{subsec:weights}). Take a $\mu$ which is smaller than $\lambda^1$, but larger than the next lower eigenvalue of $Q_x$. We introduce weighted Sobolev spaces $W^{l,2;-\mu}([0,1] \times \bR)$ which agree with their standard counterparts for $s \ll 0$, but which are defined by asking for $\exp(\mu s)\Upsilon$ to be of class $W^{l,2}$ for $s \gg 0$. The linearized operator
\begin{equation} \label{eq:alpha-d}
\begin{aligned} &
D_{(r^2,u^2)}^{\mathit{para},\mu}: \bR \oplus W^{1,2;-\mu}(\bR \times [0,1], (u^2)^*TM, (u^2)^*TL_0,(u^2)^*TL_1) \\ & \qquad \qquad
\longrightarrow L^{2;-\mu}(\bR \times [0,1], (u^2)^*TM)
\end{aligned}
\end{equation}
is Fredholm, and its index is equal to the multiplicity of $\lambda^1$ as an eigenvalue. If $E_{\lambda^1}$ is the corresponding eigenspace, the asymptotic behaviour of $\exp(\lambda^1 s)\Upsilon$ gives a map
\begin{equation} \label{eq:lambda-plus-map}
\mathit{ker}(D_{(r^2,u^2)}^{\mathit{para},\mu}) \longrightarrow E_{\lambda^1}.
\end{equation}
Because of our assumption that $(r^2,u^2)$ is isolated and regular, that map must be injective. This, in combination with the index statement, implies that \eqref{eq:alpha-d} is onto, and that \eqref{eq:lambda-plus-map} is an isomorphism. That justifies the uniqueness statement for $(R,\Upsilon)$.

Suppose that there are arbitrarily large $g$ such that $r_g > r^2$. Then, there must also be arbitrarily large $g$ such that $\partial_g r_g < 0$. By applying Proposition \ref{th:rescaled-limit} to a sequence of such $g$, one gets another $(R,\Upsilon)$, which because of its properties must agree with the one described above (up to multiplication with a positive constant, which is irrelevant). By construction of that element, it must have $R \leq 0$. Contrapositively, if $R > 0$, then $r_g \leq r^2$ for large $g$, as claimed. The other case is parallel.
\end{proof}

\subsection{A modified situation\label{subsec:modified}}
We also want to consider a related problem, in which two different continuation map equations are being glued together. 

Let's say that we have three Floer data $(H_{-,t},J_{-,t})$, $(H_t, J_t)$, $(H_{+,t},J_{+,t})$; as well as two continuation map data $(H^2_{s,t}, J^2_{s,t})$ and $(H^1_{s,t}, J^1_{s,t})$, with
\begin{equation}
\left\{
\begin{aligned}
& \textstyle \lim_{s \rightarrow -\infty} (H^2_{s,t}, J^2_{s,t}) = (H_{-,t}, J_{-,t}), \\
& \textstyle \lim_{s \rightarrow +\infty} (H^2_{s,t}, J^2_{s,t}) 
= \lim_{s \rightarrow -\infty} (H^1_{s,t}, J^1_{s,t}) = (H_t,J_t), \\
& \textstyle \lim_{s \rightarrow +\infty} (H^1_{s,t}, J^1_{s,t}) = (H_{+,t}, J_{+,t}).
\end{aligned}
\right.
\end{equation}
Along with these, we want to have glued data, which define a family of continuation map equations depending on a length parameter $g \gg 0$. These data can be viewed in two different conventions, denoted by $(H^\nu_{g,s,t}, J^\nu_{g,s,t})$ for $\nu = 1,2$, and related by
\begin{equation} \label{eq:translate-12}
(H^1_{g,s,t}, J^1_{g,s,t}) = (H^2_{g,s+g,t}, J^2_{g,s+g,t}).
\end{equation}
The relation between these and the original continuation maps is:
\begin{align} \label{eq:glue-the-data-1}
& \mybox{
On every compact subset of $\bR \times [0,1]$, the $(H^\nu_{g,s,t}, J^\nu_{g,s,t})$ converge to $(H^\nu_{s,t}, J^\nu_{s,t})$ as $g \rightarrow \infty$. More precisely, in each $C^r$-norm, the convergence is exponentially fast in $g$.
} \\
& \mybox{
As $s \rightarrow-\infty$, we have exponential convergence $(H^2_{g,s,t},J^2_{g,s,t}) \rightarrow (H_{-,t},J_{-,t})$ in a way which holds uniformly in $g$. Convergence $(H^1_{g,s,t},J^1_{g,s,t}) \rightarrow (H_{+,t},J_{+,t})$ as $s \rightarrow +\infty$ holds in the same sense.
} \\ \label{eq:glue-the-data-3}
& \mybox{
On any subset of the form $[s^2, s^1 + g]$, the difference between $(H_{g,s,t}^2,J_{g,s,t}^2)$ and $(H_t,J_t)$, in each $C^r$-norm, is bounded by a constant (independent of $g$) times $\exp(-\alpha^2 s) + \exp(\alpha^1 (s-g))$, for some $\alpha^1, \alpha^2 > 0$.}
\end{align}
Suppose that $(u^2,u^1)$ are regular isolated solutions of our continuation map equations, with a common limit \eqref{eq:matching-limits}. Gluing produces, for each sufficiently large $g$, solutions $u_g^\nu$, $\nu = 1,2$, of the continuation map associated to $(H_{g,s,t}^\nu, J_{g,s,t}^\nu)$; these are two forms of the same solution, related as in \eqref{eq:translate-12}, but are separated for notational convenience. The counterpart of Lemma \ref{th:convergence-to-derivative} says that:

\begin{lemma} \label{th:convergence-to-derivative-2}
As $g \rightarrow \infty$, both $\partial_g u_g^\nu$ converge to zero on compact subsets. 
\end{lemma}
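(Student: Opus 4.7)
The argument parallels Lemma~\ref{th:convergence-to-derivative}, but with a significant simplification: neither $u^1$ nor $u^2$ has translation invariance, so both linearized operators $D_{u^\nu}$ are already invertible, and standard inverse-function-theorem gluing produces $u_g^\nu = \exp_{v_g^\nu}(\xi_g^\nu)$ from a preglued ansatz $v_g^\nu$ with no need for any slice condition. Here $v_g^\nu$ is built as in \eqref{eq:preglue}, interpolating $u^2(s,t)$ and $u^1(s-g,t)$ via cutoffs supported near $s = g/2$ in the $\nu = 2$ convention (the $\nu = 1$ version is obtained via \eqref{eq:translate-12}).

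The key observation is that, for $g$ sufficiently large, both $v_g^\nu$ and the glued data $(H^\nu_{g,s,t}, J^\nu_{g,s,t})$ are $g$-independent on any fixed compact subset: in the $\nu = 2$ convention a compact subset $[-M,M] \times [0,1]$ eventually lies in $\{s \leq g/2 - 1/2\}$, where $v_g^2 = u^2$ and the data coincide with $(H^2_{s,t}, J^2_{s,t})$; the analogous statement in the $\nu = 1$ convention follows by translation. Differentiating the continuation map equation satisfied by $u_g^\nu$ with respect to $g$ yields
\begin{equation*}
D_{u_g^\nu}(\partial_g u_g^\nu) = F_g^\nu,
\end{equation*}
where the source $F_g^\nu$ collects (i) the $g$-derivative of $v_g^\nu$, supported in the cutoff region near $s = g/2$ and of size $\lesssim \exp(-\alpha g/2)$ in $L^\infty$ because $u^2$ and $u^1(\cdot - g, \cdot)$ are both exponentially close to the chord $x$ there, and (ii) the $g$-derivative of the glued data, supported in the middle region and bounded in $L^2$ by $\exp(-\alpha g/2)$ via \eqref{eq:glue-the-data-3}.

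Uniform invertibility of $D_{u_g^\nu}$ (standard for gluing from two isolated regular rigid pieces) then gives $\|\partial_g u_g^\nu\|_{W^{1,2}} \lesssim \exp(-\alpha g/2) \to 0$, and elliptic bootstrapping upgrades this to $C^r$-convergence to zero on any compact subset, for any $r$. The main technical obstacle is the book-keeping of $F_g^\nu$ in the middle stretched region, which requires exponential decay of $u^1, u^2$ toward the chord $x$ (from local linearity \eqref{eq:locally-linear} together with nondegeneracy) combined with \eqref{eq:glue-the-data-3}; this is essentially the same sort of weighted-norm analysis as in the proof of Proposition~\ref{th:rescaled-limit}, and no genuinely new ideas are required. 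The absence of a translation-invariant direction in either piece is precisely what causes the limit to vanish, rather than recover a nontrivial tangent vector as in Lemma~\ref{th:convergence-to-derivative}.
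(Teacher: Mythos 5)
There is a genuine gap, and the final conclusion $\|\partial_g u_g^\nu\|_{W^{1,2}} \to 0$ is in fact false. Applying the chain rule to the convergence $u_g^2(s+g,t) \to u^1(s,t)$ (on compact subsets) shows that $(\partial_g u_g^2)(s+g,t) \to -\partial_s u^1(s,t)$, so $\|\partial_g u_g^2\|_{W^{1,2}} \to \|\partial_s u^1\|_{W^{1,2}} > 0$ rather than to zero. Two of your estimates are mistaken. First, $\partial_g v_g^2$ is not supported near the cutoff region: for $s\geq g/2+1/2$ one has $v_g^2(s,t)=u^1(s-g,t)$, so $\partial_g v_g^2(s,t) = -\partial_s u^1(s-g,t)$, which at $s=g$ has size $O(1)$. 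Second, and more importantly, the $g$-derivative of the glued data is not globally exponentially small: for $s \geq g/2+1/2$ the glued data equal $(H^1_{s-g,t},J^1_{s-g,t})$, so $\partial_g$ of the data is $-\partial_{s'}(H^1_{s',t},J^1_{s',t})\big|_{s'=s-g}$, which has $O(1)$ $L^2$-norm because continuation map data must genuinely depend on $s$. Hence the source term $F_g^\nu$ is $O(1)$, not $o(1)$, in $L^2$, and the uniform invertibility of $D_{u_g^\nu}$ gives only a uniform bound on $\|\partial_g u_g^\nu\|_{W^{1,2}}$, not decay.

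The paper's proof hinges on a cancellation that your estimate-by-pieces misses. It introduces the extended operator $\scrD_{(g,u_g^2)}$, whose kernel is one-dimensional (since $D_{u_g^2}$ is invertible, being glued from two isolated regular pieces) and which contains $(1,\partial_g u_g^2)$ by the chain rule. The approximate kernel element is $(1, 0\#_g(-\partial_s u^1))$. The crucial observation is that $\scrD_{(g,u^2\#_g u^1)}$ applied to this vanishes identically outside the cutoff region: there, the $O(1)$ contribution from $\partial_g$ of the data is exactly cancelled by the $O(1)$ term $D_{u^1(s-g)}\big[-\partial_s u^1(s-g)\big]$, since both arise from differentiating the (satisfied) continuation map equation for $u^1$ in $s'=s-g$. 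Linearized gluing then shows $(1,\partial_g u_g^2)$ is an exponentially small perturbation of $(1,0\#_g(-\partial_s u^1))$, whose $\Upsilon$-component vanishes on any fixed compact subset for large $g$. This gives the lemma, without any (false) claim of global $W^{1,2}$ decay.
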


\begin{proof}[Sketch of proof]
This situation is simpler than the previous one, since the parameter is fixed explicitly by the $g$-dependence of the glued equation. We can introduce an extension of the linearized operator which takes into account varying $g$. Since the geometric origin of the parameter is a bit different than before, we prefer to keep separate notation, and write $\scrD_{(g,u_g^2)}$ for this extended operator (rather than $D^{\mathit{para}}$). By definition,
\begin{equation} \label{eq:1-g}
(1,\partial_g u_g^2) \in \mathit{ker}(\scrD_{(g,u_g^2)}) \iso \bR.
\end{equation}

Now consider a preglued solution $u^2 \#_g u^1$ as in \eqref{eq:preglue}, and a section $0 \#_g (-\partial_s u^1)$ of $(u^2 \#_g u^1)^*TM$, as in \eqref{eq:linear-preglue}. As $g \rightarrow \infty$, $\scrD_{(g,u^2 \#_g u^1)}(1, 0 \#_g (-\partial_s u^1) ) \longrightarrow 0$ in any Sobolev space. Using the linearized version of gluing, we can therefore find an $(R_g,\Upsilon_g) \in \mathit{ker}(\scrD_{(g,u^2_g)})$ which, as $g \rightarrow \infty$, becomes closer and closer to $(1,0 \#_g (-\partial_s u^1))$. Comparing that with \eqref{eq:1-g}, it follows that $(R_g,\Upsilon_g) = \gamma_g (1,\partial_g u_g^2)$ with $\gamma_g \rightarrow 1$ as $g \rightarrow \infty$. Since $0 \#_g (-\partial_s u^1)$ goes to zero on compact subsets as $s \rightarrow \infty$, so does $\Upsilon_g$, and hence so does $\partial_g u_g^2$. One can play the same game in the $(s+g,t)$ coordinate system to obtain the corresponding result for $u_g^1$.
\end{proof}

Finally, we want to tweak the setup so that local linearity applies. This means that for all Floer equations involved, \eqref{eq:locally-linear} holds; and similarly, \eqref{eq:added-condition} holds for all continuation maps. This is not quite everything, there are more precise condition on the glued data:
\begin{align} \label{eq:local-ex}
&
\mybox{
Let $x_-$ be a chord for $(H_-,J_-)$. There is a $\sigma^2$ such that $(H_{g,s,t}^2,J_{g,s,t}^2) = (H_{-,t},J_{-,t})$ for all $s \leq \sigma^2$ and all $g \gg 0$, in a neighbourhood of $\{(t,x_-(t))\} \subset [0,1] \times  M$. 
} \\ &
\mybox{Similarly, let $x_+$ be a chord for $(H_+,J_+)$. There is a $\sigma^1$ such that $(H^1_{g,s,t},J^1_{g,s,t}) = (H^1_{+,t}, J^1_{+,t})$ for all $s \geq \sigma^1$ and all $g \gg 0$, in a neighbourhood of $\{(t,x_+(t))\} \subset [0,1] \times M$.
} \\ & \label{eq:local-ex-3}
\mybox{Finally, let $x$ be a chord for $(H,J)$. Then there are $s^1$, $s^2$ such that $(H_{g,s,t}^2, J^2_{g,s,t}) = (H_t,J_t)$ for all $s \in [s^2,s^1 + g]$ and all $g \gg 0$, in a neighbourhood of $\{(t,x(t))\} \subset [0,1] \times M$.}
\end{align}
These conditions ensure that on suitable subsets $(-\infty,\sigma^2] \times [0,1]$, $[s^2, s^1 + g] \times [0,1]$, $[\sigma^1 + g,\infty) \times [0,1]$, the glued solutions $u_g^2$ for large $g$ satisfy a Floer equation, which can be linearized by a suitable change of coordinates. 

As usual, we're actually interested in a parametrized situation. This means that we have a family $(H^2_{r,s,t},J^2_{r,s,t})$ with an added parameter $r$; but we still have a single $(H^1_{s,t},J^1_{s,t})$, not depending on that parameter. Correspondingly, the glued data are of the form $(H^\nu_{g,r,s,t}, J^\nu_{g,r,s,t})$. The previously imposed conditions should hold for each value of the parameter. The specific gluing setup is quite similar to \eqref{eq:matching-us}, \eqref{eq:matching-us-2}, and \eqref{eq:lowest-eigenvalue}, but we repeat it for convenience:
\begin{align} 
&
\mybox{
Let $(r^2,u^2)$ be a solution of the parametrized continuation map equation for $(H^2_{r,s,t},J^2_{r,s,t})$, with limit $x$ as $s \rightarrow +\infty$, which is an isolated regular point of the parametrized moduli space.
}
\\ &
\mybox{
Let $u^1$ be a solution of the continuation map equation for $(H^1_{s,t},J^1_{s,t})$, with the same limit $x$ as $s \rightarrow -\infty$, and which is an isolated regular point of its moduli space. Moreover, let $\lambda^1$ be the highest negative eigenvalue of the selfadjoint operator associated to $x$. Then, we require that $\lim_{s \rightarrow -\infty} \exp(\lambda^1 s) \partial_s u^1$ is a nonzero $\lambda^1$-eigenveector.
}
\end{align}
There are glued solutions in the parametrized moduli space, and the exact counterpart of Corollary \ref{th:r-moves} holds for them.

\section{Homological algebra\label{sec:algebra}}
This section summarizes the necessary (quite basic) algebraic concepts. 

\subsection{$A_\infty$-structures}
Let $\scrA$ be a free graded abelian group. An $A_\infty$-algebra structure on it is given by maps
\begin{equation} \label{eq:mu}
\mu_{\scrA}^d: \scrA^{\otimes d} \longrightarrow \scrA[2-d], \;\; d \geq 1, 
\end{equation}
which satisfy the $A_\infty$-associativity conditions
\begin{equation} \label{eq:associativity}
\sum_{ij} (-1)^\dag \mu_{\scrA}^{d-j+1}(a_d,\dots,a_{i+j},\mu_{\scrA}^j(a_{i+j-1},\dots,a_i),a_{i-1},\dots,a_1) = 0,
\end{equation}
with $\dag = \|a_1\| + \cdots + \|a_{i-1}\|$ (our notation is that $\|a\| = |a|-1$ is the reduced degree). One can consider $\scrA$ as a chain complex with differential $da = (-1)^{|a|} \mu^1_{\scrA}(a)$. Then, 
\begin{equation}
(a_2,a_1) \longmapsto (-1)^{|a_1|} \mu^2_{\scrA}(a_2,a_1)
\end{equation}
is a chain map $\scrA^{\otimes 2} \rightarrow \scrA$, and induces an associative multiplication on $H^*(\scrA)$. All our $A_\infty$-structures will be required to be cohomologically unital, which means that $H^*(\scrA)$ is a unital algebra. 

\begin{remark}
Let $e \in \scrA^0$ be a representative for the cohomological unit. Then,
\begin{equation} \label{eq:left-mu}
a \longmapsto \mu^2_{\scrA}(a,e)
\end{equation}
is a chain map which is homotopic to its square. It is also a chain homotopy equivalence, because it's a quasi-isomorphism between complexes of free abelian groups. It follows that \eqref{eq:left-mu} is chain homotopic to the identity; and of course, the same is true for multiplication with $e$ on the other side. (This is for readers wondering why we have not required the existence of such homotopies as our unitality condition; it follows from cohomological unitality.)
\end{remark}

\subsection{$A_\infty$-bimodules}
An $\scrA$-bimodule is a graded free abelian group $\scrP$ together with operations
\begin{equation}
\mu^{l,1,k}_{\scrP}: \scrA^{\otimes l} \otimes \scrP \otimes \scrA^{\otimes k} \longrightarrow \scrP[1-k-l], 
\;\; k,l \geq 0,
\end{equation}
satisfying a version of \eqref{eq:associativity}. The operations $\mu_{\scrP}^{1,1,0}$ and $\mu_{\scrP}^{0,1,1}$ (with suitable signs) make $H^*(\scrP)$ into a bimodule over $H^*(\scrA)$, and we again impose a unitality condition on the cohomological level. Bimodules over $\scrA$ form a dg category. A morphism $\phi: \scrP \rightarrow \scrQ$ in that category is a collection of maps
\begin{equation} \label{eq:bimodule-map}
\phi^{l,1,k}: \scrA^{\otimes l} \otimes \scrP \otimes \scrA^{\otimes k} \longrightarrow \scrQ[-k-l], \;\;
k,l \geq 0.
\end{equation}
Consider an $A_\infty$-bimodule homomorphism, by which we mean a closed ($d\phi = 0$) degree $0$ morphism in our dg category. On the cohomology level, this induces a bimodule map $[\phi^{0,1,0}]: H^*(\scrP) \rightarrow H^*(\scrQ)$, and one defines the notion of quasi-isomorphism of bimodules using that.

\begin{lemma} \label{th:bimodule-quasi-iso}
Any quasi-isomorphism of $\scrA$-bimodules has a homotopy inverse. To spell this out: if $\phi: \scrP \rightarrow \scrQ$ is such a quasi-isomorphism, there is an $A_\infty$-bimodule homomorphism $\psi: \scrQ \rightarrow \scrP$ such that $\psi\phi = \mathit{id}_{\scrP} + d\alpha$, $\phi\psi = \mathit{id}_{\scrQ} + d\beta$ in the dg category of $\scrA$-bimodules.
\end{lemma}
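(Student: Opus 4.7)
The plan is to reduce the lemma to the statement that any $A_\infty$-bimodule $\scrC$ whose underlying chain complex $(\scrC, \mu^{0,1,0}_\scrC)$ is acyclic is contractible in the dg category of $\scrA$-bimodules, i.e.\ $\mathit{id}_\scrC = d\eta$ for some $\eta \in \hom^{-1}(\scrC, \scrC)$. Granting that, I form the mapping cone $\mathrm{Cone}(\phi) = \scrP[1] \oplus \scrQ$; its underlying chain complex is acyclic because $[\phi^{0,1,0}]$ is an isomorphism by hypothesis. A contraction $\eta$ of $\mathrm{Cone}(\phi)$ decomposes via the splitting into four blocks, and the $(\scrQ, \scrP)$-block of $\eta^{0,1,0}$ assembles, together with the corresponding blocks of the higher $\eta^{l,1,k}$, into the desired $A_\infty$-bimodule homomorphism $\psi$, while the $(\scrP, \scrP)$- and $(\scrQ, \scrQ)$-blocks give the homotopies $\alpha$ and $\beta$; the relations $\psi\phi = \mathit{id}_\scrP + d\alpha$ and $\phi\psi = \mathit{id}_\scrQ + d\beta$ are exactly the corresponding blocks of $d\eta = \mathit{id}_{\mathrm{Cone}(\phi)}$.

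To build the contraction of $\scrC$, I first pick a strict contracting homotopy $\eta^{0,1,0}$ of $(\scrC, \mu^{0,1,0}_\scrC)$, available since the complex is acyclic and degreewise free over $\bZ$. Then I construct the higher components $\eta^{l,1,k}$ by induction on $n = l+k$. The $(l,k)$-component of the equation $d\eta = \mathit{id}_\scrC$ isolates the leading linear piece
\[
\mu^{0,1,0}_\scrC \circ \eta^{l,1,k} \,\pm\, \eta^{l,1,k} \circ \bigl(\mathit{id}^{\otimes l} \otimes \mu^{0,1,0}_\scrC \otimes \mathit{id}^{\otimes k}\bigr) \;=\; -E^{l,1,k},
\]
where $E^{l,1,k}$ is an explicit expression in the operations $\mu^{l',1,k'}_\scrC$ with $(l',k') \neq (0,0)$ and in the previously constructed $\eta^{l'',1,k''}$ with $l''+k'' < n$. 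Writing $d_0$ for the differential on the space of multilinear maps $\scrA^{\otimes l} \otimes \scrC \otimes \scrA^{\otimes k} \to \scrC$ induced by $\mu^{0,1,0}_\scrC$ on both sides, postcomposition $f \mapsto \eta^{0,1,0} \circ f$ is a contracting homotopy for $d_0$; so if $E^{l,1,k}$ is a $d_0$-cocycle, then setting $\eta^{l,1,k} := -\eta^{0,1,0} \circ E^{l,1,k}$ (up to sign) solves the obstruction equation.

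The main obstacle is verifying at each step that $E^{l,1,k}$ is $d_0$-closed. This is a formal diagram chase: apply the identity $d^2 = 0$ in $\hom(\scrC, \scrC)$ to the partial solution $\eta^{(<n)} = \sum_{l'+k' < n} \eta^{l',1,k'}$ assembled so far, read off the $(l,k)$-component, and use the inductive equations $(d\eta^{(<n)})^{l',1,k'} = \mathit{id}_\scrC \cdot \delta_{(l',k'),(0,0)}$ for $l'+k' < n$ together with the full $A_\infty$-bimodule relations for $\scrC$ to see that everything except $d_0 E^{l,1,k}$ cancels. The sign bookkeeping is the only delicate point, and is dictated by the conventions recalled in Section~\ref{sec:algebra}; it carries no real mathematical content. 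With this in hand the induction closes, $\eta$ exists, and the desired homotopy inverse $\psi$ is extracted from the $(\scrQ, \scrP)$-block as described above.
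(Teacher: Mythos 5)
The paper states this lemma without proof, treating it as a standard fact about $A_\infty$-bimodules (it records no argument between the lemma statement and the subsequent discussion). Your proof is correct and is the standard one: the reduction to contractibility of the acyclic bimodule $\mathrm{Cone}(\phi)$, the choice of a strict contracting homotopy $\eta^{0,1,0}$ of the underlying complex (available since it is acyclic and degreewise free over $\bZ$, hence split), the induction on $l+k$ using postcomposition with $\eta^{0,1,0}$ to contract the $d_0$-complex of multilinear maps, and the formal $d^2=0$ argument for $d_0$-closedness of the obstruction $E^{l,1,k}$ are all exactly the right ingredients, and the block decomposition of the contraction of $\mathrm{Cone}(\phi)$ yields $\psi$, $\alpha$, $\beta$ as you describe.
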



From now on, assume that $\scrA$ is strictly proper (of finite rank over $\bZ$). The bimodules relevant for our purpose are the diagonal $\scrA$ and its linear dual $\scrA^\vee = \mathit{Hom}(\scrA,\bZ)$. Write
\begin{equation} \label{eq:quadratic-hochschild}
\mathit{CC}^*(\scrA,2) = \prod_{k,l \geq 0} \mathit{Hom}(\scrA^{\otimes l} \otimes \scrA^\vee \otimes \scrA^{\otimes k}, \scrA)[-k-l]
\end{equation}
for the chain complex of morphisms $\scrA^\vee \rightarrow \scrA$ in the bimodule category, and $\mathit{HH}^*(\scrA,2)$ for its cohomology (the notation comes from thinking of this as a generalization of Hochschild cohomology). If the components of $\psi \in \mathit{CC}^*(\scrA,2)$ are written as in \eqref{eq:bimodule-map}, the differential is
\begin{equation} \label{eq:quadratic-hochschild-differential}
\begin{aligned}
& (d\psi)^{l,1,k}(a_{k+l+1},\dots,a_{k+2},a^\vee_{k+1},a_k,\dots,a_1) = \\ 
& \sum_{ij} (-1)^{\dag_{(i)}} \mu^{k+l-j+i+2}_{\scrA}(a_{k+l+1},\dots,\psi^{j-k-1,1,k-i}(a_j,\dots,a_{k+2},a^\vee_{k+1},a_k,\dots,a_{i+1}),\dots,a_1) \\
& + \sum_{ij} (-1)^{\dag_{(ii)}} \psi^{l,1,k-j+1}(a_{k+l+1},\dots,a_{k+1}^\vee,\dots,a_{i+j+1},
\mu_{\scrA}^j(a_{i+j},\dots,a_{i+1}),a_i,\dots,a_1) \\
& + \sum_{ij} (-1)^{\dag_{(iii)}}\psi^{l-j+1,1,k}(a_{k+l+1},\dots,\mu_{\scrA}^j(a_{i+j},\dots,a_{i+1}),\dots,a^\vee_{k+1},a_k,\dots,a_1) \\
& + \sum_{ij} (-1)^{\dag_{(iv)}} \psi^{k+l-i-j+1,1,i}(a_{k+l+1},\dots,a_{i+j+1},
\\[-1.25em] & \qquad \qquad \qquad \qquad \qquad
\langle a_{k+1}^\vee, \mu^j_{\scrA}(a_k,\dots,a_{i+1},\bullet,a_{i+j},\dots,a_{k+2}) \rangle,a_i,\dots,a_1). 
\end{aligned}
\end{equation}
where $\bullet$ is an unspecified entry in $\scrA$, which creates an element of $\scrA^\vee$; the notation $\langle \cdot, \cdot \rangle$ is the canonical pairing between $\scrA^\vee$ and $\scrA$; and the signs are
\begin{equation} \label{eq:dag-signs}
\begin{aligned}
& \dag_{(i)} = |\psi|(\|a_1\|+\cdots+\|a_i\|), \\
& \dag_{(ii)} = |\psi|+1+\|a_1\|+\cdots+\|a_i\|, \\
& \dag_{(iii)} = |\psi|+1+\|a_1\|+\cdots+\|a_{k+1}^\vee\|+\cdots+\|a_i\|, \\
& \dag _{(iv)} = |\psi| +1+\|a_1\|+\cdots+\|a_{k+1}^\vee\|+\cdots+\|a_{i+j}\|.
\end{aligned}
\end{equation}

\begin{remark}
To be precise, the sign conventions in \eqref{eq:quadratic-hochschild-differential} do not quite describe an $A_\infty$-bimodule map $\scrA^\vee \rightarrow \scrA$ of degree $|\psi|$, but rather its shifted version $\scrA^\vee[1] \rightarrow \scrA[1]$. The structure of the shifted diagonal bimodule is particularly simple (see e.g.\ \cite[Section 2]{seidel14b}):
\begin{equation}
\mu_{\scrA[1]}^{l,1,k} = \mu_{\scrA}^{k+1+l}.
\end{equation}
For the dual, one has
\begin{equation}
\begin{aligned}
& \mu_{\scrA^\vee[1]}^{l,1,k}(a_{k+l+1},\dots,a_{k+2},a_{k+1}^\vee,a_k,\dots,a_1) \\
& \qquad
= (-1)^{|\bullet|}
\langle a_{k+1}^\vee, \mu_{\scrA}^{k+l+1}(a_k,\dots,a_1,\bullet,a_{k+l+1},\dots,a_{k+2}) \rangle,
\end{aligned}
\end{equation}
where of course $|\bullet|$ can be expressed in terms of the degrees of the $a$'s and of $a_{k+1}^\vee$ (which is why in part (iv) of \eqref{eq:dag-signs}, it does not appear explicitly).
\end{remark}


\subsection{Curved deformations}
A curved $A_\infty$-structure on a free graded abelian group $\scrA$ consists of operations
\begin{equation} \label{eq:mu-curved}
\mu_{\scrA_q}^d: \scrA^{\otimes d} \longrightarrow (\scrA[[q]])[2-d], \;\; d \geq 0,
\end{equation}
such that $\mu_{\scrA_q}^0$ has no constant term (vanishes when we specialize to $q = 0$). One extends \eqref{eq:mu-curved} $q$-linearly to operations on $\scrA_q = \scrA[[q]]$, which explains the notation. These operations must satisfy equations as in \eqref{eq:associativity}, but which now contain $\mu^0$ terms. For the $q = 0$ reduction, we impose the same cohomological unitality condition as before.

An $\scrA_q$-bimodule consists of a free graded abelian group $\scrP$ together with operations 
\begin{equation}
\mu^{l,1,k}_{\scrP_q}: \scrA^{\otimes l} \otimes \scrP \otimes \scrA^{\otimes k} \longrightarrow (\scrP[[q]])[1-k-l], \;\; k,l \geq 0.
\end{equation}
As before, one extends these to $\scrP_q = \scrP[[q]]$, and imposes suitable associativity conditions. We require cohomological unitality to hold after specializing to $q = 0$. Such bimodules form a dg category over $\bZ[[q]]$, with the analogue of \eqref{eq:bimodule-map} being
\begin{equation}
\phi^{l,1,k}_q: \scrA^{\otimes l} \otimes \scrP \otimes \scrA^{\otimes k} \longrightarrow (\scrQ[[q]])[-k-l].
\end{equation}
Take a bimodule homomorphism (a closed morphism in the dg category of $\scrA_q$-bimodules). We say that it is a filtered quasi-isomorphism if its $q = 0$ reduction is a quasi-isomorphism. The analogue of Lemma \ref{th:bimodule-quasi-iso} is:

\begin{lemma} \label{th:bimodule-quasi-iso-2}
Any filtered quasi-isomorphism of $\scrA_q$-bimodules has a homotopy inverse. To spell this out: if $\phi_q: \scrP_q \rightarrow \scrQ_q$ is a filtered quasi-isomorphism, there is an $A_\infty$-bimodule homomorphism $\psi_q: \scrQ_q \rightarrow \scrP_q$ such that $\psi_q\phi_q = \mathit{id}_{\scrP_q} + d\alpha_q$, $\phi_q\psi_q = \mathit{id}_{\scrQ} + d\beta_q$ in the dg category of $\scrA_q$-bimodules.
\end{lemma}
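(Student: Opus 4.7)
The plan is to deduce this filtered statement from its uncurved counterpart, Lemma \ref{th:bimodule-quasi-iso}, by a $q$-adic induction organized around the mapping cone. Let $\scrR_q = \mathit{Cone}(\phi_q)$, viewed as an $\scrA_q$-bimodule. Its $q=0$ reduction is $\scrR_0 = \mathit{Cone}(\phi_0)$, which is acyclic because $\phi_0$ is assumed to be a quasi-isomorphism of $\scrA$-bimodules. Applying Lemma \ref{th:bimodule-quasi-iso} to the zero morphism $0 \to \scrR_0$ (a quasi-isomorphism, since $\scrR_0$ is acyclic), we conclude that $\scrR_0$ is contractible in the dg category of $\scrA$-bimodules: there exists $h_0 \in \mathit{End}^{-1}(\scrR_0)$ with $d h_0 + h_0 d = \mathit{id}_{\scrR_0}$ at $q=0$.

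Next I would lift $h_0$ inductively to an endomorphism $h_q = h_0 + q h_1 + q^2 h_2 + \cdots$ of $\scrR_q$ satisfying $d h_q + h_q d = \mathit{id}_{\scrR_q}$. Writing the bimodule-endomorphism differential as $d = d_0 + q d_1 + q^2 d_2 + \cdots$, where the higher-order terms absorb contributions from the curvature $\mu^0_{\scrA_q}$ and the remaining bimodule operations, the identity at order $q^n$ takes the shape $d_0 h_n + h_n d_0 = -e_n$, where $e_n$ is an explicit polynomial in $h_0,\ldots,h_{n-1}$ and in $d_1,\ldots,d_n$. A short computation from $d^2 = 0$ and the inductive hypothesis shows that $e_n$ is $d_0$-closed in the appropriate $\bZ/2$-graded sense; the uncurved identity $d_0 h_0 + h_0 d_0 = \mathit{id}$ then lets us take $h_n = h_0 e_n$ (with a suitable sign). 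Completeness in the $q$-adic topology guarantees that $h_q$ converges, yielding a contracting homotopy of $\scrR_q$ in the dg category of $\scrA_q$-bimodules.

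Finally I would unpack $h_q$ into the desired data. With respect to the decomposition $\scrR_q = \scrP_q[1] \oplus \scrQ_q$ of the cone as a graded $\bZ[[q]]$-module, $h_q$ has four matrix components: the off-diagonal entry $\scrQ_q \to \scrP_q$ is taken as $\psi_q$, while the diagonal entries (modified, if needed, by signs) supply the homotopies $\alpha_q$ and $\beta_q$. Unpacking the identity $d h_q + h_q d = \mathit{id}_{\scrR_q}$ in terms of these matrix entries yields simultaneously closedness of $\psi_q$ together with the two homotopy relations $\psi_q \phi_q = \mathit{id}_{\scrP_q} + d \alpha_q$ and $\phi_q \psi_q = \mathit{id}_{\scrQ_q} + d \beta_q$.

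The main obstacle is purely a matter of bookkeeping: checking that the inductive obstructions $e_n$ are indeed $d_0$-closed, which is a formal consequence of $d^2 = 0$ in the curved bimodule morphism complex but requires some care because $d$ itself mixes orders in $q$ through the curvature and the higher $A_\infty$-bimodule operations. Once that verification is in hand, the argument is an instance of the standard deformation principle that a complete filtered complex whose associated graded admits a contracting homotopy admits one itself.
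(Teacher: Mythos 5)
The paper states Lemma \ref{th:bimodule-quasi-iso-2} without giving a proof, treating it as a standard deformation-theoretic consequence of Lemma \ref{th:bimodule-quasi-iso}. So there is no paper proof to compare against; the question is whether your argument is sound, and it is.

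Your strategy -- pass to the mapping cone $\scrR_q = \mathit{Cone}(\phi_q)$, observe that $\scrR_0$ is contractible, and lift the contracting homotopy $q$-adically -- is the right one and the key claims check out. In particular: the cone is a legitimate $\scrA_q$-bimodule (the morphism complex $\mathit{End}(\scrR_q)$ has a square-zero differential $d$ even though $\mu^{0,1,0}_{\scrR_q}$ does not square to zero, because the curved $A_\infty$-bimodule axioms make the curvature contributions cancel); the order-$q^n$ obstruction $e_n = \sum_{i=1}^n d_i h_{n-i}$ is indeed $d_0$-closed, via $\sum_{a+b=m} d_a d_b = 0$ together with the inductive equations at orders $< n$ and the closedness of $\mathit{id}$; and the Leibniz rule in the dg category of $\scrA$-bimodules gives $d_0(h_0 e_n) = (d_0 h_0)e_n - h_0(d_0 e_n) = e_n$, so $h_n = -h_0 e_n$ solves the step. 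Two notational cleanups would sharpen the write-up: first, since $h_q$ lives in the bimodule morphism complex, the contracting-homotopy condition is $d h_q = \mathit{id}_{\scrR_q}$ (the morphism-complex differential already packages both pre- and post-composition), not $dh_q + h_q d = \mathit{id}$, which reads as if $h_q$ and $d$ lived at the module level; second, ``$\bZ/2$-graded sense'' is out of place, as everything here is $\bZ$-graded -- what you need is simply that $e_n$ has degree $0$ and is $d_0$-closed. Also worth noting that deriving the contractibility of $\scrR_0$ from Lemma \ref{th:bimodule-quasi-iso} applied to $0 \to \scrR_0$ uses that the zero bimodule is an object of the category, which is fine under the paper's conventions; alternatively, one can quote directly the fact that an acyclic complex of free abelian groups, with all the bimodule structure riding on top, admits a contracting homotopy in $\mathit{End}(\scrR_0)$ -- essentially the same freeness argument alluded to in the Remark following \eqref{eq:mu}.
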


The previously mentioned diagonal and dual diagonal bimodules also exist in this context, and will be denoted by $\scrA_q$ and $\scrA_q^\vee$. Correspondingly, we have an analogue of \eqref{eq:quadratic-hochschild}, which is 
\begin{equation} \label{eq:quadratic-hochschild-2}
\mathit{CC}^*(\scrA_q,2) = \mathit{CC}^*(\scrA,2)[[q]],
\end{equation}
with a differential which is obtained by using $\mu_{\scrA_q}$ instead of $\mu_{\scrA}$ in \eqref{eq:quadratic-hochschild-differential}.

\subsection{$A_\infty$-categories}
In applications, we work with $A_\infty$-categories, which have a set $\mathit{Ob}(\scrA)$ of objects, a graded free abelian group $\hom_{\scrA}(X_0,X_1)$ associated to any two objects, and operations
\begin{equation}
\mu_{\scrA}^d: \hom_{\scrA}(X_{d-1},X_d) \otimes \cdots \otimes \hom_{\scrA}(X_0,X_1)
\longrightarrow \hom_{\scrA}(X_0,X_d)[2-d].
\end{equation}
The definitions of $A_\infty$-bimodules and morphisms are adjusted accordingly. For \eqref{eq:quadratic-hochschild}, this means
\begin{equation}
\begin{aligned}
& \mathit{CC}^*(\scrA,2) = 
\\ &
\!\!\! \prod_{\substack{k,l \geq 0 \\ X_0,\dots,X_{k+l+1}}} \!\!\!
\mathit{Hom}\big(\hom_{\scrA}(X_{k+l},X_{k+l+1}) \otimes \cdots \otimes \hom_{\scrA}(X_{k+1},X_{k+2}) 
\otimes \hom_{\scrA}(X_{k+1},X_k)^\vee \\[-1.75em] & \qquad \qquad  \qquad \otimes 
\hom_{\scrA}(X_{k-1},X_k) \otimes \cdots \otimes \hom_{\scrA}(X_0,X_1), 
\hom_{\scrA}(X_0,X_{k+l+1}) \big)[-k-l].
\end{aligned}
\end{equation}
The same applies to the curved case.

\section{The relative Fukaya category\label{sec:fukaya}}
This section is a recap of the definition of the relative Fukaya category for compact symplectic Calabi-Yau manifolds, following \cite{perutz-sheridan20} in all essential points (note that \cite{perutz-sheridan20} is itself a modified version of \cite{sheridan11b}; for a related construction of the absolute Fukaya category, see \cite{charest-woodward15, charest-woodward17}). 

\subsection{Geometric data\label{subsec:data}}
Let $M^{2n}$ be a closed symplectic manifold, together with a symplectic ample divisor. By that, we mean the following:

\begin{setup} \label{th:ample}
We are given a symplectic submanifold $\Omega_M^{2n-2} \subset M$ which represents $[\omega_M]$, together with a $\theta_M \in \Omega^1(M \setminus \Omega_M)$, satisfying $d\theta_M = \omega_M|(M \setminus \Omega_M)$, and such that: if $S$ is a compact oriented surface with boundary, and $u: S \rightarrow M$ a map with $u(\partial S) \cap \Omega_M = \emptyset$, then
\begin{equation} \label{eq:stokes}
\textstyle \int_S u^*\omega_M = \int_{\partial S} u^*\theta_M + (u \cdot \Omega_M). 
\end{equation}
 All Lagrangian submanifolds $L \subset M$ we consider are disjoint from $\Omega_M$, and exact, $[\theta_M|L] = 0 \in H^1(L)$. We will only use compatible almost complex structures $J$ on $M$ which make $\Omega_M$ into an almost complex submanifold. Similarly, we use Hamiltonians $H \in \smooth(M,\bR)$ such that the associated vector field $X$ is tangent to $\Omega_M$.
\end{setup}

Suppose that we have two such submanifolds $(L_0,L_1)$. A choice of Floer datum consists of a time-dependent Hamiltonian $(H_t)$, $t \in [0,1]$, such that the chords \eqref{eq:chord} are nondegenerate (the properties of the Lagrangian submanifolds and Hamiltonian vector fields imply that all such chords remain in $M \setminus \Omega_M$). We also choose a family $J_t$ of almost complex structures. That allows us to write down the relevant Floer equation \eqref{eq:floer}. 

The more general class of surfaces under consideration will be as follows. Take a Riemann surface $\bar{S}$ isomorphic to the closed disc, and a finite nonempty set $\Sigma \subset \partial\bar{S}$, arbitrarily divided into a positive and negative part, $\Sigma = \Sigma_- \cup \Sigma_+$. We form
\begin{equation} \label{eq:s}
S = \bar{S} \setminus \Sigma.
\end{equation}
For each connected component $C \subset \partial S$ we choose a Lagrangian submanifold $L_C$. Given any $\zeta \in \Sigma$, we have a pair of Lagrangian submanifolds $(L_{\zeta,0},L_{\zeta,1})$, which are the $L_C$ associated to the boundary components adjacent to $\zeta$. More precisely, if $\zeta \in \Sigma_+$, then $L_{\zeta,0}$ precedes $\zeta$ in the boundary orientation, and $L_{\zeta,1}$ follows it; while the convention for $\zeta \in \Sigma_-$ is the opposite. We assume that a Floer datum $(H_\zeta,J_\zeta)$ has been chosen for each of these pairs of Lagrangian submanifolds.

\begin{setup} \label{th:ends-etc}
A set of strip-like ends for $S$ consists of proper holomorphic embeddings 
\begin{equation}  \label{eq:ends}
\epsilon_{\zeta}:
\left\{
\begin{aligned}
&
(-\infty,\sigma_\zeta] \times [0,1] \longrightarrow S, && \textstyle \lim_{s \rightarrow-\infty} \epsilon_{\zeta}(s,t) = \zeta\;\; \text{ for } \zeta \in \Sigma_-, \\
&
[\sigma_\zeta,\infty) \times [0,1] \longrightarrow S, && \textstyle \lim_{s \rightarrow +\infty} \epsilon_{\zeta}(s,t) = \zeta\;\; \text{ for } \zeta \in \Sigma_+.
\end{aligned}
\right.
\end{equation}
Take a family $(J_z)_{z \in S}$. We also need a one-form $K \in \Omega^1(S, \smooth(M,\bR))$, with the property that 
\begin{equation} \label{eq:z-tangent}
\mybox{
if $Z \in T_z(\partial S)$, $z \in C \subset \partial S$, then $K(Z)|L_C = 0$. 
}
\end{equation}
To clarify the context: the almost complex structures, as well as $K(Z)$ for any $Z$, must belong to the classes described in Setup \ref{th:ample}. Over the ends (as $s \rightarrow \pm\infty$), we impose asymptotic conditions
\begin{equation} \label{eq:jk-converge}
\left\{
\begin{aligned}
& J_{\epsilon_{\zeta}(s,t)} \longrightarrow J_{\zeta,t}, \\
& \epsilon_{\zeta}^*K \longrightarrow H_{\zeta,t}\, \mathit{dt}.
\end{aligned}
\right.
\end{equation}
The convergence is understood to be exponentially fast, in the same sense as in \eqref{eq:cont-family}.
\end{setup}

Let $Y$ be the vector-field-valued one-form associated to $K$, which means that for $Z \in TS$, $Y(Z)$ is the Hamiltonian vector field of $K(Z)$. We are now ready to write down the relevant Cauchy-Riemann equation: 
\begin{equation} \label{eq:cauchy-riemann}
\left\{
\begin{aligned} 
& u: S \longrightarrow M, \\
& u(C) \subset L_C, \\
& \textstyle \lim_{s \rightarrow \pm\infty} u(\epsilon_{\zeta}(s,t)) = x_\zeta(t), \\
& (du - Y)^{0,1} = 0,
\end{aligned}
\right.
\end{equation}
where the $x_\zeta$ are appropriate chords. In local holomorphic coordinates $z = s+it$ on $S$, the last line of \eqref{eq:cauchy-riemann} is
\begin{equation} \label{eq:cauchy-riemann-explicit}
\partial_t u - Y_{s,t}(\partial_t) = J_{s,t} \big(\partial_s u - Y_{s,t}(\partial_s) \big).
\end{equation}
For fixed data, an application of \eqref{eq:stokes} provides a bound on the energy, of the form
\begin{equation}
E(u) = \textstyle \half \int_S \| du - Y \|^2 \leq (u \cdot \Omega_M) + \text{\it constant}.
\end{equation}

In our applications, the surfaces will carry a finite set of interior marked points $\Xi \subset S \setminus \partial S$, and we impose the following intersection constraints:
\begin{equation} \label{eq:adjacency-conditions}
\left\{
\begin{aligned} 
& u \cdot \Omega_M = |\Xi|, \\
& u(\xi) \in \Omega_M \;\; \text{for all $\xi \in \Xi$.}
\end{aligned}
\right.
\end{equation}
By Gromov's trick, one can convert \eqref{eq:cauchy-riemann-explicit} into a straight pseudo-holomorphic map equation for the graph $(z,u(z)): S \longrightarrow S \times M$. Because of the assumptions on $J$ and $K$, the relevant almost complex structure on $S \times M$ has the property that $S \times \Omega_M$ is an almost complex submanifold. This allows us to apply positivity of intersections in the classical sense: $u^{-1}(\Omega_M)$ is a finite set; every point in it contributes positively to $u \cdot \Omega_M$; and the contribution is $1$ exactly when the intersection is transverse. Hence:

\begin{lemma} \label{th:positivity}
All solutions of \eqref{eq:cauchy-riemann}, \eqref{eq:adjacency-conditions} satisfy $u^{-1}(\Omega_M) = \Xi$; and moreover, at each point of $u^{-1}(\Omega_M)$ the intersection is transverse.
\end{lemma}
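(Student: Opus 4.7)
The plan is to follow the author's own outline: apply Gromov's trick to convert \eqref{eq:cauchy-riemann-explicit} into a genuine pseudo-holomorphic curve equation in $S \times M$, verify that $S \times \Omega_M$ is an almost complex submanifold for the relevant almost complex structure, and then invoke classical positivity of intersections.

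First I would define, for each $z \in S$ and $(Z,v) \in T_zS \oplus T_{u(z)}M$, the almost complex structure
\begin{equation*}
\tilde J_{(z,x)}(Z, v) = \bigl( jZ,\, J_z\bigl(v - Y_z(Z)\bigr) + Y_z(jZ) \bigr),
\end{equation*}
where $j$ is the complex structure on $S$. A direct computation shows $\tilde J^2 = -\mathbbm{1}$, and the graph $\tilde u(z) = (z, u(z))$ satisfies the honest pseudo-holomorphic equation $d\tilde u \circ j = \tilde J \circ d\tilde u$ if and only if $u$ satisfies \eqref{eq:cauchy-riemann-explicit}. This is standard; the only thing to check is that $\tilde J$ is tamed by a suitable symplectic form on $S \times M$, which is automatic on any compact subset once the symplectic form on $S$ is chosen large enough, and ensures that the usual local intersection theory for pseudo-holomorphic curves applies.

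Next, I would verify that $S \times \Omega_M \subset S \times M$ is $\tilde J$-invariant. For $(Z,v) \in T_z S \oplus T_x\Omega_M$ with $x \in \Omega_M$, we have $J_z v \in T_x\Omega_M$ because $J_z$ preserves $\Omega_M$ by Setup~\ref{th:ample}, and $Y_z(Z), Y_z(jZ) \in T_x\Omega_M$ because the Hamiltonian vector field of each $K(Z)$ is tangent to $\Omega_M$ by the same Setup (applied pointwise, together with the condition imposed on $K$ through Setup~\ref{th:ends-etc}). Hence the second component of $\tilde J_{(z,x)}(Z,v)$ lies in $T_x\Omega_M$, as required. Therefore $S \times \Omega_M$ is an almost complex submanifold of codimension two.

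With the reduction complete, I would apply the classical local structure theorem for pseudo-holomorphic intersections with an almost complex submanifold (as in Micallef--White, or the presentation in McDuff--Salamon). Since $\tilde u$ is nonconstant on any component of $S$ (the fibre direction $M$ is hit nontrivially, by unique continuation unless $u$ is constant, which is excluded by the chord asymptotics), and $S \times \Omega_M$ is properly embedded almost complex of complex codimension one, the preimage $\tilde u^{-1}(S \times \Omega_M) = u^{-1}(\Omega_M)$ is a discrete set in the interior of $S$ (boundary points are excluded since $L_C \cap \Omega_M = \emptyset$), and each intersection point contributes a strictly positive integer to the intersection number, equal to $1$ exactly when the intersection is transverse. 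The only step requiring care is ensuring that the usual positivity applies in the presence of our (tame, not integrable) $\tilde J$, but this is the classical setting of Micallef--White and needs no novel ingredient.

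Finally, combining these ingredients with the constraints \eqref{eq:adjacency-conditions}: the condition $u(\xi) \in \Omega_M$ for all $\xi \in \Xi$ gives $\Xi \subset u^{-1}(\Omega_M)$, so the total intersection multiplicity is at least $|\Xi|$, with equality forcing each $\xi \in \Xi$ to be a transverse intersection and $u^{-1}(\Omega_M)$ to contain no additional points. The equality of intersection numbers is guaranteed by the hypothesis $u \cdot \Omega_M = |\Xi|$. The main (modest) obstacle is the bookkeeping in writing down $\tilde J$ and checking the two invariance properties above; everything after that is a direct appeal to classical results.
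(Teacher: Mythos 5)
Your proposal is correct and follows essentially the same route as the paper, which also cites Gromov's trick together with classical positivity of intersections; you have simply spelled out the explicit almost complex structure $\tilde J$ on $S\times M$, the verification that $S\times\Omega_M$ is $\tilde J$-invariant (from the tangency of the Hamiltonian vector fields of $K(Z)$ and the $J$-holomorphicity of $\Omega_M$ in Setup~\ref{th:ample}), and the final counting argument. One small point of phrasing: the relevant fact is not that $\tilde u$ is nonconstant (it never is, as its first component is the identity on $S$) but that $\tilde u$ is not contained in $S\times\Omega_M$, which is immediate because the boundary conditions and chord asymptotics place $u$ outside $\Omega_M$; your invocation of unique continuation there is unnecessary.
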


\subsection{Surfaces with one negative end\label{subsec:one-negative}}
We now introduce the more specific class of maps which enters into the definition of the relative Fukaya category. Namely, for some $d \geq 0$, take
\begin{equation} \label{eq:disc-surface}
\begin{aligned}
&
\bar{S} = D \subset \bC \text{ (closed unit disc)}, \;\; \Sigma _-= \{\zeta_0\}, \; \;\Sigma_+= \{\zeta_1,\dots,\zeta_d\}, 
\\ & \qquad
\text{where }
\left\{
\begin{aligned}
& \zeta_0 = 1, \\
& \zeta_1 = e^{i \rho_1},\; \dots,\; \zeta_d = e^{i \rho_d}, \quad 0< \rho_1 < \cdots < \rho_d <2\pi.
\end{aligned}
\right.
\end{aligned}
\end{equation}
The resulting $S = \bar{S} \setminus \Sigma$ is called a $(d+1)$-punctured disc. Since the punctures are numbered, the associated ends can be denoted by $\{\epsilon_0,\dots,\epsilon_d\}$. We can also number the boundary components as $\partial S = C_0 \cup C_1 \cup \cdots \cup C_d$, starting with that between $\zeta_0$ and $\zeta_1$. Hence, the labeling of components with Lagrangian submanifolds is a choice of $L_0,\dots,L_d$. Additionally, our surface comes with a set of $m \geq 0$ of interior marked points, which we order, $\Xi = \{\xi_1,\dots,\xi_m\}$.

We need to work with varying $S$. Let $\scrR^{d+1;m}$ be the moduli space of $(d+1)$-punctured discs with $m \geq 1-d/2$ interior marked points. This is a (real) manifold with
\begin{equation} \label{eq:stability}
\mathrm{dim}(\scrR^{d+1;m}) = d+2m -2.
\end{equation}
The group $\mathit{Sym}_m$ acts freely on it, by permuting the interior marked points. There is a natural compactification $\bar\scrR^{d+1;m}$, which is a manifold with corners, still carrying an action of $\mathit{Sym}_m$ (which is no longer free). The compactification can be constructed by embedding our space into (the real locus of) the Deligne-Mumford moduli space of genus $0$ curves with $d+1+2m$ marked points. A stratum of $\bar\scrR^{d+1;m}$ is described by the following combinatorial structure. We have a planar tree $T$ with $(d+1)$ semi-infinite edges, one of which is distinguished as the root. This additionally comes with a partition of $\{1,\dots,m\}$ into subsets $\Xi^v$ corresponding to the vertices, and we must have
\begin{equation} \label{eq:valence-inequality}
|v| + 2|\Xi^v| \geq 3.
\end{equation}
The points in such a stratum are geometrically described as follows: to each vertex $v$ corresponds a genus zero nodal surface $S^v$, which is a disc with $|v|$ boundary punctures (called the principal component) together with other, spherical, irreducible components attached to it in tree-like patterns (each tree is attached to an interior point of $S^v$, and no interior point appears more than once). Furthermore, each $S^v$ has smooth interior marked points labeled by $\Xi^v$. Finally, there is the usual no-automorphisms stability condition (if there are no spheres, this follows from \eqref{eq:valence-inequality}; in the general case, it restricts the structure of the tree of spheres). Figure \ref{fig:r12} shows the two-dimensional moduli space $\bar\scrR^{1;2}$, which is the first one to include a surface with a spherical irreducible component.
\begin{figure}
\begin{centering}
\includegraphics[scale=0.8]{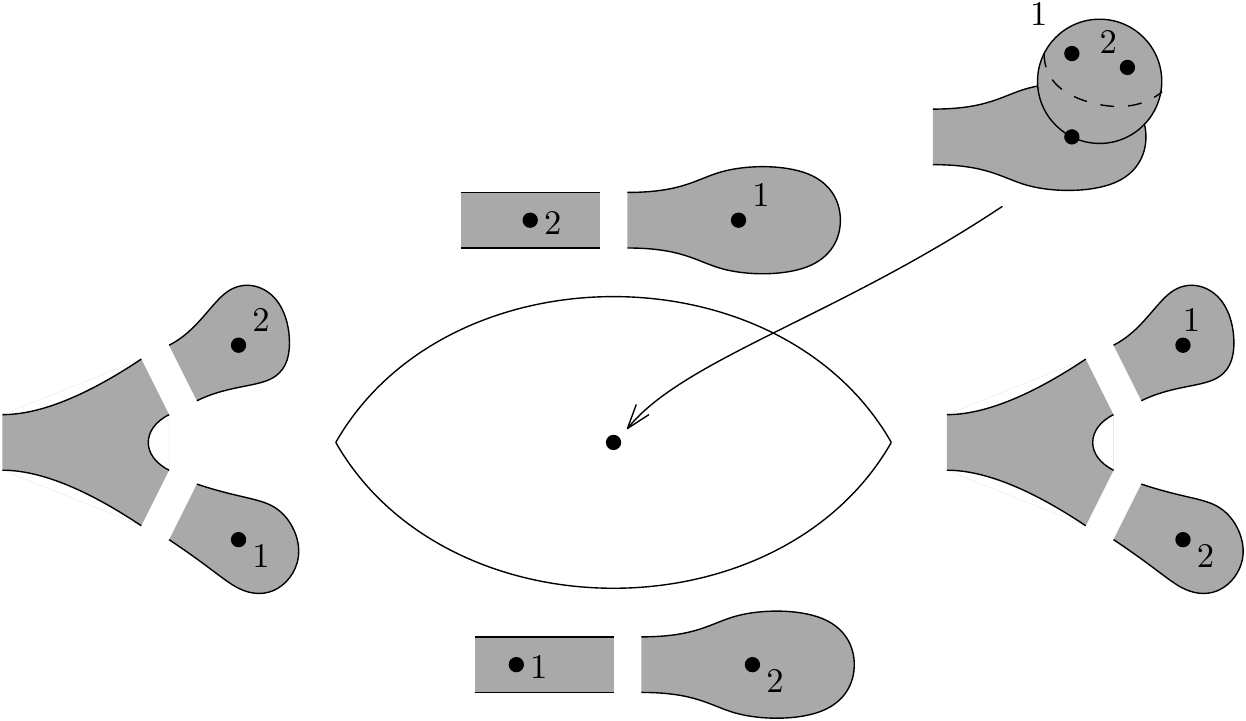}
\caption{The space $\bar\scrR^{1;2}$ is a bi-gon.\label{fig:r12}}
\end{centering}
\end{figure}

To construct the necessary moduli spaces of maps with varying domains, we first choose a Floer datum for every pair of Lagrangian submanifolds. Then we choose ends $\epsilon$, as well as data $(J,K)$ (asymptotic to the previously chosen Floer data on the ends), for each fibre $S$ of the universal family of surfaces over $\scrR^{d+1;m}$. We require that these are $\mathit{Sym}_m$-invariant. Moreover, there is a consistency condition, which 
takes the form of an extension to $\bar\scrR^{d+1;m}$, with the following properties.
If a component $S^v$ contains no spheres, the almost complex structures and the inhomogeneous term on $S^v$ should agree with those made when considering it as representing a point of $\scrR^{|v|;|\Xi^v|}$ (which makes sense thanks to the assumption of $\mathit{Sym}_{|\Xi^v|}$-invariance). More generally, on each disc part of an $S^v$, the auxiliary data only depend on the moduli of that part with its special points (interior marked points, or nodal points where the spheres are attached); while on the sphere parts the inhomogeneous term is zero, and the family of almost complex structures is constant, agreeing with the almost complex structure associated to the point on the disc where the tree of sphere is attached. This is not a full description of the consistency condition, one has to be more precise about what happens on the thin parts (long strips or cylinders) as a surface degenerates; but since that is a familiar issue, and as special case of it was already discussed in \eqref{eq:glue-the-data-1}--\eqref{eq:glue-the-data-3}, we will not review it here.

The outcome of our construction are spaces $\scrR^{d+1;m}(x_0,\dots,x_d)$, which parametrize the choice of a point of $\scrR^{d+1;m}$ together with a solution of \eqref{eq:cauchy-riemann} on the surface $S$ determined by that point. These again carry free $\mathit{Sym}_m$-actions. We extend the notation by taking $\scrR^{2;0}(x_0,x_1)$ to be the space of those Floer trajectories which avoid $\Omega_M$, divided by translation. The transversality issue is relatively straightforward (because of the freeness of the $\mathit{Sym}_m$-action), leading to:

\begin{lemma}
For generic choices of the auxiliary data (almost complex structures, Hamiltonians) satisfying the consistency condition outlined above, all spaces $\scrR^{d+1;m}(x_0,\dots,x_d)$ are regular. 
\end{lemma}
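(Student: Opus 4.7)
The plan is a standard Sard--Smale argument on a universal moduli space, built inductively over strata of $\bar\scrR^{d+1;m}$ so that the consistency condition is honoured. First fix, once and for all, a generic Floer datum $(H_t,J_t)$ for every pair of Lagrangian submanifolds in play, ensuring nondegeneracy of the chords \eqref{eq:chord} and regularity of the ordinary Floer trajectories parametrized by $\scrR^{2;0}(x_0,x_1)$. Then induct on $(d,m)$ in the order dictated by inclusions of boundary strata: assuming auxiliary data have been fixed on every proper boundary stratum of $\bar\scrR^{d+1;m}$ (for which the corresponding moduli spaces are regular by the inductive hypothesis), choose a generic extension to the interior of the top stratum.

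For the inductive step, form the Banach manifold $\mathcal{P}$ of $\mathit{Sym}_m$-invariant perturbations of $(J,K)$ over the universal curve which restrict to the already-fixed data on the boundary strata and over the strip-like ends. The universal moduli space of pairs $(u,(J,K))$, with $u$ satisfying \eqref{eq:cauchy-riemann}--\eqref{eq:adjacency-conditions} and $(J,K)\in\mathcal{P}$, is a Banach manifold provided the full linearization (both in $u$ and in $(J,K)$) is surjective at every solution; the projection to $\mathcal{P}$ is Fredholm of the expected index, so Sard--Smale supplies a residual set of $(J,K)$ for which every $\scrR^{d+1;m}(x_0,\dots,x_d)$ is regular. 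Because $\mathit{Sym}_m$ acts freely on $\scrR^{d+1;m}$, working with $\mathit{Sym}_m$-invariant data does not affect the dimension count or the genericity conclusion --- this is precisely the simplification flagged in the statement.

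The analytic heart is surjectivity of the universal linearization. Given a nonzero cokernel element $\eta$ at $(u,(J,K))$, Aronszajn unique continuation gives that $\eta$ has isolated zeros, while Lemma \ref{th:positivity} confines $u^{-1}(\Omega_M)$ to $\Xi$ and guarantees transverse intersection there. A somewhere-injectivity argument adapted to inhomogeneous Cauchy--Riemann equations locates a point $z \in S \setminus (\Xi \cup \partial S)$ with $du(z)-Y(z) \neq 0$, $\eta(z) \neq 0$, $u(z) \notin \Omega_M$, and $u^{-1}(u(z))$ consisting of a single point. A perturbation of $J$ (or equivalently of $K$) supported in a small neighbourhood of $(z,u(z)) \in S \times (M\setminus\Omega_M)$, and chosen $\mathit{Sym}_m$-invariantly, then produces a variation of the Cauchy--Riemann operator pairing nontrivially with $\eta$, giving the desired contradiction.

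The main obstacle is precisely the somewhere-injectivity: Lagrangian labels can repeat around $\partial S$, spherical bubbles are absent only because of \eqref{eq:adjacency-conditions}, and the domain is merely a punctured disc, so one must carefully rule out multiply-covered coincidences and ensure that the chosen $z$ is disjoint from its own $u$-preimage. Once this is in hand, the rest is routine: Sard--Smale closes the inductive step, and the construction terminates when all strata of $\bar\scrR^{d+1;m}$ have been treated.
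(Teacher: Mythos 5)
Your proof follows the standard Sard--Smale route over a universal moduli space, built inductively over boundary strata, and this is exactly the argument the paper has in mind when it calls the transversality issue ``relatively straightforward.'' However, the paragraph where you single out somewhere-injectivity as ``the main obstacle'' misplaces the difficulty. The perturbations of $(J,K)$ here are domain-dependent: they live on the universal curve and may be chosen supported near a single point $(z,u(z))$ of the \emph{graph} of $u$ inside $S\times(M\setminus\Omega_M)$. The graph map $z\mapsto(z,u(z))$ is injective via its first coordinate for free, so you never need $u^{-1}(u(z))$ to be a single point, and multiply-covered images are simply irrelevant; likewise, a perturbation of $K$ (as opposed to $J$) does not require $du(z)-Y(z)\neq 0$. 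The whole point of surface-dependent perturbation data on stable domains is that it trivializes somewhere-injectivity, and this is precisely why the paper is comfortable stating the lemma without proof.

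The one place something could genuinely go wrong is the $\mathit{Sym}_m$-invariance constraint, and your one-line treatment of it (``does not affect the dimension count or the genericity conclusion'') slightly misses the mechanism. Freeness of the $\mathit{Sym}_m$-action on $\scrR^{d+1;m}$ matters because it allows a $\mathit{Sym}_m$-invariant perturbation of the universal data to be prescribed arbitrarily near a fixed $[S]$: its values at the other $m!-1$ points of the orbit are then forced by invariance, but those are distinct points of the moduli space and impose no constraint on the linearization at $(u,[S])$. If instead $[S]$ had nontrivial isotropy $G\subset\mathit{Sym}_m$, an invariant perturbation would have to be $G$-equivariant near $S$, which could genuinely obstruct surjectivity of the universal linearization. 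That — not dimension counting, and not multiple covers — is what the paper's parenthetical ``because of the freeness of the $\mathit{Sym}_m$-action'' is flagging. With those two points corrected, your argument is the intended one.
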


We denote by $\bar\scrR^{d+1;m}(x_0,\dots,x_d)$ the standard stable map compactification. The main issue is to analyze the structure of the codimension $<2$ strata in the compactification (for generic choices of auxiliary data). As usual, the underlying philosophy is that only moduli spaces of dimension $\leq 1$ are used in the construction of the Fukaya category, so codimension $\geq 2$ phenomena can be excluded from consideration. 

\subsection{Disc bubbles}
Let's suppose that we have a sequence $(u_k: S_k \rightarrow M)_{k = 1,2,\dots}$ in some space $\scrR^{d+1;m}(x_0,\dots,x_d)$. After passing to a subsequence, we have convergence in the stable map sense. The combinatorial structure of the limit is similar to that considered before, with a planar rooted tree $T$ and nodal surfaces $S^v$ which come with maps $u^v: S^v \rightarrow M$. However, the collection $\{S^v\}$ is more complicated than the limit of the sequence $(S_k)$ in $\bar\scrR^{d+1;m}$. First of all, there can be additional two-valent vertices $v$ with $\Xi^v = \emptyset$, which carry solutions of Floer's equation. Moreover, each $S^v$ can have additional irreducible components, which are collapsed when passing to the limit in $\bar\scrR^{d+1;m}$. A priori these irreducible components, which we call bubbles, can be either spheres or discs. The discs would appear in a tree-like pattern attached to a boundary nodal point of the main component of $S^v$ (this is completely different from the boundary-puncture issue; the nodal points are not being removed, and on each disc bubble component, $u^v$ is pseudo-holomorphic with no inhomogeneous term). In fact, we can exclude that for easy topological reasons:

\begin{lemma} \label{th:disc-bubbling}
No disc bubbles can occur in our limits.
\end{lemma}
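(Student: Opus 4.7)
The plan is to combine exactness of $\theta_M|L_C$ with positivity of intersections against $\Omega_M$, in order to show that any disc bubble would force the total intersection with $\Omega_M$ to exceed the topologically fixed value $|\Xi| = m$. The argument is purely topological, matching the paper's remark about ``easy topological reasons''.

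First I would suppose, for contradiction, that a Gromov limit of some sequence in $\scrR^{d+1;m}(x_0,\dots,x_d)$ contains a disc bubble $w \colon D \to M$, attached at a boundary node, with boundary on some $L_C$. A disc bubble by definition collapses under the map to $\bar\scrR^{d+1;m}$, so it carries none of the marked points of $\Xi$; moreover, it is a non-constant $J$-holomorphic map with no inhomogeneous term. Since $\partial D$ is mapped into $L_C \subset M \setminus \Omega_M$ and $[\theta_M|L_C] = 0$, the closed-loop integral $\int_{\partial D} w^*\theta_M$ vanishes, and \eqref{eq:stokes} reduces to
$$0 \;<\; \int_D w^*\omega_M \;=\; w \cdot \Omega_M.$$
By the positivity-of-intersections argument underlying Lemma \ref{th:positivity}, it then follows that $w \cdot \Omega_M \geq 1$.

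The remaining (main) components $u^v$ of the limit collectively carry all $m$ marked points of $\Xi$, each constrained to lie in $\Omega_M$. Positivity of intersections, applied componentwise, gives $u^v \cdot \Omega_M \geq |\Xi^v|$. Summing over the full limit,
$$\sum_v u^v \cdot \Omega_M \;\geq\; \sum_{v\ \mathrm{main}} |\Xi^v| \;+\; w \cdot \Omega_M \;\geq\; m + 1.$$
But the total intersection number with $\Omega_M$ is a homological invariant, well defined because the Lagrangians are disjoint from $\Omega_M$, and it is additive across Gromov limits; so the sum must equal $u_k \cdot \Omega_M = m$ for every element of the sequence. This contradicts the inequality above, ruling out disc bubbles.

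The only verifications needed are that a disc bubble carries no marked points of $\Xi$ (immediate from its being the portion of the nodal limit that collapses in $\bar\scrR^{d+1;m}$) and the standard additivity of intersection numbers across Gromov limits. There is no genuine analytic obstacle; the same accounting would in fact rule out any non-constant sphere bubble beyond $\bar\scrR^{d+1;m}$ as well, but the disc case is what is needed here.
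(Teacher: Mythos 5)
Your argument rests on the same two ingredients as the paper's: the exactness $[\theta_M|L_C] = 0$, which kills the boundary term in \eqref{eq:stokes} and forces a nonconstant disc bubble to have $w \cdot \Omega_M > 0$; and positivity of intersections together with the fixed topological count $u_k \cdot \Omega_M = m$, which turns that extra intersection into a contradiction. The difference is one of packaging: the paper \emph{localizes}, taking a region $B_k$ surrounding the bubbling point (which, since bubbling happens near $\partial S_k$ where $u_k$ is close to a Lagrangian, contains none of the marked points), deduces $(u_k|B_k) \cdot \Omega_M = 0$ from Lemma \ref{th:positivity}, and then contradicts positivity of energy on the bubble tree. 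You instead do a \emph{global} sum over all components. Both routes are essentially the same proof; the local version is a little cleaner for a reason worth noting.

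The global accounting hides a small gap: the per-component inequality $u^v \cdot \Omega_M \geq |\Xi^v|$ fails when a component of the limit lies inside $\Omega_M$. A constant tree of spheres in $\Omega_M$ carrying $l \geq 2$ marked points has $u^v \cdot \Omega_M = 0$, and the ``missing'' intersections are recovered only as a tangency of the adjacent principal component at the attaching node (the content of Lemma \ref{th:constant-bubbles}). Your total is still correct once this is tracked, but the step as written is not justified; the paper's localization sidesteps the issue entirely because $B_k$ sees no marked points and hence no $\Omega_M$ at all. Separately, the final remark --- that the same count rules out any nonconstant sphere bubble beyond $\bar\scrR^{d+1;m}$ --- overreaches. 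An unstable nonconstant sphere bubble with exactly one marked point and one node contributes $w \cdot \Omega_M \geq 1$, while carrying one unit of marked-point constraint; the ledger balances at $m$ and there is no contradiction. That is precisely why the paper has to invoke the Calabi-Yau/genericity argument (Setup \ref{th:calabi-yau} onward) to handle nonconstant spheres, rather than a topological count.
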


\begin{proof}[Sketch of proof]
Take a compact part $B_k \subset S_k$, for $k \gg 0$, which (in somewhat imprecise words) surrounds the point where disc bubbling will occur; in Figure \ref{fig:disc-bubbling}, $B_k$ is indicated by the lighter shading. On the dividing curve between $B_k$ and the rest of $S_k$, the map $u_k$ will be nearly constant at a point of one of our Lagrangian submanifolds. In particular, the restriction of $u_k$ to that curve avoids $\Omega_M$. Because of the conditions in \eqref{eq:cauchy-riemann}, it is clear that
\begin{equation}
(u_k|B_k) \cdot \Omega_M = 0.
\end{equation}
That will then imply the corresponding property for the bubble tree that appears in the limit. However, this is impossible, since that tree needs to include at least one irreducible component on which the pseudo-holomorphic map is non-constant (that component has positive energy, hence by \eqref{eq:stokes} must have positive intersection number with $\Omega_M$).
\end{proof}
\begin{figure}
\begin{centering}
\includegraphics[scale=0.8]{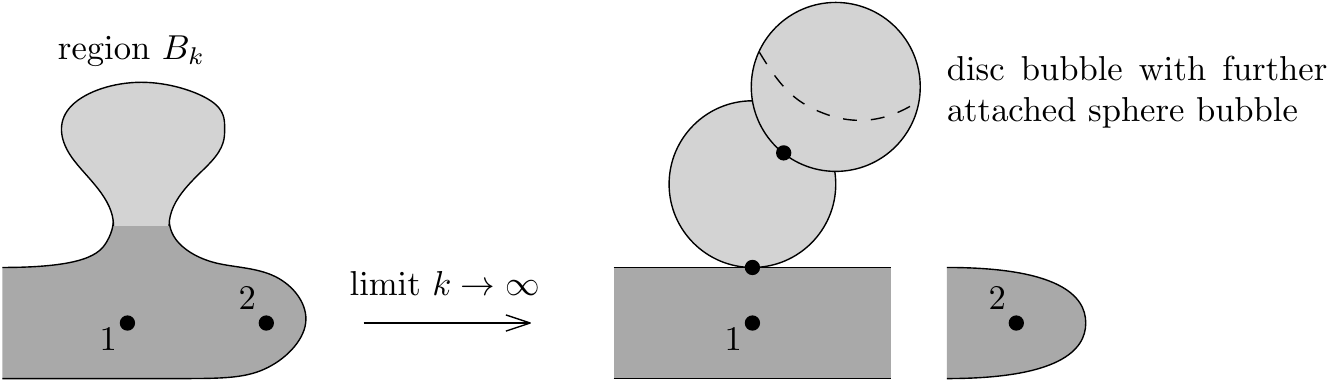}
\caption{\label{fig:disc-bubbling}An example (not the simplest one, since there are two bubble components) of the potential disc-bubbling behaviour, ruled out by Lemma \ref{th:disc-bubbling}.}
\end{centering}
\end{figure}

\subsection{Constant sphere bubbles}
Another fairly straightforward situation which can arise in the stable map limit is the following. Suppose that $S^v$ has a constant tree of spheres attached to the principal component (at some interior point). Constant means that the map $u^v$ is constant on all of the spheres in the tree. Because of the stable map condition, this implies that our tree of spheres must contain at least two of the interior marked points (the points labeled by $\Xi^v \subset \{1,\dots,m\}$); which also means that the constant map takes values at some point of $\Omega_M$. One can say a little more:

\begin{lemma} \label{th:constant-bubbles}
Suppose that $S^v$ has a constant tree of spheres attached to the principal component at some interior point, and that tree contains $l \geq 2$ points of $\Xi^v$. Then, the restriction of $u^v$ to the principal component has $l$-fold intersection multiplicity (in other words, order of tangency $l-1$) with $\Omega_M$ at the attaching point.
\end{lemma}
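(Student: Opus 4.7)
My plan is to deduce the multiplicity by comparing the local intersection number of $u^v$ on the principal component against the global intersection number of a nearby smooth approximant $u_k$, both measured on a small disc isolating $p$. Let $u$ denote the restriction of $u^v$ to the principal component. Since $u$ sends the boundary into Lagrangians disjoint from $\Omega_M$ (Setup \ref{th:ample}), it is not identically mapped into $\Omega_M$; then by Gromov's trick combined with standard positivity of intersections, $u^{-1}(\Omega_M)$ is discrete and each of its points carries a positive integer multiplicity. Choose a small closed coordinate disc $\bar D$ around $p$, small enough that $u|\bar D$ meets $\Omega_M$ only at $p$, and small enough to avoid every other node of $S^v$. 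Let $m_0 \geq 1$ denote the local multiplicity of $u$ at $p$, so that $(u|D) \cdot \Omega_M = m_0$; the goal is to prove $m_0 = l$.

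Next, I would produce, for each $k \gg 0$, a closed topological disc $\bar D_k \subset S_k$ with the following two properties: (a) $u_k|\partial D_k \to u|\partial D$ smoothly as $k \to \infty$, and in particular $u_k|\partial D_k$ is disjoint from $\Omega_M$ for $k$ large; and (b) $\Xi_k \cap D_k$ is precisely the set of $l$ marked points that degenerate into the constant bubble tree at $p$. Given these, homological invariance applied to (a) yields $(u_k|D_k) \cdot \Omega_M = m_0$, while Lemma \ref{th:positivity} together with (b) gives $(u_k|D_k) \cdot \Omega_M = l$, since each of the $l$ marked points contributes a transverse intersection of multiplicity $1$ and there are no other intersections inside $D_k$. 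Therefore $m_0 = l$, as claimed.

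The main obstacle is producing $\bar D_k$ and verifying property (b); this is standard Gromov/stable-map compactness bookkeeping, but needs to be made explicit. Near $p$ the smooth surface $S_k$ develops a long neck (or, if the bubble tree has several nodes, a rooted tree of necks) connecting a principal-side disc region to the rescaled bubble domain; one takes $\bar D_k$ to be the union of the principal-side disc region identified with $\bar D$, together with all neck regions and all rescaled bubble components attached at $p$. By the definition of stable map convergence, every marked point of $\Xi_k$ converging to a point of the bubble tree at $p$ eventually lies in $\bar D_k$; conversely, any marked point not degenerating into the bubble tree at $p$ either converges to an intersection of $u$ with $\Omega_M$ in the principal component away from $p$, and so lies outside $\bar D$ for $k$ large by our shrinking, or is captured by a different neck at a different attaching point, which is excluded because $\bar D$ contains no other node of $S^v$. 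Once this neck-by-neck accounting is settled, the rest of the argument is a direct application of positivity of intersections and the homological invariance of the intersection number under homotopies of the boundary through $M \setminus \Omega_M$.
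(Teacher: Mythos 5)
Your proof is correct and follows essentially the same approach as the paper's sketch: isolate the attaching point with a small disc, identify $(u_k|D_k)\cdot\Omega_M$ with the number of captured marked points via positivity of intersections (Lemma \ref{th:positivity}), and compare this to the local multiplicity $m_0$ of the limit map using the stability of intersection numbers under stable-map convergence with boundary staying in $M\setminus\Omega_M$. You are slightly more explicit than the paper about the neck-region bookkeeping; the only minor imprecision is that condition (a) on the boundary alone does not yield $(u_k|D_k)\cdot\Omega_M = m_0$ --- one also needs the convergence of the full map on $D_k$ to the limiting disc-plus-constant-bubble-tree --- but you acknowledge this in your final paragraph.
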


\begin{proof}[Sketch of proof]
The restriction of $u^v$ to the principal component can't be contained in $\Omega_M$, hence has finitely many intersection points with that submanifolds. Draw a small loop around the attaching point. That loop cuts off a small disc with the tree of spheres attached. Smallness means that other than the nodal point, the disc doesn't contain any intersection points of the principal component with $\Omega_M$. Look (again in somewhat vague terminology) at the corresponding region $B_k \subset S_k$, $k \gg 0$, as schematically drawn in Figure \ref{fig:constant-bubble}. By definition,
\begin{equation}
(u_k|B_k) \cdot \Omega_M = \# \{ \text{points $\xi_i$ contained in $B_k$} \}.
\end{equation}
In the limit, the intersection number of the small disc-plus-tree-of-spheres is the same, namely $l$ in the terminology of our statement. But since the spheres come with constant maps, and on the principal component only the nodal point contributes, this number is the multiplicity of intersection at that point.
\end{proof}
\begin{figure}
\begin{centering}
\includegraphics[scale=0.8]{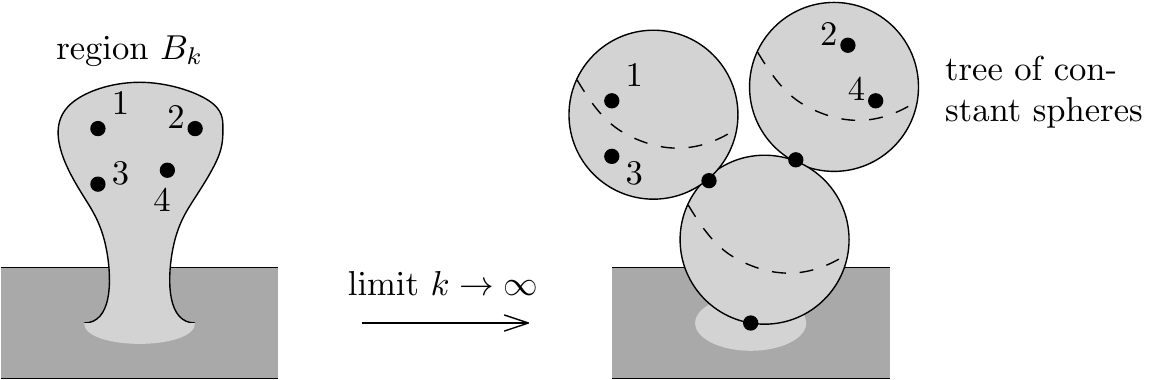}
\caption{\label{fig:constant-bubble}Constant spheres leading to higher tangencies, see Lemma \ref{th:constant-bubbles} (again, this is not the simplest possible example of such behaviour).}
\end{centering}
\end{figure}

Via transversality techniques from \cite{cieliebak-mohnke07} applied to the disc component of $S^v$, Lemma \ref{th:constant-bubbles} implies that constant sphere bubbles are a codimension $2$ phenomenon.

\subsection{Non-constant sphere bubbles}
At this point, we need to add a partial Calabi-Yau condition:

\begin{setup} \label{th:calabi-yau}
We assume that $c_1(M)|\pi_2(M)= 0$.
\end{setup}

Note that then, $c_1(\Omega_M) = - \omega_M|\Omega_M$ on $\pi_2(\Omega_M)$. As a classical consequence:

\begin{lemma}
For generic $J$ (in the class from Setup \ref{th:ample}), the following are true. (i) Consider non-constant $J$-holomorphic spheres in $M$ which are not entirely contained in $\Omega_M$. Then, the subset of $M$ consisting of points lying on such a sphere is of codimension $\geq 4$ (which means, it is contained in the image of a map from a manifold of dimension $\leq 2n-4$ to $M$). Moreover, the subset of $\Omega_M$ consisting of points lying on the same kind of spheres (spheres that are not contained in $\Omega_M$) is of codimension $\geq 4$ inside $\Omega_M$ (contained in the image of a manifold of dimension $\leq 2n-6$). (ii) Consider non-constant $J$-holomorphic spheres contained in $\Omega_M$. Then, the subset of $\Omega_M$ consisting of points which lie on such a sphere is of codimension $\geq 6$ inside $\Omega_M$ (meaning, contained in the image of a map from a manifold of dimension $\leq 2n-8$ to $\Omega_M$).
\end{lemma}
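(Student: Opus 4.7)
The plan is to apply the standard Sard–Smale transversality machinery for moduli spaces of simple $J$-holomorphic spheres, combined with elementary dimension counts enabled by Setup~\ref{th:calabi-yau}, and then invoke the multiple-cover factorization (every non-constant $J$-holomorphic sphere is a multiple cover of a simple one, so its image is contained in that of a simple sphere). I will fix a homology class $A$, consider the universal moduli space of somewhere-injective $J$-holomorphic spheres with one marked point in class $A$, and intersect the Baire set of regular $J$ over all countably many classes and variants.

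For part (i), let $\scrM^*(A,J)$ denote the unparametrized simple moduli space in $M$ with one marked point. Its expected dimension is $2n - 6 + 2c_1(A) + 2 = 2n - 4 + 2c_1(A)$, which the Calabi–Yau hypothesis collapses to $2n-4$. Evaluation at the marked point therefore has image of dimension $\leq 2n-4$, and the first statement follows from the multiple-cover reduction. For the second statement, I would additionally arrange the one-pointed evaluation map to be transverse to $\Omega_M$; this is standard because perturbations of $J$ off $\Omega_M$ give enough freedom at points $u(z)\notin\Omega_M$, while perturbations of $J$ along $\Omega_M$ (preserving $T\Omega_M$, as in Setup~\ref{th:ample}) handle points with $u(z)\in\Omega_M$. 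The preimage of $\Omega_M$ is then a submanifold of codimension $2$, giving image in $\Omega_M$ of dimension $\leq 2n-6$, hence codimension $\geq 4$ inside $\Omega_M$.

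For part (ii), spheres contained in $\Omega_M$ are precisely $(J|\Omega_M)$-holomorphic spheres there, and Setup~\ref{th:ample} allows arbitrary compatible perturbations of $J|\Omega_M$. From the adjunction identity $c_1(\Omega_M) = -\omega_M|\Omega_M$ on $\pi_2(\Omega_M)$, noted just after Setup~\ref{th:calabi-yau}, every non-constant class $A \in H_2(\Omega_M)$ represented by a $J$-holomorphic sphere satisfies $c_1(A) = -\omega_M(A) \leq -1$. The one-pointed unparametrized simple moduli in $\Omega_M$ then has dimension $2(n-1) - 6 + 2c_1(A) + 2 = 2n - 6 + 2c_1(A) \leq 2n - 8$, so its evaluation image inside $\Omega_M$ has codimension $\geq 6$, as claimed.

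The main technical obstacle is verifying that the transversality arguments go through subject to the constraint that $J$ makes $\Omega_M$ almost complex. This constraint splits the variations of $J$ into those tangent to $\Omega_M$ (acting on the normal bundle) and those fixing $\Omega_M$ (acting on $J|\Omega_M$), and at any somewhere-injective point not lying on $\Omega_M$ the normal-direction perturbations already suffice for the usual McDuff–Salamon surjectivity argument, while for spheres inside $\Omega_M$ only the tangential variations matter, and these form the full space of compatible almost complex structures on $\Omega_M$; in either case, the classical argument applies. The countable union over homology classes is harmless, since a countable union of images of manifolds of dimension $\leq N$ is itself contained in the image of a single (disjoint-union) manifold of the same bound on dimension, which is exactly the form of ``codimension'' claimed in the lemma. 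Finally, the countably many transversality requirements (regularity of $\scrM^*(A,J)$ in $M$, transversality of evaluation to $\Omega_M$, and regularity of $\scrM^*(A,J|\Omega_M)$ in $\Omega_M$) can be imposed simultaneously because a countable intersection of Baire sets is Baire.
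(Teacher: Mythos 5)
The paper itself offers no proof of this lemma, introducing it with ``As a classical consequence'' and leaving the verification to the reader; so there is no in-paper argument to compare against. Your proof is the correct standard argument that the paper is implicitly invoking: the Calabi--Yau condition $c_1(M)|\pi_2(M)=0$ collapses the expected dimension of simple one-pointed spheres in $M$ to $2n-4$; the adjunction relation $c_1(\Omega_M)=-\omega_M$ on $\pi_2(\Omega_M)$ forces $c_1(\Omega_M)(A)\le -1$ for any non-constant class, dropping the expected dimension in $\Omega_M$ to $\le 2n-8$; the intermediate bound $\le 2n-6$ for spheres of type (i) meeting $\Omega_M$ comes from making the one-pointed evaluation transverse to $\Omega_M$; multiple covers are handled by passing to the underlying simple sphere, which has the same image; and the countable union over homology classes and the restriction to almost complex structures preserving $\Omega_M$ are treated exactly as in the stabilizing-divisor transversality arguments of Cieliebak--Mohnke and Sheridan. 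Your handling of the constrained perturbations of $J$ (normal directions off $\Omega_M$ for curves not contained in $\Omega_M$, tangential variations giving the full space of compatible structures on $\Omega_M$ for curves inside it) is precisely the point that needs saying, and you say it correctly. No gaps.
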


This is not strictly speaking enough for us; instead, we will use a mild generalization to families of almost complex structures. The outcome is that if we consider limits in which some $S^v$ contains a non-constant tree of spheres (meaning, a tree of spheres at least one of which is not constant), then that phenomenon is again of codimension $\geq 2$ in our moduli spaces. This completes our discussion of transversality for the compactification: for generic choices, the only codimension $\leq 1$ degenerations are those where the surface $S$ splits into two punctured discs (one of which can be a Floer strip), with no other irreducible components.

\subsection{Completing the definition\label{subsec:define-fukaya}}
At this point, we add further specifics:

\begin{setup} \label{th:branes}
We assume that $c_1(M) = 0$, and fix a trivialization of the canonical bundle $K_M$ (for some compatible almost complex structure). We only allow Lagrangian submanifolds which (in addition to the previous assumptions) are graded, and which come equipped with {\em Spin} structures. 
\end{setup}

Such Lagrangian submanifolds will be the objects of the relative Fukaya category, which is a $\bZ$-graded curved $A_\infty$-category, denoted here simply by $\scrB_q$. The morphism spaces are Floer complexes,
\begin{equation}
\hom_{\scrB_q}(L_0,L_1) = \mathit{CF}^*(L_0,L_1; H_{L_0,L_1})[[q]] = \bigoplus_x \bZ_x[[q]],
\end{equation}
where $\bZ_x \iso \bZ$ is the orientation space associated to a chord $x$. The $A_\infty$-structure is 
\begin{equation} \label{eq:mu-q}
\mu_{\scrB_q}^d(x_d,\dots,x_1) = \sum_{m \geq 0} \pm \# (\scrR^{d+1;m}(x_0,\dots,x_d)/\mathit{Sym}_m) \, q^m x_0.
\end{equation}
Here, $\pm \#$ is the usual signed count of points in zero-dimensional moduli spaces.

\subsection{The diagonal class\label{subsec:diagonal-class}}
So far, we have only used Riemann surfaces with one negative end. The next step beyond that would be as follows. Take, for some $k,l \geq 0$,
\begin{equation} \label{eq:z-surface}
\begin{aligned}
&
\bar{S} = (\bR \times [0,1]) \cup \{\pm\infty\}, \;\; \Sigma_- = \{\zeta_0,\zeta_{k+1}\}, \;\;
\Sigma_+ = \{\zeta_1,\dots,\zeta_k,\zeta_{k+2},\dots,\zeta_{k+l+1}\},
\\
&
\qquad \text{ where } \left\{
\begin{aligned}
& \zeta_0 = -\infty,\\ & \zeta_1,\dots,\zeta_k \in \bR \times \{0\} \;\text{in increasing order of real parts, } \\
& \zeta_{k+1} = +\infty, \\ & \zeta_{k+2},\dots,\zeta_{k+l+1} \in \bR \times \{1\} \;\text{in decreasing order of real parts.}
\end{aligned}
\right.
\end{aligned}
\end{equation}
As before, we also want to have interior marked points $\{\xi_1,\dots,\xi_m\}$. 
\begin{figure}
\begin{centering}
\includegraphics[scale=0.8]{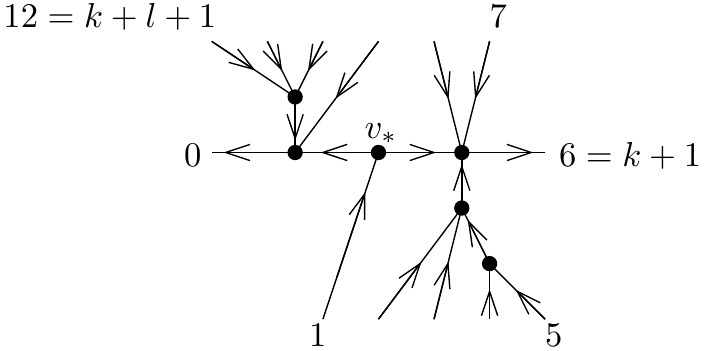}
\caption{\label{fig:orient-edge}A tree parametrizing a stratum of $\bar\scrR^{6,7;m}$.}
\end{centering}
\end{figure}%
The relevant moduli space $\scrR^{k+1,l+1;m}$ parametrizes the choices of $\zeta$'s and $\xi$'s, but without dividing by $\bR$-translation, which means that
\begin{equation}
\mathrm{dim}(\scrR^{k+1,l+1;m}) = k+l+2m.
\end{equation}
The compactification $\bar\scrR^{k+1,l+1;m}$ can be constructed inside the real part of the Fulton-MacPherson space of ordered configurations of $k+l+2+2m$ points on $\bC P^1$. Combinatorially, its strata are parametrized by planar trees $T$ with $(k+l+2)$ semi-infinite edges, which are numbered compatibly with their cyclic ordering. The $0$-th and $(k+1)$-st semi-infinite edge are declared to be roots (called the $0$-root and $(k+1)$-root), and the others leaves.  As before, this tree comes with a decomposition of $\{1,\dots,m\}$ into subsets $\Xi^v$. Additionally, we single out a vertex $v_*$, which must lie on the path from one root to the other. The choice of $v_*$ determines a preferred orientation of all edges in $T$, as indicated in Figure \ref{fig:orient-edge}. We can write the valence of $v_*$ as $|v_*| = k_* + l_* + 2$, where $k_*$ is the number of incoming edges which, in the cyclic ordering, come after the edge going to the $0$-root but before the edge going to the $(k+1)$-root. Finally, the condition \eqref{eq:valence-inequality} applies to all vertices $v \neq v_*$. Geometrically, to each $v \neq v_*$ we associate a disc with $|v|$ boundary punctures, possibly with trees of spheres attached to it, as in our previous description of $\bar\scrR^{d+1,m}$; and to $v_*$ a surface \eqref{eq:z-surface} with $(k_*,l_*)$ instead of $(k,l)$, again with additional spherical irreducible components. All those surfaces come with smooth interior points labeled by $\Xi^v$. Note that for $v_*$, we again do not identify surfaces which differ by an $\bR$-translation. Correspondingly, the usual no-automorphism stability condition applies to all irreducible components except for the principal component of $S^{v_*}$. Figure \ref{fig:double-r-example} shows the two-dimensional space $\bar\scrR^{1,1;1}$.
\begin{figure}
\begin{centering}
\includegraphics[scale=0.8]{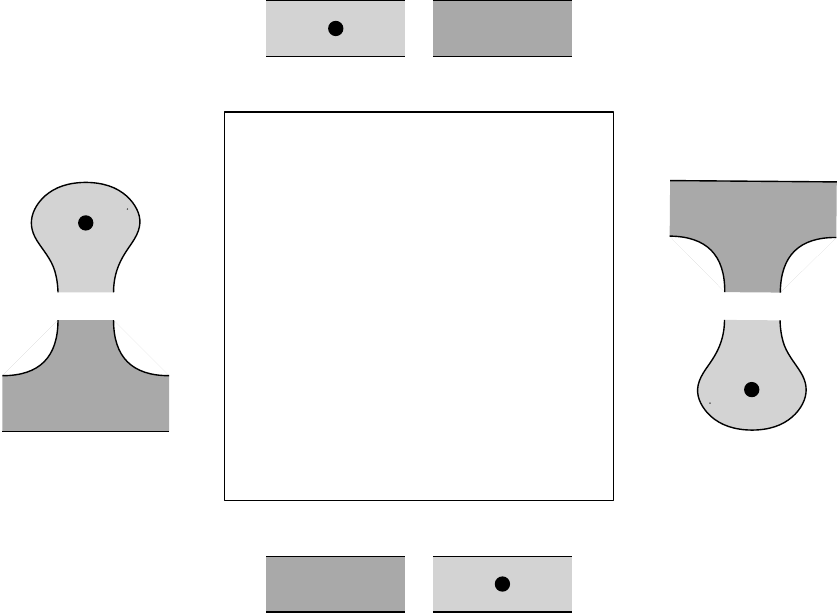}
\caption{\label{fig:double-r-example}The space $\bar\scrR^{1,1;1}$. The darker shaded parts are the surfaces corresponding to the distinguished vertex $v_*$.}
\end{centering}
\end{figure}%

\begin{remark}
These moduli spaces of Riemann surfaces (leaving interior points aside) are not new. They appear whenever one tries to construct a homomorphism of $A_\infty$-bimodules geometrically, for instance in the ``two-pointed closed-open string map'' from \cite[Section 5.6]{ganatra12}.
\end{remark}

Given a surface \eqref{eq:z-surface}, we label the boundary components with Lagrangians $L_0,\dots,L_{k+l+1}$. Fix appropriate data as in Setup \ref{th:ends-etc}. Given that and the interior marked points, we can consider solutions of associated Cauchy-Riemann equation \eqref{eq:cauchy-riemann} satisfying \eqref{eq:adjacency-conditions}. In the simplest case $k = l = m = 0$, the surface is an infinite strip, and what we have is equivalent to a continuation map equation relating $H_{L_0,L_1,t}$ and $-H_{L_0,L_1,1-t}$. Algebraically, the outcome is an element
\begin{equation} \label{eq:floer-delta}
\begin{aligned}
 \delta_{L_0,L_1} & \, \in \big( \mathit{CF}^*(L_0,L_1;H_{L_0,L_1})
\otimes \mathit{CF}^*(L_1,L_0;H_{L_1,L_0}) \big)^n \\
& = \mathit{Hom}\big(\mathit{CF}^*(L_1,L_0;H_{L_1,L_0})^\vee, \mathit{CF}^*(L_0,L_1;H_{L_0,L_1})\big)^n
\\ & = \mathit{Hom}\big( \mathit{CF}^*(L_0,L_1;-H_{L_0,L_1,1-t}), \mathit{CF}^*(L_0,L_1;H_{L_0,L_1,t}) \big),
\end{aligned}
\end{equation}
which is a cocycle for the differential induced by the Floer differentials. To clarify, the Floer differentials involved here, and the definition of \eqref{eq:floer-delta}, use only holomorphic curves in the complement of $\Omega_M$ (coming from surfaces with no interior marked points). To get to the last line of \eqref{eq:floer-delta}, we have used Poincar{\'e} duality in Floer theory. It is a standard fact about continuation maps that:

\begin{lemma} \label{th:delta-continuation}
If we think of $\delta_{L_0,L_1}$ as a map between Floer chain complexes, as in the second or third line of \eqref{eq:floer-delta}, then it is a quasi-isomorphism.
\end{lemma}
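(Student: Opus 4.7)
The strategy is to identify $\delta_{L_0,L_1}$, in the third-line formulation of \eqref{eq:floer-delta}, with a standard continuation map between Floer complexes for the two sets of Floer data $(-H_{L_0,L_1,1-t},J_{L_0,L_1,1-t})$ and $(H_{L_0,L_1,t},J_{L_0,L_1,t})$, and then to quote the well-known fact that such continuation maps are quasi-isomorphisms.

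\emph{Step 1: reinterpret the defining moduli space.} The element $\delta_{L_0,L_1}$ is defined by counting isolated solutions of \eqref{eq:cauchy-riemann}--\eqref{eq:cauchy-riemann-explicit} on the strip $\bR \times [0,1]$ with boundary components labelled by $L_0$ (bottom) and $L_1$ (top), both punctures $\pm\infty$ being \emph{negative} ends. The data $(K,J)$ on the strip are asymptotic at the two ends to the prescribed Floer data for the ordered pairs $(L_0,L_1)$ and $(L_1,L_0)$ respectively. The tautological output of such a count lies in $\mathit{CF}^*(L_0,L_1;H_{L_0,L_1})\otimes \mathit{CF}^*(L_1,L_0;H_{L_1,L_0})$, matching the first line of \eqref{eq:floer-delta}. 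Applying the strip-flip $(s,t)\mapsto (-s,1-t)$ to the $+\infty$ half of the strip swaps the Lagrangian labels and reverses the sign of the Hamiltonian via \eqref{eq:transformed-data}, converting that end into a \emph{positive} end whose data converge to $(-H_{L_0,L_1,1-t},J_{L_0,L_1,1-t})$ (this is the geometric incarnation of Poincaré duality in Floer theory). After this reparametrization the strip equation becomes a standard continuation-type equation \eqref{eq:cont-map-equation} on $\bR \times [0,1]$ interpolating between $(-H_{L_0,L_1,1-t},J_{L_0,L_1,1-t})$ at $s\to -\infty$ and $(H_{L_0,L_1,t},J_{L_0,L_1,t})$ at $s\to +\infty$, and the count is precisely that of a continuation map.

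\emph{Step 2: invoke the standard continuation argument.} Given any two regular Floer data on the same pair $(L_0,L_1)$, a continuation map between them is a quasi-isomorphism. The usual proof is by composing with a continuation map in the reverse direction: the composite, via a one-parameter family of continuation equations joining a constant interpolation to the concatenation of the two given ones, is chain homotopic to the identity (the constant interpolation gives the identity on the nose). Applied to our situation, the continuation map obtained in Step~1 has a homotopy inverse and hence is a quasi-isomorphism.

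\emph{Step 3: compatibility.} What remains is bookkeeping: one must verify that the Poincaré duality identification of the third line of \eqref{eq:floer-delta} with the map produced in Step~1 is itself an isomorphism of chain complexes (up to signs that have already been accounted for in the conventions of Section~\ref{sec:algebra}). This is a direct comparison of orientations and asymptotic behaviours.

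\emph{Main obstacle.} Nothing here is deep; the only delicate point is the sign/orientation matching under the strip-flip used to realize Poincaré duality and to turn one negative end into a positive one. Once this identification is in place the result is standard, so Step~1 is really the only substantive part of the argument.
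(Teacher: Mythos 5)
Your argument follows the paper's own reasoning exactly: the paper already observes, just before the lemma, that the $k=l=m=0$ moduli problem ``is equivalent to a continuation map equation relating $H_{L_0,L_1,t}$ and $-H_{L_0,L_1,1-t}$'' (via the Poincaré duality / strip-flip identification), and then declares the lemma ``a standard fact about continuation maps.'' Your Steps 1--3 simply spell out what the paper is referring to, so the proposal is correct and takes essentially the same approach.
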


For the general construction, we need to make auxiliary choices consistently over all $\scrR^{k+1,l+1;m}$, in exactly the same way as before. This gives rise to moduli spaces $\scrR^{k+1,l+1;m}(x_0,\dots,x_{k+l+1})$. For generic choices, counting points in the zero-dimensional moduli spaces, as in \eqref{eq:mu-q}, yields
\begin{equation} \label{eq:psi-components}
\begin{aligned}
& \delta^{l,1,k}_q \in 
\mathit{Hom}\big( \mathit{CF}^*(L_{k+l},L_{k+l+1}) \otimes \cdots \otimes \mathit{CF}^*(L_{k+1},L_{k+2}) 
\otimes \mathit{CF}^*(L_{k+1},L_k)^\vee \\ & \qquad \qquad  \qquad \otimes 
\mathit{CF}^*(L_{k-1},L_k) \otimes \cdots \otimes \mathit{CF}^*(L_0,L_1), 
\mathit{CF}^*(L_0,L_{k+l+1}) \big)^{n-k-l}[[q]].
\end{aligned}
\end{equation}
Here, the choices of Hamiltonians have been suppressed for brevity. The $\delta^{l,1,k}_q$ satisfy equations governed by the codimension $1$ strata of the compactification $\bar\scrR^{k+1,l+1;m}(x_0,\dots,x_{k+l+1})$. These can be written in terms of \eqref{eq:quadratic-hochschild-differential} as $(d\delta_q)^{l,1,k} = 0$. The conclusion is that the collection of all \eqref{eq:psi-components} yields a cocycle
\begin{equation} \label{eq:psi-class}
\delta_q \in \mathit{CC}^n(\scrB_q, 2).
\end{equation}
By definition, $\delta_q$ constitutes a bimodule map $\scrB_q^\vee[-n] \rightarrow \scrB_q$, and Lemma \ref{th:delta-continuation} says that it is a filtered quasi-isomorphism. We have therefore proved the following (well-known) weak Calabi-Yau property of the relative Fukaya category:

\begin{cor}
The diagonal bimodule $\scrB_q$ and its shifted dual $\scrB_q^\vee[-n]$ are filtered quasi-isomorphic.
\end{cor}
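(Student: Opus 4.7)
The construction preceding the corollary already produces the candidate bimodule map $\delta_q \in \mathit{CC}^n(\scrB_q,2)$, so the plan is simply to verify that (i) it is a cocycle, and (ii) its $q=0$ reduction is a quasi-isomorphism; then Lemma \ref{th:bimodule-quasi-iso-2} supplies the homotopy inverse. For (i), the cocycle equation $(d\delta_q)^{l,1,k} = 0$ in \eqref{eq:quadratic-hochschild-differential} is the algebraic shadow of the usual gluing/degeneration analysis at the codimension-one boundary strata of $\bar\scrR^{k+1,l+1;m}(x_0,\dots,x_{k+l+1})$: as recorded after \eqref{eq:psi-components}, for generic auxiliary data the only such strata are splittings of the surface into two punctured discs (possibly with a Floer strip component), and the signed count of the resulting broken configurations matches each of the four families of terms in \eqref{eq:quadratic-hochschild-differential}, with the two ``special'' trees adjacent to $v_*$ producing the type-(i) and type-(iv) terms that distinguish $\mathit{CC}^*(\scrA,2)$ from ordinary Hochschild cochains. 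The signs here are routine but tedious; I would simply appeal to the general framework already set up in Section \ref{sec:algebra}.

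For (ii), it suffices to show that the $(0,1,0)$-component of $\delta_q$, reduced mod $q$, is a chain-level quasi-isomorphism $\mathit{CF}^*(L_1,L_0)^\vee \to \mathit{CF}^*(L_0,L_1)[-n]$ for every pair of objects $(L_0,L_1)$. Setting $q=0$ kills all contributions from surfaces with $m \geq 1$ interior marked points, so only the moduli space $\scrR^{1,1;0}$ survives. The corresponding surface \eqref{eq:z-surface} with $k=l=0$ is an infinite strip with punctures only at $\pm\infty$, and, in line with \eqref{eq:floer-delta}, the Cauchy-Riemann equation \eqref{eq:cauchy-riemann} becomes a continuation map equation between $H_{L_0,L_1,t}$ and $-H_{L_0,L_1,1-t}$. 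Thus $\delta^{0,1,0}_q|_{q=0}$ is precisely the element $\delta_{L_0,L_1}$ from \eqref{eq:floer-delta}, and Lemma \ref{th:delta-continuation} applies to give the desired quasi-isomorphism.

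Combining (i) and (ii), $\delta_q$ is a filtered quasi-isomorphism of $\scrB_q$-bimodules, and Lemma \ref{th:bimodule-quasi-iso-2} yields a homotopy inverse in the dg category of $\scrB_q$-bimodules. The main step that is not entirely mechanical is the sign-checking in (i), i.e.\ matching the geometric boundary contributions on $\bar\scrR^{k+1,l+1;m}$ with the four types of terms in \eqref{eq:quadratic-hochschild-differential}, especially the type-(iv) terms involving the canonical pairing $\langle a_{k+1}^\vee,\cdot\rangle$, which correspond to the Floer strip being glued at the distinguished vertex $v_*$ on the side opposite to the two negative ends; everything else is a direct transcription of items already established in this section.
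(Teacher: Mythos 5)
Your proposal is correct and follows the paper's own route: the cocycle equation comes from the codimension-one boundary analysis of $\bar\scrR^{k+1,l+1;m}$, and the filtered quasi-isomorphism property reduces to the $q=0$, $(0,1,0)$-component being $\delta_{L_0,L_1}$ from \eqref{eq:floer-delta}, for which Lemma \ref{th:delta-continuation} applies. The appeal to Lemma \ref{th:bimodule-quasi-iso-2} for a homotopy inverse is a harmless extra; the statement ``filtered quasi-isomorphic'' already holds once the filtered quasi-isomorphism $\delta_q$ is produced.
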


\subsection{Sign issues for the diagonal class}
Let's outline how one gets from geometry to the signs in \eqref{eq:quadratic-hochschild-differential}. We use the same orientations of the Stasheff moduli spaces $\scrR^{d+1}$ as in \cite[Section 12g]{seidel04}. For the spaces $\scrR^{k+1,l+1}$, we choose the orientation given by (the real parts of) $(\zeta_1,\dots,\zeta_k,-\zeta_{k+2},\dots,-\zeta_{k+l+1})$, in this order.
\begin{figure}
\begin{centering}
\includegraphics[scale=0.8]{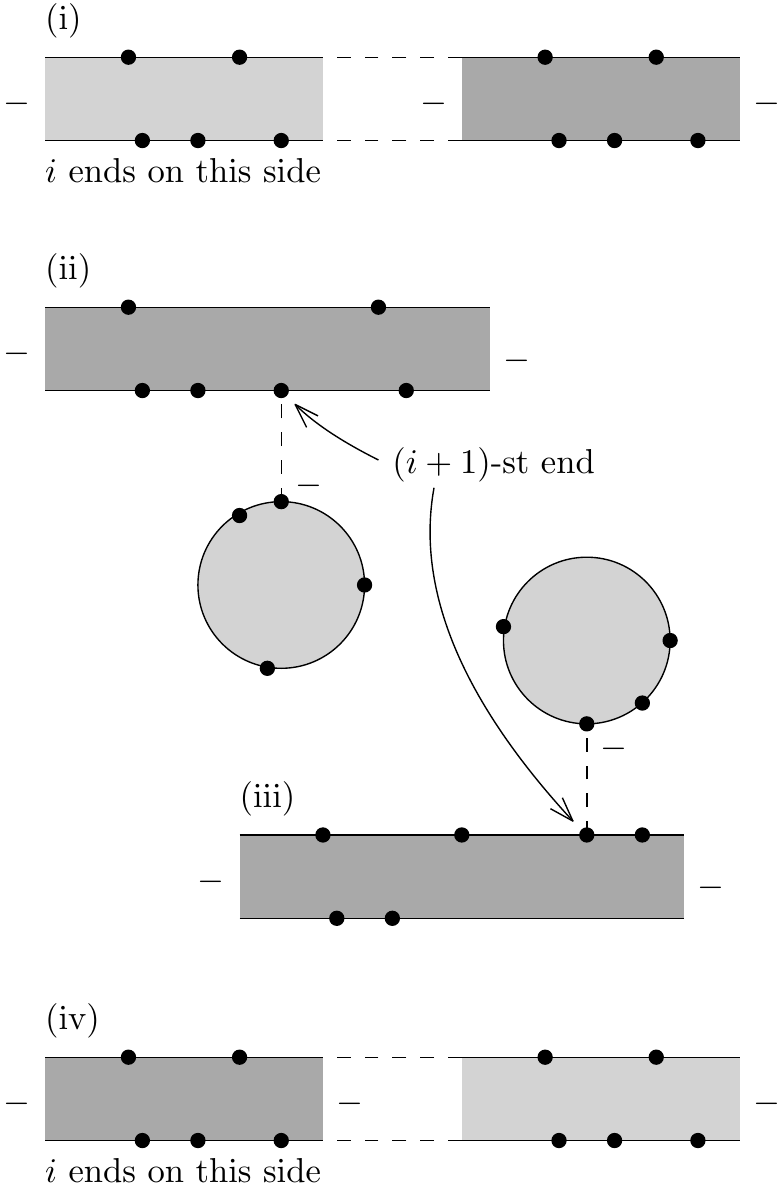}
\caption{\label{fig:4-bubbles}Codimension one boundary faces in $\bar\scrR^{k+1,l+1}$. 
As in Figure \ref{fig:double-r-example}, the darker shading singles out the components corresponding to $v_*$. Those are always drawn so that $\zeta_0$ is on the left and $\zeta_{k_*+1}$ on the right, corresponding to the coordinates in \eqref{eq:z-surface}. We have drawn some ends as boundary punctures, to simplify the picture. Negative ends are marked with a $-$.}
\end{centering}
\end{figure}

\begin{lemma} \label{th:codim-1}
The orientations of codimension one boundary strata in $\bar\scrR^{k+1,l+1}$ induced by their natural product structure differ from those coming from the orientation of the interior by a sign $(-1)^\ast$, where, following the notation from \eqref{eq:quadratic-hochschild-differential} and Figure \ref{fig:4-bubbles}: 

(i) the stratum is $\scrR^{k+l-j+i+3} \times \scrR^{k-i+1,j-k}$, and $\ast_{(i)} = 
(i+1)(j+k+l) + j(k+l) + 1$; \newline
(ii) the stratum is $\scrR^{k-j+2,l+1} \times \scrR^{j+1}$, and $\ast_{(ii)} = j(i+k+l) + i + j$; \newline
(iii) the stratum is $\scrR^{j+1} \times \scrR^{k+1,l-j+2}$, and 
$\ast_{(iii)} = j(i+k+l) + i + 1$;
\newline
(iv) the stratum is $\scrR^{i+1,k+l-i-j+2} \times \scrR^{j+1}$, and 
$\ast_{(iv)} = jl + j + k + 1$.
\end{lemma}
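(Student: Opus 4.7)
The statement is purely combinatorial orientation bookkeeping on $\bar\scrR^{k+1,l+1}$, and I would treat each of the four boundary faces by the same three-step procedure. Near each codimension-one stratum, pick a gluing parameter $\rho \in [0,\varepsilon)$ collapsing the bubble in question (a strip-like neck length for type~(i); the standard boundary-node smoothing for types~(ii)--(iv)), and complete it to local coordinates by the natural position coordinates on the two product factors. Since $\rho$ points inward, the induced boundary orientation is $o_{\partial} = -\,\iota_{\partial_\rho} o_{\mathrm{int}}$. The entire computation then reduces to expressing
\[
o_{\mathrm{int}} = d\zeta_1 \wedge \cdots \wedge d\zeta_k \wedge (-d\zeta_{k+2}) \wedge \cdots \wedge (-d\zeta_{k+l+1})
\]
in the form $d\rho \wedge o_1 \wedge o_2$, where $o_1, o_2$ are the preferred orientations on the two factors, coming from Seidel's Stasheff convention \cite[Section 12g]{seidel04} on any $\scrR^{j+1}$ factor and inductively from the same definition on any nested $\scrR^{k'+1,l'+1}$ factor.

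For types (ii), (iii), (iv) a single Stasheff piece $\scrR^{j+1}$ bubbles off along the boundary. I would identify the $j$ boundary punctures carried by the bubble (e.g.\ $\zeta_{i+1},\dots,\zeta_{i+j}$ for type~(ii)), use $\rho$ as a transverse coordinate to the normalized position of one of them, and read off the remaining $\zeta$'s as coordinates on the parent $\scrR^{k-j+2,l+1}$ (or $\scrR^{k+1,l-j+2}$, or $\scrR^{i+1,k+l-i-j+2}$). The permutation sign needed to bring the wedge into the target form $d\rho \wedge o_1 \wedge o_2$, combined with the $-1$ from the outward-normal convention, the overall $(-1)^l$ introduced by the negative signs in $o_{\mathrm{int}}$, and the signed discrepancy between Seidel's Stasheff orientation on $\scrR^{j+1}$ and the naïve left-to-right order of its boundary marked points, is the desired $(-1)^{\ast_{(\cdot)}}$. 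For type~(i) the piece that separates is itself of type $\scrR^{k-i+1,j-k}$, so $o_2$ is the inductive orientation rather than a Seidel one; otherwise the analysis is identical.

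\textbf{Main obstacle.} The only delicate case is type~(iv), where the bubble sits at the distinguished root $\zeta_{k+1} = +\infty$: the coordinates $\zeta_{k+2},\dots,\zeta_{i+j+1}$, which carry the $(-1)$-twist from the definition of $o_{\mathrm{int}}$, must be reshuffled past the $\zeta_{i+1},\dots,\zeta_k$ coordinates, and one must also reconcile how the dualized slot of $\mathit{CC}^*(\scrA,2)$ permutes into the cyclic order of the new parent surface; this is where the extra contributions $jl + j + k$ in $\ast_{(iv)}$ arise. The other three cases are easier variants of the same calculation. As a final sanity check I would verify that the signed count of the codimension-one boundary of the compactified moduli spaces reproduces the identity $(d\delta_q)^{l,1,k} = 0$ via \eqref{eq:quadratic-hochschild-differential} and \eqref{eq:dag-signs} exactly; any leftover discrepancy would localize an error to a single $\ast_{(\cdot)}$, so this provides both motivation and verification in one stroke.
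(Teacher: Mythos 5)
The paper does not actually carry out the computation; it states that the calculations are elementary and merely suggests exploiting the symmetry $\bar\scrR^{k+1,l+1} \iso \bar\scrR^{l+1,k+1}$ (which changes orientations by $(-1)^{kl}$ and swaps cases (i)~$\leftrightarrow$~(iv), (ii)~$\leftrightarrow$~(iii)) to halve the work. Your framework --- write $o_{\mathrm{int}} = d\rho \wedge o_1 \wedge o_2$ using a gluing/neck-length parameter $\rho$ as the inward normal, with $o_{\partial} = -\iota_{\partial_\rho} o_{\mathrm{int}}$, then read off the permutation and twist signs --- is indeed the natural one, and is what the paper is implicitly relying on. Two remarks on your discussion of case~(iv), though. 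First, a small indexing slip: the bubble's upper-boundary punctures are $\zeta_{k+2},\dots,\zeta_{i+j}$, not $\zeta_{k+2},\dots,\zeta_{i+j+1}$ (that last one remains on the parent surface), as one reads off from the argument list of $\mu^j_{\scrA}(a_k,\dots,a_{i+1},\bullet,a_{i+j},\dots,a_{k+2})$ in \eqref{eq:quadratic-hochschild-differential}. Second, and more substantively: your claim that the contributions $jl+j+k$ in $\ast_{(iv)}$ partly arise from ``how the dualized slot of $\mathit{CC}^*(\scrA,2)$ permutes into the cyclic order of the new parent surface'' conflates two logically separate steps. Lemma \ref{th:codim-1} is a statement purely about orientations of moduli spaces of Riemann surfaces; no category $\scrA$, no $\mathit{CC}^*$, no dualized slot enters. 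All of $\ast_{(iv)}$ must come from the geometric reshuffling: the bubble carries lower punctures $\zeta_{i+1},\dots,\zeta_k$ and ($-$twisted) upper punctures $\zeta_{k+2},\dots,\zeta_{i+j}$ together with the root $\zeta_{k+1}$, and one must both interleave these into the single cyclic ordering used for the Stasheff orientation of $\scrR^{j+1}$ and slide that whole block past the parent's remaining coordinates. The dualized slot, the $\bZ[-n]$ factor, and the Koszul and $\ddag$ signs are then combined with $\ast_{(iv)}$ in the subsequent subsection to recover $\dag_{(iv)}$; keeping those two layers separate is what makes the bookkeeping tractable, and mixing them into the statement of this lemma invites confusion. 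Finally, your proposed consistency check against $(d\delta_q)^{l,1,k}=0$ is legitimate and is exactly what the paper does for case~(iv), but note that the paper emphasizes the splitting-independent leftover term \eqref{eq:koszul-5} as the tell-tale that the four cases are mutually consistent --- you should confirm that your direct computation reproduces that feature.
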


We will not reproduce the calculations here, because of their elementary nature. (Note that there is a symmetry $\bar\scrR^{k+1,l+1} \iso \bar\scrR^{l+1,k+1}$, which identifies orientations up to $(-1)^{kl}$, and swaps the four cases in pairs; one can use that to simplify the computation.) Interior marked points and the resulting constraints don't really affect sign issues, because they are of even real dimension, and hence we'll ignore them for the purpose of the present discussion, adopting simplified notation.

As before, we write $\bZ_x$ for the orientation space associated to a chord. More generally, given any map $u$ in the general class \eqref{eq:cauchy-riemann}, write $\bZ_u$ for the group isomorphic to $\bZ$, and such that a choice of isomorphism corresponds to an orientation of the space $\mathit{coker}(D_u) \oplus \mathit{ker}(D_u)$. If $u$ is defined on a $(d+1)$-punctured disc, with limits $(x_0,\dots,x_d)$, linear gluing yields a preferred isomorphism \cite[Equation (12.2)]{seidel04}
\begin{equation} \label{eq:orientation-isomorphism}
\bZ_{x_0} \iso \bZ_u \otimes \bZ_{x_d} \otimes \cdots \otimes \bZ_{x_1}.
\end{equation}
In similar notation, let $\bZ_{\scrR^{d+1}}$ be the group such that a choice of isomorphism $\bZ_{\scrR^{d+1}} \iso \bZ$ corresponds to an orientation of that space. For an isolated regular point, we have $\bZ_{\scrR^{d+1}} \otimes \bZ_u \iso \bZ$ canonically, and therefore \eqref{eq:orientation-isomorphism} turns into
\begin{equation}
\bZ_{x_0} \iso \bZ_{\scrR^{d+1}}^\vee \otimes \bZ_{x_d} \otimes \cdots \otimes\bZ_{x_1}.
\end{equation}
This, together with our chosen orientation of the spaces $\scrR^{d+1}$, determines the contribution of each isolated point in $\scrR^{d+1}(x_0,\dots,x_d)$ to the $x_0$-coefficient in $\mu^d(x_d,\dots,x_1)$; except that one inserts an additional artificial sign $(-1)^{\ddag_d}$ \cite[Equation (12.24)]{seidel04}, with
\begin{equation} \label{eq:ddag-1}
\ddag_d = |x_1| + 2|x_2| + \cdots + d|x_d|.
\end{equation}
For maps defined on surfaces \eqref{eq:z-surface}, the analogue of \eqref{eq:orientation-isomorphism} is
\begin{equation} \label{eq:orientation-isomorphism-2}
\bZ_{x_0} \iso \bZ_{\scrR^{k+1,l+1}}^\vee \otimes \bZ_{x_{k+l+1}} \otimes \cdots \otimes (\bZ[-n] \otimes \bZ_{x_{k+1}}^\vee) \otimes \cdots \otimes \bZ_{x_1}.
\end{equation}
Here, $\bZ[-n]$ is just a copy of $\bZ$ with a grading determined by the dimension of our Lagrangian submanifolds; we insert that into the formula since it affects the Koszul signs. The isomorphism \eqref{eq:orientation-isomorphism-2} enters into the definition of $\delta^{l,1,k}$ as before, where the additional sign is $(-1)^{\ddag_{k,l}}$,
\begin{equation} \label{eq:ddag-2}
\begin{aligned}
\ddag_{k,l} & = n|x_1| + (n+1)|x_2| + \cdots + (n+k-1)|x_k| + (n+k)|x_{k+1}| \\
& \qquad + (k+1) |x_{k+2}| + \cdots + (k+l) |x_{k+l+1}| + k.
\end{aligned} 
\end{equation}

We will not fully discuss the proof that this yields the sign conventions in \eqref{eq:quadratic-hochschild-differential}, but we will consider the contribution from (iv) in Lemma \ref{th:codim-1}, which is the most complicated one. For such points in $\bar\scrR^{k+1,l+1}(x_0,\dots,x_{k+l+1})$, the relevant instance of \eqref{eq:orientation-isomorphism} is
\begin{equation}
\bZ_{x_{k+1}} \iso \bZ_{\scrR^{j+1}}^\vee \otimes \bZ_{x_k} \otimes \cdots \otimes \bZ_{x_{i+1}} \otimes \bZ_x \otimes \bZ_{x_{i+j}} \otimes \cdots \otimes \bZ_{x_{k+2}},
\end{equation}
where $|x| = -|x_{i+1}| - \cdots - |x_k| + |x_{k+1}| - |x_{k+2}| - \cdots - |x_{i+j}| - 2 + j$. One can convert that into the form
\begin{equation} \label{eq:x-1}
\bZ_x^\vee \iso \bZ_{x_{i+j}} \otimes \cdots \otimes \bZ_{x_{k+2}} \otimes \bZ_{x_{k+1}}^\vee \otimes \bZ_{\scrR^{j+1}}^\vee \otimes \bZ_{x_k} \otimes \cdots \otimes \bZ_{x_{i+1}};
\end{equation}
this picks up a Koszul sign given by the parity of
\begin{equation} \label{eq:koszul-1}
j+|x_{i+1}|+ \cdots + |x_k|.
\end{equation}
Similarly, the relevant instance of \eqref{eq:orientation-isomorphism-2} is
\begin{equation} \label{eq:x-2}
\bZ_{x_0} \iso \bZ_{\scrR^{i+1,k+l-i-j+2}}^\vee \otimes \bZ_{x_{k+l+1}} \otimes \cdots \otimes \bZ_{x_{i+j+1}} \otimes
\bZ[-n] \otimes \bZ_x^\vee \otimes \bZ_{x_i} \otimes \cdots \otimes \bZ_{x_1}.
\end{equation}
One inserts the expression for $\bZ_x^\vee$ from \eqref{eq:x-1} into \eqref{eq:x-2}, and then transforms the outcome into the form
\begin{equation}
\bZ_{x_0} \iso (\bZ_{\scrR^{i+1,k+l-i-j+2}} \otimes \bZ_{\scrR^{j+1}})^\vee \otimes \bZ_{x_{r+s+1}} \otimes \cdots \bZ[-n] \otimes \bZ_{x_{k+1}}^\vee \otimes \cdots \otimes \bZ_{x_1},
\end{equation}
at the cost of another Koszul sign, determined by
\begin{equation} \label{eq:koszul-2}
n(|x_{k+2}| + \cdots + |x_{i+j}|) + 
j\big(|x_{k+1}| + \cdots + |x_{k+l+1}| + n+k+l\big).
\end{equation}
To that one adds the signs from \eqref{eq:ddag-1} and \eqref{eq:ddag-2},
\begin{align}
\label{eq:koszul-3}
&
\begin{aligned}
\ddag_j & = |x_{k+2}| + 2|x_{k+3}| + \cdots + (i+j-k-1) |x_{i+j}| \\
& \qquad \qquad + (i+j-k)|x| + (i+j-k+1) |x_{i+1}| + \cdots + j |x_k|, 
\end{aligned}
\\ \label{eq:koszul-4}
&
\begin{aligned}
& \ddag_{i,k+l-i-j+1} = n|x_1| + \cdots + (n+i-1)|x_i| + (n+i)|x| \\
& \qquad \qquad + (i+1)|x_{i+j+1}| + \cdots + (k+l-j+1) |x_{k+l+1}| + i. 
\end{aligned}
\end{align}
Adding up \eqref{eq:koszul-1}, \eqref{eq:koszul-2}--\eqref{eq:koszul-4} and $\ast_{(iv)}$ yields, after some computation mod $2$,
%
\begin{equation} \label{eq:koszul-5}
\dag_{(iv)} + (\ddag_{k,l} + |x_1| + \cdots + |x_{k+l+1}| + n).
\end{equation}
The first summand is the desired sign from \eqref{eq:quadratic-hochschild-differential}. Crucially, the rest of the formula, which includes the expression $\ddag_{k,l}$ from \eqref{eq:ddag-2}, is independent of exactly what splitting we are considering (it depends only on $(k,l)$ and the $x_i$). The same splitting-independent sign also arises when considering other codimension one boundary strata, and hence is ultimately irrelevant; in other words, what we get from geometry is the cocycle equation $d\delta = 0$ from \eqref{eq:quadratic-hochschild-differential} multiplied by that sign.

Finally, we should mention that, in our main argument later on, a version of the geometric diagonal class construction with an added parameter appears. Unsurprisingly, the resulting algebraic equations are governed by the differential \eqref{eq:quadratic-hochschild-differential} in degree $(n-1)$ (one less than the degree of the diagonal class). We won't comment on that sign computation, since the ingredients remain the same.

\section{Linear Cauchy-Riemann equations\label{sec:plane}}

This section brings together some of the ideas from Sections \ref{sec:gluing} and \ref{sec:fukaya}, in a geometric setup which is elementary but will, later on, underpin our main argument. Concretely, we consider certain linear Cauchy-Riemann equations on surfaces \eqref{eq:z-surface}, and solutions which have (lowest possible) exponential growth on one end, in the same sense as in Corollary \ref{th:r-moves}.

\subsection{Index theory preliminaries\label{subsec:weights}}
We will be looking at equations \eqref{eq:cauchy-riemann} in the toy model case where: $M = \bC$; all Lagrangians are $L = i\bR$; the standard complex structure is used; and the Hamiltonian functions involved are all multiples of $\half |w|^2$. In fact, for the Floer equation we want to make things even simpler, by choosing a $t$-independent
\begin{equation} \textstyle
H(w) = \frac{\alpha}{2} |w|^2, \;\;\;
\alpha\in (0,\pi). 
\end{equation}
The equation then becomes
\begin{equation} \label{eq:trivial-floer}
\left\{
\begin{aligned}
& u: \bR \times [0,1] \longrightarrow \bC, \\
& u(s,0), \; u(s,1) \in i\bR, \\
& \partial_s u + i\partial_t  u+ \alpha u = 0.
\end{aligned}
\right.
\end{equation}
This is linear, so the linearized operator $D$ is just its left hand side (replacing $u$ with $\Upsilon$, to stick with our general notation). Write $D = \partial_s + Q$, so that 
\begin{equation} \label{eq:model-q}
Q = i\partial_t +\alpha. 
\end{equation}
One can think of $Q$ as the selfadjoint operator associated to the unique chord $x = 0$. Its eigenvectors form the standard Fourier basis, labeled by $m \in \bZ$:
\begin{equation} \label{eq:lambda-m}
\left\{
\begin{aligned}
& \lambda_m = \pi m + \alpha, \\
& \Xi_m = i\exp(-\pi i m t).
\end{aligned}
\right.
\end{equation}

For more general surfaces \eqref{eq:s}, always assumed to come with a choice of ends, we take 
\begin{equation} \label{eq:elementary-k}
K = \beta \otimes \half|w|^2, \;\; \text{ where }
\left\{
\begin{aligned}
& \beta \in \Omega^1(S), \\
& \beta|\partial S = 0, \\
& \epsilon_\zeta^*\beta = \alpha\, \mathit{dt} \;\; \text{ for all $\zeta$.}
\end{aligned}
\right.
\end{equation}
The resulting Cauchy-Riemann equation
\begin{equation} \label{eq:plane-cr}
\left\{
\begin{aligned}
& u: S \longrightarrow \bC, \\
& u(\partial S) \subset i\bR, \\
& \bar\partial u - \beta^{0,1} iu = 0
\end{aligned}
\right.
\end{equation}
is again linear. As before, we denote the associated operator by $D$. In standard Sobolev spaces,
\begin{equation} \label{eq:c-index}
\mathrm{index}(D) = 1-|\Sigma_-|.
\end{equation}

\begin{lemma} \label{th:automatic-injectivity}
If $\mathrm{index}(D) \leq 0$, $D$ is injective.
\end{lemma}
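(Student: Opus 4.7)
The plan is to combine a Carleman-type similarity principle on $S$ with Schwarz reflection to a punctured sphere, and then invoke Liouville's theorem.

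First, since $S$ is simply connected, solve $\bar\partial\phi = i\beta^{0,1}$ on $S$. By adding a holomorphic function with prescribed imaginary boundary values (a classical Riemann--Hilbert/harmonic conjugate problem on a disc), one can arrange that $\phi|\partial S\subset\bR$. Setting $v := e^{-\phi}\Upsilon$, a direct calculation gives $\bar\partial v = 0$, and $e^{-\phi}|\partial S\in\bR_{>0}$ transfers the boundary condition to $v(\partial S)\subset i\bR$.

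Next I would read off the asymptotics of $v$ at each puncture from the eigenvalue data \eqref{eq:lambda-m}. In end-local coordinates the normalization $\epsilon_\zeta^*\beta=\alpha\,dt$ forces $\phi\sim -\alpha s$ up to a bounded correction, so $e^{-\phi}$ grows like $e^{\alpha s}$. Since $\alpha\in(0,\pi)$, the smallest positive eigenvalue of $Q$ is $\alpha$ and the largest negative eigenvalue is $\alpha-\pi$. At a positive puncture $\zeta\in\Sigma_+$ (where $s\to+\infty$) the decay of $\Upsilon$ gives $|\Upsilon|\lesssim e^{-\alpha s}$, hence $|v|=O(1)$; at a negative puncture $\zeta\in\Sigma_-$ (where $s\to-\infty$) the decay gives $|\Upsilon|\lesssim e^{(\pi-\alpha)s}$, hence $|v|\lesssim e^{\pi s}\to 0$. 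Thus $v$ is bounded at every puncture and in fact tends to zero at each negative puncture.

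Finally, double $S$ along $\partial S$ to form a punctured sphere $\hat S$, each boundary puncture of $S$ becoming a single interior puncture of $\hat S$. Extend $v$ to $\hat v$ on $\hat S$ via Schwarz reflection $\hat v(\iota(z))=-\overline{v(z)}$; the condition $v(\partial S)\subset i\bR$ makes the gluing continuous, and a short computation confirms $\hat v$ is holomorphic across $\partial S$. Because $\hat v$ is bounded near each puncture of $\hat S$, Riemann's removable-singularities theorem extends it to a holomorphic function on $\bC P^1$, and Liouville then forces $\hat v$ to be a constant. The hypothesis $\mathrm{index}(D)\le 0$ is equivalent to $|\Sigma_-|\ge 1$; at any puncture of $\hat S$ coming from $\Sigma_-$ the limit of $v$ is $0$, so the constant must vanish. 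Hence $\hat v\equiv 0$, $v\equiv 0$, and $\Upsilon\equiv 0$.

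The main obstacle is the first step: producing a $\phi$ with $\phi|\partial S\subset\bR$ while preserving the asymptotic profile $\phi\sim -\alpha s$ at each end. This is a linear elliptic boundary value problem on a disc-type domain, solvable by standard harmonic-conjugate methods, but one must check that the holomorphic correction used to achieve the Dirichlet condition on $\mathrm{Im}\,\phi$ does not spoil the exponential model behaviour dictated by the local equation near each puncture.
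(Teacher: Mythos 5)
Your proposal is correct and takes a genuinely different route from the paper's. The paper works directly with $\Upsilon$ on the original surface: it uses the asymptotic expansion to compute the winding number of the loop $t \mapsto \Upsilon(\epsilon_\zeta(s,t))^2/|\Upsilon(\epsilon_\zeta(s,t))|^2$ over each end (finding it $\leq 0$ at positive ends, $\geq 1$ at negative ends), then identifies the sum of these winding numbers with the nonnegative count of zeros of $\Upsilon^2$, deriving a contradiction with $\mathrm{index}(D) - 1 < 0$. This is an argument-principle computation that never requires converting $\Upsilon$ into a genuinely holomorphic function. You instead perform the conversion explicitly (with the additional requirement that the conformal factor be real-valued on $\partial S$), then globalize by Schwarz doubling to $\bC P^1$, remove the singularities, and conclude via Liouville plus the vanishing at a negative puncture. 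Both arguments rest on the same winding/zero dichotomy; the paper's version has the advantage of avoiding the auxiliary boundary-value problem for $\phi$, while yours packages the conclusion in a more familiar holomorphic-function-on-a-sphere form. (For what it's worth, the paper's own gauge transformation in the proof of Lemma \ref{th:linear-glue} deliberately allows the boundary condition to rotate along $\partial S$ rather than insisting on a real conformal factor, which is why the argument-principle formulation is the more economical one here.)

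The gap you flag is real but benign, and worth closing explicitly. Writing $\phi = \phi_{\mathrm{re}} + i\phi_{\mathrm{im}}$, the requirements $\bar\partial\phi = i\beta^{0,1}$ and $\phi_{\mathrm{im}}|\partial S = 0$ force $\phi_{\mathrm{im}}$ to solve a Poisson equation with Dirichlet data whose source involves first derivatives of $\beta$. Since $\epsilon_\zeta^*\beta = \alpha\,dt$ is constant on every end and $\beta|\partial S = 0$, that source vanishes outside a compact subset; a Fourier expansion on each half-strip then shows $\phi_{\mathrm{im}}$, and hence $\partial_t\phi_{\mathrm{im}}$, decays at rate at least $e^{-\pi|s|}$ on the ends. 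Since $S$ is simply connected, $\phi_{\mathrm{re}}$ is recovered by integrating $\partial_s\phi_{\mathrm{re}} = -\beta_t + \partial_t\phi_{\mathrm{im}} = -\alpha + O(e^{-\pi|s|})$, so $\phi_{\mathrm{re}} = -\alpha s + O(1)$ as needed. With that in place, your asymptotic bounds on $v = e^{-\phi}\Upsilon$ hold, the doubling and removable singularity steps go through, and the argument is complete.
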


\begin{proof}
We will only sketch the proof, since this is a familiar argument, compare e.g.\ \cite[Section 11d]{seidel04}. Suppose that we have a nonzero solution of $D\Upsilon = 0$. On a positive end, $\epsilon_\zeta^*\Upsilon = c_\zeta \exp(-\lambda_{m_\zeta} s) \Xi_{m_\zeta}(t) + \cdots$ for some $m_\zeta \geq 0$ and $c_\zeta \neq 0$. Hence, for $s \gg 0$, the loop 
\begin{equation}
t \longmapsto \Upsilon(\epsilon_\zeta(s,t))^2/|\Upsilon(\epsilon_\zeta(s,t))|^2
\end{equation}
has winding number $-m_\zeta \leq 0$. For a negative end we have $m_\zeta \leq -1$, hence the winding number is $-m_\zeta \geq 1$. Putting the two inequalities together yields
\begin{equation} \textstyle
\sum_{\zeta \in \Sigma_{\pm}} \mp m_\zeta \leq -|\Sigma_-| = \mathrm{index}(D)-1 < 0.
\end{equation}
That same number can be computed by adding up the multiplicities of zeros of $\Upsilon^2$ (with boundary points counting only half). Since this is nonnegative, we get a contradiction.
\end{proof}

We will also want to consider the same operators in weighted Sobolev spaces. Namely, take $\mu = (\mu_\zeta)_{\zeta \in \Sigma}$, where $\mu_\zeta \in \bZ$. As the domain, we take the space of those $\Upsilon$ such that $\exp(\pi \mu_\zeta s) \Upsilon(\epsilon_{\zeta}(s,t))$ lies in the standard $W^{1,2}$ space; the target space has the same modified version of $L^2$. (So, allowing exponential growth means taking $\mu_\zeta<0$ for $\zeta \in \Sigma_+$, respectively $\mu_\zeta > 0$ for $\zeta \in \Sigma_-$.) To mark the difference, we denote the weighted Sobolev completion by $D^\mu$. The counterpart of \eqref{eq:c-index} is
\begin{equation} \textstyle
\mathrm{index}(D^\mu) = 1-|\Sigma_-| + \sum_{\zeta \in \Sigma_{\pm}} \mp \mu_\zeta.
\end{equation}
By the same argument as before, one sees that:

\begin{lemma} \label{th:automatic-injectivity-2}
The statement of Lemma \ref{th:automatic-injectivity} also holds in weighted Sobolev spaces.
\end{lemma}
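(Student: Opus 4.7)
The plan is to mimic the proof sketch of Lemma \ref{th:automatic-injectivity}, adjusting the asymptotic analysis on the ends to reflect the exponential weights. Suppose for contradiction that $\Upsilon \neq 0$ lies in the kernel of $D^\mu$. On each end, $\Upsilon$ has a Fourier expansion
\begin{equation*}
\epsilon_\zeta^*\Upsilon(s,t) = \textstyle\sum_{m} c_{\zeta,m}\exp(-\lambda_m s)\Xi_m(t),
\end{equation*}
using the basis \eqref{eq:lambda-m}. The weighted integrability condition $\exp(\pi\mu_\zeta s)\epsilon_\zeta^*\Upsilon \in W^{1,2}$ translates (since $\lambda_m = \pi m + \alpha$ with $\alpha \in (0,\pi)$) into the requirement that only modes with $m \geq \mu_\zeta$ appear on a positive end, and only modes with $m \leq \mu_\zeta - 1$ appear on a negative end. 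Hence if $m_\zeta$ is the exponent of the leading nonzero term (which exists because $\Upsilon \neq 0$), we have $m_\zeta \geq \mu_\zeta$ at positive ends and $m_\zeta \leq \mu_\zeta - 1$ at negative ends.

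Next I carry over the winding-number calculation. For large $|s|$ on each end, the loop $t \mapsto \Upsilon(\epsilon_\zeta(s,t))^2/|\Upsilon(\epsilon_\zeta(s,t))|^2$ has winding number $-m_\zeta$. Summing these boundary contributions as in the unweighted case gives
\begin{equation*}
\textstyle \sum_{\zeta \in \Sigma_\pm} \mp m_\zeta \;\leq\; \sum_{\zeta \in \Sigma_+}(-\mu_\zeta) + \sum_{\zeta \in \Sigma_-}(\mu_\zeta - 1) = \mathrm{index}(D^\mu) - 1.
\end{equation*}
On the other hand, the same quantity computes (with boundary zeros counted with half-multiplicity) the total count of zeros of the holomorphic section $\Upsilon^2$ obtained after doubling across $\partial S$, hence is nonnegative. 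If $\mathrm{index}(D^\mu) \leq 0$ the right-hand side is $\leq -1$, which is a contradiction.

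The only step that requires any care is the translation of the weighted integrability condition into the bound on the leading Fourier index; everything else is formally identical to the proof of Lemma \ref{th:automatic-injectivity}. Note that this is precisely the place where the hypothesis $\alpha \in (0,\pi)$ in \eqref{eq:trivial-floer} is used, to ensure no eigenvalue $\lambda_m$ of $Q$ equals $\pi\mu_\zeta$ so that the cutoff between admissible and inadmissible modes falls strictly between two consecutive integers.
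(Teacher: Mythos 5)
Your proof is correct and follows exactly the route the paper intends: the paper's own proof of Lemma~\ref{th:automatic-injectivity-2} is just the phrase ``by the same argument as before,'' and you have carried out precisely that transfer, correctly translating the weighted $W^{1,2}$-condition into the bounds $m_\zeta\geq\mu_\zeta$ (positive ends) and $m_\zeta\leq\mu_\zeta-1$ (negative ends) and verifying that the resulting winding-number sum is bounded above by $\mathrm{index}(D^\mu)-1$. Your closing remark about $\alpha\in(0,\pi)$ guaranteeing that $\pi\mu_\zeta$ is never an eigenvalue (so the weight cleanly separates admissible from inadmissible modes) is exactly the point that makes the translation work.
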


Given any $\zeta \in \Sigma_{\pm}$, one can look at the leading asymptotic coefficient:
\begin{equation} \label{eq:gamma-zeta-map}
\begin{aligned}
& \Lambda_{\zeta}: \mathit{ker}(D^\mu) \longrightarrow \bR, \\ &
\Lambda_{\zeta}(\Upsilon) = 
\frac{\lim_{s \rightarrow \pm\infty} \exp(\lambda_{m_\zeta} s) \Upsilon(\epsilon_\zeta(s,\cdot))\big)}{\Xi_{m_\zeta}}, \;\;
\text{where }
m_\zeta = 
\begin{cases}
\mu_\zeta & \zeta \in \Sigma_+, \\
\mu_\zeta - 1 & \zeta \in \Sigma_-.
\end{cases}
\end{aligned}
\end{equation}
The kernel of this map is $\mathit{ker}(D^{\tilde{\mu}})$, where $\tilde{\mu}$ is obtained from $\mu$ by increasing (for $\zeta \in \Sigma_+$) or decreasing (for $\zeta \in \Sigma_-$) $\mu_\zeta$ by one. By repeatedly applying this, until the index of $D^{\tilde{\mu}}$ becomes zero, and finally using Lemma \ref{th:automatic-injectivity-2}, one gets an upper bound $\mathrm{dim}(\mathit{ker}(D^\mu)) \leq \mathrm{index}(D^\mu)$. The conclusion is:

\begin{lemma} \label{th:surjective-operator}
If $\mathrm{index}(D^\mu) > 0$, $D^\mu$ is surjective, and each map \eqref{eq:gamma-zeta-map} is nonzero.
\end{lemma}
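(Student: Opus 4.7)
The plan is to establish the upper bound
\begin{equation}
\dim \ker(D^\mu) \leq \mathrm{index}(D^\mu),
\end{equation}
from which both halves of the lemma fall out immediately: surjectivity, since the Fredholm index formula then gives $\dim \mathrm{coker}(D^\mu) = \dim \ker(D^\mu) - \mathrm{index}(D^\mu) \leq 0$, forcing the cokernel to vanish; and nonvanishing of the $\Lambda_\zeta$ by a codimension count against a suitable one-step modification of the weight.

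For the upper bound, I would iterate the construction in \eqref{eq:gamma-zeta-map}. Given any weight $\mu$ and any puncture $\zeta$, form $\tilde\mu$ by increasing $\mu_\zeta$ by one if $\zeta \in \Sigma_+$ (respectively decreasing it if $\zeta \in \Sigma_-$); this drops the index by one, and by the definition of $\Lambda_\zeta$ together with the description of $\ker(\Lambda_\zeta)$ given just before \eqref{eq:gamma-zeta-map}, one has $\ker(D^{\tilde\mu}) = \ker(\Lambda_\zeta) \subseteq \ker(D^\mu)$ as an inclusion of codimension at most one. Iterating $N = \mathrm{index}(D^\mu)$ times produces a sequence of operators ending with one of index zero; by Lemma \ref{th:automatic-injectivity-2} the final kernel is trivial, and the chain of codimension-one inclusions then delivers the claimed bound.

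Combining this bound with the index formula yields $\dim \mathrm{coker}(D^\mu) = 0$, so $D^\mu$ is surjective, and in fact $\dim \ker(D^\mu) = \mathrm{index}(D^\mu)$. Applying the same conclusion to the one-step tightening $\tilde\mu$ associated to any fixed $\zeta$ (whose index is $\mathrm{index}(D^\mu) - 1 \geq 0$, using Lemma \ref{th:automatic-injectivity-2} directly in the boundary case $\mathrm{index}(D^{\tilde\mu}) = 0$) gives $\dim \ker(D^{\tilde\mu}) = \dim \ker(D^\mu) - 1$, so the quotient map $\Lambda_\zeta$ has one-dimensional image. There is no real obstacle here: the essential analytic input, injectivity in weighted spaces when the index is nonpositive, is exactly what Lemma \ref{th:automatic-injectivity-2} supplies, and everything else is bookkeeping on indices and codimensions of the kernel filtration induced by successively varying weights.
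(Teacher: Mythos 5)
Your argument is correct and coincides with the paper's own: one iteratively tightens a single weight to drop the index by one, observes that $\ker(\Lambda_\zeta) = \ker(D^{\tilde\mu})$ gives a codimension-at-most-one step, terminates the chain via Lemma \ref{th:automatic-injectivity-2} at index zero to get $\dim\ker(D^\mu) \leq \mathrm{index}(D^\mu)$, and then reads off surjectivity from the Fredholm index and nonvanishing of each $\Lambda_\zeta$ from the strict drop in kernel dimension. This is exactly the bookkeeping the paper sketches in the sentence preceding the lemma.
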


\subsection{Asymptotic behaviour and linear gluing}
Take surfaces $S^1$ and $S^2$, and pick $\zeta^1 \in \Sigma^1_+$, $\zeta^2 \in \Sigma^2_-$. Gluing these ends together, with gluing length $g \gg 0$, produces a surface $S$. Suppose that we have chosen weights $\mu^1$ and $\mu^2$ for our surfaces, such that
\begin{equation} \label{eq:weights-1}
\mu^1_{\zeta^1} = \mu^2_{\zeta^2} - 1.
\end{equation}
Forgetting those two, $S$ inherits weights $\mu$, such that
\begin{equation}
\mathrm{index}(D^\mu) = \mathrm{index}(D^{\mu^1}) + \mathrm{index}(D^{\mu^2}) - 1.
\end{equation}
Suppose now that $\mathrm{index}(D^{\mu^k}) > 0$ $(k = 1,2)$.

\begin{lemma} \label{th:linear-glue}
There is an operation (linear gluing) which produces a family of isomorphisms, smoothly depending on $g \gg 0$,
\begin{equation} \label{eq:linear-glue}
\mathit{ker}(D^{\mu^1}) \times_{\bR} \mathit{ker}(D^{\mu^2}) \stackrel{\iso}{\longrightarrow} \mathit{ker}(D^\mu),
\end{equation}
where the fibre product is taken with respect to \eqref{eq:gamma-zeta-map}, and the following properties hold:
\begin{align}
\label{eq:asymptotic-glue-1}
&
\mybox{
Suppose that $\zeta \neq \zeta^1$ is some other end of $S^1$, which then becomes an end of $S$ as well. The diagram
$$
\xymatrix{
\mathit{ker}(D^{\mu^1}) \times_{\bR} \mathit{ker}(D^{\mu^2}) 
\ar[d]_-{\text{projection}}
\ar[rr]^-{\eqref{eq:linear-glue}}
&&\mathit{ker}(D^\mu) \ar[d]^-{\Lambda_{\zeta}}
\\
\mathit{ker}(D^{\mu^1}) \ar[rr]^-{\Lambda_{\zeta}} &&
\bR
}
$$
is asymptotically commutative, which means that its failure to commute goes to zero as $g \rightarrow \infty$. 
}
\\ &
\label{eq:asymptotic-glue-2}
\mybox{
Similarly, suppose that $\zeta \neq \zeta^2$ is an end of $S^2$, and set $m = \mu^2_{\zeta^2}-1$; then, asymptotic commutativity holds for
$$
\xymatrix{
\mathit{ker}(D^{\mu^1}) \times_{\bR} \mathit{ker}(D^{\mu^2}) 
\ar[d]_-{\text{projection}}
\ar[rr]^-{\eqref{eq:linear-glue}} &&
\mathit{ker}(D^\mu) \ar[d]^-{\exp(\lambda_m g)\Lambda_{\zeta}}
\\
\mathit{ker}(D^{\mu^2}) \ar[rr]^-{\Lambda_{\zeta}} &&
\bR.
}
$$
}
\end{align}
\end{lemma}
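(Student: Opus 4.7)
The plan is standard linear gluing in weighted Sobolev spaces, with careful bookkeeping of a rescaling dictated by \eqref{eq:weights-1}. First I would set up the pregluing. By Lemma \ref{th:surjective-operator}, both $D^{\mu^k}$ are surjective and each $\Upsilon^k \in \mathit{ker}(D^{\mu^k})$ has an honest leading asymptotic expansion at its gluing end, with common eigenvalue index $m = \mu^1_{\zeta^1} = \mu^2_{\zeta^2}-1$. Fix strip-like coordinates identifying the two ends via $s' = s-g$. Given a matched pair with $\Lambda_{\zeta^1}(\Upsilon^1) = \Lambda_{\zeta^2}(\Upsilon^2) = c$, form
\[
\Upsilon^1 \#_g \Upsilon^2 = \begin{cases}
\Upsilon^1 & \text{on the } S^1 \text{ side, outside the neck},\\
\exp(-\lambda_m g)\,\Upsilon^2 & \text{on the } S^2 \text{ side, outside the neck},\\
\text{cutoff interpolation} & \text{on the neck}.
\end{cases}
\]
The factor $\exp(-\lambda_m g)$ is chosen precisely so that, in the coordinate $s$ on $S$, both pieces contribute the common leading term $c\exp(-\lambda_m s)\Xi_m(t)$ at the neck, and the cutoff only has to handle eigenmodes with strictly smaller exponent, so the pregluing error is exponentially small in $g$.

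Next I would correct to an element of the kernel. Since $\mathrm{index}(D^\mu) = \mathrm{index}(D^{\mu^1}) + \mathrm{index}(D^{\mu^2}) - 1 > 0$, Lemma \ref{th:surjective-operator} gives a right inverse $Q^\mu$, which a standard neck analysis lets us choose with operator norm bounded uniformly in $g$. Setting
\[
\Upsilon_g := \Upsilon^1 \#_g \Upsilon^2 - Q^\mu\big(D^\mu(\Upsilon^1 \#_g \Upsilon^2)\big)
\]
produces an element of $\mathit{ker}(D^\mu)$ depending smoothly on the input pair, and defines the map \eqref{eq:linear-glue}. For the isomorphism claim, dimensions match: the fibre product has dimension $\mathrm{index}(D^{\mu^1}) + \mathrm{index}(D^{\mu^2}) - 1 = \mathrm{index}(D^\mu) = \dim \mathit{ker}(D^\mu)$, using surjectivity of each $\Lambda_{\zeta^k}$ (Lemma \ref{th:surjective-operator}) and of $D^\mu$. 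Injectivity for large $g$ is the familiar rescaling/subconvergence argument: any nonzero element of the kernel of the glued map would, after normalization, produce a matched pair whose preglued section is essentially itself, contradicting the exponential smallness of the correction.

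The asymptotic commutativity is then transparent from the pregluing formula. On an end $\zeta \neq \zeta^1$ of $S^1$ (diagram \eqref{eq:asymptotic-glue-1}), the preglued section coincides with $\Upsilon^1$ throughout that end for large $g$, so $\Lambda_\zeta$ applied to the pregluing is exactly $\Lambda_\zeta(\Upsilon^1)$; the correction $Q^\mu(D^\mu(\cdot))$ can be estimated in a slightly stricter weight on that end (this is a direct ellipticity statement), which kills its contribution to $\Lambda_\zeta$ in the limit $g \to \infty$. Diagram \eqref{eq:asymptotic-glue-2} is the same argument on an end $\zeta \neq \zeta^2$ of $S^2$, with the decisive observation that the pregluing uses $\exp(-\lambda_m g)\,\Upsilon^2$ rather than $\Upsilon^2$; the compensating factor $\exp(\lambda_m g)$ on the vertical arrow of the diagram is precisely the bookkeeping inverse of this rescaling.

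The main obstacle is producing the right inverse $Q^\mu$ with operator norm bounded uniformly in $g$, in the weighted $W^{1,2}$-norms. The neck estimate proceeds cleanly provided the weight $\mu$ on $S$ avoids the spectrum of the model operator $Q$ at the glued chord, i.e.\ provided $\lambda_m$ is bounded away from any transition value in the spectrum as seen on either side of the neck. This follows from \eqref{eq:weights-1}: the eigenvalue $\lambda_m$ that was the leading asymptotic on each piece sits strictly in the interior of the decay range cut out by the single weight $\mu$ on $S$, so both the pregluing error and the output of $Q^\mu$ lie in the range where exponential decay in $g$ propagates through the estimate.
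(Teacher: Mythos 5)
Your argument is correct in substance, but it takes a genuinely different route from the paper. You perform the gluing directly in weighted Sobolev spaces: you preglue with a compensating factor $\exp(-\lambda_m g)$ on the $S^2$ side (correctly engineered so that both leading asymptotics agree as functions of the glued coordinate $s$), project onto the kernel via a right inverse with uniform operator norm, and then read off the asymptotic commutativity from the pregluing formula plus exponential decay of the correction term. This is the standard scheme, and the point you flag at the end --- that the uniform right inverse needs the glued weight $\mu$ to sit strictly between eigenvalues, which follows from \eqref{eq:weights-1} --- is precisely the standard technical ingredient. The paper instead conjugates each $D^{\mu^k}$ by $\exp(\rho^k)$ for a carefully chosen $\rho^k$, turning the weighted problem into an ordinary $\bar\partial$-problem with (rotating) totally real boundary conditions in which solutions extend holomorphically to the compactification of $S^k$ obtained by adding boundary circles; the leading coefficient $\Lambda_\zeta$ becomes a boundary value, condition \eqref{eq:weights-1} becomes the compatibility of boundary values on the glued ends, and the gluing is then borrowed from the compact-source pseudoholomorphic setting. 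Both approaches buy the same theorem. Your route is more self-contained but carries the extra burden of the uniform weighted estimate on the neck; the paper's route offloads that burden to the standard compact gluing package at the price of the conjugation bookkeeping and the fact that the conjugated $\rho$ on the glued surface must be spliced from $\rho^1$ and $\exp(\lambda_m g)\rho^2$, which is exactly how the $\exp(\lambda_m g)$ in \eqref{eq:asymptotic-glue-2} arises there. Your accounting for that factor, via the rescaling of $\Upsilon^2$ in the pregluing, is a clean equivalent.
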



\begin{proof}[Sketch of proof]
This is a thinly disguised form of a very familiar result. Instead of giving a full proof, we just describe the elementary considerations which reduce it to the standard form.

Given a surface $S$ and a choice of weights $\mu_\zeta$, choose a $\rho \in \smooth(S,\bC)$ such that
\begin{equation}
\rho(\epsilon_\zeta(s,t)) = \lambda_{m_\zeta} s + i\pi m_\zeta t = 
\alpha s + \begin{cases} \mu_\zeta \pi (s+it) & \zeta \in \Sigma_+, \\
& \\ (\mu_\zeta-1) \pi (s+it) & \zeta \in \Sigma_-. \end{cases}
\end{equation}
Then
\begin{equation}
\tilde{D}\tilde{\Upsilon} = \exp(\rho) D(\exp(-\rho)\tilde{\Upsilon})
\end{equation}
is a Cauchy-Riemann operator in a class slightly more general than the one considered before. In particular, the real boundary conditions vary along $\partial S$. However, on each end we have $\tilde{D} = \bar\partial = \partial_s + i\partial_t$ with imaginary boundary conditions. Finally, our choice of weights for $\Upsilon$ amounts to allowing $\tilde{\Upsilon} = \exp(\rho)\Upsilon$ to grow at a very small rate on each end. In fact, this means that the corresponding solutions of $\tilde{D}{\tilde\Upsilon} = 0$ are bounded, and extend holomorphically to the compactification obtained by adding a point $s = \pm\infty$ to each end. The value at the point at infinity is related to the previous asymptotics by
\begin{equation} \label{eq:relate}
\tilde{\Upsilon}(\epsilon_\zeta(\pm \infty)) = i \Lambda_\zeta(\Upsilon).
\end{equation}

The application to gluing goes as follows: take surfaces $S^1$ and $S^2$ with their corresponding functions $\rho^1,\rho^2$; by \eqref{eq:weights-1}, the functions have the same shape on the ends to be glued together. Given $(\Upsilon^1,\Upsilon^2)$ which lie in the fibre product from \eqref{eq:linear-glue}, the corresponding $(\tilde{\Upsilon}^1, \tilde{\Upsilon}^2)$ have the same value at the points $\epsilon_{\zeta^1}(+\infty)$, $\epsilon_{\zeta^2}(-\infty)$. Hence, one can use a standard construction to glue them together to a solution of a corresponding equation $\tilde{D}\tilde{\Upsilon} = 0$ on the surface $S$, for $g \gg 0$. That surface carries a function $\rho$ which equals $\rho^1$ on the part inherited from $S^1$, and $\exp(\lambda_m g) \rho^2$ on the part inherited from $S^2$, for $m$ as in \eqref{eq:asymptotic-glue-2}. This gives us the desired $\Upsilon = \exp(-\rho)\tilde{\Upsilon}$, which is the image of $(\Upsilon^1, \Upsilon^2)$ under \eqref{eq:linear-glue}. By construction, the values of $\tilde{\Upsilon}$ at the points $\epsilon_{\zeta}(\pm\infty)$, for $\zeta \neq \zeta^1,\zeta^2$, converge to those of $\tilde{\Upsilon}^1$, $\tilde{\Upsilon}^2$ as $g \rightarrow \infty$. In view of \eqref{eq:relate}, this immediately implies \eqref{eq:asymptotic-glue-1}. If $\zeta$ is an end of $S^2$, the analogue of \eqref{eq:relate} must be modified because of the different choice of $g$ over that end, and now says that
\begin{equation}
\tilde{\Upsilon}(\epsilon_\zeta(\pm \infty)) = i\exp(\lambda_m g) \Lambda_\zeta(\Upsilon),
\end{equation}
which leads to \eqref{eq:asymptotic-glue-2}. As for the gluing we've actually used in the argument (for maps on compact Riemann surfaces, with common boundary values), it's the linear version of a procedure which goes back to \cite{mrowka88} in the analogous gauge-theoretic context; in pseudo-holomorphic curve theory, it appears e.g.\ in \cite[Section 7.1.3]{fooo} or \cite{abouzaid12}.
\end{proof}

As a first application, consider more specifically surfaces $S$ which are $(d+1)$-punctured discs \eqref{eq:disc-surface}, numbering the $\zeta$s by $\{0,\dots,d\}$ as usual. We set $\mu = (1,0,\dots,0)$. By Lemma \ref{th:surjective-operator}, we have isomorphisms, for any $k \in \{1,\dots,d\}$,
\begin{equation} \label{eq:compare-ends}
\xymatrix{
\bR  && \mathit{ker}(D^\mu) \ar[ll]_-{\Lambda_{0}}^-{\iso} \ar[rr]^-{\Lambda_{k}}_-{\iso}
&& \bR.}
\end{equation}

\begin{lemma} \label{th:doesnt-matter-in-the-end}
The two maps \eqref{eq:compare-ends} always differ by multiplication with a positive number.
\end{lemma}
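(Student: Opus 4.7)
The plan is to pin down the sign via a direct winding-number/topological argument, without any induction on~$d$ or any deformation inside $\scrR^{d+1}$.

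First I would take any nonzero $\Upsilon\in\mathit{ker}(D^\mu)$. Since Lemma~\ref{th:surjective-operator} makes both $\Lambda_0$ and $\Lambda_k$ isomorphisms of the one-dimensional $\mathit{ker}(D^\mu)$ onto~$\bR$, the values $\Lambda_\zeta(\Upsilon)$ are nonzero at every puncture~$\zeta$. For the weights $\mu=(1,0,\dots,0)$ formula~\eqref{eq:gamma-zeta-map} gives $m_{\zeta_0}=\mu_0-1=0$ at the negative end, and $m_{\zeta_k}=\mu_k=0$ at each positive end; so the leading Fourier mode of $\Upsilon$ at every end is the constant eigenvector $\Xi_0=i$ with the nonzero real coefficient $\Lambda_\zeta(\Upsilon)$. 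Rerunning the winding-number bookkeeping of the proof of Lemma~\ref{th:automatic-injectivity} in this weighted setting, the quantity $\sum_{\zeta\in\Sigma_\pm}\mp m_\zeta$ now equals~$0$; and since it also equals the total zero multiplicity of $\Upsilon^2$ on $\bar S$ (boundary zeros counted at half weight, interior at full), and all multiplicities are nonnegative, $\Upsilon$ must be nowhere zero on $S\cup\partial S$.

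Next I would pass to the auxiliary section $v=-i\Upsilon\colon\bar S\to\bC$. The Lagrangian condition $\Upsilon|\partial S\subset i\bR$ becomes $v|\partial S\subset\bR$, and since $\Upsilon$ is everywhere nonzero, $v$ has a well-defined constant nonzero sign along each boundary arc~$C_j$ between consecutive punctures. At each puncture $\zeta_\ell$ the asymptotic identity $\lim\exp(\alpha s)\,v(\epsilon_{\zeta_\ell}(s,\cdot))=\Lambda_{\zeta_\ell}(\Upsilon)\in\bR$ shows that $v$ on both boundary arcs meeting $\zeta_\ell$ tends to zero from the side carrying the sign of $\Lambda_{\zeta_\ell}(\Upsilon)$. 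Since two consecutive punctures $\zeta_\ell,\zeta_{\ell+1}$ share one boundary arc $C_\ell$, on which $v$ has a single constant sign, we obtain $\mathrm{sign}\,\Lambda_{\zeta_\ell}(\Upsilon)=\mathrm{sign}\,\Lambda_{\zeta_{\ell+1}}(\Upsilon)$. Iterating once around $\partial\bar S$ yields $\mathrm{sign}\,\Lambda_0(\Upsilon)=\mathrm{sign}\,\Lambda_k(\Upsilon)$ for every~$k$, which is precisely positivity of the ratio in~\eqref{eq:compare-ends}.

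The only delicate point is that the winding formula has to be applied in the weighted Sobolev framework, but the choice $\mu_0=1$ at the unique negative end produces exactly the shift $m_{\zeta_0}=0$ that balances the winding budget, so the no-zeros conclusion is automatic; the remainder of the argument is purely combinatorial and requires no case analysis on~$d$.
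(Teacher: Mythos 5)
Your proof is correct, and it takes a genuinely different route from the paper's. The paper proves the statement by noting it is invariant under deformation (so depends only on the topological type), checking the strip case $d=1$ by hand, and then using the linear gluing isomorphism of Lemma~\ref{th:linear-glue} together with \eqref{eq:asymptotic-glue-1} to reduce arbitrary $d$ to $d-1$. You instead argue directly on a fixed surface: since $\mathit{ker}(D^\mu)$ is one-dimensional and every $\Lambda_\zeta$ is an isomorphism (Lemma~\ref{th:surjective-operator}), the leading Fourier mode at every puncture is exactly $m_\zeta=0$; feeding this into the zero-count identity that underlies Lemma~\ref{th:automatic-injectivity} forces $\Upsilon$ to be nowhere vanishing on $S\cup\partial S$, and the constancy of the sign of $v=-i\Upsilon$ along each boundary arc, matched with the real asymptotic value $\Lambda_\zeta(\Upsilon)$ at the two punctures bounding that arc, chains the signs around $\partial\bar S$. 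What this buys you is a self-contained, pointwise argument with no induction on $d$ and no use of the gluing machinery; what the paper's route buys is that it exercises Lemma~\ref{th:linear-glue}, which is needed anyway immediately afterwards in Lemma~\ref{th:sign-remains}. One small slip in phrasing: at the negative end $\zeta_0$, $v$ blows up rather than ``tends to zero'' (since $\alpha>0$ and $s\to-\infty$), but its sign is still that of $\Lambda_{\zeta_0}(\Upsilon)$, so the chaining argument is unaffected.
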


\begin{proof}
This property is clearly invariant under deformations, so it depends only on the topology of the situation (the numbers $d$ and $k$). For $d = 1$, it is true for elementary reasons, since one can equip $S = \bR \times [0,1]$ with $\beta = \alpha \mathit{dt}$; then $\mathit{ker}(D^\mu)$ is generated by $\Upsilon = \exp(-\lambda_0 s)\Xi_0$.

Now consider an $S^1$ with $d > 1$. Pick some $j \neq k$ in $\{1,\dots,d\}$. At the $j$-th end, one can glue in a once-punctured disc $S^2$ with weight $1$, so as to obtain an $S$ with one less puncture than $S^1$. By an application of \eqref{eq:asymptotic-glue-1} to this gluing process, one sees that the desired statement is true for $S^1$ iff it is true for $S$. This allows one to reduce the general case to the previously discussed one.
\end{proof}

\subsection{The parametrized version\label{subsec:parametrized-c}}
Let's extend \eqref{eq:plane-cr} by adding a parameter $r$, which changes the inhomogeneous term by a constant Hamiltonian vector field pointing in horizontal direction. Concretely, this means that we replace \eqref{eq:elementary-k} with
\begin{equation} \label{eq:gamma-form}
K_r = \beta \otimes \half |w|^2 - r(\gamma \otimes \mathrm{im}(w)),\;\; \text{ for some }
\left\{
\begin{aligned}
& \gamma \in \Omega^1_c(S), \\
& \gamma|\partial S = 0, \\
& \epsilon_\zeta^*\gamma = 0 \;\; \text{ for all $\zeta$};
\end{aligned}
\right.
\end{equation}
and \eqref{eq:plane-cr} with
\begin{equation} \label{eq:plane-cr-2}
\bar\partial u -  iu\beta^{0,1} - r \gamma^{0,1} = 0.
\end{equation}
Again, one takes the left hand side of this and replaces $(r,u)$ by $(R,\Upsilon)$, to get the parametrized version $D^{\mathit{para}}$ of the linearized operator. We will also adopt the notation $D^{\mathit{para},\mu}$ for the completion in weighted Sobolev spaces.

Suppose that we are gluing together two surfaces along ends as in \eqref{eq:weights-1}, but where $S^2$ carries some $\gamma^2$ which defines our parametrized equation (whereas $S^1$ does not). The glued surface $S$ carries an induced $\gamma$ term (which is zero on the part inherited from $S^1$). Assuming that $D^{\mu^1}$ and $D^{\mu^2,\mathit{para}}$ are onto, the analogue of \eqref{eq:linear-glue} is a map
\begin{equation} \label{eq:linear-glue-2}
\mathit{ker}(D^{\mu^1}) \times_{\bR} \mathit{ker}(D^{\mathit{para},\mu^2}) \stackrel{\iso}{\longrightarrow} \mathit{ker}(D^{\mathit{para},\mu}).
\end{equation}
This fits into analogues of \eqref{eq:asymptotic-glue-1} and \eqref{eq:asymptotic-glue-2}. Additionally, if we consider the forgetful map to the $R$-component of the domain, the following diagram is also asymptotically commutative:
\begin{equation} \label{eq:asymptotic-glue-3}
\xymatrix{
\mathit{ker}(D^{\mu^1}) \times_{\bR} \mathit{ker}(D^{\mathit{para},\mu^2}) 
\ar[dr]_-{\text{ forgetful}\;\;}
\ar[rr]^-{\eqref{eq:linear-glue-2}}
&&\mathit{ker}(D^{\mathit{para},\mu}) 
\ar[dl]^-{\;\;\;\;\;\;\text{ forgetful times }\exp(\lambda_{m_{\zeta^2}} g)}
\\
& \bR.
}
\end{equation}
Of course, the corresponding statements hold if we have a gluing situation where $S^1$ carries a parametrized equation and $S^2$ doesn't.

Now, let's consider more specifically surfaces \eqref{eq:z-surface}, with weights $\mu= (0,\dots,1,\dots,0)$, where the unique $1$ entry is associated to $\zeta_{k+1} \in \Sigma_-$. As an instance of Lemma \ref{th:automatic-injectivity-2}, $D^\mu$ is invertible. Therefore $D^{\mathit{para},\mu}$ is surjective with one-dimensional kernel, which maps isomorphically to the $R$-component in its domain. Let's combine this with the parametrized analogue of \eqref{eq:gamma-zeta-map} for the end carrying the nonzero weight, to get
\begin{equation} \label{eq:two-maps-to-r}
\xymatrix{
\bR && \ar[ll]_-{\text{forgetful}}^-{\iso} 
\mathit{ker}(D^{\mathit{para},\mu}) 
\ar[rr]^-{\Lambda_{k+1}}
&& \bR.
}
\end{equation}
Going from left to right in \eqref{eq:two-maps-to-r} is multiplication with some number, which we denote by
\begin{equation} \label{eq:lim-gamma}
\Gamma = \Gamma(S,\beta,\gamma) \in \bR.
\end{equation}
More concretely, this number is obtained by solving $D^\mu(\Upsilon) = \gamma^{0,1}$ and then looking at the coefficient of exponential growth near $\zeta_{k+1}$. As a consequence, it depends linearly on $\gamma$. 

\begin{lemma} \label{th:its-an-isomorphism}
$D^{\mathit{para}}$ (the parametrized operator on a surface \eqref{eq:z-surface}, with trivial weights) is an isomorphism if and only if \eqref{eq:lim-gamma} is nonzero.
\end{lemma}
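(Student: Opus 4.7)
The plan is to argue via Fredholm index bookkeeping, reducing the isomorphism statement to an injectivity statement, and then reading off $\Gamma$ as the exact obstruction to injectivity.

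First I would compute the index. For a surface $S$ as in \eqref{eq:z-surface} we have $|\Sigma_-| = 2$, so with trivial weights $\operatorname{index}(D) = 1 - |\Sigma_-| = -1$ by \eqref{eq:c-index}. Adding the parameter $R$ gives $\operatorname{index}(D^{\mathit{para}}) = 0$. Thus, being an isomorphism is equivalent to being injective.

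Next I would analyze the kernel of $D^{\mathit{para}}$. Any element is a pair $(R,\Upsilon)$ in standard Sobolev space satisfying $D\Upsilon = R\,\gamma^{0,1}$. If $R=0$, then $D\Upsilon = 0$ with trivial weights, and since $\operatorname{index}(D) = -1 \le 0$, Lemma \ref{th:automatic-injectivity} forces $\Upsilon = 0$. Hence non-injectivity of $D^{\mathit{para}}$ is equivalent to the existence of an element $(1,\Upsilon)$ in the kernel, i.e.\ a $\Upsilon$ in standard Sobolev with $D\Upsilon = \gamma^{0,1}$.

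Now I would bring in the weighted operator $D^\mu$ with $\mu = (0,\dots,1,\dots,0)$, the unit weight placed at $\zeta_{k+1} \in \Sigma_-$. Its index is $1 - 2 + 1 = 0$, and Lemma \ref{th:automatic-injectivity-2} gives injectivity, hence $D^\mu$ is an isomorphism. In particular there is a unique $\Upsilon_\gamma$ in the weighted space with $D\Upsilon_\gamma = \gamma^{0,1}$, and by the very definition of \eqref{eq:lim-gamma} we have $\Lambda_{k+1}(\Upsilon_\gamma) = \Gamma$. Since the weights at all other ends are zero, $\Upsilon_\gamma$ already lies in the standard Sobolev space on those ends; the only obstruction to $\Upsilon_\gamma$ lying in the trivial-weight space is the asymptotic coefficient at $\zeta_{k+1}$, namely $\Gamma$.

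Putting this together: $D^{\mathit{para}}$ (with trivial weights) fails to be injective if and only if $\Upsilon_\gamma$ decays at $\zeta_{k+1}$, which happens exactly when $\Gamma = 0$. The only subtle point is verifying that vanishing of $\Lambda_{k+1}(\Upsilon_\gamma)$ really places $\Upsilon_\gamma$ in the standard Sobolev space; but since with weight $1$ at $\zeta_{k+1}$ the only extra asymptotic mode allowed compared to weight $0$ corresponds to the single eigenvalue $\lambda_0 = \alpha$ (and $\Lambda_{k+1}$ captures precisely this leading coefficient), this is automatic. Combined with the index equality $\operatorname{index}(D^{\mathit{para}}) = 0$, this gives the equivalence $\Gamma \neq 0 \iff D^{\mathit{para}}$ is an isomorphism.
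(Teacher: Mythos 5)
Your proof is correct and matches the intended argument — the paper's own "proof" is simply the remark "This is clear from the definitions," and what you have written out is exactly the chain of definitions being invoked: the index count $\operatorname{index}(D^{\mathit{para}}) = 1 + \operatorname{index}(D) = 0$ reduces isomorphism to injectivity, the $R=0$ case is handled by injectivity of $D$ (Lemma~\ref{th:automatic-injectivity}), and the $R \neq 0$ case is resolved by noting that the trivial-weight space sits inside the weighted space, so any candidate $\Upsilon$ must coincide with $\Upsilon_\gamma$, whose membership in the smaller space is controlled precisely by $\Lambda_{k+1}(\Upsilon_\gamma) = \Gamma$. The remark that the weight change at $\zeta_{k+1}$ gates exactly one asymptotic mode (the $m=0$ eigenvalue $\lambda_0 = \alpha$) is the correct justification for the final step.
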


This is clear from the definitions.

\begin{lemma} \label{th:it-can-be-nonzero}
For any surface \eqref{eq:z-surface} and any choice of $\beta$, one can find a $\gamma$ such that \eqref{eq:lim-gamma} is nonzero.
\end{lemma}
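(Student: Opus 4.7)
The plan is to argue by contradiction: were $\Gamma(S,\beta,\gamma)$ to vanish for every admissible $\gamma$, the unweighted Cauchy-Riemann operator on $S$ would be forced to surject onto all compactly supported $(0,1)$-inhomogeneities, which its index forbids.

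To make this precise, I would first reinterpret the condition $\Gamma=0$ in terms of weighted Sobolev spaces. Recall from the discussion preceding \eqref{eq:two-maps-to-r} that $D^\mu$, with $\mu$ placing a $1$ at the negative end $\zeta_{k+1}$ and $0$ elsewhere, is an isomorphism, and by construction $\Gamma(S,\beta,\gamma) = \Lambda_{k+1}(\Upsilon_\gamma)$, where $\Upsilon_\gamma = (D^\mu)^{-1}(\gamma^{0,1})$. Using the identification of $\mathit{ker}(\Lambda_{\zeta})$ with $\mathit{ker}(D^{\tilde\mu})$ for the modified weight (see the paragraph following \eqref{eq:gamma-zeta-map}), the vanishing of $\Gamma(S,\beta,\gamma)$ is equivalent to $\Upsilon_\gamma$ lying in the ordinary (unweighted) $W^{1,2}$. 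Consequently, if the lemma failed, every $(0,1)$-form of the shape $\gamma^{0,1}$ would lie in the image of the unweighted operator $D\colon W^{1,2} \to L^2$. But \eqref{eq:c-index} gives $\mathrm{index}(D) = 1-|\Sigma_-| = -1$, and Lemma \ref{th:automatic-injectivity} shows $D$ is injective; hence its cokernel is one-dimensional, spanned via the $L^2$-pairing by some nonzero $\eta \in \mathit{ker}(D^*)$.

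For the contradiction, I would appeal to a bump function. Every smooth compactly supported $(0,1)$-form $\psi$ on $S$ can be written as $\gamma^{0,1}$ for an admissible $\gamma$: the real $1$-form $\gamma = 2\,\re(\psi)$ recovers $\psi$ as its $(0,1)$-part, and since $\psi$ is compactly supported in the interior of $S$, the side conditions in \eqref{eq:gamma-form} (vanishing on $\partial S$ and on the ends) are automatic. Hence the failure hypothesis would force $\int_S \langle \eta, \psi\rangle = 0$ for every such $\psi$, and by density $\eta \equiv 0$ -- contradicting the nontriviality of the cokernel.

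The main content sits in the translation performed in the second paragraph: once vanishing of $\Gamma$ is rephrased as solvability of $D\Upsilon = \gamma^{0,1}$ in the standard Sobolev space, the rest is the familiar Fredholm/duality trick, and the admissibility constraints on $\gamma$ pose no serious obstruction because they are purely local near $\partial S$ and the ends.
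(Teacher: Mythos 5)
Your proof is correct and follows essentially the same route as the paper's. The paper disposes of the lemma in one line: since $D^\mu$ is surjective while the unweighted $D$ is not (being injective with index $-1$), one can pick a compactly supported $\gamma$ with $\gamma^{0,1}$ outside the image of $D$. Your write-up simply makes the underlying Fredholm/density mechanism explicit — identifying $\Gamma=0$ with solvability of $D\Upsilon=\gamma^{0,1}$ in the unweighted space, and then using the one-dimensional cokernel together with density of compactly supported $(0,1)$-forms to derive the contradiction — so it is a correct elaboration rather than a different argument.
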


\begin{proof}
$D^\mu$ is surjective but $D$ isn't. Hence, we can find a smooth $\gamma$, supported in a compact subset of the interior of $S$ that's disjoint from the ends, such that $\gamma^{0,1}$ lies in the image of $D^{\mu}$ but not in that of $D$. 
\end{proof}


Next, we need  to look at the behaviour of \eqref{eq:lim-gamma} under gluing surfaces together.

\begin{lemma} \label{th:sign-remains}
Take two surfaces, one of type \eqref{eq:z-surface} (carrying $\beta$ and $\gamma$ one-forms), and an auxiliary one other of type \eqref{eq:disc-surface} (carrying a $\beta$). Gluing them together at one end yields another surface in the class \eqref{eq:z-surface}. For both the original and glued surface, consider \eqref{eq:lim-gamma}. If this is positive for the original surface, then the same holds for the glued surface, assuming the gluing length is sufficiently large (and correspondingly if it is negative).
\end{lemma}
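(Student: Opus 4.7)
The plan is to apply the parametrized linear gluing isomorphism \eqref{eq:linear-glue-2} and track the ratio $\Gamma = \Lambda_{k+1}/R$ across it using the asymptotic commutativity statements \eqref{eq:asymptotic-glue-1}--\eqref{eq:asymptotic-glue-3}. Write $S^\circ$ for the surface of type \eqref{eq:z-surface} carrying $\beta$ and $\gamma$, and $\Delta$ for the disc. Since $\gamma$ is supported on $S^\circ$, the parametrized operator lives there, while $\Delta$ carries only the ordinary operator. In each case below, the matching condition \eqref{eq:weights-1} forces $D^{\mu^\Delta}$ to have index $1$, hence a one-dimensional kernel, and similarly for the relevant parametrized operators (by Lemma \ref{th:automatic-injectivity-2}).

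The principal case is when $\Delta$ is attached via its unique negative end to a positive end $\zeta_j^\circ$ of $S^\circ$ with $j \neq 0, k+1$, so $\Delta$ has weights $(1,0,\dots,0)$. In the notation of Lemma \ref{th:linear-glue}, $S^\circ$ plays the role of $S^1$---the side carrying the parametrized equation. The analogue of \eqref{eq:asymptotic-glue-3} then gives $R \sim R^\circ$ with no exponential correction, and \eqref{eq:asymptotic-glue-1}, applied to the inherited end $\zeta_{k+1}^S = \zeta_{k+1}^\circ$, gives $\Lambda_{k+1}(\Upsilon) \sim \Lambda_{k+1}(\Upsilon^\circ)$. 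Dividing, $\Gamma(S) \to \Gamma(S^\circ)$ as $g \to \infty$, so the sign is preserved for $g$ sufficiently large.

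The remaining cases---$\Delta$ glued to one of the negative ends $\zeta_0^\circ$ or $\zeta_{k+1}^\circ$ of $S^\circ$---are handled analogously, but with $S^\circ$ now playing the role of $S^2$ and exponential factors $\exp(\lambda_m g)$, for $m = \mu^\circ_{\zeta^\circ}-1$, appearing in the gluing diagrams. Gluing at $\zeta_0^\circ$ gives $m = -1$; the factor $\exp(\lambda_{-1} g)$ enters the numerator (via \eqref{eq:asymptotic-glue-2} applied to $\zeta_{k+1}^S = \zeta_{k+1}^\circ$) and the denominator (via \eqref{eq:asymptotic-glue-3}) in the same way, and cancels in the ratio. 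Gluing at $\zeta_{k+1}^\circ$ gives $m = 0$, $\lambda_0 = \alpha > 0$: the new $\zeta_{k+1}^S$ is now inherited from the negative end of $\Delta$, and Lemma \ref{th:doesnt-matter-in-the-end} applies (since $\Delta$ again carries weights $(1,0,\dots,0)$), allowing one to compare the leading coefficient of $\Upsilon^\Delta$ at $\zeta_0^\Delta$ with the one at the glued positive end by a strictly positive constant $c$. Combined with the fibre-product constraint and $R \sim \exp(-\alpha g)R^\circ$, this yields $\Gamma(S) \sim c\exp(\alpha g)\,\Gamma(S^\circ)$, still sign-preserving. The main obstacle is the careful bookkeeping of these exponential factors and verifying that, where they do not outright cancel, the residual scaling carries a positive coefficient supplied by Lemma \ref{th:doesnt-matter-in-the-end}.
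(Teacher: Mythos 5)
Your proof is correct and follows the same mechanism as the paper's (Lemma~\ref{th:linear-glue}, its parametrized version, and Lemma~\ref{th:doesnt-matter-in-the-end}). Two small remarks. First, the paper only writes out in detail the case you call ``gluing at $\zeta_{k+1}^\circ$''---where the disc's positive end is attached to the z-surface's $(k+1)$-st negative end and the disc's own negative end becomes the new $\zeta_{k+1}^S$---declaring the others ``straightforward''; you have instead spelled out all three configurations, with matching exponential bookkeeping, which is a fine and slightly more complete presentation. (Your ``principal case'' is in fact one of the straightforward ones; the case the paper singles out is the one where Lemma~\ref{th:doesnt-matter-in-the-end} is genuinely needed.) Second, where the paper argues that the numerator $\Lambda_{k+1}$ and denominator $R$ both come out positive via \eqref{eq:asymptotic-glue-1}--\eqref{eq:asymptotic-glue-3}, you track the ratio $\Gamma = \Lambda_{k+1}/R$ directly through the gluing diagrams and show the exponentials either cancel or leave a $c\exp(\alpha g)$ factor with $c>0$. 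These are the same estimates packaged differently; the conclusion agrees.
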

\begin{figure}
\begin{centering}
\includegraphics[scale=0.8]{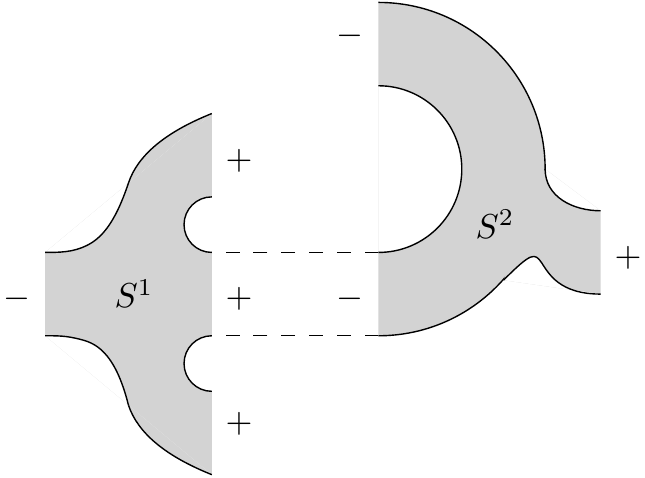}
\caption{\label{fig:special-glue}The gluing situation from the proof of Lemma \ref{th:sign-remains}. We have marked the ends as positive/negative. In the specific example drawn here, $d^1 = 3$, $(k^2,l^2) = (0,1)$, $k = 2$, and $(k,l) = (1,2)$.}
\end{centering}
\end{figure}

\begin{proof}
This is a consequence of the properties of \eqref{eq:linear-glue-2}. While most cases are straightforward, there is one which deserves discussion. Namely, suppose that $S^1$ is a $(d^1+1)$-punctured disc, with some choice of $\beta^1$; and $S^2$ of type \eqref{eq:z-surface} for some $(k^2,l^2)$, with its $(\beta^2,\gamma^2)$. We glue the $j$-th end of $S^1$ ($1 \leq k \leq d^1$, hence a positive end) to the $(k^2+1)$-st (negative) end of $S^2$, with the result being a surface $S$ of type $(k,l)$ (see Figure \ref{fig:special-glue}).

Consider a weighted operator $D^{\mu^1}$ on $S^1$, where the weights are $\mu^1 = (1,0,\dots,0)$. For $S^2$ we use $D^{\mathit{para},\mu^2}$ with the weights $\mu^2 = (0,\dots,1,\dots,0)$ that enter into \eqref{eq:two-maps-to-r}. Take the unique solution of $D^{\mathit{para},\mu^2}(R^2,\Upsilon^2) = 0$ with $R^2 = 1$. There is a unique solution of $D^{\mu^1}(\Upsilon^1) = 0$ with matching asymptotics on the ends undergoing gluing, meaning that $(\Upsilon^1,(R^2,\Upsilon^2))$ belongs to the domain of \eqref{eq:linear-glue-2}. Applying \eqref{eq:linear-glue-2} yields a solution of $D^{\mathit{para},\mu}(R,\Upsilon) = 0$, with the following properties.
\begin{align} \label{eq:s1}
&
\mybox{As an application of the asymptotic commutativity of \eqref{eq:asymptotic-glue-3}, we have that $\exp(\alpha g) R$ is close to $R^2 = 1$; hence $R$ is positive.}
\\ \label{eq:s2} &
\mybox{Using \eqref{eq:asymptotic-glue-1} in the same way, we see that the asymptotics of $\Upsilon$ at the $(k+1)$-st end are approximately inherited from those of $\Upsilon^1$ at the $0$-th end.}
\\ \label{eq:s3} &
\mybox{Lemma \ref{th:doesnt-matter-in-the-end} says that the asymptotics of $\Upsilon^1$ at the $0$-th end agree with those at the $j$-th end, up to multiplication with a positive constant. By construction, those asymptotics match those of $\Upsilon^2$ at the $(k^2+1)$-st end.}
\end{align}
Suppose that \eqref{eq:lim-gamma} is positive for $S^1$. This means that the leading coefficient of $\Upsilon^2$ at the $(k^2+1)$-st end is a positive multiple of $\Xi_0$. By applying \eqref{eq:s3} and then \eqref{eq:s2}, one sees that the same holds for $\Upsilon$ at the $(k+1)$-st end. Together with \eqref{eq:s1}, this implies that \eqref{eq:lim-gamma} is positive for $S$, as claimed. 
\end{proof}

\subsection{Modular dependence}
Let's start with the familiar spaces $\scrR^{d+1}$. On those, one can make consistent choices of (ends and) $\beta$ terms, in the sense of \cite[Section 9i]{seidel04}. Here, we can interpret consistency in a strict sense, which means the following. Take a point of $\bar\scrR^{d+1}$, lying in a stratum labeled by some tree $T$, and described by a collection of $|v|$-pointed discs $S^v$. Nearby points of $\scrR^{d+1}$ correspond to surfaces $S$ obtained by gluing the $S^v$ together, with some (large) gluing lengths. Then, the $\beta$ term on those surfaces is correspondingly glued together from those on the $S^v$.
%

Now consider spaces $\scrR^{k+1,l+1}$ as in Section \ref{subsec:diagonal-class}, but with no interior marked points (for simplicity). On these spaces, we can make consistent choices of $\beta$ and $\gamma$, in the same sense. For $\beta$, that condition involves the previous choices made for $\scrR^{d+1}$. For $\gamma$, what we mean is that if $S$ is obtained by gluing together some $S^v$ (with large gluing lengths), then its one-form $\gamma$ is concentrated in the part of $S$ coming from $S^{v_*}$, and agrees with what we had previously chosen for $S^{v_*}$ by itself. A slight generalization of Lemma \ref{th:sign-remains} yields the following:

\begin{lemma} \label{th:its-continuous}
Suppose that the $\beta$, $\gamma$ have been chosen consistently over the moduli space. For each point of $\bar\scrR^{k+1,l+1}$, consider the surface $S^{v_*}$ and the associated number \eqref{eq:lim-gamma}. The subset where this number is positive (or negative) is open in $\bar\scrR^{k+1,l+1}$.
\end{lemma}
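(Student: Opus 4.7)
The plan is to check openness pointwise, splitting into the case of an interior point of $\bar\scrR^{k+1,l+1}$ (meaning a point of $\scrR^{k+1,l+1}$ itself) and the case of a point in a boundary stratum, since in each case the mechanism is slightly different.

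First, on $\scrR^{k+1,l+1}$ the family of Cauchy--Riemann operators $D^{\mathit{para},\mu}$ on the fibres $S$ (with the weights used to define $\Gamma$) varies smoothly, and by Lemma \ref{th:surjective-operator} it is uniformly surjective. Hence the unique solution of $D^\mu\Upsilon = \gamma^{0,1}$ varies smoothly as well, and so does its leading asymptotic coefficient on the $(k+1)$-st end. This means $\Gamma$ is a continuous function on $\scrR^{k+1,l+1}$, so the subsets on which it is positive (respectively negative) are open in $\scrR^{k+1,l+1}$.

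Now fix a point $p\in\bar\scrR^{k+1,l+1}\setminus\scrR^{k+1,l+1}$, lying in a stratum labelled by a tree $T$ with vertices $\{v\}$ and distinguished vertex $v_*$. The value of our function at $p$ is, by definition, $\Gamma(S^{v_*},\beta,\gamma)$, computed on the component carrying the inhomogeneous term $\gamma$. Nearby points of $\bar\scrR^{k+1,l+1}$ either belong to the same stratum or to a coarser one, where some gluing lengths have been made finite. By the consistency requirement on the choices of $\beta$ and $\gamma$, on a nearby glued surface $S$ the form $\gamma$ is supported in the region coming from $S^{v_*}$ and agrees there with the original, while $\beta$ is obtained by the usual cutoff preglue from the $\beta^v$. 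The argument of Lemma \ref{th:sign-remains} then applies to the gluing that attaches the $(d+1)$-punctured disc $S^v$ ($v\neq v_*$, adjacent to $S^{v_*}$) to $S^{v_*}$: for sufficiently large gluing length, the sign of $\Gamma$ is preserved. Iterating this over all gluings of the remaining vertices $v\neq v_*$ (in an order that proceeds outward from $v_*$ along $T$), one sees that for every sufficiently small neighbourhood of $p$ in $\bar\scrR^{k+1,l+1}$, the sign of $\Gamma$ at any nearby surface agrees with the sign at $p$. Combined with continuity on each stratum, this gives the desired openness.

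The main subtlety lies in the iterated gluing argument: Lemma \ref{th:sign-remains} is stated only for gluing in a single disc, whereas at a deep boundary point of $\bar\scrR^{k+1,l+1}$ one must glue in an entire tree of discs, and one has to keep control of the gluing lengths uniformly as the deformation parameter varies. This is handled by performing the gluings one edge of $T$ at a time, working outward from $v_*$: at each step, the vertex being attached is a $(d+1)$-punctured disc carrying only a $\beta$-term and no $\gamma$, so Lemma \ref{th:sign-remains} applies exactly as stated, and the intermediate surface produced by the previous gluings plays the role of $S^2$ in that lemma. The uniformity of the ``sufficiently large'' gluing length threshold over small neighbourhoods in $\bar\scrR^{k+1,l+1}$ then follows from the compactness of each boundary stratum together with the smooth dependence of the linearised gluing construction on parameters.
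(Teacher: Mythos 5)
Your argument follows the same strategy the paper has in mind: continuity of $\Gamma$ on each open stratum (since the weighted Fredholm operators vary smoothly and are uniformly invertible, so the distinguished solution and its leading asymptotic coefficient depend continuously on the point), combined with an application of Lemma~\ref{th:sign-remains} near boundary points. The paper calls this ``a slight generalization of Lemma~\ref{th:sign-remains}'', and your reading of what that generalization must accomplish --- handling a whole tree of discs rather than a single one --- is correct.

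The one place where you should be more careful is the iteration. Lemma~\ref{th:sign-remains} as stated glues a single disc to a fixed $S^2$, with a threshold on the gluing length depending on both surfaces. When you peel off the discs one edge at a time, after the first step the ``intermediate surface'' already depends on the gluing length $g_1$, so the threshold you need for $g_2$ is in principle a function of $g_1 \in (G,\infty)$; your appeal to compactness of the boundary stratum does not by itself control this, because the family of intermediate surfaces is noncompact in the $g_1$-direction. The cleaner route --- and most likely what the paper intends by ``slight generalization'' --- is to run the argument of Lemma~\ref{th:sign-remains} for a simultaneous gluing at all the edges of $T$ at once: one extends Lemma~\ref{th:linear-glue} and the asymptotic commutativity statements \eqref{eq:asymptotic-glue-1}--\eqref{eq:asymptotic-glue-3} to a multi-edge gluing, where the errors go to zero uniformly as $\min_i g_i \to \infty$, and then the sign of the leading coefficient at the $(k+1)$-st end is read off exactly as in the one-edge proof. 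This sidesteps the uniformity question entirely. Your iterated version can be made to work, but it requires the additional observation that the constants in linear gluing can be chosen uniformly over the (non-compact) family of partially glued surfaces --- which is true but is precisely what the simultaneous-gluing formulation packages more transparently. Apart from this, the proof is sound, though the invocation of Lemma~\ref{th:surjective-operator} for ``uniform surjectivity'' is slightly misplaced: for the weights used in the definition of $\Gamma$, the relevant operator $D^\mu$ has index zero and is invertible by Lemma~\ref{th:automatic-injectivity-2}, and continuity on each stratum follows from that plus smooth dependence of the family.
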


From there on, using Lemma \ref{th:it-can-be-nonzero} and partitions of unity, one easily obtains the following:

\begin{prop} \label{th:sign-of-gamma}
Given any consistent choice of $\beta$, one can choose the $\gamma$'s consistently so that the function from Lemma \ref{th:its-continuous} is everywhere positive (or everywhere negative).
\end{prop}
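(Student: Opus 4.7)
The plan is to proceed by induction on $k+l$. For the base case $(k,l) = (0,0)$, the moduli space is a point (a single strip); Lemma \ref{th:it-can-be-nonzero} produces a compactly supported $\gamma$ with $\Gamma \neq 0$, and since $\Gamma$ is linear in $\gamma$, replacing $\gamma$ by $-\gamma$ if needed arranges $\Gamma > 0$.

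For the inductive step on $\bar\scrR^{k+1,l+1}$, assume $\gamma$ has been chosen consistently with $\Gamma > 0$ on all $\bar\scrR^{k'+1,l'+1}$ with $k'+l' < k+l$ (the $\gamma$'s on the $\bar\scrR^{d+1}$ factors play no role in $\Gamma$, since $v_*$ never sits there). The consistency condition then pins $\gamma$ down in a neighborhood of the boundary of $\bar\scrR^{k+1,l+1}$: in each of the four strata of Figure \ref{fig:4-bubbles}, the distinguished vertex $v_*$ lies on a factor of type $\scrR^{k^\sharp+1,l^\sharp+1}$ with $k^\sharp+l^\sharp<k+l$, where by induction $\Gamma > 0$, and Lemma \ref{th:sign-remains} (applied once per glued disc factor) transfers this to $\Gamma > 0$ on the glued surface, at least for large gluing lengths. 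Hence, after extending the boundary data to a globally defined $\gamma_0$ on $\bar\scrR^{k+1,l+1}$ respecting consistency, the function $p \mapsto \Gamma(S_p,\beta_p,\gamma_{0,p})$ is continuous (this is implicit in Lemma \ref{th:its-continuous}) and strictly positive on some open neighborhood $U_0$ of $\partial \bar\scrR^{k+1,l+1}$.

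To correct $\gamma_0$ on the interior, I would use a partition-of-unity argument. For each point $p$ in the compact set $K = \bar\scrR^{k+1,l+1} \setminus U_0$, Lemma \ref{th:it-can-be-nonzero} applied to the fiber surface $S_p$ supplies an interior-compactly-supported one-form $\eta_p$ on $S_p$ with $\Gamma(\eta_p) \neq 0$; by scaling and sign choice, $\Gamma(\gamma_0 + \eta_p) > 0$ at $p$. Spread $\eta_p$ smoothly to a family $\eta_p^q$ over a small neighborhood $V_p \subset \bar\scrR^{k+1,l+1}$ of $p$, arranging both that its support in $S$ stays away from the ends and that $V_p$ is disjoint from $\partial \bar\scrR^{k+1,l+1}$. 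Continuity of $\Gamma$ lets me shrink $V_p$ so that $\Gamma(\gamma_0 + \eta_p^q) > 0$ throughout $V_p$. By compactness, cover $\bar\scrR^{k+1,l+1}$ by $U_0$ and finitely many $V_{p_i}$, pick a subordinate partition of unity $\phi_0,\phi_1,\dots,\phi_N$, and set $\gamma_{\mathrm{new}} = \gamma_0 + \sum_i \phi_i \eta_{p_i}$. Linearity of $\Gamma$ in its $\gamma$-argument, together with $\sum_j \phi_j = 1$, gives
$$\Gamma(\gamma_{\mathrm{new}}) \;=\; \Gamma(\gamma_0) + \sum_i \phi_i \Gamma(\eta_{p_i}) \;=\; \phi_0\,\Gamma(\gamma_0) + \sum_i \phi_i\,\Gamma(\gamma_0 + \eta_{p_i}) \;>\; 0,$$
since on the right every term is a nonnegative multiple of a positive number and at each point at least one weight is strictly positive.

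The only genuine subtlety, and the step I expect to be the main obstacle, is verifying that the consistency condition is preserved under the perturbation: the construction of the $\eta_{p_i}$ with support both in the interior of each fiber $S$ and in the interior of the moduli space has to be done coherently as the modular parameters vary, so that $\gamma_{\mathrm{new}}$ still factorizes correctly near every boundary stratum. Since I can freely shrink the neighborhoods $V_{p_i}$ and arrange the fiberwise support of $\eta_{p_i}$ to lie in a prescribed interior region of the universal family, this is a bookkeeping matter rather than a substantive difficulty. The case of everywhere-negative $\Gamma$ is obtained by negating $\gamma$ throughout the induction.
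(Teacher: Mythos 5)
Your proposal is correct and fills in, with the right amount of detail, exactly the argument that the paper merely gestures at (``using Lemma~\ref{th:it-can-be-nonzero} and partitions of unity, one easily obtains\dots''): induction on $k+l$ so that consistency plus Lemma~\ref{th:sign-remains}/\ref{th:its-continuous} pins the sign of $\Gamma$ near $\partial\bar\scrR^{k+1,l+1}$, then a linearity-in-$\gamma$ partition-of-unity correction supported in the interior of the moduli space and of the fibres. The one small imprecision --- Lemma~\ref{th:its-continuous} gives openness of the positive/negative sets rather than continuity of $\Gamma$ across strata --- does not affect the argument, since openness is all that is actually used near the boundary and $\Gamma$ is genuinely continuous (indeed smooth) over the interior where you perturb.
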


\section{Connections on surfaces\label{sec:connections}}
Following \cite{seidel18}, one can base the definition of the Fukaya category of a Lefschetz fibration on Riemann surfaces equipped with flat connections, whose structure group is the group of affine automorphisms of the real line. We will need a generalization, in which flatness is replaced by a nonnegativity condition for the curvature. This is similar to \cite{seidel17} but more elementary, because the group involved is so simple.

\subsection{Definitions\label{subsec:affine}}
Let $G$ be the group of orientation-preserving affine transformations of the real line, and $\frakg$ its Lie algebra. $G$ is an extension
\begin{equation} \label{eq:affine-group}
0 \rightarrow \bR \longrightarrow G \longrightarrow \bR^{\gr} \rightarrow 1.
\end{equation}
The Lie algebra $\frakg$ consists of vector fields $(aw+b)\partial_w$. A connection on the trivial $G$-bundle over a manifold $S$ is an operation $d-A$, for $A \in \Omega^1(S,\frakg)$. Gauge transformations $\Phi \in \smooth(S,G)$ act on connections by
\begin{equation}
\Phi_*A = \Phi A \Phi^{-1} + (d\Phi) \Phi^{-1}.
\end{equation}
The curvature of a connection is
\begin{equation}
F_A = -dA + \half [A,A] \in \Omega^2(S,\frakg).
\end{equation}
The following statement is an instance of the general ``reduction of structure group'' for connections \cite[Chapter II, Theorem 7.1]{kobayashi-nomizu}:

\begin{lemma} \label{th:partial-trivialization}
Suppose that $S$ is simply-connected, and that $A$ is a connection whose curvature lies in $\Omega^2(S) \otimes \partial_w$. Choose a base point $\bullet \in S$. Then there is a gauge transformation $\Phi$ which is based, meaning that it is the identity at the base point, such that $\Phi_*A \in \Omega^1(S) \otimes \partial_w$ is a connection for the translation subgroup. Moreover, $\Phi$ is unique up to composition with based gauge transformations in the $\bR$ subgroup. 
\end{lemma}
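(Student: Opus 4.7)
The plan is to reduce to an abelian case via the projection to the quotient $\bR^{\gr}$, which is well-suited to the hypothesis that $F_A$ lies in the (normal) translation subalgebra.

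First I would fix notation: let $X = w\partial_w$ generate the dilation subgroup and $Y = \partial_w$ the translation subgroup, with $[X,Y] = -Y$, so that $\bR Y$ is a Lie ideal in $\frakg$. Write $A = a X + b Y$ for $a,b \in \Omega^1(S)$. A direct computation gives
\begin{equation*}
F_A = -dA + \tfrac{1}{2}[A,A] = -(da)\, X + \bigl(-db - a \wedge b\bigr) Y.
\end{equation*}
The hypothesis that $F_A \in \Omega^2(S) \otimes \partial_w$ is therefore equivalent to $da = 0$. (Conceptually, this is the statement that the connection $\bar A = a$ induced on the quotient $\bR^{\gr}$-bundle is flat.)

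Next I would construct $\Phi$. Since $S$ is simply-connected and $a$ is closed, there is a unique $f \in \smooth(S,\bR)$ with $f(\bullet)=0$ and $df = a$. Set $\Phi = \exp(-fX) \in \smooth(S,G)$, the based dilation $w \mapsto e^{-f}w$. Using that $\mathrm{Ad}_\Phi X = X$ and $\mathrm{Ad}_\Phi Y = e^{-f} Y$, together with $(d\Phi)\Phi^{-1} = -(df)X$ (since dilations form an abelian subgroup), one computes
\begin{equation*}
\Phi_* A = (a - df)\, X + (e^{-f} b)\, Y = (e^{-f}b)\, Y \in \Omega^1(S) \otimes \partial_w,
\end{equation*}
as required.

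Finally, uniqueness. Suppose $\Phi'$ is another based gauge transformation with $\Phi'_* A \in \Omega^1(S) \otimes \partial_w$. Let $\Psi = \Phi'\Phi^{-1}$, a based gauge transformation, and $\bar\Psi : S \to \bR^{\gr}$ its image under the quotient homomorphism $G \to \bR^{\gr}$. Both $\Phi_*A$ and $\Phi'_*A$ project to the zero connection on the quotient bundle, and since that bundle is abelian, the gauge transformation $\bar\Psi$ between them must satisfy $d\bar\Psi/\bar\Psi = 0$. Being locally constant with $\bar\Psi(\bullet)=1$, it is identically $1$, so $\Psi$ takes values in the kernel $\bR \subset G$ of the quotient. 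This is the promised uniqueness statement.

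I do not foresee a serious obstacle here: the argument is a direct instance of reducing the structure group along the short exact sequence \eqref{eq:affine-group}, made explicit by the fact that both the quotient $\bR^{\gr}$ and the kernel $\bR$ are abelian. The only small point requiring a bit of care is the conjugation formula $\mathrm{Ad}_\Phi Y = e^{-f} Y$ and the related computation of $(d\Phi)\Phi^{-1}$, but both follow from the one-parameter subgroup structure and do not require heavy machinery.
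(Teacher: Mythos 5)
Your proof is correct, and it makes explicit the ``reduction of structure group'' argument that the paper invokes by citing Kobayashi--Nomizu without proof: the key observation is exactly yours, that the $X$-component of $F_A$ is $-da$, so the hypothesis amounts to flatness of the induced $\bR^{\gr}$-connection, which on a simply-connected base is gauged to zero by $\Phi=\exp(-fX)$ with $df=a$, and uniqueness follows by projecting any other solution to the abelian quotient. One small point worth flagging, even though it does not affect correctness: you write $[X,Y]=-Y$ (the vector-field commutator) and then $\mathrm{Ad}_\Phi Y=e^{-f}Y$; these are both right, but note that the second is computed from the conjugation $w\mapsto e^{-f}(e^{f}w+t)=w+e^{-f}t$ (or equivalently as the pushforward of $\partial_w$), and does \emph{not} follow from naively exponentiating $\mathrm{ad}_{-fX}$ with the commutator $[X,Y]=-Y$, which would give $e^{f}Y$ because of the standard sign discrepancy between the vector-field bracket and the Lie-algebra bracket of a diffeomorphism group. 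Since you only use the bracket for the curvature computation (where either sign convention gives the $X$-component $-da$) and compute the adjoint correctly by another route, the argument stands; but if a reader tries to re-derive $\mathrm{Ad}_\Phi Y$ from your displayed bracket via $e^{\mathrm{ad}}$, they will get a puzzling sign, so a one-line remark clarifying which convention is in force would make the writeup airtight.
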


Define the nonnegative part of $\frakg$ to be the subset of vector fields that are nonnegative on the entire real line, or equivalently
\begin{equation}
\frakg_{\ge} = \bR^{\ge} \partial_w.
\end{equation}
Let $S$ be an oriented surface. We say that a connection is nonnegatively curved if, after inserting any oriented basis of $TS$ into $F_A$, we get an element of $\frakg_{\ge}$. The nonnegative curvature condition is preserved under gauge transformations. If $S$ is simply-connected, one can apply Lemma \ref{th:partial-trivialization} to transform a nonnegatively curved connection into one of the form $\Phi_*A = \gamma \otimes \partial_w$, where $\gamma \in \Omega^1(S)$ satisfies
\begin{equation}
d\gamma \leq 0.
\end{equation}

\subsection{Spaces of connections\label{subsec:connections}}
Let $\scrA([0,1])$ be the space of those $A \in \Omega^1([0,1],\frakg)$ such that:
\begin{equation} \label{eq:move-past}
\mybox{
if $g \in G$ is the parallel transport for $A$ over $[0,1]$, acting on the real line, then $g(0) > 0$.
}
\end{equation}
There is a natural gauge group $\scrG([0,1])$ acting on $\scrA([0,1])$, namely those $\Phi \in \smooth([0,1],G)$ such that $\Phi_t(0) = 0$ at $t = 0,1$. In fact, that group acts simply transitively. On the other hand, the group is clearly weakly contractible, and therefore \cite[Section 2.1]{seidel18}:

\begin{lemma} \label{th:a-interval}
$\scrA([0,1])$ is weakly contractible.
\end{lemma}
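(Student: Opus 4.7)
The excerpt already spells out the structure of the argument: $\scrG([0,1])$ acts simply transitively on $\scrA([0,1])$, and $\scrG([0,1])$ is weakly contractible. Once both are in hand, any orbit map $\Phi \mapsto \Phi_*A_*$ is a smooth bijection $\scrG([0,1]) \to \scrA([0,1])$, and the weak contractibility transfers. The plan is therefore to verify these two assertions.

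For the simply transitive action, I would take $A, A' \in \scrA([0,1])$ with parallel transports $g, g' \in G$ over $[0,1]$ and look for the unique $\Phi \in \scrG([0,1])$ solving $\Phi_*A = A'$. This is equivalent to the linear ODE $d\Phi = A'\Phi - \Phi A$; differentiating $\Psi_t := \Phi_t g_t \Phi_0^{-1}$, where $g_t$ is the parallel transport of $A$ on $[0,t]$, shows any solution satisfies $\Psi_t = g'_t$, so that $\Phi_1 = g' \Phi_0 g^{-1}$. The task reduces to finding a unique $\Phi_0 \in \bR^{\gr}$ for which $g'\Phi_0 g^{-1} \in \bR^{\gr}$. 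Writing elements of $G$ as $x \mapsto ax+b$ (with $a>0$) and elements of $\bR^{\gr}$ as scalings $x \mapsto cx$ (with $c>0$), this becomes the elementary system $c_1 a / c_0 = a'$, $c_1 b = b'$ in the positive scaling factors $c_0, c_1$; the strict positivity $b, b' > 0$ guaranteed by \eqref{eq:move-past} is precisely what ensures the system has a unique positive solution.

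For the weak contractibility of $\scrG([0,1])$, I would use that $G \cong \bR \rtimes \bR^{\gr}$ is diffeomorphic to $\bR^2$ as a manifold, hence contractible. The endpoint evaluation
\begin{equation*}
\scrG([0,1]) \longrightarrow \bR^{\gr} \times \bR^{\gr},\qquad \Phi \longmapsto (\Phi_0, \Phi_1)
\end{equation*}
is a Serre fibration whose fiber over any pair of endpoints is weakly equivalent to the based loop space $\Omega G$. Since $G$ is contractible, so is $\Omega G$; the base $\bR^{\gr} \times \bR^{\gr}$ is contractible, and the long exact sequence in homotopy groups yields weak contractibility of $\scrG([0,1])$.

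The main obstacle is really just the bookkeeping in the first step. The delicate conceptual point is that the strict inequality in \eqref{eq:move-past} is exactly what allows one to gauge-fix within $\scrG$: without it, the boundary-value system for $c_0, c_1$ would be degenerate and the action would fail to be transitive. A secondary technical point is that the identification $\scrG([0,1]) \to \scrA([0,1])$ must be recognized as a weak equivalence in the appropriate (Fréchet) smooth category, but the orbit map has a smooth inverse given by the explicit formulas above, so this is automatic.
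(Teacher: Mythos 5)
Your proposal is correct and reconstructs exactly the argument the paper sketches (and delegates to \cite[Section 2.1]{seidel18}): identify $\scrA([0,1])$ with the gauge group $\scrG([0,1])$ via a simply transitive action, then observe the latter is weakly contractible because $G$ is. The explicit reduction of $\Phi_*A=A'$ to the boundary-value problem $\Phi_1 = g'\Phi_0 g^{-1}$ with $\Phi_0,\Phi_1\in\bR^{\gr}$, and the observation that \eqref{eq:move-past} is what makes that system uniquely solvable in positive scalings, is precisely the content being appealed to.
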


The analogous two-dimensional concept is a little more cumbersome, because of technical constraints dictated by our intended application to Floer theory. Consider a surface \eqref{eq:s}, but this time in a topological sense (as an oriented smooth surface, with no complex structure). Suppose that our surface comes with ends \eqref{eq:ends} (again, in a topological sense, meaning that they are oriented smooth coordinates). Let $\scrA_{\ge}(S)$ be the space of those $A \in \Omega^1(S,\frakg)$ such that:
\begin{align}
& \label{eq:p-space-ends} \mybox{
$\epsilon_{\zeta}^*A$ is the pullback by projection of some $A_\zeta \in \scrA([0,1])$.}
\\ \label{eq:covariantly-constant-lambda} 
& \mybox{Parallel transport for $A|\partial S$ preserves $0 \in \bR$.} 
\\ \label{eq:p-space-curvature} & 
\mybox{$A$ is nonnegatively curved everywhere.}
\end{align}

\begin{lemma} \label{th:a-surface-preliminary}
The space $\scrA_{\ge}(S)$ is empty if $\Sigma_- = \emptyset$, and weakly contractible otherwise.
\end{lemma}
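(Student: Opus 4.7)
My plan is to handle the two cases separately via Stokes-type arguments, after using the simple-connectedness of $S$ to reduce to an abelianized gauge.

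For the emptiness claim when $\Sigma_- = \emptyset$, suppose for contradiction that $A \in \scrA_{\ge}(S)$. Writing $A = \alpha\otimes\partial_w + \beta\otimes w\partial_w$, the nonnegative-curvature condition \eqref{eq:p-space-curvature} forces $d\beta = 0$ and the scalar inequality $\kappa := -d\alpha + \alpha\wedge\beta \geq 0$, while \eqref{eq:covariantly-constant-lambda} forces $\alpha|\partial S = 0$. Since $S$ is simply connected I can write $\beta = d\phi$, and Lemma \ref{th:partial-trivialization} provides a gauge with linear part $a = e^{-\phi}$ (zero translation part) putting $A$ in translation form $\tilde\alpha \otimes \partial_w$, where $\tilde\alpha = e^{-\phi}\alpha$ satisfies $d\tilde\alpha = -e^{-\phi}\kappa \leq 0$. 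The bookkeeping on ends is exactly matched: the parallel section $\sigma$ of $A$ with $\sigma(0) = 0$ transforms to $\tilde\sigma = e^{-\phi}\sigma$, which satisfies the abelian equation $d\tilde\sigma = \tilde\alpha$, so \eqref{eq:move-past} translates to $\int_0^1 \epsilon_\zeta^*\tilde\alpha = \tilde\sigma_\zeta(1) > 0$ at every end. Applying Stokes to a compact truncation $D_N \subset S$ obtained by discarding half-strip neighborhoods $\{|s|\geq N\}$ of each end yields
\begin{equation*}
0 \;\geq\; \int_{D_N} d\tilde\alpha \;=\; \int_{\partial D_N}\tilde\alpha \;=\; \sum_{\zeta\in\Sigma_+}\int_0^1 \epsilon_\zeta^*\tilde\alpha \;-\; \sum_{\zeta\in\Sigma_-}\int_0^1\epsilon_\zeta^*\tilde\alpha,
\end{equation*}
since the $\partial S$ arcs contribute nothing. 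When $\Sigma_- = \emptyset$ and $\Sigma = \Sigma_+$ is nonempty, the right-hand side is a nonempty sum of strictly positive terms, contradicting the upper bound.

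For weak contractibility when $\Sigma_- \neq \emptyset$, nonemptiness comes from reversing the analysis: choose end data $A_\zeta \in \scrA([0,1])$ whose translation-gauge integrals satisfy the compatibility $\sum_+ \leq \sum_-$ (achievable because $\Sigma_-$ is nonempty), then glue their cross-sectional pullbacks together in the interior of $S$ via cutoffs coordinated so that $d\tilde\alpha \leq 0$, and gauge back. For higher connectivity, consider the restriction-to-ends map $\scrA_{\ge}(S) \to \prod_{\zeta\in\Sigma}\scrA([0,1])$. The target is weakly contractible as a finite product of factors known to be weakly contractible by Lemma \ref{th:a-interval}. Over compatible end data, the fibers are convex when viewed in the translation gauge, since the curvature inequality $d\tilde\alpha \leq 0$ is affine-linear in $\tilde\alpha$ (with $\tilde\alpha|\partial S = 0$ and the end values fixed). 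A standard fibration/partition-of-unity extension argument then shows that any map $S^k \to \scrA_{\ge}(S)$ is nullhomotopic, giving $\pi_k(\scrA_{\ge}(S)) = 0$.

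The crux of the argument, and the point where I expect the most care to be required, is the substitution $\tilde\alpha = e^{-\phi}\alpha$ in Part 1: the naive alternative $e^{+\phi}\alpha$ produces cross-sectional integrals whose signs are \emph{not} forced to be positive by \eqref{eq:move-past}, and the Stokes contradiction collapses. Identifying the correct gauge in Lemma \ref{th:partial-trivialization} is what simultaneously achieves $d\tilde\alpha \leq 0$ and the positivity $\int_0^1\tilde\alpha_\zeta\,dt > 0$ at every end. In Part 2 the analogous subtlety is that the translation gauge depends on $\beta$ and hence on the chosen connection, so the fiberwise convexity must be used carefully: either one interpolates in the translation gauge after checking the resulting family descends to a path in $\scrA_{\ge}(S)$, or one interpolates $\alpha$ and $\beta$ in a coordinated way that keeps $d\beta = 0$ (which requires adding compensating $ds$-components in cutoff regions).
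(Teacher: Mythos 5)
Your Stokes argument for the emptiness statement is essentially the paper's own argument: both of you gauge the connection to translation form $\gamma\otimes\partial_w$ using simple-connectedness via Lemma \ref{th:partial-trivialization}, observe that the nonnegative-curvature hypothesis becomes $d\gamma\leq 0$, that \eqref{eq:covariantly-constant-lambda} becomes $\gamma|\partial S = 0$, and that \eqref{eq:move-past} becomes $\int_0^1\epsilon_\zeta^*\gamma>0$ on every end, and then apply Stokes to a large truncation of $S$. (Incidentally, with the paper's bracket conventions the scalar curvature is $\kappa = -d\alpha - \alpha\wedge\beta$ rather than $-d\alpha + \alpha\wedge\beta$; this doesn't affect anything, since $d\tilde\alpha = -e^{-\phi}\kappa\leq 0$ follows from gauge-covariance of $F_A$ regardless of the explicit formula.)

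The gap is in the weak-contractibility half, and it is exactly the subtlety you flag in your closing remarks but do not resolve. Your plan is to fiber $\scrA_\ge(S)$ over $\prod_\zeta\scrA([0,1])$ and argue the fibers are weakly contractible ``by convexity in the translation gauge.'' But the fibers are $\scrA_\ge(S,\Sigma)$, whose weak contractibility is the content of the paper's Lemma \ref{th:a-surface}, which is \emph{deduced from} the present lemma (via this very fibration) rather than being available as an input. And the convexity argument you offer for those fibers cannot be read literally: the abelianizing gauge $\Phi$ depends on the connection, and moreover is only well-defined up to composition with elements of $\scrG_\bullet^\dag(S)$, so a straight-line interpolation in the space of translation one-forms does not canonically descend to a path in $\scrA_\ge(S,\Sigma)$. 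Neither of your two suggested fixes closes this: interpolating $(\alpha,\beta)$ linearly does not preserve the quadratic curvature condition, and ``interpolating in the translation gauge and then checking descent'' is precisely the missing homotopy-theoretic work. The paper instead contracts $\scrA_\ge(S)$ directly, by exhibiting
\begin{equation*}
\scrA_\ge^{\dag}(S)\;\times_{\scrG^{\dag}_\bullet(S)}\;\scrG_\bullet(S)\;\iso\;\scrA_\ge(S),
\end{equation*}
where $\scrA_\ge^{\dag}(S)$ is the convex (hence weakly contractible) space of abelianized one-forms, $\scrG_\bullet(S)$ is the based gauge group (with $s$-independence on ends and preservation of $0$ on $\partial S$), and $\scrG_\bullet^{\dag}(S)$ is the additive subgroup of translations; contractibility of $\scrA_\ge(S)$ then follows because all three pieces are weakly contractible. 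That balanced-product identity is the precise form of the ``descends to a path in $\scrA_\ge(S)$'' step you would need, and it is the ingredient your proposal is missing.
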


\begin{proof}
Fix a base point $\bullet \in S \setminus \partial S$, which is disjoint from the ends. Let $\scrG_\bullet(S) \subset \smooth(S,G)$ be the group of those based gauge transformations which, on each end, are independent of $s$, and whose restriction to $\partial S$ preserves the point $0 \in \bR$. This group acts on $\scrA_{\ge}(S)$. By a version of Lemma \ref{th:partial-trivialization}, one sees that there is a $\Phi \in \scrG_\bullet(S)$ such that $\Phi_*A = \gamma \otimes \tau$, where $\gamma$ satisfies the following analogues of \eqref{eq:p-space-ends}--\eqref{eq:p-space-curvature}:
\begin{align} \label{eq:abelian-1}
& \mybox{
$\epsilon_{\zeta}^*\gamma$ is the pullback of some $\gamma_\zeta \in \Omega^1([0,1])$, such that $\textstyle \int_0^1 \gamma_\zeta > 0$.
}
\\ & \mybox{$\gamma|\partial S = 0$.}
\label{eq:abelian-2}
\\ \label{eq:abelian-3} & \mybox{
$d\gamma$ is nonpositive everywhere.}
\end{align}

Applying Stokes' theorem shows that
\begin{equation} \label{eq:stokes-z}
0 \geq \int_S d\gamma  = \sum_{\zeta \in \Sigma_\pm} \pm \int_0^1 \gamma_\zeta.
\end{equation}
If all ends are positive ones, the right hand side of \eqref{eq:stokes-z} is positive, which is a contradiction. Suppose from now on that there is at least one negative end. Then, the space of $\gamma$ satisfying \eqref{eq:abelian-1}--\eqref{eq:abelian-3} is certainly nonempty (one first chooses the $\gamma_\zeta$ so that the right hand side of \eqref{eq:stokes-z} is nonpositive, and then arranges for an extension over the interior of $S$ so that \eqref{eq:abelian-3} holds). To investigate this further, write $\scrA_\ge^\dag(S)$ for the space of $\gamma$ satisfying \eqref{eq:abelian-1}--\eqref{eq:abelian-3}. Let $\scrG_\bullet^\dag(S)$ be the (additive) group of those $\phi \in \smooth(S,\bR)$ which vanish on $\partial S$ and at the base point, and which are $s$-independent on each end. This acts on $\scrA_\ge^\dag(S)$, simply by adding $d\phi$. Again by a version of Lemma \ref{th:partial-trivialization}, the relation between all these spaces and groups is that
\begin{equation}
\scrA_\ge^{\dag}(S) \times_{\scrG^\dag_\bullet(S)} \scrG_\bullet(S) \iso \scrA_{\ge}(S).
\end{equation}
Now, $\scrA_\ge^{\dag}(S)$ is weakly contractible, since \eqref{eq:abelian-1}--\eqref{eq:abelian-3} are convex conditions. The groups involved are also weakly contractible, so it follows that the same holds for $\scrA_{\ge}(S)$.
\end{proof}

Now, suppose that we work with fixed $A_\zeta$. This gives a space denoted by $\scrA_{\ge}(S,\Sigma)$, which fits into a weak fibration
\begin{equation}
\scrA_\ge(S,\Sigma) \longrightarrow \scrA_\ge(S) \longrightarrow \scrA([0,1])^{|\Sigma|}.
\end{equation}
Lemmas \ref{th:a-interval} and \ref{th:a-surface-preliminary} then imply:

\begin{lemma} \label{th:a-surface}
If $\Sigma_- \neq \emptyset$, $\scrA_\ge(S,\Sigma)$ is weakly contractible.
\end{lemma}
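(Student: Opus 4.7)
The statement is naturally set up to follow from a fibration-sequence argument combined with the two previous lemmas. Concretely, restricting a connection to its pulled-back data on the ends gives a map
\begin{equation*}
r: \scrA_\ge(S) \longrightarrow \scrA([0,1])^{|\Sigma|}, \qquad A \longmapsto (A_\zeta)_{\zeta \in \Sigma},
\end{equation*}
whose fiber over a chosen point is by definition $\scrA_\ge(S,\Sigma)$. By Lemma~\ref{th:a-surface-preliminary} the total space $\scrA_\ge(S)$ is weakly contractible (using $\Sigma_- \neq \emptyset$), and by Lemma~\ref{th:a-interval} together with the fact that a finite product of weakly contractible spaces remains weakly contractible, so is the base. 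Thus, once $r$ is shown to be a Serre fibration, the long exact sequence of homotopy groups forces $\scrA_\ge(S,\Sigma)$ to be weakly contractible as well.

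The core work is therefore to verify the homotopy lifting property: given a family $(A^{(\tau)})_{\tau \in D^k}$ in $\scrA([0,1])^{|\Sigma|}$ together with a lift $(\tilde A^{(\tau)})_{\tau \in \partial D^k}$ in $\scrA_\ge(S)$, I need to extend to a lift over all of $D^k$. My plan is to reduce to the abelianized picture of the proof of Lemma~\ref{th:a-surface-preliminary}. Choose a base point and apply the global gauge transformation from that proof to convert each $\tilde A^{(\tau)}$ into the form $\gamma^{(\tau)} \otimes \partial_w$ with $\gamma^{(\tau)}$ satisfying the convex conditions \eqref{eq:abelian-1}--\eqref{eq:abelian-3}; the corresponding gauge-transformed problem reduces to a lifting in the commutative fibration
\begin{equation*}
\scrA_\ge^\dag(S,\Sigma) \longrightarrow \scrA_\ge^\dag(S) \longrightarrow \Omega^1([0,1])^{|\Sigma|},
\end{equation*}
twisted by the corresponding weakly contractible gauge groups $\scrG^\dag_\bullet(S,\Sigma) \hookrightarrow \scrG^\dag_\bullet(S)$, whose quotient is also straightforwardly a weakly contractible function space on the ends. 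Once there, lifts can be constructed by a convex-combination argument on the thin parts of the ends, using a cutoff in the $s$-variable to interpolate between the old and the new $\gamma_\zeta$; since the relevant nonpositive-curvature condition $d\gamma \le 0$ is convex and is automatic on the pulled-back ends, the interpolation stays in $\scrA_\ge^\dag(S)$.

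The main obstacle I expect is precisely this last point: preserving nonnegative curvature during the lift. The naive cutoff interpolation $\gamma = (1-\chi(s))\gamma^{\mathrm{old}}_\zeta\,dt + \chi(s)\gamma^{\mathrm{new}}_\zeta\,dt$ introduces a term $\chi'(s)\,ds \wedge (\gamma^{\mathrm{new}}_\zeta - \gamma^{\mathrm{old}}_\zeta)\,dt$ in $d\gamma$, whose sign is not controlled. The fix is to perform the interpolation not pointwise in $t$ but after compensating by a function $\phi \in \scrG^\dag_\bullet(S)$, i.e. by adding a $d\phi$ term supported near the end and chosen so that the overall $d\gamma \le 0$ is restored. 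This is always possible because, as in the proof of Lemma~\ref{th:a-surface-preliminary}, the only global obstruction to satisfying \eqref{eq:abelian-3} with prescribed end data is Stokes' identity \eqref{eq:stokes-z}, which is a condition that varies continuously in $\tau$ and holds throughout the family by hypothesis. Packaging these local modifications together, using a partition of unity on the parameter space $D^k$ to glue them to the given lift on $\partial D^k$, produces the desired extension, and hence the Serre fibration property that completes the argument.
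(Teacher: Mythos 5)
Your overall route --- realize $\scrA_\ge(S,\Sigma)$ as the fiber of the restriction map $\scrA_\ge(S) \to \scrA([0,1])^{|\Sigma|}$, invoke Lemmas~\ref{th:a-interval} and~\ref{th:a-surface-preliminary} for base and total space, and conclude by the long exact sequence --- is exactly the paper's proof (the paper simply asserts the ``weak fibration'' property and stops). Where you go further, by trying to verify a Serre homotopy lifting property directly, the argument has a genuine gap.

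The problem is the proposed ``fix'' at the end. You correctly observe that the naive cutoff interpolation contributes a term $\chi'(s)\, ds \wedge (\gamma_\zeta^{\mathrm{new}}-\gamma_\zeta^{\mathrm{old}})\, dt$ to $d\gamma$ whose sign is uncontrolled, but then propose to repair this ``by adding a $d\phi$ term supported near the end.'' That cannot work: $d(\gamma + d\phi) = d\gamma$ because $d^2 = 0$, which is precisely the reason the gauge group $\scrG_\bullet^\dag(S)$ preserves condition \eqref{eq:abelian-3} in the first place --- so the compensation changes nothing. If instead what you intend is to modify the target end data $\gamma_\zeta^{\mathrm{new}}$ by $\psi'(t)\,dt$ with $\psi(0)=\psi(1)=0$ \emph{before} interpolating, then $\int_0^1 \psi' = 0$ forces $\int_0^1 \gamma_\zeta^{\mathrm{new}} \le \int_0^1 \gamma_\zeta^{\mathrm{old}}$ for a positive end, a monotonicity condition that is not available in general. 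Relatedly, the assertion that the Stokes constraint \eqref{eq:stokes-z} ``holds throughout the family by hypothesis'' is not supported: in the HLP setup you are only given a lift over $\partial D^k$, so you know \eqref{eq:stokes-z} there but have no a priori control at interior points of $D^k$. A sound verification of the fibration property would need a different mechanism (e.g.\ exploit that the restriction map $\scrG_\bullet(S)\to\scrG([0,1])^{|\Sigma|}$ is onto and that $\scrG([0,1])$ acts simply transitively, so lifts can be produced by gauging), or one can bypass the fibration argument entirely by noting directly that $\scrA_\ge^\dag(S,\Sigma)$ --- the abelianized connections with fixed end data --- is a convex subset of an affine space, hence contractible once nonempty.
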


\section{Geometric and analytic setup\label{sec:setup}}
This section describes the geometry of the target space, and discusses some basic properties of the resulting Cauchy-Riemann equations. It combines ideas from \cite{seidel17, seidel18, seidel18b}, with the addition of a symplectic ample divisor. We will not be going through the entire foundational Floer theory package, as some aspects (notably transversality) entirely follow the standard process.

\subsection{Hyperbolic geometry\label{subsec:hyp}}
Take the open upper half-plane and its compactification,
\begin{equation}
\begin{aligned}
& W = \{\im(w) > 0\} \subset \bC, \\
& \bar{W} = \{\im(w) \geq 0\} \cup \{\infty\}, \\
& \partial_\infty W = \partial \bar{W} = \bar{W} \setminus W.
\end{aligned}
\end{equation}
We equip $W$ with the hyperbolic symplectic form,
\begin{equation}
\omega_W = \frac{d\re(w) \wedge d\im(w)}{\im(w)^2}.
\end{equation}
The group $G$ of affine transformations (see Section \ref{subsec:affine}) acts on $\bar{W}$, by holomorphically extending its action on the real line. When restricted to $W$, the action is Hamiltonian. Concretely, the vector fields and Hamiltonian functions are 
\begin{equation} \label{eq:g-action}
X_\gamma = (aw+b)\partial_w, \quad H_\gamma = \frac{a \re(w) + b}{\im(w)}.
\end{equation}

\subsection{Symplectic geometry\label{subsec:context}}
Let $E^{2n}$ be a symplectic manifold equipped with a symplectic ample divisor $\Omega_E$, and a one-form $\theta_E$ on its complement, as in Setup \ref{th:ample}. We assume that $E$ is Calabi-Yau, as in Setup \ref{th:branes}. Our manifold should come with a proper map
\begin{equation} \label{eq:pi-w}
p: E \longrightarrow W.
\end{equation}
If $x$ is a regular point of $p$, define $\mathit{TE}_x^h \subset \mathit{TE}_x$ to be the $\omega_E$-orthogonal complement of $\mathit{TE}_x^v = \mathit{ker}(Dp_x)$. Whenever $\mathit{TE}_x^v$ is a symplectic subspace, $\mathit{TE}_x^h$ is also symplectic and a complementary subspace, hence projects isomorphically to $TW_{p(x)} = \bC$. Let $E^{\mathit{triv}} \subset E$ be the set of regular points $x \in E$ such that $\mathit{TE}_x^v$ is a symplectic subspace, and $\omega_E|\mathit{TE}_x^h$ is the pullback of $\omega_W$. The significance of this condition is roughly as follows: on the subset of regular points for which $\mathit{TE}_x^v$ is symplectic, the restriction $\omega_E|\mathit{TE}^h_x$ is (a lift from Hamiltonian vector fields to functions of) the curvature of the symplectic connection $TE^h$. Hence, on the interior of $E^{\mathit{triv}}$, the connection is flat.

\begin{setup} \label{th:e-setup}
Of the map \eqref{eq:pi-w}, we require that:
\begin{align} &
\label{eq:e-triv}
\mybox{$E \setminus E^{\mathit{triv}}$ is relatively compact; and $\mathit{TE}^h$ is tangent to $\Omega_E$, on the complement of a compact subset in that submanifold.}
\\ &
\label{eq:e-triv-2}
\mybox{$p^{-1}(\{\im(w) \leq 4\}) \subset E^{\mathit{triv}}$; and on $\Omega_E \cap p^{-1}(\{\im(w) \leq 4\})$, $\mathit{TE}^h$ is tangent to $\Omega_E$.}
\end{align}
\end{setup}

Let $M = E_i$ be the fibre at $i \in W$. This is a symplectic manifold, and $\Omega_M = \Omega_E \cap M$ is a symplectic submanifold. Using parallel transport, and the flatness condition \eqref{eq:e-triv-2} for the connection, we get a symplectic isomorphism
\begin{equation} \label{eq:2-trivialization}
E \supset p^{-1}(\{\im(w) \leq 4\}) \iso \{\im(w) \leq 4\} \times M \subset W \times M
\end{equation}
which is fibered over $W$, and is the identity on the fibre over $i$. Moreover, this isomorphism takes (the relevant parts of) $\Omega_E$ to $W \times \Omega_M$. From the same perspective, let's consider \eqref{eq:e-triv}, which gives local trivializations for $p$ outside a compact subset. Using those, one can form a canonical compactification
\begin{equation} \label{eq:e-bar}
p: \bar{E} \longrightarrow \bar{W},
\end{equation}
with $\partial_\infty E = \bar{E} \setminus E = p^{-1}(\partial_\infty W)$. The closure $\bar{\Omega}_E \subset \bar{E}$ is again a codimension two submanifold. 

We consider Lagrangian submanifolds $L \subset E$ which are disjoint from $\Omega_E$ and exact for $\theta_E$, as in Setup \ref{th:ample}. Moreover, they should be graded and {\em Spin}, see Setup \ref{th:branes}. However, they are no longer required to be compact. Instead,

\begin{setup} \label{th:l}
All our Lagrangian submanifolds should satisfy:
\begin{align}
&
\mybox{$p(L) \cap \{\mathrm{im}(w) \geq 1\}$ is compact;} \\
& \mybox{$p(L) \cap \{\mathrm{im}(w) \leq 1+\epsilon\} \subset (0,1+\epsilon]i$, for some $\epsilon = \epsilon_L>0$.}
\end{align}
\end{setup}

As a consequence, under \eqref{eq:2-trivialization}, $L \cap p^{-1}(\{\mathrm{im}(w) \leq 1+\epsilon\})$ corresponds to $(0,1+\epsilon]i \times (L \cap M)$. Moreover, the closure $\bar{L} \subset \bar{E}$ is again a submanifold (with boundary $\partial \bar{L} = \bar{L} \cap\bar{E}_0$).

We use compatible almost complex structures $J$ on $E$ which make $\Omega_E$ into an almost complex submanifold (as in Setup \ref{th:ample}), but which are adapted to $p$, in the following sense.

\begin{setup} \label{th:j}
(i) All our almost complex structures should satisfy:
\begin{align}
& \mybox{Outside a compact subset, $Dp$ is $J$-holomorphic.} \\
& \mybox{$J$ extends smoothly to an almost complex structure on $\bar{E}$. 
}
\end{align}
(ii) For part of our argument, we will want to impose additional conditions. An almost complex structure of the kind described above is called ``suitable for restriction'' if it also satisfies:
\begin{align} \label{eq:j-splits}
& \mybox{In a neighbourhood of $M = E_i$, with respect to \eqref{eq:2-trivialization}, $J$ is the product of the standard complex structure on the base and some compatible almost complex structure on $M$.}
\end{align}
\end{setup}

Concerning Hamiltonian functions $H$ on $E$, we want the associated vector field $X$ to be tangent to $\Omega_E$, as in Setup \ref{th:ample}. There are requirements concerning the behaviour of $H$ at infinity, which involve the action of $G$ on $W$.

\begin{setup} \label{th:h}
(i) Given some $\gamma \in \frakg$, we want our Hamiltonian functions $H$ to satisfy:
\begin{equation} \label{eq:h-at-infinity}
\mybox{Outside a compact subset, $H$ is the pullback of the Hamiltonian $H_\gamma$ from \eqref{eq:g-action}.}
\end{equation}
(ii) As before, there is an additional requirement which makes $H$ ``suitable for restriction'', namely:
\begin{equation} \label{eq:h-splits}
\mybox{In a neighbourhood of $M = E_i$, under \eqref{eq:2-trivialization}, $H$ corresponds to the sum of a constant multiple of $p^*(\half|w-i|^2)$ and a function on $M$.}
\end{equation}
\end{setup}

The condition \eqref{eq:h-at-infinity} implies that the associated vector field is, outside a compact subset, the unique lift of $X_\gamma$ to $\mathit{TE}^h$. In particular, $X$ always extends smoothly to $\bar{E}$.

\subsection{Cauchy-Riemann equations\label{subsec:our-cr}}
We begin by setting up the data underlying Floer's equation. Take Lagrangian submanifolds $L_0,L_1$ as in Setup \ref{th:l}. 

\begin{setup} \label{th:setup-floer}
Choose some $A = a_t \mathit{dt} \in \scrA([0,1])$ (see Section \ref{subsec:connections}), and additionally, some $\alpha \in (0,\pi)$ (which, as we'll see below, needs to be appropriately small). For $t \in [0,1]$, we choose $H_t$ to lie in the class of functions associated to $a_t \in \frakg$, as in Setup \ref{th:h} (including its second part), where the constant multiple in \eqref{eq:h-splits} is $\frac{\alpha}{2} |w-i|^2$ for all $t$. We require nondegeneracy of chords (which, together with the other conditions, implies that there are only finitely many chords), as well as the following:
\begin{equation}
\label{eq:choose-floer-1}
\mybox{Each chord either lies in $p^{-1}(\{\mathrm{im}(w) > 1+\epsilon\})$, for some $\epsilon = \epsilon_{L_0,L_1,H} > 0$, or in $M = E_i$.}
\end{equation}
We also take $J_t$ as in Setup \ref{th:j} (again including its second part). Additionally, there are more specific technical conditions, which we need so that the techniques from Section \ref{sec:gluing} can be applied:
\begin{align}
&
\mybox{We require local linearity near the chords in the sense of \eqref{eq:locally-linear}. (This condition is compatible with the previous ones, thanks to the local product structure near chords that lie in $M$.)}
\\ & \label{eq:first-eigenvalue}
\mybox{Take any chord that is contained in $M = E_i$. We require that, for the selfadjoint operator associated to that chord, $\alpha$ should be the smallest positive eigenvalue, and that this eigenvalue is simple. (Because of the condition on the Hamiltonian, the selfadjoint operator associated to that chord splits as a direct sum of two pieces, one an operator \eqref{eq:model-q}, and the other an operator associated to the fibre direction. 
The first summand has an eigenvalue $\alpha$, by \eqref{eq:lambda-m}. Then, our condition is that all the positive eigenvalues of the second summand should be strictly bigger than $\alpha$; this can be achieved by first choosing the Hamiltonian in $M$, and then making $\alpha$ sufficiently small.)}
\end{align}
\end{setup}

More generally, take a surface \eqref{eq:s}, assuming that $\Sigma_- \neq \emptyset$, and equip it with strip-like ends. Our surface should come with a Lagrangian submanifold $L_C$ for each $C \subset \partial S$ (as in Setup \ref{th:l}). For each end, we choose $A_\zeta \in \scrA([0,1])$ and $\alpha_\zeta \in (0,\pi)$, and then corresponding $(H_\zeta, J_\zeta)$ as in Setup \ref{th:setup-floer}, where the Lagrangian submanifolds are the $(L_{\zeta,0}, L_{\zeta,1})$ associated to that end.

\begin{setup} \label{th:jk-adapted}
(i) Pick $A \in \scrA_{\ge}(S,\Sigma)$, meaning that $\epsilon_{\zeta}^*A = A_\zeta$ on the ends. We choose $K \in \Omega^1(S,\smooth(E,\bR))$, such that \eqref{eq:z-tangent} holds, and where for each $\xi \in TS$, $K(\xi)$ is a Hamiltonian function in the class determined by $A(\xi) \in \frakg$, as in Setup \ref{th:h}(i). Similarly, we pick a family $(J_z)_{z \in S}$, where each $J_z$ is as in Setup \ref{th:j}(i). On the ends, we want to have convergence conditions as in \eqref{eq:jk-converge}, together with a slightly stronger version, which generalizes our discussion of continuation maps in \eqref{eq:added-condition}:
\begin{equation}
\mybox{
If $x_\zeta(t)$ is a chord associated to some end $\zeta$, then in a neighbourhood of $\{(t,x_\zeta(t))\} \subset [0,1] \times E$, and on a suitable part $\pm s \gg 0$ of the end, we want to have $\epsilon_\zeta^*K = H_{\zeta,t}\mathit{dt}$ and $\epsilon_\zeta^*J = J_{\zeta,t}$.
}
\end{equation}

(ii) We say that the data chosen above are ``suitable for restriction'' if:
\begin{align}
\label{eq:j-restriction}
& \mybox{Each $J_z$ is as in Setup \ref{th:j}(ii).} \\
& \label{eq:k-restriction}
\mybox{There is a $\beta \in \Omega^1(S)$, satisfying $\epsilon_{\zeta}^*\beta = \alpha_\zeta \mathit{dt}$ on the ends, as well as $\beta|\partial S = 0$, such that the following holds. Near $M = E_i$, and with respect to \eqref{eq:2-trivialization}, each $K(\xi)$ corresponds to $\beta(\xi) p^*(\half |w-i|^2) + \text{\it (a function on $M$)}$.}
\end{align}
\end{setup}

For convenience, let's reproduce here the associated Cauchy-Riemann equation (the same as in \eqref{eq:cauchy-riemann}, but with target space $E$):
\begin{equation} \label{eq:cauchy-riemann-2}
\left\{
\begin{aligned} 
& u: S \longrightarrow E, \\
& u(C) \subset L_C, \\
& \textstyle \lim_{s \rightarrow \pm\infty} u(\epsilon_{\zeta}(s,t)) = x_\zeta(t), \\
& (du - Y)^{0,1} = 0.
\end{aligned}
\right.
\end{equation}
Consider $v = p(u): S \rightarrow W$. On the subset of $S$ where $v$ is sufficiently close to $\partial_\infty W$, it satisfies 
\begin{equation} \label{eq:cauchy-riemann-projected}
\left\{
\begin{aligned}
& v(\partial S) \subset i[0,\infty), \\
& (dv - Z)^{0,1} = 0,
\end{aligned}
\right.
\end{equation}
where $Z$ is the one-form with values in Hamiltonian vector fields on $W$ associated to $A$. Similarly, if Setup \ref{th:jk-adapted}(ii) applies, then on the subset where $u$ is sufficiently close to $M = E_i$, $v = p(u)-i$ satisfies \eqref{eq:plane-cr}, with $\beta$ as in \eqref{eq:k-restriction}. In fact, the Cauchy-Riemann equation then splits into two independent parts, where the base part is \eqref{eq:plane-cr}, and the fibre part is a Cauchy-Riemann equation with target $M$.

\begin{remark}
To be precise, there is a slight difference between the situation here and the original use of \eqref{eq:plane-cr}. In Section \ref{sec:plane}, we used a single $\alpha$ throughout, whereas in the present situation, the behaviour of $\beta$ on each end is determined by a different $\alpha_\zeta$. However, all the discussion from Section \ref{sec:plane} also goes through in this marginally more general context.
\end{remark}

\subsection{Approaching the fibre}
Suppose that we have a surface, with all the necessary auxiliary data, satisfying both parts of Setup \ref{th:jk-adapted}. Using our analysis of \eqref{eq:plane-cr}, we obtain the following.

\begin{lemma} \label{th:approach}
Let $u$ be a solution of \eqref{eq:cauchy-riemann-2} which is not contained in $M = E_i$. Then, 
\begin{align}
\label{eq:avoids-the-fibre}
& \mybox{$u^{-1}(M) = \emptyset$.} \\
\label{eq:cant-go-into}
& \mybox{If $\zeta$ is a negative end, $x_\zeta$ must lie outside $M$.} \\
\label{eq:ifits}
& \mybox{If $\zeta$ is a positive end, and $x_\zeta$ lies in $M$, $\lim_{s \rightarrow+\infty} \exp(\alpha_\zeta s) (p(u(\epsilon_\zeta(s,\cdot))) - i)$ is a positive multiple of $i$.}
\end{align}
\end{lemma}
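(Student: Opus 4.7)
The argument rests on restricting to a neighborhood of the fibre $M = E_i$, where Setup \ref{th:jk-adapted}(ii) guarantees that the Cauchy--Riemann equation \eqref{eq:cauchy-riemann-2} splits as a product: under the local trivialization \eqref{eq:2-trivialization}, the map $v := p \circ u - i$ satisfies the toy-model linear equation \eqref{eq:plane-cr} with the one-form $\beta$ from \eqref{eq:k-restriction}, together with the boundary condition $v(\partial S) \subset i\bR$ inherited from Setup \ref{th:l}. If $u \not\subset M$, the Carleman similarity principle applied to \eqref{eq:plane-cr} ensures that $v$ does not vanish identically on any open subset of its domain, so $u^{-1}(M) = v^{-1}(0)$ consists of isolated points, each a zero of $v$ of positive finite local order.

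The assumption \eqref{eq:first-eigenvalue} says precisely that, for any chord $x_\zeta \in M$, the base factor of the associated selfadjoint operator along $x_\zeta$ is the operator \eqref{eq:model-q} with $\alpha = \alpha_\zeta$; by \eqref{eq:lambda-m} its smallest positive eigenvalue is $\lambda_0 = \alpha_\zeta$ (with $\Xi_0 = i$) and its largest negative eigenvalue is $\lambda_{-1} = \alpha_\zeta - \pi$ (with $\Xi_{-1} = i e^{\pi i t}$). With this in hand, \eqref{eq:avoids-the-fibre} and \eqref{eq:cant-go-into} follow by running the winding-number argument from the proof of Lemma \ref{th:automatic-injectivity} on $v$. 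Choose a compact region $K \subset S$ containing all zeros of $v$, with end cuts pushed out to $|s| \gg 0$ at ends with $x_\zeta \in M$, and truncated within the neighborhood of $M$ elsewhere. The total winding of $v^2/|v|^2$ along $\partial K$ equals the sum of zero multiplicities (a nonnegative integer). On the other hand, boundary arcs on $\partial S$ contribute zero (as $v^2$ has constant argument $\pi$ where $v \in i\bR \setminus \{0\}$), end cuts at $\zeta$ with $x_\zeta \notin M$ together with arcs along the inner boundary of the neighborhood contribute zero in the limit ($v$ being bounded away from $0$ there), and end cuts at $\zeta$ with $x_\zeta \in M$ contribute $\mp m_\zeta$. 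Combining,
\[
0 \;\leq\; \#\{\text{zeros of $v$}\} \;=\; \sum_{\zeta \in \Sigma_+,\, x_\zeta \in M} (-m_\zeta) \;+\; \sum_{\zeta \in \Sigma_-,\, x_\zeta \in M} m_\zeta \;\leq\; -\,\big|\{\zeta \in \Sigma_- : x_\zeta \in M\}\big|,
\]
forcing both sides to vanish. This yields \eqref{eq:avoids-the-fibre} and \eqref{eq:cant-go-into}, and additionally $m_\zeta = 0$ at each positive end with $x_\zeta \in M$, so the leading asymptotic form of $v$ there is $c_\zeta\, i \exp(-\alpha_\zeta s)$ with $c_\zeta \in \bR \setminus \{0\}$.

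For \eqref{eq:ifits} it remains to show that $c_\zeta > 0$. I would argue this using positivity of intersections in the graph picture: the graph $z \mapsto (z, u(z))$ is pseudo-holomorphic in $S \times E$ with respect to an almost complex structure for which $S \times M$ is a pseudo-holomorphic submanifold (a consequence of the product structure in Setup \ref{th:jk-adapted}(ii)). Perturbing to $S \times p^{-1}(i + i\delta)$ with $\delta > 0$ small, a transverse intersection appears near the end $\zeta$ exactly when $c_\zeta\, i \exp(-\alpha_\zeta s)$ meets $i\delta$, that is, precisely when $c_\zeta > 0$; comparing with the analogous analysis at $\delta < 0$, together with \eqref{eq:avoids-the-fibre}, pins down the sign. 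The most delicate step is the winding argument: adapting Lemma \ref{th:automatic-injectivity} to the current setting, where $v$ satisfies the linear equation only on an open subset of $S$, requires controlling the inner-boundary contributions from the region where $u$ crosses in and out of the neighborhood of $M$. The sign statement \eqref{eq:ifits} is a separate subtlety, since the winding of $v^2/|v|^2$ is insensitive to the sign of $c_\zeta$; the positivity-of-intersections argument at infinity is the needed additional input beyond the linear analysis of Section \ref{sec:plane}.
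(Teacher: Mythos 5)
Your overall strategy -- projecting to $v = p\circ u - i$, reading off asymptotics from the model operator, and running a degree count for \eqref{eq:avoids-the-fibre}--\eqref{eq:cant-go-into} -- is close in spirit to the paper's, but the winding argument has a genuine gap, and you correctly identify where it lies. The problem is that on the ``inner boundary'' of your region $K$ (the arcs where $u$ leaves the neighbourhood of $M$), the fact that $|v|$ is bounded away from zero does not make the winding of $v^2/|v|^2$ vanish: a loop can stay far from the origin and still wind arbitrarily. Since $v$ satisfies the linear equation \eqref{eq:plane-cr} only inside the neighbourhood, you have no argument principle available on the outside to force cancellation, so the displayed inequality carries an uncontrolled extra term. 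The paper avoids this altogether by using a \emph{global} topological invariant: the average intersection number $\tfrac12\big(u\cdot E_{i-\delta} + u\cdot E_{i+\delta}\big)$ with $\delta>0$ real. Because the Lagrangian boundary conditions project into the imaginary axis near $M$, and the chords either lie in $M$ or far above it, both avoid $E_{i\pm\delta}$; the count is therefore $\delta$-independent, and equals $0$ by sending $\delta\to\infty$ (relative compactness of $u(S)$). Letting $\delta\to 0$ then yields precisely the nonnegative sum you wrote, but with no inner-boundary term at all, forcing every contribution to vanish.

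For \eqref{eq:ifits}, your positivity-of-intersections argument with $S\times p^{-1}(i+i\delta)$ runs into a different obstruction: $i+i\delta = i(1+\delta)$ lies on the imaginary axis, so for small $\delta$ the fibre $E_{i(1+\delta)}$ actually meets the Lagrangian boundary conditions, and $u\cdot E_{i(1+\delta)}$ is not a well-defined homotopy invariant. The paper instead argues by propagation along $\partial S$: if $c_0<0$, then $\mathrm{im}\,p(u) < 1$ near the end, and combined with $u^{-1}(M)=\emptyset$ from \eqref{eq:avoids-the-fibre}, the adjacent boundary components of $S$ are trapped in $p^{-1}(i(0,1))$. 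This trapping forces each neighbouring end $\zeta'$ to have the same behaviour, and one eventually reaches a point of $\Sigma_-$, contradicting \eqref{eq:cant-go-into}. That step uses $\Sigma_-\neq\emptyset$ in an essential way.
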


\begin{proof}
The key is to consider the average intersection number
\begin{equation} \label{eq:average-int}
\half (u \cdot E_{i-\delta} + u \cdot E_{i+\delta}), \;\; \delta>0.
\end{equation}
Equivalently, this is the average degree of $v = p(u) -i$ over $\pm\delta$. Because the Lagrangian submanifolds and the chords are disjoint from $E_{i \pm \delta}$, this is defined, and the same, for all $\delta$. Since $u(S)$ is relatively compact in $E$, letting $\delta \rightarrow \infty$ shows that \eqref{eq:average-int} is zero. 

Let's compare this to what we can learn from $\delta \rightarrow 0$. In that limit, points of $u^{-1}(E_{i \pm \delta})$ either converge to points of $u^{-1}(M)$, or go to $\pm\infty$ on one of the ends, which is necessarily one with $x_\zeta$ contained in $M$. Hence, what we need to do is to determine the contribution of each such limit point, or end, to \eqref{eq:average-int}. For a point $z \in u^{-1}(M) \setminus \partial S$, or equivalently, an interior solution of $v(z) = 0$, the local degree over $0$ (multiplicity of vanishing) is necessarily positive. Then, the contribution of nearby points to the degree over $\pm \delta$, for small $\delta$, is the same, which means that we have a positive integer contribution to \eqref{eq:average-int}. One can similarly analyze the local behaviour at points $z \in v^{-1}(0) \cap \partial S$; for such a point, we have some $d>0$ such that the nearby degree over one of the two points $\pm\delta$ is $d$, and that over the other point is $d-1$. Again, that leads to a positive contribution $d-\half$ to \eqref{eq:average-int}. 

Suppose that $x_\zeta$ is a chord in $M$. For $\pm s \gg 0$, the equation \eqref{eq:plane-cr} for $v$ reduces to \eqref{eq:trivial-floer} for $v(\epsilon_\zeta(s,t))$, so in terms of \eqref{eq:lambda-m} we have an asymptotic behaviour
\begin{equation} \label{eq:v-asymp}
v(\epsilon_\zeta(s,t)) = c_m \exp(-\lambda_m s) \Xi_m + \cdots,\;\; \text{ where $c_m \neq 0$ and }
\begin{cases} m \geq 0 & \zeta \in \Sigma_+, \\
m < 0 & \zeta \in \Sigma_-.
\end{cases}
\end{equation}
Let's first discuss the case of a negative $\zeta$. For $s \ll 0$, consider the loop formed by taking $t \mapsto v(\epsilon(s,t))$ and then connecting its endpoints inside the imaginary axis. That loop will avoid $\pm \delta$ if $\delta$ is small enough, and the average of its winding number around those two points is $m/2 > 0$. Hence, the contribution of such an end to \eqref{eq:average-int} is again positive. For positive $\zeta$, one uses $t \mapsto v(\epsilon(s,-t))$ in a similar way, and gets a contribution of $-m/2 \geq 0$ to \eqref{eq:average-int}. Since the sum of all those contributions must be zero, we immediately get \eqref{eq:avoids-the-fibre} and \eqref{eq:cant-go-into}; and moreover, for positive ends we see that $m = 0$ in \eqref{eq:v-asymp}. 

Let's push the discussion of positive ends a little further. The relevant eigenvalue and eigenvector being $\lambda_0 = \alpha_\zeta$ and $\Xi_0 = i$, we must have
\begin{equation} \label{eq:which-side}
\text{for $s \gg 0$, } \mathrm{im}(v(\epsilon_\zeta(s,t))) 
\begin{cases}
> 0 & \text{if } c_0 > 0, \\
< 0 & \text{if } c_0 < 0.
\end{cases}
\end{equation}
Suppose now that $c_0 < 0$. Combining \eqref{eq:which-side} with the boundary condition, one finds that 
\begin{equation}
u(\epsilon_\zeta(s,k)) \in p^{-1}(i(0,1)) \;\; \text{ for $k = 0,1$ and $s \gg 0$.}
\end{equation}
Recall that by our previous argument, $p(u)$ avoids $i$. Hence, $u$ must map the components of $\partial S$ containing $\epsilon_\zeta(s,k)$ to $p^{-1}(i(0,1))$. This means that for any end $\zeta'$ adjacent to those boundary components, we must have $x_{\zeta'} \in M$, and moreover, $p(u)$ again approaches that limit from below in $W$. In other words, those $\zeta'$ have the same property as $\zeta$. One can go from one point of $\Sigma$ to its neighbours in this way, until one hits a point of $\Sigma_-$ (which always exists by assumption), yielding a contradiction. Hence $c_0 > 0$, which is \eqref{eq:ifits}.
\end{proof}

\subsection{Compactness}
The only concern here (beyond standard Fukaya-categorical issues) has to do with the noncompactness of $E$ and its Lagrangian submanifolds. One has to rule out the possibility of sequences of solutions (with bounded energy) escaping to infinity. We approach this using the compactification \eqref{eq:e-bar}, following \cite{seidel17, seidel18} (more specfifically, \cite[Lemma 4.10]{seidel18}); because the arguments are quite similar, only a limited amount of details will be given. The Cauchy-Riemann equation in \eqref{eq:cauchy-riemann-2} extends to maps $S \rightarrow \bar{E}$, because the almost complex structures, the Hamiltonian vector fields in the inhomogeneous term, and the Lagrangian submanifolds, have smooth extensions. Let's consider solutions of this equation, in a slightly more general context, where strip-like ends play no role, meaning that $S$ could be any Riemann surface with boundary, carrying a one-form which satisfies \eqref{eq:covariantly-constant-lambda}, \eqref{eq:p-space-curvature}.

\begin{lemma} \label{th:touch-boundary}
For a solution $u: S \rightarrow \bar{E}$ of (the extended version of) \eqref{eq:cauchy-riemann-2}, $u^{-1}(\partial_\infty E)$ is open and closed. Moreover, on that subset, $v = p(u)$ satisfies $dv = v^*Z$, where $Z$ is as in \eqref{eq:cauchy-riemann-projected}, but extended continuously to $\bar{W}$.
\end{lemma}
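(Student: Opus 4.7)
The first claim, closedness of $u^{-1}(\partial_\infty E)$, is automatic from the continuity of $u$ together with the closedness of $\partial_\infty E \subset \bar E$. For openness, fix $z_0 \in u^{-1}(\partial_\infty E)$. Because Setup \ref{th:l} ensures that each Lagrangian has relatively compact image in $W$ under $p$, the submanifolds $L_C$ are all disjoint from $\partial_\infty E$, so $z_0$ lies in the interior of $S$. Choose a neighborhood $U \subset \mathrm{int}(S)$ of $z_0$ small enough that $u(U)$ avoids the compact sets from Setups \ref{th:j}(i) and \ref{th:h}(i) where the asymptotic conditions could fail. Then $Dp$ is $J$-holomorphic on $u(U)$ and $Y$ is the horizontal lift of $X_\gamma$, so the projection $v = p(u)$ satisfies $(dv - Z)^{0,1} = 0$ on $U$, with $Z(\xi) = X_{A(\xi)}$.

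Suppose first that $v(z_0) \in \bR \subset \partial_\infty W$. Shrink $U$ to be simply connected; by Lemma \ref{th:partial-trivialization} we may choose a gauge transformation $\Phi \in \smooth(U,G)$ with $\Phi_*(A|_U) = \gamma \otimes \partial_w$ for some $\gamma \in \Omega^1(U)$. Applying $\Phi^{-1}$ fibrewise to $v$, which still takes values in $\bar W$, carries $\partial_\infty W$ to itself (as $G$ preserves the boundary), and satisfies the transformed equation, we may assume $A|_U = \gamma \otimes \partial_w$. Writing $v = x + iy$ in coordinates $z = s+it$, the equation reads $\partial_s x - \partial_t y = \gamma_s$ and $\partial_s y + \partial_t x = \gamma_t$. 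Differentiating and adding yields
\[
\Delta y \;=\; \partial_s \gamma_t - \partial_t \gamma_s \;=\; d\gamma(\partial_s, \partial_t).
\]
The nonnegative curvature condition $F_A \in \frakg_{\ge}$ translates, in this abelian gauge, to $d\gamma \leq 0$. Hence $\Delta y \leq 0$, and since $y \geq 0$ on $U$ with $y(z_0) = 0$, the strong minimum principle forces $y \equiv 0$ on a neighborhood of $z_0$. So $v$, and therefore $u$, takes values in $\partial_\infty W$ (resp.\ $\partial_\infty E$) near $z_0$.

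If instead $v(z_0) = \infty$, the gauge trick fails because the structure group $G$ fixes $\infty$ and cannot simplify the equation there. We change coordinates on $\bar W$ via the M\"obius map $\phi(w) = -1/w$, which sends $\infty$ to $0$ and carries $\partial_\infty W$ to itself. Setting $\tilde v = \phi \circ v$, the inhomogeneous term transforms to $\tilde Z(\xi) = (-a(\xi)\tilde v + b(\xi)\tilde v^2)\partial_{\tilde v}$, which vanishes to first order at $\tilde v = 0$. Thus on $U$ the function $\tilde v$ solves a linear first-order elliptic equation of the form $\bar\partial \tilde v + \alpha\, \tilde v = O(|\tilde v|^2)$ for some continuous $\bC$-valued one-form $\alpha$, and the Carleman similarity principle applies: either $\tilde v \equiv 0$ on a neighborhood of $z_0$ (giving $v \equiv \infty$, and we are done), or $\tilde v$ has an isolated zero of finite positive order $k$ at $z_0$, with $\tilde v(z) \sim c(z-z_0)^k e^{\eta(z)}$. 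In the latter case $\tilde v$ is locally a branched $k$-to-$1$ cover onto a disc about $0$, and so takes values on both sides of the real axis, contradicting $\mathrm{im}(\tilde v) \geq 0$. This is the main analytic subtlety in the proof.

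For the second assertion, note that on $\partial_\infty W$ the Hamiltonian vector fields $X_\gamma = (aw+b)\partial_w$ are tangent to the boundary: they restrict to $(ax+b)\partial_x$ on $\bR$ and vanish at $\infty$. Hence on $v^{-1}(\partial_\infty W)$, both $dv$ and $v^*Z$ take values in the real line bundle $T(\partial_\infty W) \subset T\bar W$. Because $J_W$ maps this real tangent line to its normal complement in $T\bar W$, the $(0,1)$-condition $(dv - v^*Z)^{0,1} = 0$ forces the tangential form $dv - v^*Z$ itself to vanish, which is exactly $dv = v^*Z$.
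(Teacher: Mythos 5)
The main gap is in your opening reduction. You assert that, because each Lagrangian has relatively compact image in $W$ under $p$, the submanifolds $L_C$ are disjoint from $\partial_\infty E$ and hence $z_0$ must be an interior point of $S$. Neither claim is correct. By Setup~\ref{th:l}, $p(L)$ contains the ray $(0,1]i$, so it is not relatively compact in $W$; correspondingly, the closure $\bar{L}\subset\bar{E}$ has nonempty boundary $\partial\bar{L}=\bar{L}\cap\bar{E}_0\subset\partial_\infty E$. The boundary condition for the extended Cauchy--Riemann equation is $u(C)\subset\bar{L}_C$, so a boundary point $z_0\in\partial S$ can indeed satisfy $u(z_0)\in\partial_\infty E$ (necessarily with $v(z_0)=0$, since $i[0,\infty)\cap\partial_\infty W=\{0\}$; the $\infty$ case really is confined to interior points). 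Your superharmonicity computation and minimum principle are fine at interior points, but to handle boundary points you need two further observations, both of which the paper makes: (a) the local gauge transformation must be chosen so that its restriction to $\partial S$ preserves $0\in\bR$ --- this is exactly what the version of Lemma~\ref{th:partial-trivialization} used in the proof of Lemma~\ref{th:a-surface-preliminary} provides, via the hypothesis \eqref{eq:covariantly-constant-lambda}, and is not automatic from the bare statement of Lemma~\ref{th:partial-trivialization}; (b) the gauge-fixed equation together with $x=0$ on $\partial S$ and $\gamma|\partial S=0$ gives $\partial_n y=0$ on $\partial S$, so the strong boundary minimum principle (Hopf lemma under Neumann conditions) applies. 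This is what the paper is referring to with ``subharmonic function with Neumann boundary conditions.'' Once this boundary case is supplied, your argument closes the gap.

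Beyond that one issue, your route differs from the paper's in two respects that are both legitimate. For $v(z_0)=\infty$, you change coordinates by a M\"obius map and invoke the Carleman similarity principle; the paper instead reduces via Gromov's trick to pseudoholomorphic curves and uses the ``locally constant or locally open'' dichotomy. These are essentially the same analytic fact packaged differently. For the identity $dv=v^*Z$ on $u^{-1}(\partial_\infty E)$, you argue directly from tangency of $Z$ to $\partial_\infty W$ plus the fact that $T(\partial_\infty W)\cap J_W\bigl(T(\partial_\infty W)\bigr)=0$ kills the $(0,1)$-part of a tangent-valued form; the paper instead reads off $d\,\mathrm{re}(v^\dag)=\gamma$ from the gauge-fixed equation. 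Your version is arguably more transparent, since it avoids a second pass through the gauge transformation.
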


\begin{proof}
There is an open neighbourhood $U \subset S$ of $u^{-1}(\partial_\infty E)$, such that $v = p(u)|U$ satisfies the extended version of \eqref{eq:cauchy-riemann-projected}. For concreteness, let's write this down:
\begin{equation} \label{eq:cauchy-riemann-projected-2}
\left\{
\begin{aligned}
& v: U \longrightarrow \bar{W}, \\
& v(U \cap \partial S) \subset i[0,\infty), \\
& (dv - v^*Z)^{0,1} = 0.
\end{aligned}
\right.
\end{equation}
Note that the vector fields \eqref{eq:g-action}, which are those that appear in the inhomogeneous term $Z$, extend not just to the disc $\bar{W}$, but to the whole Riemann sphere $\bC \cup \{\infty\}$; and vanish at $\infty$.

Suppose that $v(z) = \infty \in \bar{W}$. Necessarily, $z$ is an interior point of $S$. Because the inhomogeneous term vanishes at $\infty$, a standard reduction to the case of pseudo-holomorphic curves (Gromov's trick) shows that either $v$ is locally constant near $z$, or else $v(z)$ is locally an open map, taking on all values on the Riemann sphere close to $\infty$. The second possibility is of course impossible, since $v$ remains inside $\bar{W}$. So, locally $v \equiv \infty$, which is what we wanted.

Now suppose that $v(z) \in \bR$. By a local gauge transformation, as used in the proof of Lemma \ref{th:partial-trivialization}, one can transform $v$ into a solution of a simpler equation 
\begin{equation} \label{eq:gauged-equation}
\left\{
\begin{aligned}
& v^\dag: U^\dag \longrightarrow \bar{W}, \\
& v^\dag(U^\dag \cap \partial S) \subset i[0,\infty), \\
& (v^\dag - \gamma)^{0,1} = 0,
\end{aligned}
\right.
\end{equation}
with $\gamma$ as in \eqref{eq:abelian-2}, \eqref{eq:abelian-3}. Then, $\psi = -\mathrm{im}(v^\dag)$ is a subharmonic function with Neumann boundary conditions, which is $\leq 0$ everywhere. If $\psi = 0$ at some point, then that must be a maximum, hence $\psi = 0$ nearby. This means $d \mathrm{im}(v^\dag) = 0$ locally, which by the Cauchy-Riemann equation \eqref{eq:gauged-equation} implies $d\mathrm{re}(v^\dag) = \gamma$. Undoing the gauge transformation shows that $dv = v^*Z$ near our point, as desired.
\end{proof}

\begin{lemma} \label{th:contradiction-on-the-end}
Take $S = (s_-,s_+) \times [0,1]$, with $A \in \Omega^1(S,\frakg)$ being the pullback of some connection in $ \scrA([0,1])$. Then, any solution $u: S \rightarrow \bar{E}$ of (the extended version of) \eqref{eq:cauchy-riemann-2} is necessarily contained in $E$.
\end{lemma}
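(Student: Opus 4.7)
The plan is to use Lemma \ref{th:touch-boundary} to reduce to the extremal case, and then exploit the defining property \eqref{eq:move-past} of $\scrA([0,1])$ to force a contradiction from the boundary conditions. Since $S = (s_-, s_+) \times [0,1]$ is connected, Lemma \ref{th:touch-boundary} implies that $u^{-1}(\partial_\infty E)$ is either empty (which is what we want) or all of $S$. Assume the latter for contradiction, so that $u(S) \subset \partial_\infty E$ and $v = p(u)$ maps into $\partial_\infty W = \bR \cup \{\infty\}$, satisfying $dv = v^*Z$ in the sense of Lemma \ref{th:touch-boundary}.

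Since $A$ is the pullback of $A_\zeta = a_t \, dt \in \scrA([0,1])$, the one-form $Z$ has no $ds$-component. Hence $\partial_s v = 0$, so $v$ depends only on $t$, and $v(t)$ is the parallel transport of $v(0)$ by $A$ acting on $\bar W$. The boundary condition $v(s,k) \in i[0,\infty)$ ($k = 0,1$), combined with $v \in \partial_\infty W$, forces $v(0), v(1) \in i[0,\infty) \cap (\bR \cup \{\infty\}) = \{0, \infty\}$.

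Now I would split into two cases. If $v(0) = 0$, then $v(1) = g(0)$, where $g \in G$ is the parallel transport over $[0,1]$. By \eqref{eq:move-past}, $g(0) > 0$, so $v(1)$ is a positive real number --- which does not lie in $\{0, \infty\}$. Contradiction. If $v(0) = \infty$, then since each Hamiltonian vector field $X_\gamma = (aw+b)\partial_w$ extends to $\bar W$ with a zero at $\infty$, the integral curve remains at $\infty$ for all $t$, giving $v \equiv \infty$. Consequently $u$ maps all of $S$ into the fibre $\bar E_\infty$ at the point at infinity of $\bar W$. However, Setup \ref{th:l} requires $p(L) \cap \{\mathrm{im}(w) \geq 1\}$ to be compact in $W$ for each Lagrangian, so the closures $\bar L_0, \bar L_1 \subset \bar E$ are disjoint from $\bar E_\infty$. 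Since $\partial S$ is nonempty and $u(\partial S) \subset \bar L_0 \cup \bar L_1$, this contradicts $u(S) \subset \bar E_\infty$.

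I do not expect a real obstacle here: the argument is essentially a maximum-principle-style rigidity for parallel transport on $\bar W$, and the only mildly technical point is handling the Cauchy-Riemann equation near $v = \infty$, which is already taken care of by Lemma \ref{th:touch-boundary} via a local gauge transformation bringing the equation into the abelian form \eqref{eq:gauged-equation}. The crucial structural input is the strict positivity in \eqref{eq:move-past}: without it, a trajectory starting at $0 \in \bR$ could return to $0$, and the argument would fail.
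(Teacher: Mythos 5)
Your proof is correct and follows essentially the same route as the paper's: reduce via Lemma \ref{th:touch-boundary}, use the $ds$-independence of $A$ to interpret $v$ as parallel transport in $t$, and derive a contradiction from the strict positivity in \eqref{eq:move-past}. The only minor point is that $i[0,\infty) \cap (\bR \cup \{\infty\}) = \{0\}$ (the set $i[0,\infty) \subset \bC$ does not contain the point $\infty \in \bar{W}$), so the second case of your analysis is never reached --- which is how the paper dismisses the constant-$\infty$ possibility in one line, rather than invoking the Lagrangian closures.
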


\begin{proof}
Suppose that, on the contrary, $u^{-1}(\partial_\infty E) \neq \emptyset$. We can then apply Lemma \ref{th:touch-boundary}. Because $S$ has boundary points, it's impossible for $v = p(u)$ to be a constant map with value $\infty$. The alternative is that $v$ takes values in $\bR$, has boundary values $0$, and is covariantly constant for $A$. But that's also impossible, since parallel transport from $(s,0)$ to $(s,1)$ takes $0$ to a positive point on the real line, by definition \eqref{eq:move-past}.
\end{proof}

With that at hand, we return to the original case of surfaces $S$ with strip-like ends, and assuming that there is at least one such end.

\begin{proposition} \label{th:contained}
Suppose that $u_k: S \rightarrow E$ is a sequence of solution of \eqref{eq:cauchy-riemann}, with bounded energy. Then, there is a compact subset of $E$ which contains the images of all $u_k$.
\end{proposition}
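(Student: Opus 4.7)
The plan is to argue by contradiction, leveraging the compactification $\bar E$ together with Lemmas \ref{th:touch-boundary} and \ref{th:contradiction-on-the-end}. Suppose the conclusion fails: after passing to a subsequence, there are points $z_k \in S$ such that $u_k(z_k)$ escapes every compact subset of $E$. Since $\bar E$ is compact, all the auxiliary data (the almost complex structures, the inhomogeneous term coming from $K$, and the Lagrangians $\bar L_C$) extend smoothly to $\bar E$, and the $u_k$ have bounded energy, a Gromov-compactness argument on $\bar E$ produces a stable limit. This limit consists of the usual pieces: a modified principal component, Floer strips from breaking along the strip-like ends, and possibly sphere or disc bubbles. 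The escape hypothesis guarantees that at least one irreducible component $v$ of the limit has image meeting $\partial_\infty E$.

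Next I apply Lemma \ref{th:touch-boundary}: the set $v^{-1}(\partial_\infty E)$ is open and closed, so by connectedness $v$ takes values entirely in $\partial_\infty E$. The task reduces to showing no component of the limit can have this property.

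If $v$ is (a remnant of) the principal component or a breaking Floer trajectory on some end $\zeta$, then the domain of $v$ contains a strip region $(s_-, s_+) \times [0,1]$ on which the connection one-form is pulled back from $A_\zeta \in \scrA([0,1])$. (For the principal component, take a half-strip region near any surviving end; for a breaking piece, take the whole $\bR \times [0,1]$.) This is exactly the situation of Lemma \ref{th:contradiction-on-the-end}, which forbids such a solution from lying in $\bar E \setminus E$, giving the desired contradiction.

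The remaining possibility is that $v$ is a bubble lying in $\partial_\infty E$. Disc bubbles are excluded as in Lemma \ref{th:disc-bubbling}, since the closures $\bar L_C$ remain disjoint from $\Omega_E$ near infinity, so a disc bubble would contribute zero to intersection with $\Omega_E$ yet must carry positive energy. For sphere bubbles, I use that $Dp$ is $J$-holomorphic outside a compact set and that $J$ extends to $\bar E$ (Setup \ref{th:j}(i)), so $p \circ v: \mathbb{CP}^1 \to \bar W$ is holomorphic with image in $\partial_\infty W$; the maximum principle applied to $\mathrm{im}(p\circ v)$ forces $p\circ v$ to be constant, so $v$ lies in a single fiber of $p$ over $\partial_\infty W$. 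By the local product structure near infinity built into Setup \ref{th:e-setup}, this fiber is a copy of the Calabi-Yau manifold $M$, and such sphere bubbles are then excluded by the standard genericity argument for $J$ in the presence of $c_1 = 0$. The main technical obstacle I expect is precisely this last step: verifying that the Gromov-compactness setup on $\bar E$ is valid (the equation degenerates at $\partial_\infty E$ only in controlled ways) and that the genericity argument ruling out sphere bubbles inside fibers over $\partial_\infty W$ is compatible with the earlier consistent choices of data over the various moduli spaces. Everything else in the proof is a clean combination of the two key lemmas already established.
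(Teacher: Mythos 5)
Your overall framework (Gromov compactness on $\bar{E}$, Lemma \ref{th:touch-boundary}, Lemma \ref{th:contradiction-on-the-end}) matches the paper's, but the way you handle bubble components takes a genuinely different and ultimately problematic route. You try to rule out each component that touches $\partial_\infty E$ separately: for sphere bubbles you invoke the maximum principle to put them in a fibre over $\partial_\infty W$ and then ``exclude [them] by the standard genericity argument.'' That genericity argument is not actually available: nowhere in Setups \ref{th:j}, \ref{th:jk-adapted} has a transversality condition been imposed on the restriction of $J$ to $\partial_\infty E$ or to the fibres $\bar{E}_w$ over $w\in\partial_\infty W$ (and indeed there's no reason those fibres should inherit a generic almost complex structure from $J$ on $E$, which one is free to perturb only in the interior). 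Moreover you would still have to deal with \emph{constant} sphere bubbles sitting in $\partial_\infty E$, which genericity can't exclude.

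The paper's argument sidesteps all of this with a connectedness observation: Lemma \ref{th:touch-boundary} applies to \emph{every} component of the Gromov limit (each one is either disjoint from $\partial_\infty E$ or entirely contained in it), and since the stable limit is connected, if any piece touches $\partial_\infty E$ then the \emph{entire} limit — principal component, breaking strips, and all bubbles — lies in $\partial_\infty E$. At that point one only needs to find \emph{some} strip-shaped piece of the domain (a portion of a strip-like end), where Lemma \ref{th:contradiction-on-the-end} gives the contradiction. No claim about sphere (or disc) bubbles is needed at all. You should also handle the second escape scenario more carefully, where $z_k = \epsilon_\zeta(s_k,t_k)$ with $s_k \to \pm\infty$: there one reparametrizes by translation in $s$ before taking the Gromov limit, obtaining a Floer trajectory with values in $\bar{E}$, and then runs the same connectedness argument. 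You mention ``Floer strips from breaking along the strip-like ends'' but the reparametrization step is where those come from and should be spelled out.
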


\begin{proof}[Sketch of proof]
We can consider the $u_k$ as maps to $\bar{E}$. Because our metric on $E$ blows up at infinity, whereas a metric on $\bar{E}$ remains bounded, the given energy bound implies (compare \cite[Equation (4.19) and Lemma 4.5]{seidel18}) that
\begin{equation}
\int_S \|du_k - Y\|^2_{\bar{E}} \;\; \text{is bounded (independently of $k$).}
\end{equation}
Via Gromov's trick, we can convert the $u_k$ into a sequence of pseudo-holomorphic sections $S \rightarrow S \times \bar{E}$, with totally real boundary conditions, whose energies (with respect to a metric on $\bar{E}$) are bounded on any fixed compact subset of $S$. This is enough to apply a Gromov compactness argument: the outcome is convergence of a subsequence (on compact subsets) to a solution $u_\infty: S \rightarrow \bar{E}$ of \eqref{eq:cauchy-riemann}, plus possible (sphere and disc) bubble components, which are just pseudo-holomorphic maps.

Suppose that, possibly after passing to a subsequence, there are points $z_k \in S$, contained in a compact subset of $S$, such that $u_k(z_k) \rightarrow \partial_\infty E$. This means that the Gromov limit must intersect $\partial_\infty E$. If $u_\infty$ intersects $\partial_\infty E$, it must be entirely contained in it, by Lemma \ref{th:touch-boundary}; and the same holds for the bubble components. Connectedness of the limit therefore implies that all of it must be contained in $\partial_\infty E$. We can then look at a piece of a strip-like end, and derive a contradiction to Lemma \ref{th:contradiction-on-the-end}.

The other possible situation is that, again after passing to a subsequence, there are points $z_k = \epsilon_\zeta(s_k,t_k)$, such that $u_k(z_k) \rightarrow \partial_\infty E$ and $s_k \rightarrow \pm\infty$. In that case, one looks at the reparametrized maps $u_k(\epsilon_\zeta(s+s_k,t))$. On each compact subset of $\bR \times [0,1]$, these maps are defined for $k \gg 0$ and satisfy a sequence of Cauchy-Riemann equation which, in the limit, converge to Floer's equation. Gromov compactness again applies (in a slightly more complicated version) and yields a limit, which is a solution $u_\infty: \bR \times [0,1] \rightarrow \bar{E}$ of Floer's equation, with possible bubbles attached, and where one of the components must intersect $\partial_\infty E$. The rest is as before.
\end{proof}

In Fukaya-categorical constructions, we are dealing with a slightly more general situation where the domain Riemann surface varies, and can degenerate; but the same strategy applies.

\section{Acyclicity\label{sec:acyclic}}
This section provides the Floer-theoretic computation underlying the acyclicity statement in Theorem \ref{th:main}. This does not require the full construction of \eqref{eq:serre-trans}, only its most elementary piece. In particular, we will be working only with holomorphic curves that lie in the complement of the symplectic hypersurface $\Omega_E$, which effectively means in a situation where the symplectic form is exact.

\subsection{The first version of Floer cohomology}
To make our argument easier, we choose all the geometric data in very specific forms.

\begin{setup} \label{th:l0l1}
Fix two Lagrangian submanifolds $L_0,L_1 \subset E$. These lie in the general class from Setup \ref{th:l}, but are subject to an additional condition:
\begin{equation}
\mybox{
$p(L_k) \cap \{\mathrm{im}(w) \leq 4\} \subset \{\mathrm{re}(w) = \iota_k(\mathrm{im}(w))\}$,
with functions $\iota_k: (0,4] \rightarrow \bR$ such that
\[
\left\{ 
\begin{aligned} &
\iota_k(y) = 0 && \text{for $y \leq 2$,} \\
&
-1 < \iota_0(y) < \iota_1(y) < 1 &&\text{for $2 < y \leq 4$.} \\
 \end{aligned}
\right.
\]
} 
\end{equation}
\end{setup}

\begin{setup} \label{th:plus-floer}
For Floer's equation, we choose $A_- \in \scrA([0,1])$, $\alpha_- \in (0,\pi)$, and $(H_{-,t},J_{-,t})$ as in Setup \ref{th:setup-floer}, but with some more specific requirements.
\begin{align} 
& \label{eq:zero-box}
\mybox{$H_{-,t}$ is zero on $p^{-1}(B)$, where $B = \{ -1 \leq \mathrm{re}(w) \leq 1, \, 3 \leq \mathrm{im}(w) \leq 4\}$. On the same subset, $p$ is $J_{-,t}$-holomorphic.} \\
& \label{eq:sharpen}
\mybox{We sharpen \eqref{eq:choose-floer-1} as follows: each chord is either contained in $p^{-1}(\{\mathrm{im}(w) > 4\})$, or else in $M = E_i$ (see Figure \ref{fig:l0l1}).}
\end{align}
\end{setup}

As an immediate consequence of Lemma \ref{th:approach}, we have:

\begin{lemma} \label{th:geometric-minus}
In the situation of Setup \ref{th:plus-floer}, each solution of Floer's equation satisfies one of the following: either, its negative limit $x_- = \lim_{s \rightarrow -\infty} u(s,\cdot)$ lies in $p^{-1}(\{\mathrm{im}(w) > 4\})$; or else, the entire trajectory is contained in $M = E_i$.
\end{lemma}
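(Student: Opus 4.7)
The plan is to invoke Lemma \ref{th:approach} directly; the proof is really just a bookkeeping exercise that combines that lemma with the dichotomy in \eqref{eq:sharpen}.

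First I would verify that the hypotheses of Lemma \ref{th:approach} are in force. A Floer trajectory is the special case of a solution of \eqref{eq:cauchy-riemann-2} on the strip $S = \bR \times [0,1]$ (with one negative and one positive end), so I need to check that the Floer data chosen in Setup \ref{th:plus-floer} are ``suitable for restriction'' in the sense of Setup \ref{th:jk-adapted}(ii). This is built into Setup \ref{th:setup-floer}, which explicitly invokes the second parts of Setup \ref{th:j} and Setup \ref{th:h}; the sharpenings \eqref{eq:zero-box} and \eqref{eq:sharpen} only strengthen those conditions further.

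Now consider a Floer trajectory $u$. Either $u$ is entirely contained in $M$, in which case the conclusion holds; or $u$ is not contained in $M$, in which case \eqref{eq:cant-go-into} from Lemma \ref{th:approach}, applied to the negative end, forces $x_-$ to lie outside $M = E_i$. Feeding this into the partition provided by \eqref{eq:sharpen} --- every chord lies either in $p^{-1}(\{\mathrm{im}(w) > 4\})$ or in $M$ --- gives the remaining alternative $x_- \in p^{-1}(\{\mathrm{im}(w) > 4\})$.

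There is no genuine obstacle here: all of the analytic substance sits inside Lemma \ref{th:approach} (the degree-counting argument built around the average intersection number \eqref{eq:average-int} and the linear analysis \eqref{eq:v-asymp} of \eqref{eq:plane-cr} on the ends). The only thing worth watching is that the proof uses \eqref{eq:cant-go-into} rather than the positive-end statement \eqref{eq:ifits}; the latter is what one would need later to control Floer trajectories whose positive limit sits in $M$, but it is irrelevant for the classification of $x_-$ claimed here.
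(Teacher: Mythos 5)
Your proof is correct and matches the paper's approach: the paper also presents this lemma as an immediate consequence of Lemma \ref{th:approach}, and your verification that the Floer data from Setup \ref{th:plus-floer} are ``suitable for restriction'' (via Setup \ref{th:setup-floer} explicitly invoking parts (ii) of Setups \ref{th:j} and \ref{th:h}) together with the combination of \eqref{eq:cant-go-into} and the chord dichotomy \eqref{eq:sharpen} is exactly the bookkeeping the paper leaves implicit.
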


Let's quickly look at the implications of this for transversality arguments. The first kind of solutions is regular for generic choices of auxiliary data, since part of it goes through a region where those data are essentially unconstrained. For the second kind of solutions, those contained in $M$, regularity in $E$ is equivalent to regularity as a Floer trajectory in $M$, by an argument involving the comparison of linearized operators. To  spell that out, let $u$ be such a solution, and $D_{M,u}$, $D_{E,u}$ the linearized operators in $M$ and $E$, respectively. There is a splitting
\begin{equation} \label{eq:les-operator}
\xymatrix{
\parbox{17em}{
$W^{1,2}(\bR \times [0,1], \bC, i\bR, i\bR)$ \newline \hspace*{6em} $\oplus$ \newline
$W^{1,2}(\bR \times [0,1], u^*TM, u^*T(L_0 \cap M),$ \newline \hspace*{8em} $u^*T(L_1 \cap M))$
} \ar[rr]^-{D \oplus D_{M,u}} \ar[d]_-{\iso} 
&&
\parbox{10em}{
$L^2(\bR \times [0,1], \bC)$ \newline \hspace*{3em} $\oplus$ \newline $L^2(\bR \times [0,1], u^*TM)$} \ar[d]^-{\iso}
\\
W^{1,2}(\bR \times [0,1], u^*TE, u^*TL_0, u^*TL_1) \ar[rr]^-{D_{E,u}} &&
L^2(\bR \times [0,1], u^*TE) 
}
\end{equation}
The operator $D$ is one of those considered in Section \ref{subsec:weights}, and clearly invertible; which implies the observation we have made. As a consequence, transversality can be achieved in spite of the constraints we have imposed. We use solutions of Floer's equation which have zero intersection number with $\Omega_E$, or equivalently (by a positivity of intersections argument, as in Lemma \ref{th:positivity}) which are disjoint from $\Omega_E$, to define the Floer cochain complex 
\begin{equation} \label{eq:plus-complex}
C^*_- = \mathit{CF}^*(L_0,L_1;H_-).
\end{equation}
We can then reformulate Lemma \ref{th:geometric-minus} in more algebraic terms, as follows:
\begin{figure}
\begin{centering}
\includegraphics{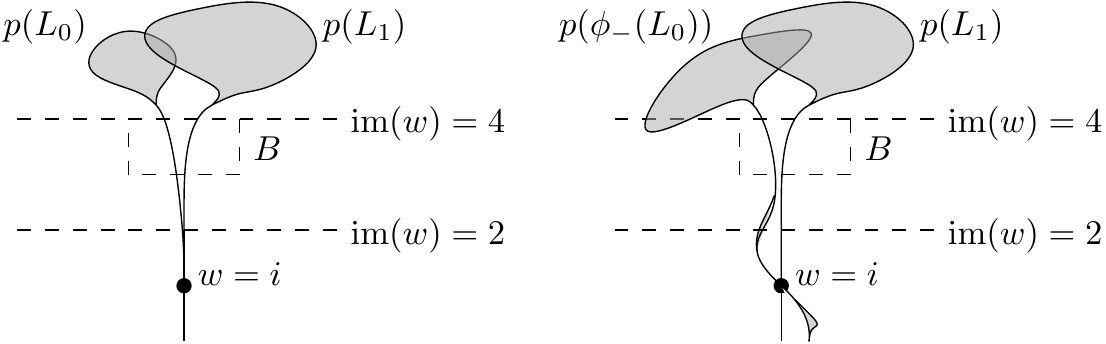}
\caption{\label{fig:l0l1}The Lagrangians from Setup \ref{th:l0l1} and their perturbed versions. Here, $\phi_-$ is the time-one map of $H_{-,t}$, in the notation from \eqref{eq:plus-complex}.}
\end{centering}
\end{figure}

\begin{lemma} \label{th:subcomplex-1}
Write
\begin{equation} \label{eq:decompose-1}
C_-^* = C^*_{-,\mathit{sub}} \oplus C^*_{-,\mathit{quot}},
\end{equation}
where the first summand is formed by the chords lying over $\{\mathrm{im}(w) > 4\}$, and the second one by those in $M$. Then (as the notation suggests) the first summand is a subcomplex. Moreover, the induced differential on the quotient computes the Floer cohomology of $(L_0 \cap M, L_1 \cap M)$ inside $M \setminus \Omega_M$.
\end{lemma}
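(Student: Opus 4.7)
The plan is to read off both claims directly from Lemma \ref{th:geometric-minus}, with the splitting \eqref{eq:les-operator} providing the needed regularity statement for trajectories contained in $M$. By \eqref{eq:sharpen}, every chord is either ``high'' (in $p^{-1}(\{\mathrm{im}(w)>4\})$) or ``low'' (in $M = E_i$), and these two regions are disjoint, so the decomposition \eqref{eq:decompose-1} is well-defined at the level of underlying graded abelian groups. Lemma \ref{th:geometric-minus} says that if the negative limit $x_-$ of a Floer trajectory is low, then the entire trajectory lies in $M$, forcing $x_+$ to be low as well; equivalently, no trajectory has $x_-$ low and $x_+$ high. In the convention of \eqref{eq:mu-q} (with $d=1$), $\mu^1(x_+)$ is a sum over chords $x_-$ that appear at the negative end of a trajectory whose positive-end limit is $x_+$. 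Hence if $x_+$ is high, every contributing $x_-$ must also be high, which says precisely that $C^*_{-,\mathit{sub}}$ is closed under $\mu^1$ and is a subcomplex.

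The induced differential on $C^*_{-,\mathit{quot}}$ then counts exactly the trajectories with both endpoints low, and by Lemma \ref{th:geometric-minus} these are the trajectories whose entire image is contained in $M$. The product conditions \eqref{eq:j-splits} and \eqref{eq:h-splits} identify any such trajectory with a Floer trajectory in $M$ for the pair $(L_0 \cap M, L_1 \cap M)$ and the restricted data $(H_{-,t}|_M, J_{-,t}|_M)$; the low chords of the ambient problem correspond bijectively to the chords of this restricted Floer datum, because the base-direction Hamiltonian vector field vanishes along $w = i$. Moreover, $C_-^*$ is defined using only trajectories disjoint from $\Omega_E$, so any $M$-contained trajectory automatically avoids $\Omega_M = \Omega_E \cap M$; therefore the induced count is exactly the Floer differential of $(L_0 \cap M, L_1 \cap M)$ inside $M \setminus \Omega_M$.

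The one technical input I need is that an $M$-contained trajectory is regular in $E$ iff it is regular as a trajectory in $M$. This is exactly what the splitting \eqref{eq:les-operator}, $D_{E,u} = D \oplus D_{M,u}$, delivers, provided the normal-direction operator $D$ is invertible. Here $D$ is a linear strip Cauchy-Riemann operator on $\bC$ with imaginary boundary conditions and perturbation term $\alpha_-$ as in Section \ref{subsec:weights}; since $|\Sigma_-|=1$, equation \eqref{eq:c-index} gives $\mathrm{index}(D) = 0$, and Lemma \ref{th:automatic-injectivity} yields injectivity because $\alpha_- \in (0,\pi)$, so $D$ is an isomorphism. Consequently, the transversality achievable for Floer data on $M$ (by generic fibrewise choices, still compatible with Setup \ref{th:plus-floer}) transfers to regularity of the $M$-contained trajectories in $E$. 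No new compactness, bubbling, or gluing argument is required; the main thing to be careful about is tracking the direction convention so that one identifies the correct summand of \eqref{eq:decompose-1} as the subcomplex, which is really the only potential source of confusion in this argument.
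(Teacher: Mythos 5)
Your proof is correct and follows the same route the paper takes: the subcomplex statement is the contrapositive of Lemma~\ref{th:geometric-minus}, the quotient identification uses the product form of the data near $M$ (Setups~\ref{th:j}(ii) and~\ref{th:h}(ii)), and the regularity transfer is exactly the operator splitting~\eqref{eq:les-operator} with $D$ invertible via~\eqref{eq:c-index} and Lemma~\ref{th:automatic-injectivity}. You have correctly tracked the direction convention of~\eqref{eq:mu-q} (negative limit $=x_0$, positive limit $=x_1$), which as you say is the one place this argument can go wrong. One small remark: the paper also sketches, immediately after the lemma, a second proof of the subcomplex statement based on the topological intersection-number argument of Lemma~\ref{th:box-lemma} rather than on Lemma~\ref{th:geometric-minus}. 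That alternative is not needed here, but it is the version that generalizes to the later Lemmas~\ref{th:geometric-plus},~\ref{th:geometric-continuation}, and~\ref{th:homotopy-vanishes}, where the ``suitable for restriction'' hypothesis (and hence Lemma~\ref{th:approach}) is unavailable; so it is worth keeping in mind, even though your argument is the cleaner one for this particular statement.
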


For future applications, we want to explain a different approach to proving the first part of Lemma \ref{th:subcomplex-1} (namely, that $C^*_{-,\mathit{sub}}$ is a subcomplex), which relies on \eqref{eq:zero-box} and the following elementary topological argument:

\begin{lemma} \label{th:box-lemma}
Let $R = [-C,C] \times [0,1]$, for some $C>0$. Take any map 
\begin{equation} \label{eq:rectangle-map}
\left\{
\begin{aligned}
& u: R \longrightarrow E, \\
& u(s,0) \in L_0, \; u(s,1) \in L_1, \\
& \mathrm{im}(p \circ u)(-C,t) \leq 3, \\
& \mathrm{im}(p \circ u)(C,t) \geq 4.
\end{aligned}
\right.
\end{equation}
Then there is a point $w \in B$, the box from \eqref{eq:zero-box}, such that $u^{-1}(E_w) \cap \partial R = \emptyset$, and $u \cdot E_w < 0$.
\end{lemma}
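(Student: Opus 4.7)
I reduce the problem to a topological winding-number computation. Setting $v = p \circ u : R \to W$, note that $p$ is a holomorphic submersion on $p^{-1}(B)$ by \eqref{eq:e-triv-2}, so for any $w$ with $u^{-1}(E_w) \cap \partial R = \emptyset$ one has $u \cdot E_w = \deg(v,w)$ (the topological degree), with signs matching because the coorientation of the fibre agrees with the pullback of the standard orientation of $W$. It therefore suffices to produce $w \in B$ off $v(\partial R)$ with $\deg(v,w) = -1$.

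Inside $B$, the arcs $\Gamma_k := p(L_k) \cap B = \{(\iota_k(y),y) : y \in [3,4]\}$ are disjoint (since $\iota_0 < \iota_1$) and join $\{y=3\}$ to $\{y=4\}$, cutting $B$ into three vertical strips; let $B_{\mathrm{mid}}$ denote the middle one. Traversing $\partial R$ counterclockwise, the loop $\gamma := v|_{\partial R}$ decomposes into four arcs $\alpha_{\mathrm{left}}, \alpha_{\mathrm{bot}}, \alpha_{\mathrm{right}}, \alpha_{\mathrm{top}}$ living respectively in $\{\mathrm{im} \leq 3\}$, $p(L_0)$, $\{\mathrm{im} \geq 4\}$, $p(L_1)$. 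The picture to keep in mind is that $\gamma$ traces the boundary of $B_{\mathrm{mid}}$ clockwise (up $\Gamma_0$, right across $\{y=4\}$, down $\Gamma_1$, left across $\{y=3\}$), giving winding $-1$ at any interior point of $B_{\mathrm{mid}}$.

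To make this rigorous I choose $y_* \in (3,4)$ close to $4$ and $x_* \in (\iota_0(y_*), \iota_1(y_*))$ with $x_* \notin \iota_0([y_*,4]) \cup \iota_1([y_*,4])$. Such an $x_*$ exists because, as $y_* \to 4^{-}$, the forbidden compact set shrinks to the two-point set $\{\iota_0(4), \iota_1(4)\}$, while the open interval $(\iota_0(y_*), \iota_1(y_*))$ stays close to $(\iota_0(4), \iota_1(4)) \neq \emptyset$. Set $w_* = (x_*, y_*)$; it lies in the interior of $B_{\mathrm{mid}}$ and avoids $v(\partial R)$, since $v(\partial R) \cap (B \cap \{3 < \mathrm{im} < 4\}) \subset \Gamma_0 \cup \Gamma_1$.

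I compute $\deg(v,w_*)$ as the signed count of crossings of $\gamma$ with the upward ray $\eta = \{x_*\} \times (y_*, \infty)$. The arc $\alpha_{\mathrm{left}}$ contributes $0$ since $\mathrm{im}(v) \leq 3 < y_*$. For the other three arcs I decompose into maximal \emph{visits} to $\{y > y_*\}$. Since $\alpha_{\mathrm{bot}}, \alpha_{\mathrm{top}}$ lie on $p(L_0), p(L_1)$ and $\{y = y_*\} \subset \{y \leq 4\}$, all visit endpoints on $\{y = y_*\}$ occur at the single points $(\iota_0(y_*), y_*)$ or $(\iota_1(y_*), y_*)$ respectively, while $\alpha_{\mathrm{right}}$ is itself a single visit from $v(C,0)$ to $v(C,1)$. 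By the choice of $x_*$, each visit avoids the point $(x_*, y_*)$, hence lies in $\{y \geq y_*\} \setminus \{(x_*, y_*)\}$, which is simply connected; consequently its signed crossing number with $\eta$ depends only on the endpoints and is given by $\Psi(\mathrm{re}(\mathrm{end})) - \Psi(\mathrm{re}(\mathrm{start}))$, where $\Psi$ is the Heaviside step function at $x_*$. Closed visits thus contribute $0$; the surviving contributions come from the final visit of $\alpha_{\mathrm{bot}}$ (from $(\iota_0(y_*), y_*)$ to $v(C,0)$), all of $\alpha_{\mathrm{right}}$, and the initial visit of $\alpha_{\mathrm{top}}$ (from $v(C,1)$ to $(\iota_1(y_*), y_*)$), whose sum telescopes to $\Psi(\iota_1(y_*)) - \Psi(\iota_0(y_*)) = 1 - 0 = 1$. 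This gives $\deg(v, w_*) = -1$, as required.

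The main technical subtlety is selecting $x_*$ outside the traces of $\Gamma_0, \Gamma_1$ in the slab $\{y_* \leq y \leq 4\}$; everything else is bookkeeping of visits and the elementary topology of a punctured half-plane.
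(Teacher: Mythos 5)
Your proof is correct and takes the same approach as the paper: both identify the middle vertical strip of $B$ (bounded by $\Gamma_0$, $\Gamma_1$, and the segments at $\mathrm{im}(w)=3,4$) and show that $v|_{\partial R}$ has winding number $-1$ around any interior point of it. Two small remarks on the details: (a) with the usual orientation convention the signed crossing number of an arc in $\{\mathrm{im}\geq y_*\}\setminus\{w_*\}$ with the upward ray is $\Psi(\mathrm{start})-\Psi(\mathrm{end})$ rather than the reverse, so there is a sign slip in the bookkeeping, though your final value $\deg(v,w_*)=-1$ is correct; and (b) the careful selection of $w_*$ (avoiding $\iota_0([y_*,4])\cup\iota_1([y_*,4])$, etc.) is more than necessary -- since, as you yourself observe, $v(\partial R)\cap\{3<\mathrm{im}(w)<4\}\subset\Gamma_0\cup\Gamma_1$, the loop is disjoint from the whole interior of the middle strip, the winding number is constant there, and any interior point works, which is the phrasing the paper uses.
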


\begin{proof}
Take the contractible open subset, contained in $B$, whose boundary is the union of $\{\mathrm{im}(w) = j, \, \iota_0(j) \leq \mathrm{re}(w) \leq \iota_1(j)\}$ for $j = 3,4$, and the graphs of $\iota_k|[3,4]$. 
By assumption, the boundary values of $p \circ u$ form a loop which winds $-1$ around that subset. Hence, taking any point in that subset yields the desired property.
\end{proof}

Returning to \eqref{eq:decompose-1}, suppose that $C^*_{-,\mathit{sub}}$ is not a subcomplex. Then there is a Floer trajectory with $p(\lim_{s \rightarrow -\infty} u(s,t)) = i$, $\mathrm{im}\, p(\lim_{s \rightarrow +\infty} u(s,t)) > 4$. By restricting to a suitably large rectangle $R \subset \bR \times [0,1]$, one gets a map as in \eqref{eq:rectangle-map}. But by \eqref{eq:zero-box}, $p(u)$ is a holomorphic function on the preimage of the interior of $B$, hence must have nonnegative degree over each point of that interior, which contradicts Lemma \ref{th:box-lemma}.

\subsection{The second version of Floer cohomology}
We use the same Lagrangian submanifolds as before.

\begin{setup} \label{th:dual-h}
For the pair $(L_1,L_0)$ (in this order!), choose $A = a_t \mathit{dt}$, $\alpha$, and $(H_t,J_t)$ as in Setup \ref{th:setup-floer}, and also subject to the following conditions:
\begin{align} 
& \label{eq:zero-box-2}
\mybox{On the preimage of the rectangle $B$, we require the same as in \eqref{eq:zero-box}.}
\\
& 
\mybox{We need a weaker version of \eqref{eq:sharpen}, saying that chords must be 
disjoint from $p^{-1}(\{3 \leq \mathrm{im}(w) \leq 4\})$.}
\end{align}
\end{setup}

We now revert to $(L_0,L_1)$, and adjust the data from Setup \ref{th:dual-h} to obtain the dual Floer cochain complex (see Figure \ref{fig:second-hamiltonian}):
\begin{equation} \label{eq:minus-complex}
\left\{
\begin{aligned}
& A_+ = -a_{1-t} \mathit{dt}, \;\; \alpha_+ = -\alpha, \\
& (H_{+,t}, J_{+,t}) = (-H_{1-t}, J_{1-t}), \\
& C^*_+ = \mathit{CF}^*(L_0,L_1;H_+) \iso \mathit{CF}^{n-*}(L_1,L_0;H)^\vee.
\end{aligned}
\right.
\end{equation}

\begin{lemma} \label{th:geometric-plus}
Write 
\begin{equation} \label{eq:second-complex}
C^*_+ = C^*_{+,\mathit{sub}}\oplus C^*_{+,\mathit{quot}},
\end{equation}
where the first piece is as in Lemma \ref{th:geometric-minus}, and the second one is generated by the remaining chords, meaning those in $p^{-1}(\{\mathrm{re}(w) < 3\})$. Then, $C^*_{+,\mathit{sub}}$ is again a subcomplex. Moreover, the quotient complex $C^*_{+,\mathit{quot}}$ is acyclic.
\end{lemma}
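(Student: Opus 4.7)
The argument I would give has two parts: (i) $C^*_{+,\mathit{sub}}$ is closed under the Floer differential, and (ii) the quotient $C^*_{+,\mathit{quot}}$ is acyclic. Part (i) is essentially a rerun of the argument that established Lemma \ref{th:subcomplex-1}, and part (ii) requires first a geometric reduction, then a displacement-style input.

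For part (i): the key observation is that $H_+=-H(1-t)$ inherits the vanishing on $p^{-1}(B)$ from $H$ by \eqref{eq:zero-box-2}, and $J_+=J(1-t)$ continues to make $p$ holomorphic there. A hypothetical Floer trajectory for $(L_0,L_1)$ with $H_+$ having negative limit in $M$ and positive limit in $\{\mathrm{im}(w)>4\}$ would, after restriction to a sufficiently large rectangle, give a map satisfying the hypotheses of Lemma \ref{th:box-lemma}, producing a point $w\in B$ with $u\cdot E_w<0$; this contradicts $p$-holomorphicity near the fibers over $B$. So the Floer differential cannot send a chord over $\{\mathrm{im}(w)>4\}$ to a chord in $M$, which is exactly the subcomplex property for $C^*_{+,\mathit{sub}}$.

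For part (ii), the first move is to identify the quotient differential. Rerunning the intersection-number balance argument in the proof of Lemma \ref{th:approach} with $\alpha_+<0$, one finds that the sign flip exchanges the roles of positive and negative ends: trajectories not contained in $M$ cannot have their positive limit in $M$. Thus any Floer trajectory between two $M$-chords must lie entirely in $M$, and the induced quotient differential on $C^*_{+,\mathit{quot}}$ reduces to the Floer differential of $(L_0\cap M,L_1\cap M)$ with Hamiltonian $H_+|M$ inside $M\setminus\Omega_M$.

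The acyclicity of this fiber Floer complex is the main obstacle and where the real work lies. My plan is to construct an explicit nullhomotopy via a parametrized continuation inside $E$: exploit the slack in Setup \ref{th:dual-h} (which only demands the box vanishing, the chord separation from $p^{-1}(\{3\le\mathrm{im}\le 4\})$, and the conditions of Setup \ref{th:setup-floer}) to deform $H$ through an ambient Hamiltonian family whose fiber part on $M$ eventually displaces $L_0\cap M$ from $L_1\cap M$ inside $M\setminus\Omega_M$, so that at the end of the interpolation the $M$-chord complex is literally zero. A parametrized continuation map over this family, with the $\alpha_+<0$ approach analysis applied parametrically to keep trajectories confined to $M$, then furnishes the desired chain nullhomotopy of the identity on $C^*_{+,\mathit{quot}}$. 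The hardest step will be checking that this deformation can be performed with all the constraints preserved throughout -- nondegeneracy of chords, the eigenvalue condition \eqref{eq:first-eigenvalue}, the box vanishing on $p^{-1}(B)$, and the suitability for restriction of Setups \ref{th:j} and \ref{th:h} -- so that the parametrized moduli spaces are regular and the reduction to fiber trajectories holds uniformly in the parameter.
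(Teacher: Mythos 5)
Your part (i) matches the paper: the subcomplex property does follow from the same application of Lemma \ref{th:box-lemma}, which is insensitive to the choice of Hamiltonian and relies only on \eqref{eq:zero-box-2} and the Lagrangian boundary conditions. The two problems are in part (ii). First, the reduction of the quotient differential to the fibre Floer differential tacitly assumes that $C^*_{+,\mathit{quot}}$ is generated only by $M$-chords, which it is not: Setup \ref{th:dual-h} deliberately imposes only the weaker version of \eqref{eq:sharpen}, so $C^*_{+,\mathit{quot}}$ also contains chords over $\{1+\epsilon<\mathrm{im}(w)<3\}$ (with $\epsilon$ as in \eqref{eq:choose-floer-1}), and the quotient differential mixes those with the $M$-chords. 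If your identification were right, $C^*_{+,\mathit{quot}}$ would compute $\mathit{HF}^*(L_0\cap M,L_1\cap M)$ inside $M\setminus\Omega_M$, a generically nonzero invariant, so the asserted acyclicity would be impossible; the cancellation in the lemma is exactly between the $M$-chords and these intermediate chords. Second, the displacement plan cannot be carried out: $L_0\cap M$ and $L_1\cap M$ are compact exact Lagrangians in $M\setminus\Omega_M$, and the nonvanishing of their Floer cohomology is precisely the obstruction to Hamiltonian displacement. Your intention to preserve the ``suitable for restriction'' constraints of Setups \ref{th:j}(ii) and \ref{th:h}(ii) throughout is also exactly backwards; it is that product structure near $E_i$ which pins the $M$-chords in place, so it must be abandoned.

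The paper's actual argument is shorter and introduces no new moduli spaces. It drops the ``suitable for restriction'' constraint and modifies $H_+$ on a compact subset of $p^{-1}(\{\mathrm{im}(w)\le 3\})$ so that the modified $\tilde H_+$ has no chords at all over $\{\mathrm{im}(w)<3\}$, so that $\tilde C^*_{+,\mathit{quot}}=0$. This is achievable entirely in the base direction: near $E_i$ both $L_0$ and $\phi_+(L_1)$ project to thin arcs, and a compactly supported modification rotates one arc past the other (Figure \ref{fig:second-hamiltonian}) without touching the fibre Lagrangians, removing the intersections. The original and modified data are linked by mutually inverse continuation maps, and because Lemma \ref{th:box-lemma} applies to those (and to their chain homotopies) just as well, the continuation maps respect the subcomplex/quotient decomposition. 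Hence $C^*_{+,\mathit{quot}}$ is chain homotopy equivalent to $\tilde C^*_{+,\mathit{quot}}=0$, which gives the acyclicity; no parametrized continuation or displacement input is needed.
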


\begin{proof}
The subcomplex statement uses Lemma \ref{th:box-lemma} in the same way as before. The other statement follows from a chain of fairly straightforward observations. First of all, the ``suitable for restriction'' property of the Hamiltonian, Setup \ref{th:h}(ii), is not necessary here, and could be dropped. In that wider context, it is easy to choose a modified Hamiltonian $\tilde{H}_+$ so that the associated chain complex $\tilde{C}^*_+$ has $\tilde{C}^*_{+,\mathit{quot}} = 0$, see Figure \ref{fig:second-hamiltonian}. More precisely, that can be achieved by taking our original choice of Hamiltonian, and changing it only on a compact subset contained in $p^{-1}(\{\mathrm{im}(w) \leq 3\})$. This modified choice is related to the original choice by a pair of mutually homotopy inverse continuation maps. Moreover, Lemma \ref{th:box-lemma} continues to be applicable, meaning that both the continuation maps and the respective chain homotopies preserve the subcomplexes. As a consequence, the homotopy types of $C^*_{+,\mathit{quot}}$ and $\tilde{C}^*_{+,\mathit{quot}}$ agree, hence the desired acyclicity holds.
\end{proof}
\begin{figure}
\begin{centering}
\includegraphics{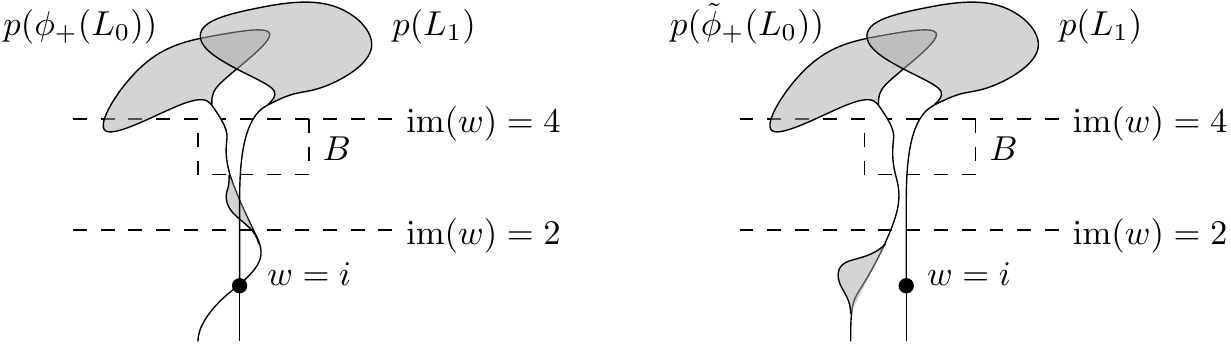}
\caption{\label{fig:second-hamiltonian}Continuing from Figure \ref{fig:l0l1}, we show the effect of applying the time-one map $\phi_+$ of the Hamiltonian from \eqref{eq:minus-complex} (on the left), as well as its modification from the proof of Lemma \ref{th:geometric-plus} (on the right).}
\end{centering}
\end{figure}

\subsection{The continuation map}
Our next step is to look at a Cauchy-Riemann equation on $\bR \times [0,1]$, with boundary conditions $(L_0,L_1)$ and auxiliary data $(H_{1,s,t},J_{1,s,t})$ (the meaning of the subscript $1$ will become clear later). If we wanted to adhere to the formalism from Section \ref{subsec:our-cr}, this would be formulated as follows.

\begin{setup} \label{th:setup-cont}
Consider $\bR \times [0,1]$ as a surface with two negative ends (which means that the pairs of Lagrangian submanifolds associated to the ends are $(L_0,L_1)$ at $s \rightarrow -\infty$ and $(L_1,L_0)$ as $s \rightarrow +\infty$). The Floer data associated to the ends will be as in Setup \ref{th:l0l1} and \ref{th:dual-h}, respectively. Given those, we follow Setup \ref{th:jk-adapted} in choosing $(K_1 = H_{1,s,t} \mathit{dt}, J_1)$, using part (i) only. We want to impose one more condition, as in \eqref{eq:zero-box}, \eqref{eq:zero-box-2}:
\begin{equation}
\label{eq:zero-box-3}
\mybox{On the preimage of the rectangle $B$, $H_{1,s,t}$ is zero and $p$ is $J_{1,s,t}$-holomorphic.}
\end{equation}
\end{setup}

Equivalently, this sets up a continuation map equation between the data from Setup \ref{th:plus-floer} and \eqref{eq:minus-complex}. Transversality is unproblematic. Again using only solutions which avoid $\Omega_E$, we get a chain map
\begin{equation} \label{eq:pm-continuation}
C^*_+ \longrightarrow C^*_-.
\end{equation}

\begin{lemma} \label{th:geometric-continuation}
The map \eqref{eq:pm-continuation} takes the subcomplex from Lemma \ref{th:geometric-plus} to that from Lemma \ref{th:geometric-minus}. Moreover, on those subcomplexes, the map is a quasi-isomorphism.
\end{lemma}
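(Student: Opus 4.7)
Both parts of the proof will extend the box-argument technique used in Lemmas \ref{th:subcomplex-1} and \ref{th:geometric-plus}; the key ingredient throughout is condition \eqref{eq:zero-box-3}, which makes $p$ holomorphic for $J_{1,s,t}$ and kills the inhomogeneous term over $B$.

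\textbf{Preservation of subcomplexes.} I would argue by contradiction. Suppose there is a continuation trajectory $u$ with $\lim_{s \to +\infty} u(s,\cdot) = x_+ \in C^*_{+,\mathit{sub}}$ (a chord over $\{\mathrm{im}(w) > 4\}$) and $\lim_{s \to -\infty} u(s,\cdot) = x_- \in C^*_{-,\mathit{quot}}$ (a chord in $M$, so $p(x_-) = i$). For $C \gg 0$, the restriction of $u$ to $R = [-C, C] \times [0,1]$ satisfies $\mathrm{im}(p \circ u)(-C, t) \leq 3$ and $\mathrm{im}(p \circ u)(C, t) \geq 4$, matching the hypotheses of Lemma \ref{th:box-lemma}. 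The lemma then produces a point $w \in B$ with $u^{-1}(E_w) \cap \partial R = \emptyset$ and $u \cdot E_w < 0$. On the other hand, \eqref{eq:zero-box-3} makes $p \circ u$ an honest holomorphic map on $u^{-1}(p^{-1}(B))$, so classical positivity of intersections forces $u \cdot E_w \geq 0$, a contradiction. Hence no such $u$ exists, and $f$ sends $C^*_{+,\mathit{sub}}$ into $C^*_{-,\mathit{sub}}$.

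\textbf{Quasi-isomorphism on subcomplexes.} The plan is a chain-homotopy-inverse argument in which all auxiliary data are chosen compatibly with the box argument above. I would first construct a reverse continuation map $g \colon C^*_- \to C^*_+$, using interpolating data that again vanish on $p^{-1}(B)$ and keep $p$ holomorphic there. Applying the first part to $g$ shows that $g$ preserves the subcomplex structure and so induces $g_{\mathit{ss}} \colon C^*_{-,\mathit{sub}} \to C^*_{+,\mathit{sub}}$. Standard Floer theory then produces chain homotopies $gf \simeq \mathrm{id}_{C^*_+}$ and $fg \simeq \mathrm{id}_{C^*_-}$ by counting solutions of one-parameter families of continuation-type equations (parametrized, for example, by a gluing length running between a composed equation and a trivial one). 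If each member of those families also vanishes on $p^{-1}(B)$ with $p$ holomorphic there, the box argument applies uniformly in the parameter, so both chain homotopies preserve the subcomplex structure. Restricting to subcomplexes, $f_{\mathit{ss}}$ and $g_{\mathit{ss}}$ are mutually chain-homotopy inverse, hence $f_{\mathit{ss}}$ is a quasi-isomorphism.

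The main obstacle is the bookkeeping: one must verify that generic choices of all these auxiliary data (for $g$, for the two compositions, and for the parametrized homotopies interpolating them with the identity) can simultaneously attain transversality while respecting the vanishing/holomorphy constraint on the fixed compact region $p^{-1}(B)$. Since this constraint is convex and confined to a region where the chords do not occur (by \eqref{eq:sharpen}), it should not conflict with transversality; however, the argument requires tracking the constraint across all relevant parametrized moduli spaces and their Gromov-type compactifications.
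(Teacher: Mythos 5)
Your proof of the first part is essentially identical to the paper's (the box lemma applied to a finite rectangle cut out of the continuation trajectory). For the quasi-isomorphism part, you take a genuinely different route. You construct a reverse continuation map $g$, verify (via the same box argument) that it preserves subcomplexes, build the standard chain homotopies $gf \simeq \mathrm{id}$, $fg \simeq \mathrm{id}$ from parametrized families of continuation equations whose data can be kept box-compatible, and conclude that $f_{\mathit{ss}}$ is a chain-homotopy equivalence. The paper instead uses a two-step argument: first, it observes that the restricted quasi-isomorphism property is invariant under changing all auxiliary choices (which tacitly involves the same kind of continuation-and-homotopy bookkeeping you spell out, including preservation of the subcomplex structure by the homotopies); then it computes explicitly in a specially chosen model, where $C^*_+$ is replaced by a modified complex $\tilde{C}^*_+$ all of whose chords lie over $\{\mathrm{im}(w) > 4\}$, the continuation data is taken monotone ($\partial_s H_{1,s,t} \leq 0$, exploiting exactness away from $\Omega_E$), and $H_{1,s,t} = H_{-,t} = \tilde{H}_{+,t}$ on $p^{-1}(\{\mathrm{im}(w) \geq 4\})$. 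Monotonicity gives compatibility with the action filtration, and the $s$-independence over the relevant region produces a constant solution for each chord, so the continuation map on the subcomplex is an actual chain isomorphism, not merely a quasi-isomorphism. Your approach is correct and perhaps more routine in flavor; the paper's approach buys a cleaner, more concrete conclusion (an honest isomorphism in the special model) and avoids having to set up the reverse continuation map and homotopies explicitly. Both depend in the same way on keeping the box condition \eqref{eq:zero-box-3} uniformly enforced, and your remark that this constraint is convex and localized away from the chords (so it does not obstruct transversality) is accurate.
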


\begin{proof}
The first part (preservation of subcomplexes) is again a simple application of Lemma \ref{th:box-lemma}.
By using continuation maps relating different versions of the complexes, one also sees that, if the second part (quasi-isomorphism when restricted to the subcomplexes) is true for some choice of $C^*_\pm$ and of \eqref{eq:pm-continuation}, then it holds for all other choices as well. Moreover, when defining $C^*_+$, we can drop part (ii) of Setup \ref{th:h}, and the statements we've just made are still true.

With that in mind, we can restrict to a particularly simple situation, where one uses a modified Hamiltonian $\tilde{H}_+$ and complex $\tilde{C}_+^*$, as in the proof of Lemma \ref{th:geometric-plus}, meaning that all chords are contained in $p^{-1}(\{\mathrm{im}(w) > 4\})$. At the other end, one can assume that $H_{-,t} = \tilde{H}_{+,t}$ on $p^{-1}(\{\mathrm{im}(w) > 4\})$. For the equation underlying the map \eqref{eq:pm-continuation}, one can assume that
\begin{align}
& \label{eq:monotone-increasing}
\mybox{the Hamiltonian is nonincreasing, meaning that $\partial_s H_{1,s,t} \leq 0$;}
\\
& \label{eq:constant-solution}
\mybox{we have $H_{1,s,t} = H_{-,t} = \tilde{H}_{+,t}$ on $p^{-1}(\{\mathrm{im}(w) \geq 4\})$.}
\end{align}
(Figure \ref{fig:move-along} summarizes the situation.) The condition \eqref{eq:monotone-increasing}, and the fact that we are only considering solutions that avoid $\Omega_E$, imply that the continuation map is compatible with the action filtration; and by \eqref{eq:constant-solution}, there is a constant solution for each chord of $\tilde{H}_{+,t}$. Those two properties imply that the continuation map induces an isomorphism of chain complexes $\tilde{C}^*_+ = \tilde{C}^*_{+,\mathit{sub}} \longrightarrow \tilde{C}^*_{-,\mathit{sub}}$.
\end{proof}
\begin{figure}
\begin{centering}
\includegraphics{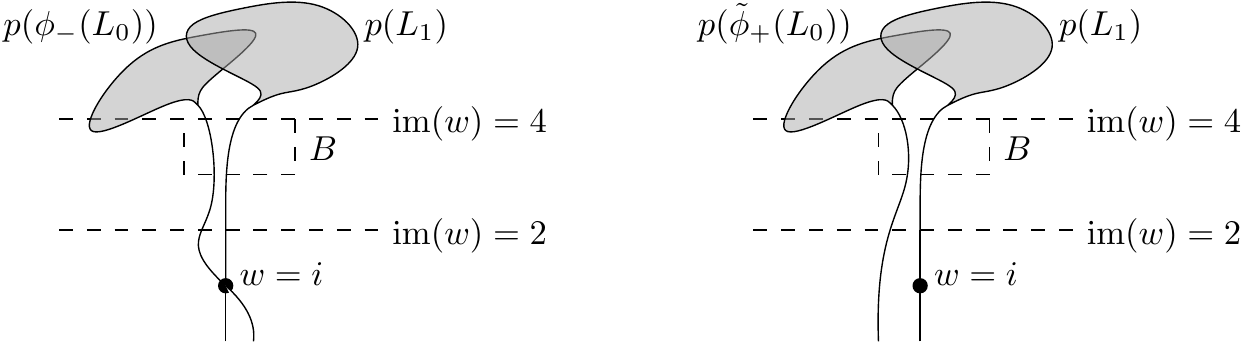}
\caption{\label{fig:move-along}A schematic idea of the choices in the proof of Lemma \ref{th:geometric-continuation}.}
\end{centering}
\end{figure}

\subsection{The nullhomotopy\label{subsec:simple-nullhomotopy}}
Still on the same surface, and with the same behaviour on the ends, we now introduce a family $(K_{r,s,t} = H_{r,s,t} \mathit{dt},J_{r,s,t})$ with an additional parameter $r \in [0,1]$.

\begin{setup} \label{th:parametrized-setup}
At the parameter value $r = 1$, the data reduce to the previously chosen ones. Moreover:
\begin{align}
& \mybox{At the parameter value $r =0$, all $(H_{0,s,t}, J_{0,s,t})$ are ``suitable for restriction'', in the sense of Setup \ref{th:j}(ii) and \ref{th:h}(ii). More specifically, near $M = E_i$, we write $H_{0,s,t} = b_{s,t} p^*(\half |w-i|^2) + \text{(a function on $M$ depending on $s,t$)}$, for some $b_{s,t}$ which is equal to $\alpha_{L_0,L_1,-}$ for $s \ll 0$, and $\alpha_{L_0,L_1,+}$ for $s \gg 0$.}
\\ \label{eq:dr}
& \mybox{We still look at $(H_{r,s,t},J_{r,s,t})$ near $M = E_i$, and for the parameter value $r = 0$. There, we require that $\partial_r H_{r,s,t} = -c_{s,t} p^*(\mathrm{im}(w)) + (\text{constant})$, where $c_{s,t} \in \bR$ becomes zero for $\pm s \gg 0$. Moreover, $\partial_r J_{r,s,t} = 0$, again for $r = 0$ and in a neighbourhood of $M$.
}
\\
& \label{eq:nonzero}
\mybox{Continuing from \eqref{eq:dr}, consider the operator $D^{\mathit{para}}(R,\Upsilon) = \partial_s \Upsilon +i\partial_t \Upsilon + b_{s,t} \Upsilon -i R c_{s,t}$, for complex-valued functions with purely imaginary boundary values, as in Section \ref{sec:plane}. Then, if we take the resulting number $\Gamma = \Gamma(\bR \times [0,1], \, b_{s,t} \mathit{dt}, c_{s,t} \mathit{dt})$ as in \eqref{eq:lim-gamma}, that satisfies $\Gamma < 0$.
}
\\ 
& \mybox{On $p^{-1}(B)$, we ask for the same as in \eqref{eq:zero-box-3}.} \label{eq:zero-box-4} \\
& \mybox{Finally, we ask that our family should have a smooth extension for small $r<0$ (this condition is of a technical nature).}
\end{align}
\end{setup}

Transversality for our parametrized moduli space is mostly straightforward, the only issue being the solutions for $r = 0$ which are contained in $M$. For those, we have the following:

\begin{lemma} \label{th:double-transversality}
For generic choices, the following holds. Take a point $(r = 0, u: \bR \times [0,1] \rightarrow M)$ in the parametrized moduli space. Then,
\begin{align}
& \label{eq:inside-m-solution}
\mybox{$u$ is regular as a solution of the continuation map equation in $M$, and $(0,u)$ is also regular as an element of the parametrized moduli space with target space $E$.} \\
& \label{eq:other-limit}
\mybox{Suppose now that our point is isolated. Take the linearized operator $D_{E,(0,u)}^{\mathit{para}}$, where the subscript $E$ clarifies the target space involved. Then there is a one-dimensional space of solutions of $D^{\mathit{para}}_{E,(0,u)}(R,\Upsilon) = 0$ which decay as $s \rightarrow -\infty$, and such that $\lim_{s \rightarrow +\infty} \exp(\alpha_+ s) \Upsilon$ is a constant multiple of $(\Xi_0 = i,0) \in \bC \oplus \mathit{TM} = \mathit{TE}$, the tangent space being taken along the relevant chord. Moreover, in that space, we have an element which has $R>0$, and such that the multiplicative constant is negative.}
\end{align}
\end{lemma}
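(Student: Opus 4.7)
Proof outline. The data at $r=0$ being suitable for restriction means that near $M = E_i$ the Hamiltonian and almost complex structure split according to the local product \eqref{eq:2-trivialization}. Because $u$ lies entirely inside $M$, the bundle $u^*TE$ splits as $\underline{\bC} \oplus u^*TM$ along the entire trajectory. By \eqref{eq:dr}, $\partial_r J_{r,s,t}|_{r=0}$ vanishes and $\partial_r H_{r,s,t}|_{r=0}$ lives in the base direction only. Together these force the linearized parametrized operator to split:
\[
D^{\mathit{para}}_{E,(0,u)}(R, V, W) = \bigl(D^{\mathit{para}}_{\text{base}}(R, V),\; D_{M,u}(W)\bigr),
\]
where $V$ and $W$ are the base and fiber components of the infinitesimal section. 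The base factor is exactly the operator on $\bR \times [0,1]$ appearing in \eqref{eq:nonzero}; by the hypothesis $\Gamma<0$ and Lemma \ref{th:its-an-isomorphism}, it is an isomorphism. For generic choice of the fiber part of the data (which by \eqref{eq:h-splits} and \eqref{eq:j-splits} can be chosen independently of the base part), $u$ is a regular point of the parametrized continuation moduli space inside $M$, whence $D_{M,u}$ is surjective. The direct sum of an isomorphism and a surjection is a surjection, giving \eqref{eq:inside-m-solution}.

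For \eqref{eq:other-limit}, assume further that $(0,u)$ is an isolated regular point; then the unweighted $D^{\mathit{para}}_{E,(0,u)}$ is an isomorphism. To access solutions that grow at rate $\exp(-\alpha_+ s)$ at $+\infty$, introduce a weighted Sobolev space $W^{1,2;-\mu}$ as in \eqref{eq:alpha-d}, with $\mu$ chosen strictly between $\alpha_+$ and every other eigenvalue of $Q_{x_+}$ that lies below $\alpha_+$. Such a $\mu$ exists: the base spectrum is $\pi\bZ + \alpha_+$, so the next lower base eigenvalue is $\alpha_+-\pi$; while the dualized form of \eqref{eq:first-eigenvalue} at $x_+$ forces the highest negative fiber eigenvalue to be strictly less than $\alpha_+$. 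Arguing as in the proof of Corollary \ref{th:r-moves}, the index of $D^{\mathit{para},\mu}_{E,(0,u)}$ exceeds that of the unweighted operator by the multiplicity of $\alpha_+$, which is $1$ (contributed only by the base eigenvector $\Xi_0 = i$), so $\mathrm{index}\bigl(D^{\mathit{para},\mu}_{E,(0,u)}\bigr)=1$. Since $\mu$ sits strictly above all negative fiber eigenvalues, the splitting persists in weighted spaces, giving
\[
\mathit{ker}\bigl(D^{\mathit{para},\mu}_{E,(0,u)}\bigr) \;=\; \mathit{ker}\bigl(D^{\mathit{para},\mu}_{\text{base}}\bigr) \;\oplus\; \mathit{ker}(D_{M,u}).
\]
The fiber summand vanishes by the regularity of $u$ in $M$, so the weighted kernel is one--dimensional and every element is of the form $(R, V, 0)$.

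Finally, $\mathit{ker}(D^{\mathit{para},\mu}_{\text{base}})$ is precisely the space appearing in \eqref{eq:two-maps-to-r}, and by definition the composition of the forgetful map to $\bR$ with the $+\infty$ leading--coefficient map $\Lambda$ is multiplication by $\Gamma$. The assumption $\Gamma<0$ from \eqref{eq:nonzero} then produces the desired element: the unique kernel element with $R=1>0$ has base leading coefficient $\Gamma<0$ and zero fiber leading coefficient, so
\[
\lim_{s\to+\infty} \exp(\alpha_+ s)\,\Upsilon \;=\; \Gamma\cdot (i, 0) \in \bC \oplus TM
\]
is a negative multiple of $(\Xi_0, 0)$, as required. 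The one delicate point in the plan is the separation of $\alpha_+$ from every other eigenvalue of $Q_{x_+}$ needed to make $\mu$ available — it is enabled by the $\alpha$-smallness condition \eqref{eq:first-eigenvalue} — after which the argument is a bookkeeping exercise that reduces everything to Lemma \ref{th:its-an-isomorphism} and the built--in sign hypothesis \eqref{eq:nonzero}.
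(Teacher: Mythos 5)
Your proof is correct and follows the same strategy as the paper: decompose the parametrized linearized operator as a direct sum $D^{\mathit{para}}_{\text{base}} \oplus D_{M,u}$ using the local product structure near $M$ together with the conditions from \eqref{eq:dr}; invoke Lemma \ref{th:its-an-isomorphism} and the hypothesis $\Gamma < 0$ for the base factor, and generic transversality in $M$ for the fiber factor; then pass to weighted Sobolev spaces for part two, using \eqref{eq:first-eigenvalue} (sign-reversed) to keep the fiber operator invertible, so the one-dimensional weighted kernel is entirely base-directed and its leading coefficient is controlled by $\Gamma$. You are somewhat more explicit than the paper about the index bookkeeping, but the argument is the same.
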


\begin{proof}
Let's look at the analogue of \eqref{eq:les-operator}:
\begin{equation} \label{eq:les-operator-2}
\xymatrix{
\parbox{17em}{
$\bR \oplus W^{1,2}(\bR \times [0,1], \bC, i\bR, i\bR)$ \newline \hspace*{6em} $\oplus$ \newline
$W^{1,2}(\bR \times [0,1], u^*TM, u^*T(L_0 \cap M),$ \newline \hspace*{8em} $u^*T(L_1 \cap M))$
} \ar[rr]^-{D^{\mathit{para}} \oplus D_{M,u}} \ar[d]_-{\iso} 
&&
\parbox{10em}{
$L^2(\bR \times [0,1], \bC)$ \newline \hspace*{3em} $\oplus$ \newline $L^2(\bR \times [0,1], u^*TM)$} \ar[d]^-{\iso}
\\
W^{1,2}(\bR \times [0,1], u^*TE, u^*TL_0, u^*TL_1) \ar[rr]^-{D^{\mathit{para}}_{E,(0,u)}} &&
L^2(\bR \times [0,1], u^*TE) 
}
\end{equation}
Here, the second summand corresponds to deformations of $u$ in $M$-direction. The first summand consists of deformations of the parameter, and simultaneously of $u$ in $W$-direction. When one does that, the $M$-part of the Cauchy-Riemann equation remains the same to first order, thanks to the conditions imposed in \eqref{eq:dr}. Hence, the operator $D_{E,(0,u)}^{\mathit{para}}$ is indeed compatible with the splitting.
$D^{\mathit{para}}$ is exactly the operator from \eqref{eq:nonzero}, hence an isomorphism by Lemma \ref{th:its-an-isomorphism}. Transversality applied to $M$ implies that for generic choices, $D_{M,u}$ is onto. Therefore, $D^{\mathit{para}}_{E,(0,u)}$ is also generically onto.

As for \eqref{eq:other-limit}, because of the assumption that the point should be isolated, both $D_{M,u}$ and $D^{\mathit{para}}_{E,(0,u)}$ are invertible. Now let's look at all the same operators in weighted Sobolev spaces, where solutions still decay as $s \rightarrow -\infty$, but are allowed to grow at rate slightly larger than $-\alpha_+$ as $s \rightarrow +\infty$. In that context, $D_{M,u}$ is still invertible, because the allowed growth rate is less than the lowest positive eigenvalue of the selfadjoint operators that appear at the ends, by \eqref{eq:first-eigenvalue}. The other piece $D^{\mathit{para}}$ is surjective with one-dimensional kernel. It follows that $D^{\mathit{para}}_{E,(0,u)}$ is also surjective with one-dimensional kernel. The rest follows from the definition of $\Gamma$, see \eqref{eq:lim-gamma}.
\end{proof}

We will use the parametrized moduli space associated to our $(H_{r,s,t}, J_{r,s,t})$, but only those solutions with $r>0$, whose limit as $s \rightarrow -\infty$ lies in $M = E_i$, and which (as usual) avoid $\Omega_E$. Counting those solutions yields a map
\begin{equation} \label{eq:our-homotopy}
C^*_+ \longrightarrow C^{*-1}_{-,\mathit{quot}}.
\end{equation}
Excluding the $r = 0$ solutions is of course possible, if maybe jarring; it will be up to us to show that this is a meaningful concept. First of all, note that due to \eqref{eq:zero-box-4}, the (by now familiar) argument involving Lemma \ref{th:box-lemma} still applies, and therefore we have the following:

\begin{lemma} \label{th:homotopy-vanishes}
The map \eqref{eq:our-homotopy} is zero on $C^*_{+,\mathit{sub}}$.
\end{lemma}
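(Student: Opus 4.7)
My plan is to argue by contradiction, following the template already used twice in this section --- at the end of the discussion after Lemma~\ref{th:subcomplex-1}, and in the proof of Lemma~\ref{th:geometric-continuation}. Suppose $(r,u)$ is a solution of the parametrized continuation map equation, with $r>0$, contributing to \eqref{eq:our-homotopy}, and whose limit $x_+$ at $s\to+\infty$ lies in $C^*_{+,\mathit{sub}}$. Then $p\circ x_+$ takes values in $\{\mathrm{im}(w)>4\}$, while by definition of \eqref{eq:our-homotopy} the other limit $x_-$ at $s\to-\infty$ lies in $M=E_i$, so $p\circ x_-\equiv i$, and in particular $\mathrm{im}(p\circ x_-) = 1$.

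First I would pick $C>0$ large enough that on $\{-C\}\times[0,1]$ and $\{+C\}\times[0,1]$ the map $p\circ u$ is $C^0$-close to $p\circ x_-$, respectively $p\circ x_+$, so that $\mathrm{im}(p\circ u)(-C,t)\le 3$ and $\mathrm{im}(p\circ u)(+C,t)\ge 4$. The restriction of $u$ to the rectangle $R = [-C,C]\times[0,1]$ then satisfies the hypotheses of Lemma~\ref{th:box-lemma}, which supplies a value $w\in B$ with $u^{-1}(E_w)\cap\partial R=\emptyset$ and $u\cdot E_w<0$.

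The punchline is then \eqref{eq:zero-box-4}: uniformly in the parameter $r$, the Hamiltonian $H_{r,s,t}$ vanishes on $p^{-1}(B)$ and $J_{r,s,t}$ makes $p$ holomorphic there. Hence on the open set $u^{-1}(p^{-1}(B))$ the parametrized continuation map equation reduces to the ordinary $\bar\partial$-equation, and $p\circ u$ is a genuine holomorphic map into $B$. Each preimage of the (interior) value $w$ therefore contributes nonnegatively to $u\cdot E_w$ by positivity of intersections, forcing $u\cdot E_w\ge 0$ and contradicting the conclusion of Lemma~\ref{th:box-lemma}. Hence no such $(r,u)$ can exist, and \eqref{eq:our-homotopy} annihilates every generator of $C^*_{+,\mathit{sub}}$.

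I do not anticipate a genuine obstacle here; the only novelty compared with the earlier applications is the presence of the parameter $r$, but the key condition \eqref{eq:zero-box-4} is imposed uniformly in $r$, so the reduction of the equation over $p^{-1}(B)$ is parameter-independent, and Lemma~\ref{th:box-lemma} applies to each individual solution $u$ verbatim.
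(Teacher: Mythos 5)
Your argument is exactly the one the paper has in mind: the paper itself only says "due to \eqref{eq:zero-box-4}, the (by now familiar) argument involving Lemma~\ref{th:box-lemma} still applies," and you have correctly spelled out that familiar argument, including the key observation that \eqref{eq:zero-box-4} holds uniformly in $r$. This matches the paper's intended proof.
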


The next step is the crucial one:

\begin{prop} \label{th:nullhomotopy-1}
The map \eqref{eq:our-homotopy} is a nullhomotopy for the composition of \eqref{eq:pm-continuation} and projection $C^*_- \rightarrow C^*_{-,\mathit{quot}}$.
\end{prop}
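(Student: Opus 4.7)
I would prove Proposition \ref{th:nullhomotopy-1} by considering the $1$-dimensional parametrized moduli space $\scrM$ of solutions $(r,u)$ of the Cauchy-Riemann problem of Setup \ref{th:parametrized-setup} with $r \in (0,1]$, with $s\to -\infty$ limit a chord in $M$, and with arbitrary $s\to +\infty$ limit in $C^*_+$. The map $h$ of \eqref{eq:our-homotopy} counts rigid elements of $\scrM$, so analyzing the codimension-one boundary of the $1$-dimensional part will yield the relation
\[ d_{-,\mathit{quot}}\circ h + h\circ d_+ = \pi \circ F, \]
where $F$ is the continuation map \eqref{eq:pm-continuation} and $\pi: C^*_- \to C^*_{-,\mathit{quot}}$ the projection.

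Three of the four boundary strata are standard: (i) the stratum $r=1$ recovers the continuation map restricted to solutions with $s\to -\infty$ limit in $M$, which by Lemma \ref{th:approach}\eqref{eq:cant-go-into} is exactly $\pi\circ F$; (ii) Floer breaking at the $s\to -\infty$ end contributes $d_{-,\mathit{quot}}\circ h$, with the intermediate chord forced to lie in $M$ by the subcomplex property established in Lemma \ref{th:geometric-minus}; (iii) Floer breaking at the $s\to +\infty$ end contributes $h\circ d_+$, with the intermediate chord an arbitrary generator of $C^*_+$. Transversality is secured by the standard machinery together with Lemma \ref{th:double-transversality}\eqref{eq:inside-m-solution} and the splitting \eqref{eq:les-operator-2} for solutions lying in $M$.

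\textbf{The main obstacle} is ruling out any contribution from $r\to 0$. Take a Gromov-convergent sequence $(r_k,u_k)\in \scrM$ with $r_k\to 0$. By Lemma \ref{th:approach}\eqref{eq:cant-go-into}, every $r=0$ main component $u_\infty$ must lie entirely in $M$ (its $s\to -\infty$ limit is in $M$). Lemma \ref{th:double-transversality}\eqref{eq:inside-m-solution}, whose hypothesis $\Gamma\neq 0$ is guaranteed by Setup \ref{th:parametrized-setup}\eqref{eq:nonzero}, implies that $(0,u_\infty)$ is rigid in the parametrized moduli space for $E$, so no continuous $1$-parameter family limits smoothly to it; the convergence must involve Floer bubbling. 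The only compatible configuration consists of a Floer strip $u^1$ for the ``$+$'' Floer data bubbling off at the $s\to+\infty$ end of $u_\infty$, not itself contained in $M$ (otherwise the situation reduces to breakings already counted in strata (ii)--(iii)). A direct adaptation of the positivity-of-intersections argument of Lemma \ref{th:approach}\eqref{eq:ifits} fixes the sign and direction of the leading asymptotic coefficient of $u^1$ at its matching end as a specific signed multiple of $\Xi_0 = i$.

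For each such broken pair $(u_\infty, u^1)$ I then apply Corollary \ref{th:r-moves} in the modified gluing setup of Section \ref{subsec:modified}. The hypothesis \eqref{eq:lowest-eigenvalue} is verified using the product structure \eqref{eq:j-splits}--\eqref{eq:h-splits} near $M$ together with Setup \ref{th:setup-floer}\eqref{eq:first-eigenvalue}, which force the highest negative eigenvalue $\lambda^1$ of the selfadjoint operator at the matching chord to come from the base $\bC$-factor, to be simple, and to have eigenvector $\Xi_0$. The unique weighted-kernel element picked out by Corollary \ref{th:r-moves} is precisely the one exhibited in Lemma \ref{th:double-transversality}\eqref{eq:other-limit}: the sign $\Gamma<0$ imposed in Setup \ref{th:parametrized-setup}\eqref{eq:nonzero} was arranged exactly so that this element has $R>0$ and an asymptotic coefficient of sign opposite to that of $\partial_s u^1$, thereby solving \eqref{eq:matching-condition}. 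Corollary \ref{th:r-moves} then yields $r_g \leq 0$ for all large gluing lengths, so these broken configurations lie in the $r<0$ closure and do not appear in the boundary of $\scrM \cap\{r>0\}$. I expect that the hardest step is this final sign bookkeeping: tracking the signs of leading coefficients through duality (from $H$ to $H_+$), through the orientation choice for $\Xi_0$, and through the $\Gamma$-sign convention, so that the element of Lemma \ref{th:double-transversality}\eqref{eq:other-limit} really is the element of Corollary \ref{th:r-moves}, and not its negative.
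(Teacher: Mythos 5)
Your overall strategy is the same as the paper's, and you correctly identify the crux: broken $r=0$ limits of the form $\big((0,u^2),u^1\big)$ with $u^2\subset M$ but $u^1\not\subset M$, which must be ruled out via Corollary \ref{th:r-moves} in the setup of Section \ref{subsec:modified}, using \eqref{eq:first-eigenvalue} to verify \eqref{eq:lowest-eigenvalue}, the positivity-of-intersections analysis (Lemma \ref{th:approach}\eqref{eq:ifits}) to pin down the sign of $\lim\exp(\alpha_+ s)\partial_s u^1$, and the deliberately arranged sign $\Gamma<0$ to guarantee the glued family has $r_g\le 0$. That part of your argument is right, and your closing comment about where the sign bookkeeping lives is well calibrated.

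However, your treatment of the other $r\to 0$ degenerations contains two genuine errors. First, you assert that Lemma \ref{th:double-transversality}\eqref{eq:inside-m-solution} makes an unbroken limit $(0,u_\infty)$ \emph{rigid} in the parametrized moduli space, so that ``no continuous $1$-parameter family limits smoothly to it.'' This is false: since we are analyzing the boundary of a one-dimensional piece of the parametrized moduli space, $\mathrm{index}(D^{\mathit{para}}_{E,(0,u_\infty)})=\mathrm{index}(D_{M,u_\infty})=1$, so there \emph{is} a one-parameter family through $(0,u_\infty)$. The correct argument is that, because of the block-diagonal structure \eqref{eq:les-operator-2} and the invertibility of $D^{\mathit{para}}$, that entire one-parameter family lies at $r=0$ inside $M$, and by regularity it exhausts the local moduli; hence it cannot be approached from $r>0$. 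Rigidity is not what rules this out -- the locked-in value $r=0$ of the deformation family is. Second, you dismiss broken $r=0$ configurations in which every component lies in $M$ by saying ``the situation reduces to breakings already counted in strata (ii)--(iii).'' That is not the reason. Strata (ii)--(iii) consist of Floer breakings with the principal component at $r>0$; a broken limit with the principal component at $r=0$ is a different object and does not coincide with them. The correct reason is the same as for the unbroken case: when both pieces lie in $M$, gluing produces a family of solutions at $r=0$ contained in $M$, which accounts for the local moduli near the broken configuration, again excluding $r>0$ approach. You also overlook entirely the possibility of a Floer trajectory bubbling off at the $s\to-\infty$ end (the case $\big(u^2,(0,u^1)\big)$ with $u^2$ a Floer trajectory for $(H_-,J_-)$); one must observe that Lemma \ref{th:geometric-minus} forces such a $u^2$, with $s\to-\infty$ limit in $M$, to lie entirely in $M$, after which Lemma \ref{th:approach} forces $u^1\subset M$ too, putting this back into the ``all in $M$'' bucket you mishandled. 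These gaps are in the easier parts of the argument, but they are genuine and the proof as written does not close them.
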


\begin{proof}
Following the usual Floer-theoretic principle, we have to look at the boundary points and ends of one-dimensional parametrized moduli spaces. More precisely, we consider only the parts of the one-dimensional parametrized moduli spaces which consist of solutions with $r>0$. There are boundary points at $r = 1$, and those by construction are just the points that enter into \eqref{eq:pm-continuation} projected to $C^*_{-,\mathit{quot}}$, so they are necessary for our desired formula to hold. There are also ends that occur by splitting off a solution of Floer's equation on the left or right, with the principal component retaining the property that $r>0$, and those give rise to the chain homotopy equation. Hence, the desired property will hold if there are no other kinds of ends.

Suppose that we had an end of the space under consideration, which can be compactified by adding a solution $u$ with $r = 0$. Because the $r = 0$ data are ``suitable for restriction'', Lemma \ref{th:approach} applies here, and as a consequence, $u$ must be contained in $M = E_i$. The relevant indices, for the same reason as in \eqref{eq:les-operator-2}, are $\mathrm{index}(D_{E,(0,u)}^{\mathit{para}}) = \mathrm{index}(D_{M,u}) = 1$. Both operators are onto, see Lemma \ref{th:double-transversality}. Hence, our $u$ belongs to a one-parameter family of solutions, for $r = 0$, which lie inside $M$. For dimension reasons, the same family describes the surroundings of $(0,u)$ in the parametrized moduli space. Hence $(0,u)$ could not, after all, arise as the limit of solutions with $r>0$.

The remaining worry are broken solutions with $r = 0$. Such a solution consists of a principal component, which satisfies the continuation map equation; and other components which are Floer trajectories. In our case, because of dimension constraints and the given regularity properties, it follows that the principal component must be an isolated point of the parametrized moduli space, and that there is only one Floer trajectory involved, which is also an isolated one. 

Suppose that the broken solution is of the form $(u^2,(0,u^1))$, where $u^2$ is Floer trajectory for $(H_-,J_-)$, and $(0,u^1)$ a continuation map solution. We are then necessarily in the second case of Lemma \ref{th:geometric-minus}, which means that $u^2$ is contained in $M$. By the analogous argument, $u^1$ is also contained in $M$. Then, we encounter the same phenomenon as before: one can glue together the two components to a family of continuation map solutions that have $r = 0$ and remain inside $M$, so this can't after all happen as a limit of an $r>0$ end of the parametrized moduli space.

Now suppose that the broken solution is of the form $((0,u^2), u^1)$, where $u^2$ is a continuation map solution and $u^1$ a Floer trajectory for $(H_+,J_+)$. Then, $u^2$ is necessarily contained in $M$. If the same holds for $u^1$, then the previous argument applies here as well. Hence, the remaining case is a broken solution $((0,u^2), u^1)$, where $u^2$ is contained in $M$ but $u^1$ is not. Addressing this case requires a much more substantial effort. Let $x$ be the chord (in $M$) that is the common limit of $u^2$ and $u^1$.
The first ingredient is
\begin{equation} \label{eq:full-positive-limit}
\mybox{$\lim_{s \rightarrow -\infty} \exp(\alpha_+ s) \partial_s u^1$ is a positive multiple of $(\Xi_0 = i, 0) \in \bC \oplus \mathit{TM}_{x(t)} = \mathit{TE}_{x(t)}$.}
\end{equation}
To see that, remember that one can think of the reversed trajectory $u^1(-s,1-t)$ as a Floer trajectory for the Hamiltonian from Setup \ref{th:dual-h}. Lemma \ref{th:approach} applies to the reversed trajectory; the outcome being that
\begin{equation} \textstyle
\lim_{s \rightarrow-\infty} \exp(\alpha_+ s)( p(u^1(s,\cdot))-i) \text{ is a positive multiple of $i$.}
\end{equation}
In terms of the standard expansion (for $s \ll 0$) $p(u^1(s,\cdot)) = i + \sum_{m \geq 0} c_m \exp(\lambda_m s) \Xi_m(1-t)$, see \eqref{eq:lambda-m} for the notation, this means that $c_0 > 0$. Differentiating yields
\begin{equation} \textstyle \label{eq:partial-limit}
\lim_{s \rightarrow -\infty} \exp(\alpha_+ s) Dp( \partial_s u^1) \text{ is a positive multiple of $i$.}
\end{equation}
We know that there must be some eigenvalue $\lambda_+ < 0$ of the selfadjoint operator associated to the chord $x$, such that $\lim_{s \rightarrow -\infty} \exp(\lambda_+ s) \partial_s u_+$ is a nonzero multiple of the corresponding eigenvector. By \eqref{eq:partial-limit}, that eigenvalue can't be more negative than $\alpha_+$. But $\alpha_+$ is the highest negative eigenvalue, as one can see from \eqref{eq:first-eigenvalue}, bearing in mind the sign reversal from \eqref{eq:minus-complex}; hence $\lambda_+ = \alpha_+$. Moreover, that eigenvalue has multiplicity $1$, and the eigenvector is precisely $(\Xi_0,0)$. Hence, from \eqref{eq:partial-limit} one derives \eqref{eq:full-positive-limit}.

Given \eqref{eq:full-positive-limit}, and \eqref{eq:other-limit} applied to $u = u^2$, we are now in a position to bring to bear Corollary \ref{th:r-moves}. The outcome is that we can glue together the two pieces of our broken solution, which yields a family of solutions that describes a neighbourhood in the compactified parametrized moduli space; and all those glued solutions have $r \leq 0$. Hence, our broken solution could not, after all, have appeared as the limit of an end where $r>0$.
\end{proof}

\begin{cor} \label{th:simple-acyclic}
The following total complex, formed out of the previously constructed map and homotopy, is acyclic:
\begin{equation}
\label{eq:contractible-2}
\big\{\!
\xymatrix{ 
\ar@/_1pc/[rrrr]_-{\eqref{eq:our-homotopy}}
C^*_+ \ar[rr]^-{\eqref{eq:pm-continuation}} && C^*_- \ar[rr]^-{\text{projection}} && C^*_{-,\mathit{quot}}
}\!\big\}.
\end{equation}
\end{cor}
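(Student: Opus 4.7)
The plan is to interpret \eqref{eq:contractible-2} as a total complex $T$ with underlying graded group $C^*_+[2] \oplus C^*_-[1] \oplus C^*_{-,\mathit{quot}}$, whose differential bundles together the Floer differentials, the continuation map $\phi$ of \eqref{eq:pm-continuation}, the projection $\pi \colon C^*_- \to C^*_{-,\mathit{quot}}$, and the nullhomotopy $h$ of \eqref{eq:our-homotopy}. Proposition \ref{th:nullhomotopy-1} guarantees $d_T^2 = 0$. The strategy is to exploit the subcomplex decompositions \eqref{eq:decompose-1} and \eqref{eq:second-complex} to produce a short exact sequence of chain complexes
\[
0 \longrightarrow T_{\mathit{sub}} \longrightarrow T \longrightarrow T_{\mathit{quot}} \longrightarrow 0,
\]
where $T_{\mathit{sub}} = C^*_{+,\mathit{sub}}[2] \oplus C^*_{-,\mathit{sub}}[1]$ sits as a subcomplex of $T$. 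Verifying closure under $d_T$ is the only compatibility to check: the continuation map sends $C^*_{+,\mathit{sub}}$ into $C^*_{-,\mathit{sub}}$ by Lemma \ref{th:geometric-continuation}, $C^*_{-,\mathit{sub}} \subset \ker(\pi)$ by construction, and the nullhomotopy vanishes on $C^*_{+,\mathit{sub}}$ by Lemma \ref{th:homotopy-vanishes}. The quotient $T_{\mathit{quot}}$ then has underlying graded group $C^*_{+,\mathit{quot}}[2] \oplus C^*_{-,\mathit{quot}}[1] \oplus C^*_{-,\mathit{quot}}$.

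I would then prove acyclicity of the two outer terms separately, from which acyclicity of $T$ follows via the associated long exact sequence in cohomology. For $T_{\mathit{sub}}$, by definition it is the shifted mapping cone of the restricted continuation map $\phi|_{C^*_{+,\mathit{sub}}} \colon C^*_{+,\mathit{sub}} \to C^*_{-,\mathit{sub}}$, and the second part of Lemma \ref{th:geometric-continuation} says precisely that this map is a quasi-isomorphism; hence $T_{\mathit{sub}}$ is acyclic. For $T_{\mathit{quot}}$, the key observation is that the projection $\pi$, written with respect to \eqref{eq:decompose-1}, is the projection to the second summand, so the map it induces between the two rightmost factors of $T_{\mathit{quot}}$ is the identity on $C^*_{-,\mathit{quot}}$. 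Thus the subcomplex $K \subset T_{\mathit{quot}}$ consisting of elements $(0, b, c)$ is the mapping cone of $\mathrm{id}_{C^*_{-,\mathit{quot}}}$, hence contractible; and the further quotient $T_{\mathit{quot}}/K \cong C^*_{+,\mathit{quot}}[2]$ is acyclic by the second part of Lemma \ref{th:geometric-plus}. A final invocation of the long exact sequence yields that $T_{\mathit{quot}}$, and therefore $T$, is acyclic.

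The main obstacle has in fact already been absorbed into the preceding results: the hard geometric input is the preservation of subcomplexes under $\phi$, $\pi$, and $h$ (requiring the Lemma \ref{th:box-lemma} winding-number argument in all three places), together with the chain homotopy identity from Proposition \ref{th:nullhomotopy-1}, whose proof in turn consumed the delicate parametrized gluing analysis of Section \ref{sec:gluing}. Given these inputs, the present corollary is genuinely a homological-algebra consequence, and the only care needed is to track that the various component maps really do interact as described, in particular that $h$ vanishes on $C^*_{+,\mathit{sub}}$ (so that $T_{\mathit{sub}}$ is a subcomplex and not merely a filtration piece up to a correction term).
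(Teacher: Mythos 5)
Your proof is correct and takes essentially the same route as the paper: filter the total complex by the subcomplex coming from $C^*_{+,\mathit{sub}}\oplus C^*_{-,\mathit{sub}}$ (acyclic as the cone of the quasi-isomorphism in Lemma \ref{th:geometric-continuation}), then peel off the contractible cone of the identity on $C^*_{-,\mathit{quot}}$, leaving $C^*_{+,\mathit{quot}}$, which is acyclic by Lemma \ref{th:geometric-plus}. The only cosmetic difference is that you phrase the successive reductions via short exact sequences and long exact sequences in cohomology, whereas the paper quotients in stages; the mathematical content and the citations of Lemmas \ref{th:geometric-plus}, \ref{th:geometric-continuation}, \ref{th:homotopy-vanishes} are identical.
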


\begin{proof}
This is straightforward. Let's write down the pieces of the total complex:
\begin{equation} \label{eq:all-complex}
\xymatrix{
C^{*}_{+,\mathit{sub}} \ar[rr]^-{\eqref{eq:pm-continuation}}_-{\htp} &&
C^{*}_{-,\mathit{sub}} 
\\
\ar@/_2pc/[rrrr]_-{\eqref{eq:our-homotopy}} \ar[urr]_-{\eqref{eq:pm-continuation}}
C^{*}_{+,\mathit{quot}} \ar[u]^-{d_+} \ar[rr]_-{\eqref{eq:pm-continuation}} && 
\ar[u]_-{d_-}
C^*_{-,\mathit{quot}} \ar[rr]_-{\mathit{id}} && 
C^*_{-,\mathit{quot}}
\
}
\end{equation}
Here, we have represented all the maps involved, except for the parts of the Floer differential which take a given piece to itself; and we have used Lemmas \ref{th:geometric-continuation}, \ref{th:homotopy-vanishes}. Note that the top row of \eqref{eq:all-complex} is an acyclic subcomplex. After quotienting that out, the middle and right groups in the remaining line form an acyclic subcomplex as well; and quotienting that out leaves $C^*_{+,\mathit{quot}}$, which is itself acyclic by Lemma \ref{th:geometric-plus}.
\end{proof}

\subsection{Comparison with Morse theory}
The following discussion is not part of the necessary logical flow of the paper. It is designed as an additional explanation, by way of analogy, of what happens in our construction of the nullhomotopy. To do that, we return to the Morse-theoretic toy model mentioned in the Introduction. In particular, the notation here is different from the rest of the paper.

Let $E^n$ be a compact manifold with boundary $M = \partial E$. We fix a collar neighbourhood 
\begin{equation}
[0,2] \times M \hookrightarrow E, 
\end{equation}
where $\{0\} \times M = \partial E$. We will be exclusively using functions $f$ which are in split form on the collar, meaning that $f|[0,2] \times M$ is the sum of a function on $[0,2]$ and a function on $M$. For simplicity, we will work with a fixed metric throughout, which on the collar is also the product of the standard metric on $[0,2]$ and a metric on $M$.

Let's start with a Morse function $f_-$, such that $f_-(w,x) = (w-1)^2/2 + (\text{\it some function on $M$})$ on the collar. As a consequence of these particular choices, the Morse complex $C^*(f_-)$ comes with a projection map $C^*(f_-) \rightarrow C^*(f_-|\{1\} \times M)$. Now, take another function $f_+$, this time with $f_+(w,x) = -(w-1)^2/2 + (\text{\it some function on $M$})$ on the collar. This choice means that $C^*(f_+)$ computes the cohomology rel boundary. The two choices are joined by a continuation map 
\begin{equation}
C^*(f_+) \longrightarrow C^*(f_-), 
\end{equation}
defined using a family $f_{1,s}$ which equals $f_\pm$ for $\pm s \gg 0$, and such that
\begin{equation} \label{eq:partial-partial}
(\partial_s \partial_w f_{1,s}) \geq 0 \;\; \text{ at points $(w = 0, x) \in \partial E$, and for any $s$.} 
\end{equation}
The condition \eqref{eq:partial-partial} prevents solutions of the continuation map equation from reaching $\partial E$. We want to show that the composition
\begin{equation} \label{eq:desired-null}
C^*(f_+) \xrightarrow{\text{continuation}} C^*(f_-) \xrightarrow{\text{projection}} C^*(f_-|\{1\} \times M)
\end{equation}
is nullhomotopic. To do that, we extend the previous choice to a family $(f_{r,s})$ with an additional parameter $r \in [0,1]$, still having the same $\pm s \gg 0$ behaviour, and satisfying the analogue of \eqref{eq:partial-partial} for each value of $r$. We require:
\begin{align}
&
\mybox{At the parameter value $r = 0$, all the functions must be of the form $f_{0,s}(w,x) = 
(db_s/ds) (w-1)^2/2 + (\text{\it a function on $M$})$ near $\{1\} \times M$, with $b_s \in \bR$ such that $db_s/ds = \mp 1$ for $\pm s \gg 0$.} \\ &
\mybox{For small $r > 0$, we have $f_{r,s}(w,x) = (db_s/ds) (w-1)^2/2 - r c_s w + \text{\it (a function on $M$})$, again near $\{1\} \times M$. Here, the $b_s$ are as before; the $c_s$ are nonpositive for all $s$, zero for $|s| \gg 0$, and negative for some values of $s$.} \label{eq:small-r}
\end{align}
The point of \eqref{eq:small-r} is that for small $r>0$, we have $-\partial_w f_{r,s} = r c_s \leq 0$ along $\{1\} \times M$. Consider a solution of the associated continuation map equation,
\begin{equation} \label{eq:r-morse}
\left\{
\begin{aligned}
& u: \bR \longrightarrow E, \\
& du/ds = -\nabla f_{r,s}, \\
& \textstyle \lim_{s \rightarrow -\infty} u(s) \text{ is a critical point of $f_-$ in $\{1\} \times M$,} \\
& \textstyle \lim_{s \rightarrow +\infty} u(s) \text{ is a critical point of $f_+$.}
\end{aligned}
\right.
\end{equation}
Because of the nature of $f_-$, such a solution, for small $r>0$, necessarily has $u(s) \in \{1\} \times M$ for $s \ll 0$. But because of \eqref{eq:small-r}, when it eventually escapes $\{1\} \times M$ (which is must, since the $r c_s$ aren't identically zero), it has to do so into $[0,1) \times M$, and then it must remain in that subset for all greater values of $s$. But then, it's impossible for $\lim_{s \rightarrow +\infty} u(s)$ to be a critical point (for critical points in $\{1\} \times M$, that's because the structure of $f_+$ would force $u(s)$ to lie in $\{1\} \times M$ for $s \gg 0$). It follows that there are no solutions of \eqref{eq:r-morse} for small values of $r$, which means that we can use solutions for all $r \in (0,1]$ to build the desired nullhomotopy \eqref{eq:desired-null}.

The condition on $c_s$ in \eqref{eq:small-r} works, but is somewhat overkill. By an analysis of gluing in the parametrized moduli space, analogous to Section \ref{sec:gluing}, one could weaken it to
\begin{equation} \label{eq:define-constant}
C \stackrel{\text{def}}{=}\textstyle \int_{\bR} \exp(b_s) c_s \, \mathit{ds} < 0.
\end{equation}
While we don't want to go through the entire argument, one can explain the appearance of this condition as follows. Consider the linearized operator associated to solutions of the parametrized continuation map. More specifically, we consider a solution for $r=0$, and which is entirely contained in $\{1\} \times M$. Then, we can project to the $w$-component; the corresponding operator, for $\Upsilon(s) \in \bR$, is
\begin{equation}
D^{\mathit{para}}(R,\Upsilon) = d\Upsilon/ds + (db_s/ds) \Upsilon - c_s R.
\end{equation}
Explicit solutions $D^{\mathit{para}}(1,\Upsilon) = 0$ are 
\begin{equation}
\Upsilon(s) = \textstyle \exp(-b_s) \int_{-\infty}^s \exp(b_\sigma) c_\sigma \, \mathit{d\sigma}.
\end{equation}
These satisfy
\begin{equation}
\Upsilon(s) = \begin{cases}
0 & s \ll 0, \\
C \exp(s) (\text{\it times a positive constant}) & s \gg 0,
\end{cases}
\end{equation}
with $C$ as in \eqref{eq:define-constant}. Hence, the sign of $C$ precisely determines the sign of the leading coefficient of $\Upsilon$ as $s \rightarrow +\infty$. With that in mind, the requirement that $C<0$ is precisely analogous to the $\Gamma<0$ condition in \eqref{eq:nonzero}. As a final remark, note that our simpler Morse-theoretic argument, based on \eqref{eq:small-r}, relied strongly on the ODE nature of the gradient flow equation; hence (as far as this author knows) it has no Floer-theoretic counterpart.

\section{The main construction\label{sec:final}}
This section brings together all the pieces: first, combining material from Sections \ref{sec:fukaya} and \ref{sec:setup}, we define the (relative) Fukaya category $\scrA_q$. For elementary reasons, this comes with a restriction functor $\scrQ_q$ as in \eqref{eq:restriction-trans}. The definition of the natural transformation $\delta_q$ from \eqref{eq:serre-trans} is along the same lines, again leaning on the material from Section \ref{sec:fukaya}. After that, we tackle the construction of the nullhomotopy from Theorem \ref{th:main}. While based on the same idea as in Section \ref{sec:acyclic}, this requires more material from Section \ref{sec:plane}, as well as a generalization of the analysis of glued solutions from Section \ref{sec:gluing}; that generalization is the only new technical ingredient introduced in this section. Finally, filtered acyclicity follows from what we've already shown in Section \ref{sec:acyclic}.

\subsection{The relative Fukaya category and associated structures}
We continue to work in the geometric situation from Section \ref{subsec:context}, in particular with $(E, \Omega_E)$ and $p$ satisfying Setup \ref{th:e-setup}. Objects of the category $\scrA_q$ are Lagrangian submanifolds $L$ as in Setup \ref{th:l}, with their grading and {\em Spin} structure. We now mimic the construction of the relative Fukaya category from Section \ref{subsec:define-fukaya}, with the appropriate modifications which ensure that the restriction functor is well-defined.

For each pair of objects $(L_0,L_1)$, choose $(H,J) = (H_{L_0,L_1},J_{L_0,L_1})$ as in Setup \ref{th:setup-floer}. For generic choices, solutions of Floer's equation will be regular. We are only interested in solutions that avoid $\Omega_E$, and use those to define the differential on $\mathit{hom}_{\scrA}(L_0,L_1) = \mathit{CF}^*(L_0,L_1;H)$. Lemma \ref{th:approach} applies and therefore, as in \eqref{eq:decompose-1}, the Floer complex has a subcomplex, generated by chords lying in $p^{-1}(\{\mathrm{im}(w) > 1\})$; moreover, the induced differential on the quotient only counts Floer trajectories which remain inside $M$. We write $\mathit{hom}_{\scrB}(L_0,L_1)$ for the quotient, and $\scrQ: \mathit{hom}_{\scrA}(L_0,L_1) \rightarrow \mathit{hom}_{\scrB}(L_0,L_1)$ for the projection. 

Take a $(d+1)$-punctured disc $S$ with $m$ interior marked points, which represents a point of $\scrR^{d+1;m}$, together with a choice of objects $(L_0,\dots,L_d)$. On each such disc, we want to choose data as in Setup \ref{th:jk-adapted}, using the previous Floer data to determine asymptotics on the ends, and always requiring the ``suitable for restriction'' property from Setup \ref{th:jk-adapted}. The choices are subject to $\mathit{Sym}_m$-equivariance and consistency constraints, as in Section \ref{subsec:one-negative}. Note that the choice involves a one-form $\beta$ on $S$, which governs part of the behaviour of the inhomogeneous term near $M$. We ask that these one-forms be strictly compatible with gluing together surfaces for sufficiently long gluing lengths (this means that the choice of $\beta$ near the boundary of each $\bar\scrR^{d+1;m}$ is governed by the choices previously made for lower-dimensional parameter spaces). We also ask that gluing be compatible with the local linearity constraints, in the same sense as in \eqref{eq:local-ex}--\eqref{eq:local-ex-3}. We consider solutions of the associated equation \eqref{eq:cauchy-riemann-2}, imposing intersection conditions with $\Omega_E$ which follow the model of \eqref{eq:adjacency-conditions}. 

Transversality is an issue, specifically for solutions which remain inside $M$. The analogue of \eqref{eq:les-operator} for such a solution $u$ is
\begin{equation} \label{eq:les-operator-3}
\xymatrix{
\parbox{15em}{
$W^{2,2}(S,\bC, i\bR, \dots, i\bR)$ \newline \hspace*{6em} $\oplus$ \newline
$T\scrR^{d+1;m} \oplus W^{2,2}(S, u^*TM, $ \newline $u^*T(L_0 \cap M), \dots, u^*T(L_d \cap M); 
\newline \xi_1,\dots,\xi_m)$}
\ar[rr]^-{D \oplus \scrD_{M,u}} \ar[d]_-{\iso} 
&&
\parbox{12em}{
$W^{1,2}(S, \mathit{Hom}^{0,1}(TS,\bC))$ \newline \hspace*{3em} $\oplus$ \newline $W^{1,2}(S, \mathit{Hom}^{0,1}(TS,u^*TM))$} \ar[d]^-{\iso}
\\
\parbox{15em}{
$T\scrR^{d+1;m} \oplus W^{2,2}(S, u^*TE, u^*TL_0,$ \newline
\hspace*{3em} $\dots, u^*TL_d; \xi_1,\dots,\xi_m)$}
\ar[rr]^-{\scrD_{E,u}} &&
W^{1,2}(S, \mathit{Hom}^{0,1}(TS,u^*TE))
}
\end{equation}
The notation $W^{2,2}(\dots; \xi_1,\dots,\xi_m)$ means that we consider a finite-codimension subspace of the usual Sobolev space, which includes the linearization of the adjacency conditions \eqref{eq:adjacency-conditions}. These are pointwise conditions, and in order to accommodate them, we have increased the order of differentiability by one. The notation $\scrD$ stands for the linearized operators extended to include variations of the domain Riemann surface. The main reason why we still have the block diagonal form $\scrD_{E,u} = D \oplus \scrD_{M,u}$ is that the $W$-component of the Cauchy-Riemann equation is satisfied even if we change the Riemann surface structure (in other words, for the model \eqref{eq:plane-cr}, $u = 0$ is a solution for any $S$ and $\beta$), which implies that infinitesimal variations in $T\scrR^{d+1;m}$ only affect the $M$-component of the linearized operator.
%

\begin{remark} \label{th:choice-of-splitting}
One should really say that the domains of the $\scrD$ operators are extensions of $T\scrR^{d+1;m}$ (the tangent space of the moduli space at the point determined by our surface) by the relevant Sobolev spaces. Of course, there's no harm in choosing a splitting, and writing it as a direct sum as we have done. One convenient choice of splitting is to pick a finite-dimensional linear space of compactly supported infinitesimal deformations of the complex structure on $S$, and to leave $\xi_1,\dots,\xi_m$ unchanged (of course, a subspace of such deformations will then necessarily be equivalent, through suitable infinitesimal diffeomorphisms, to keeping the complex structure constant but moving the $\xi_i$).
\end{remark}

The operator $D$ is one of the standard ones from Section \ref{sec:plane} and has index $0$, hence is invertible (this statement remains true even though we have changed Sobolev spaces). Hence, once more, invertibility of $\scrD_{M,u}$ and $\scrD_{E,u}$ are equivalent; and we can use transversality inside $M$ to ensure that the first of those properties holds for our moduli spaces.

\begin{remark}
At this point, the conditions on almost complex structures say that each of the strata $\Omega_E$, $M$, and $\Omega_M = \Omega_E \cap M$ must be an almost complex submanifold. This means, for instance, that transversality for holomorphic spheres has to deal separately with the spheres contained in the various strata. This does not pose any difficulties, since the normal bundles of $M \subset E$ and $\Omega_M \subset \Omega_E$ are trivial (the normal bundles of $\Omega_M \subset M$ and $\Omega_E \subset E$ are not, but that has already been dealt with in Section \ref{sec:fukaya}).
\end{remark}

As in \eqref{eq:mu-q}, one uses (isolated points in) the resulting moduli spaces, together with the previous differential (as a constant $q^0$ term), to define a curved $A_\infty$-category structure $\mu_{\scrA_q}$, with morphism spaces $\mathit{hom}_{\scrA_q}(L_0,L_1) = \mathit{hom}_{\scrA}(L_0,L_1)[[q]]$. Lemma \ref{th:approach} still applies. Hence, setting $\hom_{\scrB_q}(L_0,L_1) = \hom_{\scrB}(L_0,L_1)[[q]]$, the $q$-linear extension of the previously mentioned projection $\scrQ$ gives  a curved $A_\infty$-functor $\scrQ_q: \scrA_q \rightarrow \scrB_q$. (We should reiterate that this is the very simplest kind of $A_\infty$-functor: it's a $q$-independent linear map on morphism spaces, with no other terms.) Moreover, the curved $A_\infty$-structure on $\scrB_q$ involves only solutions that lie entirely inside $M$, and is in fact a full subcategory of the relative Fukaya category of $(M,\Omega_M)$.

Our next task is to carry over the construction from Section \ref{subsec:diagonal-class} to the present context. We use the parameter spaces $\scrR^{k+1,l+1;m}$. This time, when choosing auxiliary data on the surfaces, we only require part (i) of Setup \ref{th:jk-adapted}. In particular, for generic choices, there won't be solutions contained in $M$. Given that, transversality issues are straightforward, and one ends up with a bimodule map $\delta_q$ as in \eqref{eq:serre-trans}.

\subsection{Constructing the nullhomotopy\label{subsec:gosh}}
Take a surface $S$ representing a point of $\scrR^{l+1,k+1;m}$, together with objects $(L_0,\dots,L_{k+1,l+1})$ of $\scrA_q$. For it, we choose data $(J_r,K_r)$ as in Setup \ref{th:jk-adapted}(i), depending on an additional parameter $r \in [0,1]$. For $r = 1$, these should reduce to those previously used to define $\delta_q$. They also need to satisfy equivariance and consistency constraints, of the usual kind. Moreover:

\begin{setup} \label{th:setup-gosh}
For each such surface $S$:
\begin{align}
& \mybox{At the parameter value $r = 0$, $(J_0,K_0)$ is``suitable for restriction'', in the sense of Setup \ref{th:jk-adapted}(ii). Moreover, as in the definition of $\scrA_q$, the underlying $\beta$ one-forms should be compatible with gluing together of surfaces.}
\\ \label{eq:dr-new}
& \mybox{Look at $(K_r,J_r)$ near $M = E_i$. At $r = 0$ we require that $\partial_r J_r = 0$, and $\partial_r K_r = -\gamma p^*(\mathrm{im}(w)) + (\text{constant})$, where $\gamma$ is as in \eqref{eq:gamma-form}. Moreover, the $\gamma$ should be compatible with gluing (meaning that if we glue together $S$ and some disc with one negative end, then for large values of the gluing parameter, the one-form on the glued surface is the obvious extension of $\gamma$).} 
\\
& 
\mybox{Our choices of $\beta$ and $\gamma$ give rise to one of the operators $D^{\mathit{para},\mu}$ discussed in Section \ref{subsec:parametrized-c}, and hence to a number $\Gamma = \Gamma(S,\beta,\gamma)$. We require that all these numbers should be negative. This is possible thanks to Proposition \ref{th:sign-of-gamma} (strictly speaking, in that Proposition we had no interior marked points, but the argument carries over without any problems).} \label{eq:negative-choice}
\\ 
& \mybox{Finally, we ask that our family should have a smooth extension for small $r<0$ (as before, this is a technical requirement).}
\end{align}
\end{setup}

As usual, transversality is straightforward except for solutions with $r = 0$ and which are contained in $M$. The analogue of Lemma \ref{th:double-transversality} is:

\begin{lemma} \label{th:double-tranverslity-2}
For generic choices, the following holds. Take a point in the parametrized moduli space, of the form $(r = 0, u: S \rightarrow M)$, for some $[S] \in \scrR^{k+1,l+1;m}$. Then
\begin{align} & \label{eq:double-2}
\mybox{$u$ is regular as a map to $M$ (with varying domain in $\scrR^{k+1,l+1;m}$), and $(0,u)$ is regular as a point in the parametrized moduli space of maps to $E$ (also with varying domains).} 
\\ & \label{eq:isolated-point-op}
\mybox{Suppose now that our point is isolated. Take the operator $\scrD_{E,(0,u)}^{\mathit{para}}$, which is the parametrized version of that in \eqref{eq:les-operator-3}. Let's think of its domain as in Remark \ref{th:choice-of-splitting}, so that it consists of triples $(R,I,\Upsilon)$, with $R \in \bR$ the variation of the parameter $r$; $I$ the infinitesimal change of the Riemann surface structure, which is always compactly supported; and $\Upsilon$ the infinitesimal deformation of the map $u$, respecting the adjacency conditions at the marked points. Then there is a one-dimensional space of solutions of $\scrD_{E,(0,u)}^{\mathit{para}}(R,I,\Upsilon) = 0$ such that $\Upsilon$ decays at all ends except for the $(k+1)$-st one, and with $\lim_{s \rightarrow -\infty} \exp(\alpha s)\Upsilon(\epsilon_{k+1}(s,t))$ a constant multiple of $(\Xi_0 = i,0)$, where $\alpha \in (0,\pi)$ is the angle associated to the relevant pair of Lagrangians. That space contains an element with $R>0$ and such that the multiplicative constant is negative.
}
\end{align}
\end{lemma}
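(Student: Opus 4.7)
The plan is to follow the strategy of Lemma~\ref{th:double-transversality}, leveraging the fact that at $r=0$ the data are ``suitable for restriction'' and $u$ lies in $M = E_i$, so that the linearized operator $\scrD^{\mathit{para}}_{E,(0,u)}$ splits as a direct sum via the product decomposition $TE|_M = \bC \oplus TM$ furnished by \eqref{eq:2-trivialization}.

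First I will write down the parametrized analogue of \eqref{eq:les-operator-3}. Decompose an infinitesimal deformation of $u$ as $\Upsilon = (\Upsilon_{\mathrm{base}}, \Upsilon_{\mathrm{fiber}})$. Because $u_{\mathrm{base}} \equiv 0$, the $\Upsilon$-derivative of both $\bar\partial$ and $-i\Upsilon_{\mathrm{base}} \beta^{0,1}$ is independent of any infinitesimal variation $I$ of the complex structure on $S$ (those derivatives would act on $u_{\mathrm{base}} = 0$), and by \eqref{eq:dr-new} the $r$-derivative of the inhomogeneous term at $r=0$ only affects the base direction through the term $-R\gamma^{0,1}$. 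Similarly, variations in $I$ feed only into the fiber block. Consequently the equation decouples at first order into
\[
\scrD^{\mathit{para}}_{E,(0,u)}(R, I, \Upsilon) \;=\; \bigl(D^{\mathit{para}}(R, \Upsilon_{\mathrm{base}}),\; \scrD_{M,u}(I, \Upsilon_{\mathrm{fiber}})\bigr),
\]
where $D^{\mathit{para}}$ is exactly an operator of the type studied in Section~\ref{subsec:parametrized-c} for the pair $(\beta,\gamma)$ on $S$, while $\scrD_{M,u}$ is the usual linearized Floer operator in $M$ extended to include domain variation.

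For \eqref{eq:double-2}, I then invoke Lemma~\ref{th:its-an-isomorphism}: by \eqref{eq:negative-choice}, $\Gamma(S,\beta,\gamma) \neq 0$, so $D^{\mathit{para}}$ is an isomorphism in standard Sobolev spaces. Standard transversality inside the relative Fukaya category of $(M,\Omega_M)$, applied to the auxiliary data on the $M$-component, arranges for $\scrD_{M,u}$ to be surjective for generic choices. Combining the two surjectivities gives surjectivity of $\scrD^{\mathit{para}}_{E,(0,u)}$, proving both regularity assertions.

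For \eqref{eq:isolated-point-op}, I switch to weighted Sobolev spaces with weight $\mu = (0,\dots,0,1,0,\dots,0)$, the single $1$ sitting at the negative end $\zeta_{k+1}$. On the base block, $D^{\mathit{para},\mu}$ is precisely the operator in \eqref{eq:two-maps-to-r}: its kernel is one-dimensional, the forgetful map to $R$ is an isomorphism on the kernel, and the unique element with $R = 1$ has $\Lambda_{k+1}$-asymptotic coefficient equal to $\Gamma < 0$ by the definition \eqref{eq:lim-gamma}. On the fiber block, the weight at $\zeta_{k+1}$ allows exponential growth up to rate $\alpha$ in the fiber direction; by \eqref{eq:first-eigenvalue}, every positive eigenvalue of the fiber selfadjoint operator at the relevant chord strictly exceeds $\alpha$, so no eigenvalue is crossed — hence $\scrD^\mu_{M,u}$ has the same kernel and index as $\scrD_{M,u}$, and is therefore still an isomorphism (the unweighted $\scrD_{M,u}$ is invertible because $(0,u)$ is isolated and regular, using the block decomposition). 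Assembling the blocks, $\scrD^{\mathit{para},\mu}_{E,(0,u)}$ has one-dimensional kernel parametrized by $R$, with fiber component identically zero and base asymptotic coefficient $R\Gamma$ at $\zeta_{k+1}$. Taking $R>0$ yields an element for which the multiplicative constant is $R\Gamma < 0$, exactly as claimed.

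The main (mild) obstacle is the bookkeeping verifying that neither $I$ nor $R$ contributes to the base block to first order at $u_{\mathrm{base}} \equiv 0$; this is an elementary but notation-heavy check from the explicit form of $K_r$ and $\beta$ prescribed in Setup~\ref{th:jk-adapted}(ii) and Setup~\ref{th:setup-gosh}. Once that decoupling is in hand, the argument is a direct transcription of Section~\ref{sec:plane}, with the fiber direction kept in check by the eigenvalue comparison \eqref{eq:first-eigenvalue}.
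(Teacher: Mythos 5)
Your proof is correct and follows essentially the same route as the paper: you use the product decomposition near $M$ together with \eqref{eq:dr-new} to obtain the block form $\scrD^{\mathit{para}}_{E,(0,u)} = D^{\mathit{para}} \oplus \scrD_{M,u}$, invoke Lemma~\ref{th:its-an-isomorphism} and \eqref{eq:negative-choice} for the base block, generic transversality in $M$ for the fiber block, and then the weighted Sobolev argument with the eigenvalue gap \eqref{eq:first-eigenvalue} to pin down the one-dimensional kernel and the sign of its asymptotic coefficient. The paper's proof is terser — it displays the diagram \eqref{eq:les-operator-4} and defers the weighted analysis to "similar to that of \eqref{eq:other-limit}" — but the content is identical.
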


\begin{proof}
The analogue of \eqref{eq:les-operator-3} is
\begin{equation} \label{eq:les-operator-4}
\xymatrix{
\parbox{17em}{
$\bR \oplus W^{2,2}(S,\bC, i\bR, \dots, i\bR)$ \newline \hspace*{6em} $\oplus$ \newline
$T\scrR^{k+1,l+1;m} \oplus W^{2,2}(S, u^*TM, $ \newline $u^*T(L_0 \cap M), \dots, u^*T(L_{k+l+1} \cap M); 
\newline \xi_1,\dots,\xi_m)$}
\ar[rr]^-{D^{\mathit{para}} \oplus \scrD_{M,u}} \ar[d]_-{\iso} 
&&
\parbox{12em}{
$W^{1,2}(S, \mathit{Hom}^{0,1}(TS,\bC))$ \newline \hspace*{3em} $\oplus$ \newline $W^{1,2}(S, \mathit{Hom}^{0,1}(TS,u^*TM))$} \ar[d]^-{\iso}
\\
\parbox{17em}{
$\bR \oplus T\scrR^{k+1,l+1;m} \oplus W^{2,2}(S, u^*TE,$ \newline
\hspace*{1em} $u^*TL_0, \dots, u^*TL_{k+1+1}; \xi_1,\dots,\xi_m)$}
\ar[rr]^-{\scrD_{E,(0,u)}^{\mathit{para}}} &&
W^{1,2}(S, \mathit{Hom}^{0,1}(TS,u^*TE))
}
\end{equation}
The block diagonal form of $\scrD_{E,(0,u)}^{\mathit{para}}$ relies crucially on \eqref{eq:dr-new}. As a consequence of that assumption, the $W$-component of the Cauchy-Riemann equation looks like \eqref{eq:plane-cr-2} up to terms of order $r^2$ or higher. When we linearize at $r = 0$, we get a term $R \gamma^{0,1}$, but the variation of the Riemann surface structure still does not appear in the linearization of the $W$-component.

As before, generic choices ensure that $\scrD_{M,u}$ is onto; and $D^{\mathit{para}}$ is invertible by Lemma \ref{th:its-an-isomorphism} and \eqref{eq:negative-choice}, ensuring that $\scrD_{E,(0,u)}^{\mathit{para}}$ will be onto. The proof of \eqref{eq:isolated-point-op} is similar to that of \eqref{eq:other-limit}, again using \eqref{eq:negative-choice}.
\end{proof}

We consider isolated points in our parametrized moduli space which have $r>0$, and whose limit over the $(k+1)$-st (negative) end lies in $M$. Counting such points in the usual way, as in \eqref{eq:mu-q}, yields operations
\begin{equation} \label{eq:h-operations}
\begin{aligned}
& h_q^{l,1,k}: \hom_{\scrA_q}(L_{k+l},L_{k+l+1}) \otimes \cdots \otimes \hom_{\scrA_q}(L_{k+1},L_{k+2}) 
\otimes \hom_{\scrA_q}(L_{k+1},L_k)^\vee[-n] \\ & \qquad \otimes 
\hom_{\scrA_q}(L_{k-1},L_k) \otimes \cdots \otimes \hom_{\scrA_q}(L_0,L_1) \longrightarrow
\hom_{\scrB_q}(L_0,L_{k+l+1})[-1-k-l].
\end{aligned}
\end{equation}

\begin{prop} \label{th:nullhomotopy-2}
The operations \eqref{eq:h-operations} define a nullhomotopy $h_q$ for the composition of $\delta_q$ and the projection, seen as an $\scrA_q$-bimodule map $\scrA_q^\vee[-n] \rightarrow \scrQ_q^*\scrB_q$.
\end{prop}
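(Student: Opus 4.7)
My plan is to follow the strategy of Proposition~\ref{th:nullhomotopy-1}, adapted to surfaces varying in $\scrR^{k+1, l+1; m}$. I would begin by compactifying the one-dimensional pieces of the parametrized moduli space (pairs $(r, u)$ with $r > 0$, $u$ a solution of the Cauchy--Riemann equation on some surface in $\scrR^{k+1, l+1; m}$, $(k+1)$-st limit in $M$, and image avoiding $\Omega_E$) and identifying all codimension-one strata. The \emph{expected} strata fall into three families: the $r = 1$ boundary, which by construction yields the composition $\scrQ_q \circ \delta_q$; standard Floer strip breaking at any of the $k + l + 2$ ends, contributing the $\mu^1$-action terms of \eqref{eq:quadratic-hochschild-differential}; and the interior codimension-one strata of $\bar\scrR^{k+1, l+1; m}$, where a sub-disc with one negative end splits off (either at an input/output puncture or surrounding the dual entry), giving the higher $\mu^j$-action terms, including the delicate wraparound case~(iv). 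Disc bubbling is excluded by Lemma~\ref{th:disc-bubbling}; sphere bubbles, constant or non-constant, are of codimension $\geq 2$. Reading off these contributions with the signs governed by the orientation analysis sketched at the end of Section~\ref{sec:fukaya} produces the bimodule homomorphism identity $d(h_q) = \scrQ_q \circ \delta_q$.

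The rest of the argument rules out contributions from $r = 0$ configurations, divided as in Proposition~\ref{th:nullhomotopy-1} into three cases. \textbf{(a)} An isolated $(0, u)$ with $u \subset M$: Lemma~\ref{th:double-tranverslity-2}\eqref{eq:double-2} shows that $u$ is regular inside $M$ with varying Riemann surface moduli, so a one-parameter family of in-$M$ solutions already exhausts a neighborhood of $(0, u)$ in the parametrized moduli space, and no $r > 0$ sequence can accumulate there. \textbf{(b)} A broken configuration at $r = 0$ in which the principal component and every attached Floer strip lie in $M$: the same dimensional principle, applied after gluing, shows the glued family remains inside $\{r = 0,\, u \subset M\}$, hence does not arise as an $r > 0$ limit. \textbf{(c)} The principal component $(0, u^2)$ lies in $M$ but some attached Floer strip $u^1$ does not---this is the direct analogue of the hard case of Proposition~\ref{th:nullhomotopy-1}. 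Here I note that Lemma~\ref{th:approach} applied to $u^1$ forbids attachment at a positive end of $u^2$ (since then the negative-end limit of $u^1$ would have to be the in-$M$ chord, forcing $u^1 \subset M$), so case~(c) can only occur at one of the negative ends $\zeta_0$ or $\zeta_{k+1}$.

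The hard part will be case~(c). The chord at the attaching end lies in $M$ (since $u^2 \subset M$), and Lemma~\ref{th:approach} applied to $u^1$ forces $p(u^1)$ to approach $i \in W$ from the positive imaginary side, with leading coefficient a \emph{positive} multiple of $(\Xi_0, 0) \in \bC \oplus TM = TE$; the first-eigenvalue hypothesis \eqref{eq:first-eigenvalue} identifies the relevant eigenvalue with the chord's $\alpha$. I would then generalize Corollary~\ref{th:r-moves} to the present gluing situation, in which the main component has varying moduli in $\scrR^{k+1, l+1; m}$ and the strip attaches at a negative end. The rescaling-of-weighted-solutions argument of Proposition~\ref{th:rescaled-limit} transports essentially verbatim, using the parametrized local-linearity framework of Section~\ref{sec:gluing}; the only nontrivial sign input is the leading coefficient of the distinguished kernel element of the weighted parametrized operator $\scrD^{\mathit{para}, \mu}_{E, (0, u^2)}$. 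For attachment at $\zeta_{k+1}$, this sign is delivered directly by Lemma~\ref{th:double-tranverslity-2}\eqref{eq:isolated-point-op}, itself rooted in the consistent choice $\Gamma < 0$ in Setup~\ref{th:setup-gosh}\eqref{eq:negative-choice} via Proposition~\ref{th:sign-of-gamma}; for attachment at $\zeta_0$, an entirely parallel construction of a $\gamma$-form supported near the $\zeta_0$-end secures the analogous sign. Matching signs then force $r_g \leq 0$ for the glued family, so case~(c) contributes nothing to the $r > 0$ boundary, completing the stratification analysis and hence the proof.
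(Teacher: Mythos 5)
Your strategy matches the paper's: the $r = 1$ boundary gives the composed map, the $r > 0$ breakings give $d(h_q)$, and the substantive work consists of excluding $r = 0$ degenerations. Your cases (a), (b), and the gluing/rescaling argument via the $\Gamma < 0$ condition in the hard part of case (c) are the same ingredients the paper uses (Lemma~\ref{th:double-tranverslity-2}, Proposition~\ref{th:rescaled-limit-3}, Corollary~\ref{th:r-moves-2}).

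There is, however, an error in your case analysis. You claim case (c) can also occur with attachment at $\zeta_0$, and propose a separate $\gamma$-form near that end to control the sign there. That sub-case is vacuous. If $u^1$ attaches at the $\zeta_0$-position of $u^2$, then $u^1$ is the root of the tree and its own negative output end is the $\zeta_0$ of the total surface, whose limit chord lies in $M$ because $h_q$ targets $\hom_{\scrB_q}$. Lemma~\ref{th:approach} then immediately forces $u^1 \subset M$, contradicting the premise of case (c). This is exactly the ``repeated application of Lemma~\ref{th:approach}'' that disposes of cases (i)--(iii) of Figure~\ref{fig:4-bubbles}; only case (iv), attachment at $\zeta_{k+1}$, together with the additional Floer-trajectory degeneration at $\zeta_{k+1}$, allows a component to leave $M$. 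The parallel $\gamma$-form near $\zeta_0$ is therefore unnecessary, and it would sit awkwardly with Setup~\ref{th:setup-gosh}, where $\gamma$ is a single fixed form whose sign condition $\Gamma < 0$ is formulated in terms of the $\zeta_{k+1}$ asymptotics. One smaller point: the bubble $u^1$ in the hard case is in general a one-negative-end disc representing a point of $\scrR^{j+1;m^1}$ with $j$ positive ends (the subject of Proposition~\ref{th:rescaled-limit-3}), not merely a Floer strip, which is the separate and easier degeneration noted at the very end of the paper's proof; your remark that the argument ``transports essentially verbatim'' indicates you are aware of the generality, so this is largely a matter of terminology.
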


The overall strategy of proof follows that of Proposition \ref{th:nullhomotopy-1}, but the details are a little more complicated, and will involve re-working some of our earlier analytical observations in greater generality. The nullhomotopy relations are established through one-dimensional parametrized moduli spaces. In those spaces and their compactifications, the boundary points which give rise to the nullhomotopy relations are readily identified: they consist of the $r = 1$ solutions, plus the same kind of broken solutions (with added parameter) as when one proves that $\delta_q$ is a bimodule homomorphism. A priori, there can also be other undesirable contributions, and the main thrust of the proof consists of ruling those out.

First of all, it could happen that we have an end of a one-dimensional parametrized moduli space which can be compactified by a solution with $r = 0$. Note that, as an application of Lemma \ref{th:approach}, such a solution must be contained within $M$. In the situation of interest, its index in the parametrized moduli space is $1$, and the same holds for the unparametrized index when considered as a map to $M$. Because of this and the regularity statement \eqref{eq:double-2}, the one-parameter family which deforms such a solution lies in the $r = 0$ space and takes values in $M$, which means that it can't occur as an end of the moduli space we are interested in.

The remaining undesirables consist of broken solutions happening at $r = 0$. For the ends of one-dimensional moduli spaces, any limiting broken solution can have only two components, and must be one of the possibilities shown in Figure \ref{fig:4-bubbles}. In all cases, the limit at the leftmost negative $(-)$ end must lie in $M$. Whenever all components lie in $M$, the same argument as before applies, the conclusion being that gluing together the two pieces yields a family with $r = 0$. In the situations from Figure \ref{fig:4-bubbles}(i)--(iii), a repeated application of Lemma \ref{th:approach} shows that it is always the case that all components lie in $M$; (iv) requires more effort, and is the subject of the subsequent discussion.

\subsection{The gluing argument}
It is useful to set up some notation. 
\begin{equation} \label{eq:s1-surface}
\mybox{
We have a surface $S^1$ representing a point in $\scrR^{j+1;m^1}$, together with a choice of one of its $j$ positive ends; in notation consistent with \eqref{eq:quadratic-hochschild-differential}, that would be the $(i+j-k)$-th end. For brevity, we denote that end by $\epsilon^1$. Given some $\sigma \geq 0$, we write 
\[
S^1_{\sigma} = S^1 \setminus \epsilon_1( (\sigma,+\infty) \times [0,1]). 
\]
Our surface comes with a map $u^1: S^1 \rightarrow E$, whose image is not contained in $M$, but where $x = \lim_{s \rightarrow +\infty} u^1(\zeta^1(s,\cdot)) \in M$. The associated operator $\scrD_{u^1}$ is invertible  (from now on, we will no longer add the target manifold to the notation for linearized operators, since $E$ is always intended).
}
\end{equation}
Lemma \ref{th:approach} restricts the asymptotic behaviour of $u^1$ on $\epsilon^1$, with the outcome being the same as in \eqref{eq:full-positive-limit} (up to a sign difference due to the different conventions for the ends). Namely, let $\alpha \in (0,\pi)$ be the angle associated to the pair of Lagrangians that appear at that end. Then,
\begin{equation} \label{eq:full-positive-limit-2}
\mybox{
$\lim_{s \rightarrow +\infty} \exp(\alpha s) \partial_s (u^1(\epsilon^1(s,t)))$ 
 is a negative multiple of $(\Xi_0 = i,0) \in \bC \oplus \mathit{TM}_{x(t)} = \mathit{TE}_{x(t)}$.}
\end{equation}
Next,
\begin{equation} \label{eq:s2-surface}
\mybox{
We have a surface $S^2$ representing a point in $\scrR^{i+1,k+l-i-j+1;m^2}$. Denote the $(i+1)$-st (negative) end of that surface by $\epsilon^2$. Given some $\sigma \geq 0$, write 
\[
S^2_{\sigma} = S^2 \setminus \epsilon^2( (-\infty,-\sigma) \times [0,1]). 
\]
Our surface comes with a map $u^2: S^2 \rightarrow M$, which has the same limit $x$ on $\epsilon^2$. Moreover, $(0,u^2)$ is a regular isolated point in the parametrized moduli space, meaning that we are in the situation of \eqref{eq:isolated-point-op}.
}
\end{equation}
Gluing our two components together gives a family $(r_g, u_g: S_g \rightarrow E)$, depending on a gluing length $g \gg 0$. The surfaces $S_g$ describe a path in an appropriate space $\scrR^{k+1,l+1;m}$ ($m = m^1+m^2$) converging, as $g \rightarrow \infty$, to the point $([S^1], [S^2])$ in a codimension $1$ boundary stratum of $\bar\scrR^{k+1,l+1;m}$. More concretely, these surfaces can be obtained as follows. Take $S^1$ and $S^2$ as defined above, with the same interior marked points, but with modified complex structures $j^1_g$, $j^2_g$ which depend on $g$. Here, the change of complex structure is supported in a compact subset which is disjoint from the ends; and moreover, as $g \rightarrow \infty$, $j^\nu_g$ converges to the original complex structure exponentially fast. This convergence rate comes from the fact that $\exp(-\pi g)$ is a transverse coordinate to the boundary point in $\bar\scrR^{k+1,l+1;m}$. 
%
We chop off pieces of the ends to form $S^1_g$ and $S^2_g$ as in \eqref{eq:s1-surface}, \eqref{eq:s2-surface}, still carrying their $g$-dependent complex structures; and finally use the obviously identification $\epsilon^1(s,t) \sim \epsilon^2(s-g,t)$ to define $S_g$. For the glued maps $u_g$, we have an analogue of Lemma \ref{th:convergence-to-derivative-2} (which holds for the same reasons as before, plus the exponential convergence of $j_g^\nu$):

\begin{lemma} \label{th:convergence-to-derivative-3}
For any fixed $\sigma$, consider $S^1_\sigma \subset S_g$, $g \geq \sigma$. As $g \rightarrow \infty$, the derivative $\partial_g u_g|S^1_\sigma$ converges to zero. The same holds for $S^2_\sigma$.
\end{lemma}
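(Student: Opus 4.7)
The plan is to mimic the sketch of Lemma~\ref{th:convergence-to-derivative-2} in this more complicated geometric setting. First, I will introduce an extended linearized operator $\scrD^{\mathit{para},g}_{(r_g,u_g)}$, whose domain augments the parametrized domain from Lemma~\ref{th:double-tranverslity-2} by one extra $\bR$-factor accounting for variation of the gluing length $g$; this extra direction packages together the $g$-dependence of both the complex structure $j^\nu_g$ on $S_g$ and of the gluing interpolation region near $\epsilon^1,\epsilon^2$. Since $u^1$ and $(0,u^2)$ are regular isolated points of their moduli spaces (using \eqref{eq:s1-surface} for $u^1$, and Lemma~\ref{th:double-tranverslity-2}\eqref{eq:double-2} for $u^2$), parametrized gluing theory supplies a one-dimensional kernel of this extended operator, and by construction the tangent vector $\bigl(1,\, dr_g/dg,\, \partial_g u_g\bigr)$ generates it.

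The next step is to construct a preglued almost-kernel element $(1, R^{\mathit{pre}}_g, \Upsilon^{\mathit{pre}}_g)$ sitting over the preglued map $u^2 \#_g u^1$, following the template of \eqref{eq:linear-preglue}. On the piece of $S_g$ coming from $S^1$, set $\Upsilon^{\mathit{pre}}_g = 0$ outside a neighbourhood of $\epsilon^1$ and use a cutoff to interpolate to $-\partial_s u^1$ across the interface; symmetrically on the $S^2$ side, interpolate from $0$ on a bounded portion of $S^2$ to $\partial_s u^2$ near $\epsilon^2$, while keeping $R^{\mathit{pre}}_g = 0$. The failure of this section to lie in the kernel has two sources: (i) the standard interface mismatch of the preglued map, which is exponentially small in $g$ because $u^1$ and $u^2$ share the common asymptote $x$; (ii) the $g$-derivatives of the equation coefficients, via $\partial_g j^\nu_g$, $\partial_g K_g^\nu$, $\partial_g J_g^\nu$, which by the exponential convergence $j^\nu_g \to j^\nu$ (coordinate $\exp(-\pi g)$ near the boundary of $\bar\scrR^{k+1,l+1;m}$) and the strict gluing-compatibility of the auxiliary data vanish exponentially fast on every fixed compact subset of $S^\nu$. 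A linearized inverse function theorem argument, as in the sketch of Lemma~\ref{th:convergence-to-derivative-2}, then produces an honest kernel element $(1, R_g, \Upsilon_g)$ whose distance to $(1, R^{\mathit{pre}}_g, \Upsilon^{\mathit{pre}}_g)$ tends to zero in every Sobolev norm as $g\to\infty$.

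By the one-dimensionality of the kernel, $(1, R_g, \Upsilon_g)=(1, dr_g/dg, \partial_g u_g)$. Since the interface cutoffs making up $\Upsilon^{\mathit{pre}}_g$ have support disjoint from $S^\nu_\sigma$ as soon as $g>\sigma+O(1)$, and the $g$-dependent data corrections themselves decay exponentially on $S^\nu_\sigma$, one concludes $\Upsilon^{\mathit{pre}}_g|S^\nu_\sigma \to 0$ and therefore $\partial_g u_g|S^\nu_\sigma \to 0$, as required. The main obstacle compared with Lemma~\ref{th:convergence-to-derivative-2} is the genuine $g$-dependence of the Riemann surface structure $j^\nu_g$, which produces the new term in the preglued forcing alluded to in (ii); controlling this requires the exponential-rate convergence assumption on the gluing profiles for both the complex structure and the auxiliary data, but under those assumptions this extra contribution is strictly smaller than the already-negligible interface term on any fixed compact piece, and the rest of the argument carries through verbatim.
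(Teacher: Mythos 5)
Your overall template is the right one: introduce the extended linearized operator with an extra $\bR$-factor for the gluing length, identify $\partial_g(\cdot)$ as generating the one-dimensional kernel, build a preglued approximate kernel element, and conclude by locality. This is exactly what the paper invokes (``we have an analogue of Lemma~\ref{th:convergence-to-derivative-2} \dots plus the exponential convergence of $j_g^\nu$''). You are also right that the exponential convergence $j^\nu_g \to j^\nu$ is what controls the new source of error coming from the modular dependence of the domain.

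The gap is in your preglued vector. You describe a \emph{single} section $\Upsilon^{\mathit{pre}}_g$ which vanishes on both $S^1_\sigma$ and $S^2_\sigma$ and is small everywhere in the neck. No such section can approximate the kernel element. In the $S^2$-based parametrization, the honest kernel element $\Upsilon_g=\partial_g(u_g|S^2_g)$ does vanish on $S^2_\sigma$, but converges to $\partial_s u^1(\epsilon^1(s,t))$ --- a nonzero quantity of fixed size --- on the far part of the surface; this is precisely the content of \eqref{eq:plus-derivative-2}, and is forced by the fact that a $g$-derivative of the preglued family in a fixed parametrization of $S^2$ must produce a $\pm\partial_s u^1$ term on the $S^1$ side, exactly as in \eqref{eq:linear-preglue}. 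If your $\Upsilon^{\mathit{pre}}_g$ is globally small (zero on both compacts, exponentially small in the neck), then the residual $\scrD^{\mathit{para},g}(1,0,\Upsilon^{\mathit{pre}}_g)$ is essentially $\partial_g(\text{equation})$ evaluated on the preglued map, which is \emph{not} small on the far-out part in any single parametrization, and the linearized inverse function theorem does not close. The two assertions of the lemma ($\partial_g u_g|S^1_\sigma\to 0$ and $\partial_g u_g|S^2_\sigma\to 0$) really are $g$-derivatives taken with respect to two \emph{different} identifications of the family $S_g$, and the proof has to be run twice, with a different preglued vector for each (zero on $S^2_\sigma$ and $\pm\partial_s u^1$ far out in one case; zero on $S^1_\sigma$ and $\pm\partial_s u^2$ far out in the other), exactly as the paper indicates for Lemma~\ref{th:convergence-to-derivative-2} when it says ``One can play the same game in the $(s+g,t)$ coordinate system.'' Finally, a smaller point: your kernel generator $(1,dr_g/dg,\partial_g u_g)$ omits the $I_g=\partial_g j^2_g$ component that appears in \eqref{eq:tangent-vector-2}; since the operator is set up with a $T\scrR^{k+1,l+1;m}$-summand in its domain, this component should be kept explicit.
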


If we apply that to $S^1_\sigma$ and translate the results back to the other piece of the surface, we get the following analogue of \eqref{eq:plus-derivative} (for the same reason as in \eqref{eq:full-positive-limit-2}, with reversed sign):
\begin{equation} \label{eq:plus-derivative-2}
\mybox{
Consider $(\partial_g u_g)(\epsilon^2(s-g,t))$, which is defined on $[0,g] \times [0,1]$. As $g \rightarrow \infty$, that converges on any compact subset to $(\partial_s u^1)(\epsilon^1(s,t))$.
}
\end{equation}
Take
\begin{equation} \label{eq:tangent-vector-2}
(R_g, I_g, \Upsilon_g) = \partial_g (r_g, j_g^2, u_g|S_g^2),
\end{equation}
which is an element in the kernel of the operator $\scrD^{\mathit{para}}_{(r_g,[S_g],u_g)}$ restricted to $S_g^2 \subset S_g$. Differentiation with respect to $g$ makes sense here, since all $S_g^2$ can also be considered as part of the fixed surface $S^2$. The analogue of Proposition \ref{th:rescaled-limit} is:

\begin{proposition} \label{th:rescaled-limit-3}
Take \eqref{eq:tangent-vector-2} for some sequence $g_k$ going to $\infty$, and assume that $(dr/dg)_{g_k} \neq 0$ for all $k$. Then, after rescaling by suitable positive constants $c_k$, a subsequence of the rescaled versions will converge on compact subsets to a nonzero solution 
\begin{equation}
\scrD_{(0,u^2)}^{\mathit{para}}(R,I,\Upsilon) = 0,
\end{equation}
and which has the following properties:
\begin{align}
& \label{eq:left-decay-3}
\mybox{$\Upsilon$ decays over all ends except for $\epsilon^2$.}
\\
& \label{eq:matching-3}
\mybox{$\lim_{s \rightarrow -\infty} \exp(\alpha s) \Upsilon(\epsilon^2(s,t))$ is a nonpositive multiple of $\Xi_0 = (i,0)$.}
\end{align}
\end{proposition}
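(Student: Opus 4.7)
My plan is to carry over, essentially verbatim, the eigenfunction-decomposition argument used for Proposition \ref{th:rescaled-limit}, with the strip replaced by the single end $\epsilon^2$ on which gluing occurs. The two new features are the varying domain complex structures $j_g^2$ and the added complex-structure-variation component $I$ in the kernel. Both are mild additions: $j_g^2 \to j^2$ exponentially fast (the rate is set by the transverse coordinate $\exp(-\pi g)$ at the boundary of $\bar\scrR^{k+1,l+1;m}$), and $I_g$ is compactly supported away from $\epsilon^2$, so neither variation is visible in the neck. Hence, after the local Darboux change of coordinates near $x$ from Setup \ref{th:setup-floer}, the eigenfunction expansion of $\tilde\Upsilon_g = \widetilde{\partial_g u_g|S^2_g}$ on the neck reads as in \eqref{eq:transformed-ul}, with the companion expansion of $\widetilde{\partial_s u^1}$ at $\epsilon^1$ as in \eqref{eq:transformed-uplus}. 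By \eqref{eq:full-positive-limit-2} together with \eqref{eq:first-eigenvalue}, the smallest positive eigenvalue of $Q_x$ is $\alpha$, it is simple with eigenvector $\Xi_0 = (i, 0) \in \bC \oplus TM$, and it is exactly the rate at which $\partial_s u^1$ decays at $\epsilon^1$.

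The key analytic step is to choose rescaling factors $c_k$ so that
\[
c_k^2 \big( R_k^2 + \|I_k\|^2 + \|\Upsilon_k\|^2_{W^{1,2}(S^2_{\sigma})} \big)
\]
is pinned to a positive value, for some fixed cutoff $\sigma$ chosen so that local linearity applies beyond $\epsilon^2(\{s = -\sigma\} \times [0,1])$. Since neither $I_k$ nor any interior complex-structure variation enters the neck, the chain of inequalities \eqref{eq:ana}--\eqref{eq:ana-3} that controls the tails of $c_k\tilde\Upsilon_k$ on the neck goes through unchanged, with \eqref{eq:plus-derivative-2} playing the role of \eqref{eq:convergence-2}. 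Elliptic regularity then promotes compact-subset convergence to any $W^{l,2}$-topology on regions bounded away from the end, while $R_k$ and $I_k$ live in finite-dimensional spaces, so after passing to a subsequence $c_k(R_k, I_k, \Upsilon_k) \to (R, I, \Upsilon) \in \mathit{ker}(\scrD^{\mathit{para}}_{(0,u^2)})$ with $(R, I, \Upsilon) \neq 0$.

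The decay statement \eqref{eq:left-decay-3} at ends of $S^2$ other than $\epsilon^2$ is then automatic: nondegeneracy of the chord limits of $u^2$ at those ends, combined with the uniform $W^{1,2}$-bounds just obtained, forces the standard exponential decay of kernel elements. The matching condition \eqref{eq:matching-3} follows from the final bookkeeping in the proof of Proposition \ref{th:rescaled-limit}: coefficients of eigenvalues strictly smaller than $\alpha$ are killed by the scaling factor, while the $\alpha$-coefficient converges (up to a nonpositive rescaling factor) to the leading asymptotic of $-\partial_s u^1$, which by \eqref{eq:full-positive-limit-2} is a positive multiple of $\Xi_0$; combining these signs yields a nonpositive multiple of $\Xi_0$, as required.

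The main obstacle is not conceptual but organizational --- translating the strip-based estimates into a context where the Riemann surface $S^2$ and the added $I$-direction are both present. All the novel aspects are disentangled from the neck analysis, either by compact-support considerations for $I$ or by exponential convergence of $j_g^2$. The cleanest way to organize the proof is probably to fix $S^2$ as a smooth manifold once and for all, view the linearized operators on $S^2_g$ as acting on fixed bundles over $S^2$ but with coefficients differing from $\scrD^{\mathit{para}}_{(0,u^2)}$ by terms whose $C^r$-norms on each compact subset are $O(\exp(-\pi g))$, and then invoke standard elliptic estimates with uniform constants.
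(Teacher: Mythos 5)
Your proposal follows the paper's own approach---choose $c_k$ to pin the rescaled kernel element, redo the eigenmode estimate of Proposition \ref{th:rescaled-limit} on the neck with \eqref{eq:plus-derivative-2} playing the role of \eqref{eq:convergence-2}, and disentangle the $I$-variation from the neck via its compact support. One sign slip should be fixed: since $\epsilon^2$ is a \emph{negative} end (so the neck coordinate tends to $-\infty$, not $+\infty$ as in Proposition \ref{th:rescaled-limit}), the eigenmodes that the rescaling must kill are those with eigenvalue strictly \emph{larger} than $\alpha$, not smaller---modes $\exp(-\lambda s)$ with $\lambda>\alpha$ grow faster than $\exp(-\alpha s)$ as $s\to-\infty$, whereas modes with $\lambda<0$ decay at the neck and are already controlled by the normalization. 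Relatedly, \eqref{eq:plus-derivative-2} matches $\partial_g u_g$ to $+\partial_s u^1$ (not to $-\partial_s u^1$, as \eqref{eq:plus-derivative} does), so the clean statement is that the $\alpha$-coefficient of $\Upsilon$ is a \emph{nonnegative} constant $\lim_k c_k\exp(-\alpha g_k)$ times the leading coefficient of $\partial_s u^1$, which by \eqref{eq:full-positive-limit-2} is a negative multiple of $\Xi_0$; combining gives the nonpositive multiple required by \eqref{eq:matching-3}. Your ``nonpositive rescaling factor times the leading asymptotic of $-\partial_s u^1$'' phrasing is a double-negative paraphrase of the same and lands on the right sign, but ``strictly smaller than $\alpha$'' would point the tail estimate the wrong way if carried literally into a full write-up.
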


To prove this, one chooses the $c_k$ so that, replacing subscripts $g_k$ everywhere with $k$,
\begin{equation}
\mybox{
$c_k^2 \big(R_k^2 + \|I_k\|^2_{W^{1,2}(S^2_k)} + \|\Upsilon_k\|^2_{W^{1,2}(S^2_k)}\big)$
 is bounded, and also bounded away from zero.}
\end{equation}
The argument is as in Proposition \ref{th:rescaled-limit}, with \eqref{eq:matching-3} coming from the combination of \eqref{eq:full-positive-limit-2} and \eqref{eq:plus-derivative-2} (again, the situation from Section \ref{subsec:modified} is actually the closer model). Having established that, we get the counterpart of Corollary \ref{th:r-moves}:

\begin{corollary} \label{th:r-moves-2}
For the glued family $(r_g,u_g)$, we must have $r_g \leq 0$ for all sufficiently large $g$.
\end{corollary}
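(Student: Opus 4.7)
The plan is to argue by contradiction, closely following the template of Corollary \ref{th:r-moves}: Proposition \ref{th:rescaled-limit-3} now plays the role of Proposition \ref{th:rescaled-limit}, and the structural description of $\ker \scrD^{\mathit{para}}_{(0,u^2)}$ in Lemma \ref{th:double-tranverslity-2}\eqref{eq:isolated-point-op} plays the role of the weighted-Sobolev index computation used in the earlier proof. Since the heavy analytic work has already been packaged into those two results, the corollary itself is a short contrapositive argument.

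Suppose, for contradiction, that $r_g > 0$ for arbitrarily large $g$. Because the glued family $(r_g,u_g)$ converges as $g \to \infty$ to the broken solution with principal parameter $r^2 = 0$, we have $r_g \to 0$. By an elementary mean value argument (pick $g_k \to \infty$ with $r_{g_k} > 0$, then some $\tilde g_k > g_k$ with $r_{\tilde g_k} < r_{g_k}/2$, and apply the mean value theorem on $[g_k,\tilde g_k]$), we can extract a sequence, still called $g_k$, tending to $\infty$ with $R_{g_k} := (dr/dg)_{g_k} < 0$. Apply Proposition \ref{th:rescaled-limit-3} to this sequence: after passing to a subsequence and rescaling by positive constants $c_k$, the triples $(c_k R_{g_k}, c_k I_{g_k}, c_k \Upsilon_{g_k})$ converge on compact subsets to a nonzero $(R, I, \Upsilon) \in \ker \scrD^{\mathit{para}}_{(0,u^2)}$ satisfying \eqref{eq:left-decay-3} and \eqref{eq:matching-3}. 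Since $c_k > 0$ and $R_{g_k} < 0$, the limit satisfies $R \leq 0$; writing the limit in \eqref{eq:matching-3} as $c\,(i,0)$, we also have $c \leq 0$.

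Now invoke Lemma \ref{th:double-tranverslity-2}\eqref{eq:isolated-point-op}. The space $V \subset \ker \scrD^{\mathit{para}}_{(0,u^2)}$ of solutions $(R',I',\Upsilon')$ with $\Upsilon'$ decaying at every end except $\epsilon^2$ and with $\lim_{s\to-\infty}\exp(\alpha s)\Upsilon'(\epsilon^2(s,t))$ a multiple of $(i,0)$ is one-dimensional, and contains an element with $R'>0$ and multiplicative constant $c'<0$. Our limit $(R,I,\Upsilon)$ lies in $V$ and is nonzero, so it is a nonzero scalar multiple of this preferred generator. Consequently $R$ and $c$ are both nonzero with strictly opposite signs, which is incompatible with the two inequalities $R \leq 0$ and $c \leq 0$ derived above. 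This contradiction establishes the claim.

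The only mildly subtle point is the assertion that in $V$ neither $R$ nor $c$ can vanish for a nonzero element; equivalently, that $V$ is genuinely one-dimensional and not larger. Both $R$ and $c$ are linear functionals on $V$, and if either vanished on a nonzero element, then (taking $c$ to vanish) the corresponding $\Upsilon$ would decay at $\epsilon^2$ as well, placing the element in the standard-Sobolev kernel of $\scrD^{\mathit{para}}_{(0,u^2)}$; but that kernel is trivial by the isolated regular hypothesis on $(0,u^2)$. Given \eqref{eq:isolated-point-op}, this is automatic and requires no new input. The main obstacle — controlling the asymptotic matching of the glued tangent vector against $\partial_s u^1$ in a rescaled limit — was already handled in Proposition \ref{th:rescaled-limit-3}.
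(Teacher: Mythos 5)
Your proposal is correct and follows the same route as the paper's (very terse) proof: pass to a sequence with $\partial_g r_g<0$, apply Proposition \ref{th:rescaled-limit-3} to get a nonzero rescaled limit with $R\leq 0$ and nonpositive $\epsilon^2$-asymptotics, and contradict the sign structure guaranteed by \eqref{eq:isolated-point-op}. The only difference is that you fill in two details the paper leaves implicit — the mean value extraction of a subsequence with negative $\partial_g r_g$, and the explicit reason why neither $R$ nor the asymptotic constant can vanish on a nonzero element of the one-dimensional space $V$ — both of which are correct, though the latter is already built into the one-dimensionality plus the sign data in \eqref{eq:isolated-point-op}.
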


Namely, if there are arbitrarily large $g$ with $r_g > 0$, then we can find a sequence $g_k$ such that $(\partial r_g/\partial g)_{g_k} < 0$. Then, the limit from Proposition \ref{th:rescaled-limit-3} would have $R \leq 0$, and the behaviour of $(R,I,\Upsilon)$ would contradict \eqref{eq:isolated-point-op}. 

Corollary \ref{th:r-moves-2} nearly completes the proof of Proposition \ref{th:nullhomotopy-2}. There is one remaining case which notationally doesn't quite fit in with the discussion above, namely where our broken limit consists of a surface $S^2$ representing a point in $\scrR^{k+1,l+1;m}$, and the other component, appearing at the $(k+1)$-st end of $S^2$, is a Floer trajectory. However, that case is in fact a straightforward generalization of the material in Section \ref{sec:gluing}, and hence does not deserve a separate discussion.

\subsection{Filtered acyclicity}
Attentive readers will have noticed that, in Section \ref{sec:acyclic}, we have imposed some quite detailed additional conditions on the pair of Lagrangian submanifolds $(L_0,L_1)$ (Setup \ref{th:l0l1}), as well as on the Floer data for both $(L_0,L_1)$ and $(L_1,L_0)$ (Setup \ref{th:plus-floer}, \ref{th:dual-h}). These conditions were used in our proof of Corollary \ref{th:simple-acyclic}. Our definition of the relative Fukaya category and its associated structures lacks those conditions. However, one can remedy that as follows.

Fix a pair of objects $(L_0,L_1)$ in $\scrA_q$. Let's temporarily enlarge our category by adding two more objects $(\bar{L}_0, \bar{L}_1)$ (always different, even if $L_0 = L_1)$. We denote the enlarged category by $\bar\scrA_q$ (and adopt similar notations, such as $\bar\scrB_q$, $\bar\scrQ_q$, \dots). Here, each $\bar{L}_k$ is chosen so as to be isotopic to $L_k$, within the class of Lagrangian submanifolds under consideration; and the new pair of objects should satisfy Setup \ref{th:l0l1}. When extending the curved $A_\infty$-structure to include those objects, we can choose the Floer data so that Setup \ref{th:plus-floer} is satisfied for $(\bar{L}_0,\bar{L}_1)$, and Setup \ref{th:dual-h} for $(\bar{L}_1,\bar{L}_0)$. Similarly, when defining $\bar\delta_q$, we can arrange that the linear $q = 0$ part $\bar\delta^{0,1,0}: \mathit{hom}_{\bar\scrA}(\bar{L}_1,\bar{L}_0)^\vee \rightarrow \mathit{hom}_{\bar\scrA}(\bar{L}_0,\bar{L}_1)$ is constructed as in Setup \ref{th:setup-cont}. Finally, we can arrange that the linear $q = 0$ piece of the nullhomotopy from Section \ref{subsec:gosh}, for $(\bar{L}_0,\bar{L}_1)$, reproduces the construction from Section \ref{subsec:simple-nullhomotopy}. Here, it is crucial that the requirements in Setup \ref{th:parametrized-setup} and \ref{th:setup-gosh} do not contradict each other.

As a consequence of this and Corollary \ref{th:simple-acyclic}, if we consider the bimodule \eqref{eq:contractible} for our enlarged category, and set $q = 0$, then that bimodule is acyclic when we specialize it to the pair of objects $(\bar{L}_0,\bar{L}_1)$. As a consequence of standard invariance properties of Floer cohomology, $\bar{L}_k$ is quasi-isomorphic to $L_k$ in $\bar\scrA$. By general algebraic properties of bimodules, the acyclicity property is then inherited by $(L_0,L_1)$. In other words, Corollary \ref{th:simple-acyclic} says that the total complex
\begin{equation}
\big\{\!
\xymatrix{ 
\ar@/_1pc/[rrrr]_-{\bar{h}^{0,1,0}}
\mathit{hom}_{\bar\scrA}(\bar{L}_0,\bar{L}_1)^\vee[-n]
\ar[rr]^-{\bar\delta^{0,1,0}} && \mathit{hom}_{\bar\scrA}(\bar{L}_0,\bar{L}_1) \ar[rr]^-{\text{projection}} && \mathit{hom}_{\bar\scrB}(\bar{L}_0,\bar{L}_1)
}\!\big\}
\end{equation}
is acyclic. Then, for elementary algebraic reasons, the same holds for
\begin{equation} \label{eq:original-complex}
\big\{\!
\xymatrix{ 
\ar@/_1pc/[rrrr]_-{h^{0,1,0}}
\mathit{hom}_{\scrA}(L_0,L_1)^\vee[-n]
\ar[rr]^-{\delta^{0,1,0}} && \mathit{hom}_{\scrA}(L_0,L_1) \ar[rr]^-{\text{projection}} && \mathit{hom}_{\scrB}(L_0,L_1)
}\!\big\},
\end{equation}
which now takes place inside the original (not enlarged) categorical framework.
Since we can carry out this construction (separately) for any pair $(L_0,L_1)$, all the complexes \eqref{eq:original-complex} are acyclic, which is precisely the statement of Theorem \ref{th:main}.


\end{document}